\setlist[enumerate]{topsep=0pt,itemsep=-1ex,partopsep=1ex,parsep=1ex}
\theoremstyle{plain}
\newtheorem{theo}{Theorem}[section]
\newtheorem{lemma}[theo]{Lemma}
\newtheorem{conj}[theo]{Conjecture}
\theoremstyle{definition}
\newtheorem{dfn}[theo]{Definition}
\newtheorem{rem}[theo]{Remark}
\newcommand{\mc}[1]{\mathcal{#1}}
\newcommand{\mb}[1]{\mathbb{#1}}
\newcommand{\nib}[1]{\noindent {\bf #1}}
\newcommand{\brac}[1]{\left( #1 \right)}
\newcommand{\bsize}[1]{\left| #1 \right|}
\newcommand{\bfl}[1]{\left\lfloor #1 \right\rfloor}
\newcommand{\bcl}[1]{\left\lceil #1 \right\rceil}
\newcommand{\vc}[1]{\mathbf{#1}}
\newcommand{\sub}{\subset}
\newcommand{\Lra}{\Leftrightarrow}
\newcommand{\Ra}{\Rightarrow}
\newcommand{\sm}{\setminus}
\newcommand{\ov}{\overline}
\newcommand{\wt}{\widetilde}
\newcommand{\eps}{\varepsilon}
\newcommand{\es}{\emptyset}
\newcommand{\aA}{\alpha}
\newcommand{\bB}{\beta}
\newcommand{\gG}{\gamma}
\newcommand{\dD}{\delta}
\newcommand{\kK}{\kappa}
\newcommand{\zZ}{\zeta}
\newcommand{\lL}{\lambda}
\newcommand{\tT}{\theta}
\newcommand{\GG}{\Gamma}
\newcommand{\DD}{\Delta}
\newcommand{\OO}{\Omega}
\newcommand{\LL}{\Lambda}
\newcommand{\blA}{\boldsymbol\alpha}
\newcommand{\blB}{\boldsymbol\beta}
\newcommand{\blL}{\boldsymbol\lambda}
\newcommand{\blS}{\boldsymbol\sigma}
\newcommand{\blP}{\boldsymbol\pi}
\def\qed{\hfill $\Box$}
\title{Forbidden vector-valued intersections}
\author{Peter Keevash\thanks{Mathematical Institute, University of Oxford, Oxford, UK. 
E-mail: keevash@maths.ox.ac.uk.
\newline \hspace*{1.8em}Research supported in part by ERC Consolidator Grant 647678.}
\and Eoin Long\thanks{Mathematical Institute, University of Oxford, Oxford, UK. E-mail: long@maths.ox.ac.uk.
\newline \hspace*{1.8em}Supported in part by ERC Starter Grant 633509.}
}
\begin{document}

\maketitle

\begin{abstract}
We solve a generalised form of a conjecture of Kalai 
motivated by attempts to improve the bounds for Borsuk's problem. 
The conjecture can be roughly understood as asking for 
an analogue of the Frankl-R\"odl forbidden intersection theorem 
in which set intersections are vector-valued. 
We discover that the vector world is richer in surprising ways: 
in particular, Kalai's conjecture is false, but
we prove a corrected statement that is essentially best possible,
and applies to a considerably more general setting.
Our methods include the use of maximum entropy measures, 
VC-dimension, Dependent Random Choice 
and a new correlation inequality for product measures.
\end{abstract}

\section{Introduction}

Intersection theorems have been a central topic of Extremal Combinatorics since the seminal paper of Erd\H os, Ko and Rado \cite{Erdos-Ko-Rado}, and the area has grown into a vast body of research (see \cite{Babai-Frankl}, \cite{com} or \cite{jukna} for an overview). The Frankl-R\"odl forbidden intersection theorem is a fundamental result of this type, which has had a wide range of applications to different areas
of mathematics, including discrete geometry \cite{FRGeomRams}, 
communication complexity \cite{Sgall} and quantum computing \cite{BCW}.

To state their result we introduce the following notation. Let $[n] = \{1,\ldots , n\}$ and let $\tbinom{[n]}{k} = \{ A \subset [n]: |A|=k\}$.
For $\mc{A} \subset \tbinom{[n]}{k}$ and $t \in [n]$
let $\mc{A} \times_t \mc{A}$ be the set of
all $(A,B) \in \mc{A} \times \mc{A}$ with $|A \cap B|=t$.
Note that $\tbinom{[n]}{k} \times_t \tbinom{[n]}{k}$ is 
non-empty if and only if $\max (2k-n, 0) \leq t \leq k$.
Frankl and R\"odl proved the following `supersaturation theorem',
showing that if $t$ is bounded away from these extremes 
and $\mc{A}$ is `exponentially dense' in $\tbinom{[n]}{k}$
then $\mc{A} \times_t \mc{A}$ is  `exponentially dense' 
in $\tbinom{[n]}{k} \times_t \tbinom{[n]}{k}$.

\begin{theo}[Frankl--R\"odl \cite {FrRo}]
	\label{FranklRodl}
Let\footnote{Our notation `$n^{-1}\ll \dD \ll \eps $' here means that for any $0<\eps<1$
there is $\dD_0>0$ such that for any $0<\dD<\dD_0$
there is $n_0$ such that for $n \ge n_0$
the following statement holds.}
$0 < n^{-1} \ll \dD \ll \eps < 1$ and 
$\max (2k-n, 0) + \eps n \leq t \leq k - \eps n$.
Suppose $\mc{A} \subset \tbinom{[n]}{k}$ with $|\mc{A} | \geq (1-\delta )^n \tbinom{n}{k}$.
Then $|\mc{A} \times_t \mc{A} | \ge
 (1-\eps)^n \bsize{ \tbinom{[n]}{k} \times_t \tbinom{[n]}{k} } $.
\end{theo}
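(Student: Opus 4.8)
The plan is to recast the statement as a reverse\mbox{-}hypercontractive (``small sets cannot contract too much'') estimate for the Markov operator attached to the relation $\times_t$, and to obtain that estimate from a product structure that is hidden inside $\tbinom{[n]}{k}\times_t\tbinom{[n]}{k}$. First I would rewrite the combinatorics: a pair $(A,B)\in\tbinom{[n]}{k}\times_t\tbinom{[n]}{k}$ is the same thing as a colouring $c\colon[n]\to\{1,2,3,4\}$ whose colour classes have sizes $(t,\,k-t,\,k-t,\,n-2k+t)$, via $A=c^{-1}\{1,2\}$ and $B=c^{-1}\{1,3\}$; the hypothesis $\max(2k-n,0)+\eps n\le t\le k-\eps n$ is exactly the assertion that each of these four class sizes is at least $\eps n$. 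Let $T$ be the operator on $L^2(\tbinom{[n]}{k},\mathrm{unif})$ given by $(Tf)(A)=\mathbb{E}\,[f(B)\mid A]$, where $B$ is built from $A$ by taking the union of a uniformly random $t$-subset of $A$ with a uniformly random $(k-t)$-subset of $[n]\sm A$; one checks that $\Pr[A\to B]=1/\brac{\tbinom{k}{t}\tbinom{n-k}{k-t}}$ whenever $|A\cap B|=t$ and is $0$ otherwise, so $T$ is self-adjoint and the joint law of $(A,B)$ it produces from a uniform start is uniform on $\tbinom{[n]}{k}\times_t\tbinom{[n]}{k}$. Hence $\bangle{\mathbf 1_{\mc A},T\mathbf 1_{\mc A}}=\bsize{\mc A\times_t\mc A}\big/\bsize{\tbinom{[n]}{k}\times_t\tbinom{[n]}{k}}$, while the map $c\mapsto c^{-1}\{1,2\}$ pushes the uniform measure $\mu$ on colourings to the uniform measure on $\tbinom{[n]}{k}$, so the hypothesis reads $\mu(\mc A)\ge(1-\dD)^n$. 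It thus suffices to prove $\bangle{\mathbf 1_{\mc A},T\mathbf 1_{\mc A}}\ge(1-\eps)^n$ whenever $\mu(\mc A)\ge(1-\dD)^n$.

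The engine is a reverse\mbox{-}hypercontractive inequality of the shape $\bangle{\mathbf 1_{\mc A},T\mathbf 1_{\mc A}}\ge\mu(\mc A)^{C}$ with $C=C(\eps)$ a constant; feeding in $\mu(\mc A)\ge(1-\dD)^n$ gives $(1-\dD)^{Cn}$, and since $C$ depends on $\eps$ only while $\dD\ll\eps$ we can arrange $(1-\dD)^{C}\ge 1-\eps$. To justify the inequality I would work in the four\mbox{-}colour model. The colouring space $\OO$ with measure $\mu$ is exactly the product measure $\blP=p^{\otimes n}$ on $\{1,2,3,4\}^n$, with $p\propto(t,k-t,k-t,n-2k+t)$, conditioned on the exact colour composition; because every $p_i\ge\eps$ this conditioning costs only a $\mathrm{poly}(n)$ factor (Stirling), and, more importantly, the Dirichlet form of the natural colour\mbox{-}swap walk on $\OO$ is comparable, with constants depending only on $\eps$, to that of $\blP$, so $\mu$ inherits a reverse\mbox{-}log\mbox{-}Sobolev inequality with an $\eps$-dependent constant. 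Moreover, up to this comparison $T$ plays the role of a noise operator whose noise level is of order $(k-t)/n\ge\eps$ — indeed $A$ and $B$ differ in exactly $2(k-t)$ coordinates as subsets of $[n]$ — so its correlation parameter is bounded away from $1$ in terms of $\eps$. Standard reverse hypercontractivity for such operators, which tensorises over the $n$ coordinates, then delivers the displayed bound with $C=C(\eps)$.

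The main obstacle is precisely this last step: establishing the reverse\mbox{-}hypercontractive (equivalently reverse\mbox{-}log\mbox{-}Sobolev) estimate for $T$ with a constant uniform as $t/n$ and $k/n$ range over the admissible window, since $\OO$ is a constant\mbox{-}composition ``slice'' rather than a genuine product and $T$ performs a macroscopic resampling rather than a single\mbox{-}coordinate step, so the comparison of Dirichlet forms between $\OO$ and $\{1,2,3,4\}^n$ must be done with care. A more hands\mbox{-}on alternative avoids functional inequalities: averaging over the union $S=A\cup B$, which ranges over $\tbinom{[n]}{2k-t}$, gives $\bsize{\mc A\times_t\mc A}=\sum_S g(S)$, where $g(S)$ counts the pairs $(A,B)\in\mc A^2$ of $k$-subsets of $S$ with $A\cup B=S$, i.e. — passing to complements within $S$ — the \emph{disjoint} pairs of $(k-t)$-subsets of $S$ lying in the induced family; since the induced density of $\mc A$ in $\tbinom{S}{k}$ averages to $\bsize{\mc A}/\tbinom{n}{k}\ge(1-\dD)^n$, this reduces the theorem to its $t=0$ case (ground set $2k-t$, set size $k-t$, still in the interior), which one can attack directly by a switching\mbox{-}plus\mbox{-}concentration argument and then recombine over $S$ using convexity.
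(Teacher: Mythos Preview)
The paper does not prove this theorem: Theorem~\ref{FranklRodl} is quoted from Frankl and R\"odl~\cite{FrRo} and is used as a black box later (in the proof of Lemma~\ref{2x2=hs0}). There is therefore no ``paper's own proof'' to compare against, and in fact the paper's main machinery cannot be used to recover Theorem~\ref{FranklRodl} without circularity, since Lemma~\ref{2x2=hs0} (and hence Theorem~\ref{general}) relies on it.

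As for your proposal itself: the reverse-hypercontractivity strategy is a legitimate route to Frankl--R\"odl type statements, but your sketch does not close the gap you yourself flag. Comparing Dirichlet forms between the constant-composition slice and the product $\{1,2,3,4\}^n$ transfers a log-Sobolev constant for the \emph{single-site} walk, not for your operator $T$, which performs one macroscopic $2(k-t)$-coordinate resample; deducing the needed two-set inequality $\bangle{\mathbf 1_{\mc A},T\mathbf 1_{\mc A}}\ge\mu(\mc A)^{C(\eps)}$ for that specific long-range kernel from a slice LSI requires an additional argument (e.g.\ realising $T$ as a fixed-time evolution of a semigroup whose generator you control), which you have not supplied.

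Your alternative reduction also has an error. After passing to $S=A\cup B$ with $|S|=2k-t$ and complementing inside $S$, the new parameters are $n'=2k-t$, $k'=k-t$, $t'=0$, and the Frankl--R\"odl interior condition $\max(2k'-n',0)+\eps' n'\le t'$ reads $\eps'(2k-t)\le 0$, which fails for every $\eps'>0$. So the resulting instance sits exactly on the boundary (the Kneser/disjoint-pairs regime), not ``still in the interior''; whatever ``switching-plus-concentration'' argument you have in mind for that base case is then carrying the entire weight of the theorem and would itself need to be spelled out.
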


In a recent survey on the Borsuk problem,
Kalai \cite{Kalai} remarked that the Frankl--R\"odl theorem
can be used to give a counterexample to the Borsuk conjecture
(the Frankl--Wilson intersection theorem \cite{FrWil}
was used in Kahn and Kalai's celebrated counterexample \cite{KahnKalai}),
and suggested that improved bounds might follow
from a suitably generalised Frankl--R\"odl theorem.
He proposed the following supersaturation conjecture 
as a possible step in this direction,
in which one measures a set by 
its size $|A| = \sum_{i \in A} 1$
and its sum $\sum A = \sum_{i \in A} i$.
Let $[n]_{k,s}$ be the set of $A \subset [n]$
with $|A|=k$ and $\sum A = s$.
For $\mc{A} \subset [n]_{k,s}$ write 
\[\mc{A} \times_{(t,w)} \mc{A} = \big \{ (A,B) \in \mc{A} \times \mc{A}: 
A \neq B \mbox{ with } |A \cap B|=t \mbox{ and } \sum( A \cap B)  = w \big \}.\]

\begin{conj}[Kalai] \label{kalai}
Let $0 < n^{-1} \ll \dD \ll \eps, \aA_1, \aA_2, \bB_1, \bB_2 < 1$,
$k = \bfl{ \aA_1 n } $, $s = \bfl{ \aA_2 \tbinom{n}{2} } $, 
$t = \bfl{ \bB_1 n } $ and $w = \bfl{ \bB_2 \tbinom{n}{2} }$.
Suppose $\mc{A} \subset [n]_{k,s}$ with $|\mc{A}| \geq (1-\delta )^n |[n]_{k,s}|$.
Then $|\mc{A} \times_{(t,w)}\mc{A}| \geq \bfl{ (1-\eps )^n 
 \bsize{ [n]_{k,s} \times_{(t,w)} [n]_{k,s} } }$.
\end{conj}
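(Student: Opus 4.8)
The plan is to follow the measure-transfer strategy underlying modern proofs of Frankl--R\"odl type supersaturation, adapted to carry the extra second-order sum constraint alongside the intersection constraint; note that one cannot simply apply Theorem~\ref{FranklRodl} after forgetting sums, since the pairs it produces at intersection $t$ need not have $\sum(A\cap B)=w$. The first move is to replace the doubly-constrained ``slice of a slice'' $[n]_{k,s}$ by the maximum entropy product measure $\mu=\prod_{i\in[n]}\mu_{p_i}$ on subsets of $[n]$ with $\mathbb{E}_\mu|A|=k$ and $\mathbb{E}_\mu\sum A=s$, so that $p_i=(1+e^{-a-bi})^{-1}$ for Lagrange multipliers $a,b$, and $p_i$ varies monotonically in $i$. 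A two-dimensional local limit estimate shows $\mu$ is within an $e^{o(n)}$ factor of the uniform measure on $[n]_{k,s}$, so $|\mc{A}|\ge(1-\dD)^n|[n]_{k,s}|$ becomes $\mu(\mc{A})\ge(1-\dD')^n$ with $\dD'$ comparable to $\dD$, and an exponential lower bound proved in the product world transfers back to the desired count; the $e^{o(n)}$ losses sit inside the gap between $\dD$ and $\eps$.

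Next I would build a coupling producing $(A,B)$ with each marginal $\mu$ and with $\big(|A\cap B|,\sum(A\cap B)\big)$ concentrated near $(t,w)$: for each $i$ independently choose a joint law on $(X_i,Y_i)\in\{0,1\}^2$ with both marginals $\mu_{p_i}$ and with $q_i:=\Pr[X_i=Y_i=1]$ fixed by a further exponential tilt so that $\sum_i q_i=t$ and $\sum_i i\,q_i=w$. (Feasibility needs checking: as the tilts vary, the reachable region of $\big(\sum_i q_i,\sum_i i\,q_i\big)$ should contain a neighbourhood of $(t,w)$, its boundary being the analogue of $\max(2k-n,0)\le t\le k$ in Theorem~\ref{FranklRodl}.) Conditioning on the event $E=\{(|A\cap B|,\sum(A\cap B))=(t,w)\}$ again costs only $e^{o(n)}$ by a local limit estimate, so it suffices to show
\[
\Pr\big[\,A\in\mc{A}\ \text{and}\ B\in\mc{A}\ \big|\ E\,\big]\ \ge\ (1-\eps)^n .
\]

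The heart of the matter is to show this conditional probability is not far below $\mu(\mc{A})^2$; the coupled measure is not monotone, so FKG/Harris does not apply directly. Here I would use Dependent Random Choice to locate a small set of ``test'' coordinates together with a partial assignment on them onto a subcube where $\mc{A}$ stays exponentially dense and is \emph{pseudorandom} for the coupling, so that its fibres over that subcube behave like independent copies; VC-dimension governs how many partial assignments must be avoided, a Sauer--Shelah / $\eps$-net bound on the trace of $\mc{A}$ on the test coordinates showing that almost every choice of test set is good, while the structured bad ones are excluded separately. Feeding this into a new correlation inequality for product measures --- the substitute for FKG in this two-sided, non-monotone setting --- should yield the required bound.

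The step I expect to be the main obstacle --- and, since this paper's abstract already reports Kalai's conjecture to be false, quite possibly a real barrier to the statement exactly as phrased --- is the simultaneous control of the \emph{two} constraints $|A\cap B|=t$ and $\sum(A\cap B)=w$. With one intersection constraint the tilted coupling has a single free parameter and the analysis localises coordinatewise; the additional linear functional $\sum(A\cap B)$ forces a genuinely two-dimensional local limit theorem, uniform over the relevant tilts, and one must preclude conditioning on $E$ pushing $\mc{A}$ into a low-measure corner. I anticipate that for certain admissible-looking ranges of $(\aA_1,\aA_2,\bB_1,\bB_2)$ there are dense families $\mc{A}$ --- say junta-like ones, or ones with atypically concentrated sum statistics in the relevant direction --- for which no coupling reaches $(t,w)$ exponentially often, and that pinning down these obstructions is what forces the corrected, provably best-possible statement proved here.
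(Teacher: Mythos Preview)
Your overall framework is essentially the one the paper develops: pass to the maximum-entropy product measure $\mu_{\vc p}$ for $[n]_{k,s}$ via a large-deviation transfer (Theorem~\ref{ldp}), build a $\kappa$-bounded product coupling $\mu_{\vc q}$ on $(\{0,1\}\times\{0,1\})^n$ with both marginals $\mu_{\vc p}$ and $\mb E(|A\cap B|,\sum(A\cap B))=(t,w)$, then use Dependent Random Choice, VC-dimension, and a correlation inequality (Theorem~\ref{gencor}) to deduce $\mu_{\vc q}((\mc A\times\mc A)^{\mc V_\cap}_{(t,w)})\ge(1-\eps)^n$. That part is sound and does produce $|\mc A\times_{(t,w)}\mc A|\ge(1-\eps)^n 2^{H(\mu_{\vc q})}$; indeed Theorem~\ref{optsupersat} shows this bound, optimised over all admissible $\vc q$, is \emph{tight}.

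Where your diagnosis misses is in locating the obstruction. It is neither a feasibility problem for the coupling nor a difficulty with the two-dimensional local limit: such $\mu_{\vc q}$ exist and behave exactly as you describe. The gap is that the conjecture demands the factor $|[n]_{k,s}\times_{(t,w)}[n]_{k,s}|\approx 2^{H(\mu_{\wt{\vc q}})}$, where $\mu_{\wt{\vc q}}$ is the maximum-entropy measure on $(\{0,1\}\times\{0,1\})^n$ subject only to the six scalar constraints (both coordinates in $[n]_{k,s}$ on average, plus the two intersection constraints), with \emph{no} requirement that its marginals be $\mu_{\vc p}$. Since every $\mu_{\vc q}$ you can build satisfies those constraints too, $H(\mu_{\vc q})\le H(\mu_{\wt{\vc q}})$, with equality precisely when the marginal $\mu_{\wt{\vc p}}$ of $\mu_{\wt{\vc q}}$ coincides with $\mu_{\vc p}$. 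Both are Boltzmann distributions (Lemma~\ref{boltzmann}), and the equation $\mu_{\wt{\vc p}}=\mu_{\vc p}$ reduces to a polynomial identity in the exponential tilts which the paper solves explicitly (Lemma~\ref{limiting distributions for (t,w)-intersections in the (k,s)-sets}): it holds only on the measure-zero set $\Gamma$ of Theorem~\ref{thm: Kalai}. Off $\Gamma$ there is a linear-in-$n$ entropy gap; the $\mu_{\wt{\vc p}}$-typical sets form an exponentially small $\mc B_{full}\subset[n]_{k,s}$ carrying almost all $(t,w)$-intersections, and $\mc A=[n]_{k,s}\setminus\mc B_{full}$ is the counterexample (case~$ii$ of Theorem~\ref{thm: trichotomy}). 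So the failure is a \emph{marginal mismatch} between two Boltzmann distributions, not an inability of any coupling to reach $(t,w)$.
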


Somewhat surprisingly, this conjecture is false! In fact, although the conjecture holds in a number of natural special cases, it fails quite dramatically in general; for most pairs $(\alpha _1,\alpha _2)$ there is exactly one choice of $(\beta _1, \beta _2)$ for which Conjecture \ref{kalai} holds. Before stating this result, we first remark that
Conjecture \ref{kalai} is only non-trivial when
$[n]_{k,s}$ is exponentially large in $n$
(when $|[n]_{k,s} \times _{(t,w)} [n]_{k,s}| \geq (1-\eps )^{-n}$, requiring $|[n]_{k,s}| \ge (1-\eps)^{-n/2}$).
Defining $\aA_1,\aA_2$ as in Conjecture \ref{kalai},
we can therefore assume that $(\aA_1,\aA_2)$ belongs to
\[\Lambda := \{(x,y): 0<x<1, x^2 < y < 2x - x^2\}.\]
We say that ${\bf g} = (\alpha _1,\alpha _2,\beta _1,\beta _2)$ 
is {\em $(n,\dD,\eps)$-Kalai}
if Conjecture \ref{kalai} holds for ${\bf g}$, i.e.\ any
$\mc{A} \subset [n]_{k,s}$ with 
$|\mc{A}| \geq (1-\delta )^n |[n]_{k,s}|$ satisfies	
$|\mc{A} \times_{(t,w)}\mc{A}| 
\geq \bfl{ (1-\eps )^n 
\bsize{ [n]_{k,s} \times_{(t,w)} [n]_{k,s} } }$.
We will classify the Kalai parameters ${\bf g}$
in terms of the following set $\GG$;
note that the definition of $\GG_1$ uses two functions  
$\beta _1, \beta _2 : \Lambda \to {\mathbb R}$ 
that will be defined in Section \ref{sec:concrete description}.
Let $\Gamma = \bigcup _{i\in [3]} \Gamma _i$, 
where	\begin{align*}
	& \hspace{.2cm} \mbox{\textbf{\emph{(Popular intersections)}}} & \Gamma _{1} 
		= 
	&  \big \{ (\alpha _1,\alpha _2 , \beta _1,\beta _2): 
		(\alpha _1,\alpha _2) \in \Lambda, \mbox{ }
		\alpha _1 \neq \alpha _2 \mbox { and } 
		\beta _i = \beta _i(\alpha _1,\alpha _2) \big \};\\		
	& \hspace{.8cm} \mbox{\emph{\textbf{(Doubly random)}}} & \Gamma _{2} 		
		= &\big \{ (\alpha ,\alpha , \beta ,\beta ): 
		0 < \alpha < 1 \mbox{ \mbox{and} } 
		\max (2\alpha -1 , 0) < \beta < \alpha \big \};\\
	&\mbox{\textbf{\emph{(Uniformly random sets)}}} &\Gamma _{3} 
		= 
	&\big \{ (1/2,1/2 , \beta _1,\beta _2):
		(2\beta _1, 2\beta _2) \in \Lambda \big \}. 	
	\end{align*}

\begin{theo}\label{thm: Kalai}
Supppose ${\bf g} = (\aA_1, \aA_2, \bB_1, \bB_2) \in [0,1]^4$
with $(\aA_1,\aA_2) \in \LL$ and  
$n^{-1} \ll \dD \ll \eps \ll \eps' \ll \aA_1, \aA_2, \bB_1, \bB_2$.
Let ${\bf g}' \in \GG$ minimise $\|{\bf g} - {\bf g}'\|_1$.
\begin{enumerate}
\item If $\|{\bf g} - {\bf g}'\|_1 \le \dD$ 
then ${\bf g}$ is $(n,\dD,\eps)$-Kalai.
\item If $\|{\bf g} - {\bf g}'\|_1 \ge \eps'$
then ${\bf g}$ is not $(n,\dD,\eps)$-Kalai.
\end{enumerate}
\end{theo}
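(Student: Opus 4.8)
I would deduce part (i) from a general product-measure supersaturation theorem — the ``corrected statement'' the abstract alludes to, proved by the entropy/VC/Dependent-Random-Choice/correlation-inequality method — and part (ii) by explicit constructions; the common tool is the \emph{maximum entropy} model of $[n]_{k,s}$. Among product measures $\mu=\prod_{i=1}^n\mathrm{Bern}(p_i)$ on $2^{[n]}$ with $\sum_i p_i=k$ and $\sum_i ip_i=s$, the one of largest entropy has $p_i=(1+e^{\lL+\kK i})^{-1}$ for multipliers $\lL,\kK$, so $(p_i)$ is logistic: monotone in $i$, and constant $(=\aA_1)$ exactly when $\aA_2=\aA_1$. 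Since $\log\mu(A)$ takes the same value for every $A\in[n]_{k,s}$, the measure $\mu$ conditioned on $[n]_{k,s}$ is \emph{exactly} uniform, and as $(\aA_1,\aA_2)\in\LL$ a local limit estimate gives $\mu([n]_{k,s})\ge n^{-O(1)}$; so it is enough to work with $\mu$. The set $\GG$ records the three shapes the normalised pair statistic $\big(|A\cap B|/n,\ \sum(A\cap B)/\tbinom{n}{2}\big)$ of a $\mu^2$-random pair can take while staying ``rich'': the single mode $\GG_1$, where $\bB_i(\aA_1,\aA_2)$ equals $\tfrac1n\sum_ip_i^2$, resp.\ $\tbinom{n}{2}^{-1}\sum_iip_i^2$, in the limit; the diagonal $\GG_2$ available when $\mu$ is already uniform, along which the size coordinate sweeps the entire Frankl--R\"odl window while the sum coordinate stays at its typical value (so $\bB_2=\bB_1$); and the two-parameter family $\GG_3$, available only at the self-complementary point $\aA_1=\aA_2=1/2$, whose threshold $(2\bB_1,2\bB_2)\in\LL$ is exactly the condition that a $\bB_1n$-element subset of a uniformly spread half of $[n]$ can have sum $\bB_2\tbinom{n}{2}$.

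\textbf{Part (i): sufficiency.} Let $\mathbf g'\in\GG$ be a closest point, $\|\mathbf g-\mathbf g'\|_1\le\dD$, and split by which $\GG_i$ contains it. If $\mathbf g'\in\GG_1$ then, since the functions $\bB_i$ are smooth, $(t,w)$ lies within $O(\dD)$ of the mode of the pair statistic, so a $\mu^2$-random pair realises $(t,w)$ with probability at least $e^{-O(\dD^2)n}\ge(1-\eps/2)^n$ (using $\dD\ll\eps$); feeding this positivity into the general supersaturation theorem gives $\bsize{\mc A\times_{(t,w)}\mc A}\ge(1-\eps)^n\bsize{[n]_{k,s}\times_{(t,w)}[n]_{k,s}}$ for every $(1-\dD)^n$-dense $\mc A$. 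If $\mathbf g'\in\GG_2\cup\GG_3$ then $t=\bB_1n$ lies inside the window $\big(\max(2\aA_1-1,0),\aA_1\big)n$, so the size coordinate is governed by the Frankl--R\"odl theorem; and conditioned on $|A\cap B|=t$ the sum $\sum(A\cap B)$ is spread over an interval of length $\TT(n^2)$ that contains $w$, because a $\mu$-typical set is uniformly distributed along $[n]$, so its random $t$-subsets realise every sum whose normalisation lies between the endpoints in the definition of $\GG_2$ (the diagonal) resp.\ $\GG_3$ (the relation $(2\bB_1,2\bB_2)\in\LL$). So $(t,w)$ is again hit with probability $\ge(1-\eps/2)^n$ given the right size, and the supersaturation theorem finishes the proof.

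\textbf{Part (ii): necessity.} Now assume $\|\mathbf g-\mathbf g'\|_1\ge\eps'$ for every $\mathbf g'\in\GG$; the goal is $\mc A\sub[n]_{k,s}$ with $|\mc A|\ge(1-\dD)^n|[n]_{k,s}|$ and $\bsize{\mc A\times_{(t,w)}\mc A}<\bfl{(1-\eps)^n\bsize{[n]_{k,s}\times_{(t,w)}[n]_{k,s}}}$, and we may assume $\bsize{[n]_{k,s}\times_{(t,w)}[n]_{k,s}}\ge(1-\eps)^{-n}$, else the right side is $0$. The construction depends on the direction in which $\mathbf g$ leaves $\GG$. When the forbidden pairs are sparse — the graph on $[n]_{k,s}$ with edge set $[n]_{k,s}\times_{(t,w)}[n]_{k,s}$ has average degree $o(1)$, which occurs once $\mathbf g$ is a definite distance past $\GG_1$ towards the feasibility boundary — almost every set lies in no forbidden pair, and in the extreme cases where $\sum(A\cap B)=w$ forces $A\cap B$ to contain (or to avoid) a fixed $t$-set $F$ one can take $\mc A=\{A:F\not\subseteq A\}$ (or its analogue), which is $(1-\dD)^n$-dense with $\mc A\times_{(t,w)}\mc A=\es$. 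In the remaining range one imposes a bounded-complexity local constraint: partition $[n]$ into $O(1)$ intervals and restrict the profile $(|A\cap I_j|)_j$ and/or the partial sums $(\sum(A\cap I_j))_j$ so as to push the feasible locus of $(|A\cap B|,\sum(A\cap B))$ off $(t,w)$ while keeping $\mc A$ dense, checking separately in the cases $\aA_1\ne\aA_2$, $\aA_1=\aA_2\ne1/2$, $\aA_1=\aA_2=1/2$ that the forbidden-pair count falls below the conjectured bound — the self-complementary case being the one in which the sharper threshold $(2\bB_1,2\bB_2)\in\LL$ is forced.

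\textbf{Main obstacle.} The hard part is part (ii): producing a single uniform family of constructions covering the whole $\eps'$-complement of $\GG$ — in particular the ``moderate'' regime where the forbidden pairs are neither sparse nor pinned to a fixed substructure — and verifying that the resulting count genuinely undercuts $(1-\eps)^n\bsize{[n]_{k,s}\times_{(t,w)}[n]_{k,s}}$ rather than merely being exponentially small; this is also where the exact form of $\bB_1,\bB_2$ and the precise boundaries of $\GG_2,\GG_3$ must be pinned down. On the sufficiency side the corresponding difficulty has been front-loaded into the general product-measure supersaturation theorem, whose proof is where maximum entropy measures, VC-dimension, Dependent Random Choice and the new correlation inequality for product measures do the work.
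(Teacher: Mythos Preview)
Your Part~(i) sketch is in the right spirit but skips a nontrivial step. After applying the product-measure supersaturation theorem (Theorem~\ref{binary}) with a measure $\mu_{\bf q'}$ whose marginals are $\mu_{\bf p}$ and whose expected intersection is $(t,w)$, you obtain a lower bound on $\mu_{\bf q'}\big((\mc A\times\mc A)^{\mc V_\cap}_{(t,w)}\big)$; converting this into the required comparison with $\bsize{[n]_{k,s}\times_{(t,w)}[n]_{k,s}}$ needs $\mu_{\bf q'}$ to be (close to) the \emph{maximum-entropy} measure on pairs subject to the constraints $({\bf z},{\bf z},{\bf w})$, not just \emph{some} measure achieving them. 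The paper establishes this separately for each $\GG_i$ (last statement of Lemma~\ref{limiting distributions for (t,w)-intersections in the (k,s)-sets}), and it is not automatic.

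Your Part~(ii), however, has a genuine gap: you attempt case-by-case explicit constructions (sparse-pair regime, fixed-substructure regime, interval-profile restrictions) and correctly flag this as the main obstacle. The paper avoids explicit constructions entirely. It first proves a general trichotomy (Theorem~\ref{thm: trichotomy}): for any $({\bf z},{\bf w})$, either full supersaturation holds, or a small set $\mc B_{full}$ carries almost all ${\bf w}$-intersections, or a large $\mc B_{empty}$ has none; the split is governed by the VC-dimension of $[n]_{k,s}\times_{(t,w)}[n]_{k,s}$ in $(\{0,1\}^2)^n$ and, in the high-VC case, by whether the marginal $\mu_{\wt{\bf p}}$ of the pair maximum-entropy measure $\mu_{\wt{\bf q}}$ is close to $\mu_{\bf p}$. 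The key lemma then characterises $\GG$ as exactly the set of ${\bf g}$ for which the limit marginal equation $f_{\blL}=g^{\blP}_{0,1}+g^{\blP}_{1,1}$ is solvable: since both sides are Boltzmann, this reduces to a polynomial identity $e^{\pi_1}y^{\pi_2}+e^{\pi'_1}y^{\pi'_2}=e^{\lL_1}y^{\lL_2}+e^{\lL_1+\pi_1}y^{\lL_2+\pi_2}$ whose solution set falls into precisely the three cases $\GG_1,\GG_2,\GG_3$. When ${\bf g}$ is $\eps'$-far from $\GG$, a compactness argument turns the failure of this identity into $\|{\bf p}-\wt{\bf p}\|_1>\tT n$ for some $\tT=\tT({\bf g})>0$; case~(ii) of the trichotomy then hands you the counterexample $\mc A=[n]_{k,s}\sm\mc B_{full}$ directly (and the low-VC case is handled by Lemma~\ref{lem: low vc-dim case of trichotomy}).

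So the missing idea is that the counterexamples for Part~(ii) are not built by hand: they are supplied automatically by the structural trichotomy, and the role of $\GG$ is not as a threshold in any explicit construction but as the solvability locus of a marginal equation between two Boltzmann distributions.
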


\noindent The labels assigned to the parts of $\GG$ 
correspond to the following interpretations:\vspace{1mm}
\begin{itemize}
	\item \textbf{Popular intersections:} For $(\alpha _1,\alpha _2) \in \Lambda$ with 
	$\alpha _1 \neq \alpha _2$ there is exactly one 
	$(\alpha _1,\alpha _2,\beta _1,\beta _2) \in {\Gamma }_1$. 
	For $n$ large, the value 
	$(\beta _1n, \beta _2\tbinom {n}{2})$ is 
	essentially the most popular intersection between 
	sets in $[n]_{k,s}$, where $(k,s) = (\alpha _1n,\alpha _2\tbinom {n}{2})$.

	\item \textbf{Doubly random:} If $A \subset [n]$ is a uniformly random set of size 
	$k = \alpha n$, the expected size of $\sum A$ 
	is $s = \alpha \tbinom {n}{2} + o(n^2)$. Similarly, if two 
	sets $A$ and $B$ in $[n]_{k,s}$ with $|A\cap B| =\beta n$ 
	are randomly selected then the expected value of 
	$\sum (A \cap B)$ is $\beta \tbinom {n}{2} + o(n^2)$.  
	Theorem \ref{thm: Kalai} for $\Gamma _2$ shows Conjecture \ref{kalai} 
        holds for `random-like $\beta n$
	intersections' between `random-like $\alpha n$ sets', provided 
	$\alpha $ and $\beta $ satisfy the Frankl-R\"odl conditions.

	\item \textbf{Uniformly random sets:} 
        Most sets $A \sub [n]$ have $k = \tfrac {1}{2}n + o(n)$, 
        $s = \tfrac{1}{2}\tbinom {n}{2} + o(n^2)$ and
	$|A \cap [2L]| = L \pm o(n)$ for all $L \leq n/2$. Intersections 
	of type $(t,w) = (\beta _1n, \beta _2 \tbinom {n}{2})$ 
	can only occur between such sets if
         $(2\beta _1 + o(1), 2\beta _2 + o(1)) \in \Lambda$. 
        Theorem \ref{thm: Kalai} for $\Gamma _3$ 
	shows that Conjecture \ref{kalai} is true for 
	$(\alpha _1, \alpha _2) = (1/2,1/2)$ provided 
	this necessary condition is fulfilled.
\end{itemize}\vspace{1.5mm}

Although the bounds from Conjecture \ref{kalai} in general do not hold, 
it is still natural to ask whether we can find {\em any} $(t,w)$-intersection
in such `exponentially dense' subsets ${\cal A} \subset [n]_{k,s}$.
If so, what is the optimal lower bound on $|{\cal A} \times _{(t,w)}{\cal A}|$?
This paper investigates these questions; 
in particular, we give a natural correction to Conjecture \ref{kalai}. 

Our results will apply to the following more general setting 
of vector-valued set `sizes': 
given vectors $\mc{V} = (\vc{v}_i: i \in [n])$ in $\mb{R}^D$,
we define the $\mc{V}$-size of $A \sub [n]$ by 
\[ |A|_{\mc{V}} = \sum_{i \in A} \vc{v}_i. \]
We note that the Frankl-R\"odl theorem concerns
$\mc{V}$-sizes where $D=1$ and all $\vc{v}_i=1$,
and the Kalai conjecture concerns
$\mc{V}$-sizes where $D=2$ and $\vc{v}_i=(1,i)$.

\subsection{Vector-valued intersections} \label{subsec: intro-trichotomy}

In order to prove our forbidden ${\cal V}$-intersection theorem, 
we need to work over a general alphabet, where we associate a vector 
with each possible value of each coordinate, as follows.

\begin{dfn}
Suppose $\vc{v}^i_j \in \mb{Z}^D$ for all $i \in [n]$ and $j \in J$.
We call $\mc{V} = (\vc{v}^i_j)$ an $(n,J)$-array in $\mb{Z}^D$. 
For $\vc{a} \in J^n$ we define
\[\mc{V}(\vc{a}) = \sum_{i \in [n]} \vc{v}^i_{a_i}. \]
For $\mc{A} \sub J^n$ and $\vc{w} \in \mb{Z}^D$ we define 
$ \mc{A}^{\mc{V}}_{\vc{w}} = \{\vc{a} \in \mc{A}: \mc{V}(\vc{a}) = \vc{w}\}$.
\end{dfn}

By identifying subsets of $[n]$ with their characteristic vectors in $\{0,1\}^n$, 
and pairs of subsets of $[n]$ with vectors in $(\{0,1\}\times \{0,1\})^n$, 
this definition extends the definition of $\mc{V}$-size and $\mc{V}$-intersection 
via the following specialisation 
(note that $\mc{V}(A) = |A|_{\mc{V}}$ 
and $\mc{V}_\cap(A,B)=|A \cap B|_{\mc{V}}$).

\begin{dfn}
Suppose $\mc{V} = (\vc{v}_i: i \in [n])$,
where $\vc{v}_i \in \mb{Z}^D$ for all $i \in [n]$.
We also let $\mc{V} = (\vc{v}^i_j)$ denote the $(n,\{0,1\})$-array in $\mb{Z}^D$,
where $\vc{v}^i_1 = \vc{v}_i$ and $\vc{v}^i_0 = 0$.
We let $\mc{V}_\cap = ((\vc{v}_\cap)^i_{j,j'})$ denote the $(n,\{0,1\} \times \{0,1\})$-array 
in $\mb{Z}^D$, where $(\vc{v}_\cap)^i_{1,1} = \vc{v}_i$ and $(\vc{v}_\cap)^i_{j,j'} = 0$ otherwise.
\end{dfn}

We also introduce a class of norms on $\mb{R}^D$
to account for the possibility that different coordinates of vectors 
in $\mc{V}$ may operate at different scales.
In the following definition we think of ${\bf R}$ as a scaling;
e.g.\ for the Kalai vectors $(1,i)$, we take ${\bf R} = (1,n)$.

\begin{dfn}
Suppose $\vc{R}=(R_1,\dots,R_D) \in \mb{R}^D$. We define the $\vc{R}$-norm 
on $\mb{R}^D$ by $\|\vc{v}\|_{\vc{R}} = \max_{d \in [D]} |v_d|/R_d$.
We say that $\mc{V} = (\vc{v}^i_j)$ is $\vc{R}$-bounded 
if all $\|\vc{v}^i_j\|_{\vc{R}} \le 1$.
\end{dfn}

Our ${\cal V}$-intersection theorem requires two properties of the set of vectors ${\cal V}$. 
The first property, roughly speaking, says that any vector in $\mb{Z}^D$
can be efficiently generated by changing the values of coordinates,
and that furthermore this holds even if 
a small set of coordinates are frozen,
so that no coordinate is overly significant.
To see why such a condition is necessary,
suppose that $D=1$ and almost all
coordinates have only even values:
then there are large families where all intersections 
have a fixed parity.

\begin{dfn} \label{robgen}
Let $\mc{V} = (\vc{v}^i_j)$ be an $(n,J)$-array in $\mb{Z}^D$. 
We say that $\mc{V}$ is $\gG$-robustly $(\vc{R},k)$-generating in $\mb{Z}^D$
if for any $\vc{v} \in \mb{Z}^D$ with $\|\vc{v}\|_{\vc{R}} \le 1$
and $T \sub [n]$ with $|T| \le \gG n$ there is $S \sub [n] \sm T$ 
with $|S| \le k$ and $j_i, j'_i  \in J$ for all $i \in S$ such that
$\vc{v} = \sum_{i \in S} (\vc{v}^i_{j_i}-\vc{v}^i_{j'_i})$.
\end{dfn}

Note that if $\mc{V} = (\vc{v}_i: i \in [n])$, considered as an $(n,\{0,1\})$-array,
then Definition \ref{robgen} says that for all such $\vc{v}$ and $T$  
there are disjoint $S,S' \sub [n] \sm T$ with $|S|+|S'| \le k$ 
such that $\vc{v} = \sum_{i \in S} \vc{v}_i - \sum_{i \in S'} \vc{v}_i$.

In particular, the Kalai vectors are $0.1$-robustly $((1,n),7)$-generating in $\mb{Z}^2$. Indeed, for any vector $(0,b)$ with $b\in [ n/2 ]$, there are $n/3 $ disjoint pairs $\{i_1,i_2\}$ with
$(0,b) = (1,i_1) - (1,i_2)$. This implies that for any vector $(0,b)$ with $|b| \leq n$ there are $n/9$ disjoint sets $\{i_1,i_2,j_1,j_2\}$ with $(0,b) = (1,i_1) + (1,i_2) - (1,j_1) - (1,j_1) $. Also, there are $n/6$ disjoint triples $\{i_1,i_2, i_3\}$ with $(1,0) = (1,i_1) + (1,i_2) - (1,i_3)$. Combined, given $T \subset [n]$ with $|T| \leq n/10 < n/9 -3$ and $(a,b) \in \mb{Z}^2$  with $\|(a,b)\|_{\bf R} \leq 1$, there are disjoint $S,S' \sub [n] \sm T$ with $|S|+|S'| \le 7$ with $(a,b) = \sum_{i \in S} \vc{v}_i - \sum_{i \in S'} \vc{v}_i$.

We also make the following `general position' assumption for $\mc{V}$.

\begin{dfn}\label{robust gen} 
Suppose $\mc{V} = (\vc{v}_i)$ is an $(n,\{0,1\})$-array in $\mb{Z}^D$. 
For $I \in \tbinom{[n]}{D}$, let ${\cal V}_I = \{{\bf v}_i: i\in I\}$ 
and say that $I$ is $(\gG,\vc{R})$-generic if 
$|\det({\cal V}_I)| \ge \gG \prod_{d \in [D]} R_d$.
We say that $\mc{V}$ is $\gG'$-robustly $(\gG,\vc{R})$-generic
if for any $X \sub [n]$ with $|X| > \gG' n$, some $I\sub X$
is $(\gG,\vc{R})$-generic for $\mc{V}$.
\end{dfn}

Note that the Kalai vectors are $\gamma $-robustly $(\gamma /2, {\bf R})$-generic for any $\gamma >0$, since if $X \sub [n]$ with $|X| \geq \gG n$ then we can choose $i,i' \in X$ 
with $|i-i'| \geq \gG n - 1 \geq \gamma n/2$, and then $(1,i)$ and $(1,i')$ span a parallelogram
of area $|i-i'| \geq (\gG /2) \cdot 1 \cdot n$.

We are now in a position to state our main theorem. 
It shows that, under the above assumptions on ${\cal V}$, 
there are only two obstructions to a set ${\cal X} = (\{0,1\}^n)^{\cal V}_{\bf z}$ 
satisfying a supersaturation result as in Kalai's conjecture (case $i$): 
either (case $ii$) there is a small set ${\cal B}_{full} \subset {\cal X}$ 
responsible for almost all $\bf w$-intersections in ${\cal X}$,
or (case $iii$) there is a large set ${\cal B}_{empty} \subset {\cal X}$ 
containing no ${\bf w}$-intersections.
Furthermore, in case $ii$ we obtain optimal supersaturation 
relative to ${\cal B}_{full}$.

\begin{theo}\label{thm: trichotomy}
	Let $n^{-1} \ll \delta \ll \gG _1, \gG _1' \ll \gG _2,\gG _2' \ll \eps , D^{-1}, C^{-1}, k^{-1}$ 
	and ${\bf R} \in {\mathbb R}^D$ with $\max _{d} R_d \leq n^C$. 
Suppose ${\cal V} = ({\bf v}_i : i\in [n])$ where each ${\bf v}_i \in {\mathbb Z}^D$ is 
	${\bf R}$-bounded and ${\cal V}$ is $\gG _i'$-robustly $(\gG _i, {\bf R})$-generic 
	and $\gG _i$-robustly $({\bf R},k)$-generating for $i = 1,2$. Let ${\bf z}, {\bf w} \in {\mathbb Z}^D$ with ${\bf z} \neq {\bf w}$ and let ${\cal X} = (\{0,1\}^n)^{\cal V}_{\bf z}$. Then one of the following holds:
		\begin{enumerate}
			\item All ${\cal A} \subset {\cal X}$ with 
			$|{\cal A}| \geq (1-\delta )^n|{\cal X}|$ satisfy 
			$|({\cal A} \times {\cal A})^{{\cal V}_{\cap }}_{\bf w}| \geq 
			(1-\eps )^n |({\cal X} \times {\cal X})^{{\cal V}_\cap}_{\bf w}|$.
			\item There exists ${\cal B}_{full} \subset {\cal X}$ with 
			$|{\cal B}_{full}| \leq (1-\delta )^n|{\cal X}|$ satisfying 
			$$|({\cal X} \times {\cal X})^{{\cal V}_\cap}_{\bf w} \setminus 
			({\cal B}_{full} \times {\cal B}_{full})^{{\cal V}_{\cap}}_{\bf w}| 
			\leq (1-\delta )^n |({\cal X} \times {\cal X})^{{\cal V}_\cap}_{\bf w}|.$$ 	
			
			\item There is ${\cal B}_{empty} \subset {\cal X}$ with 
			$|{\cal B}_{empty}| \geq \lfloor (1-\eps )^n|{\cal X}|\rfloor $ satisfying 
			$({\cal B}_{empty} \times {\cal B}_{empty})^{{\cal V}_{\cap }}_{\bf w} = \emptyset $.
		\end{enumerate}
Furthermore, if $ii$ holds and $iii$ does not then any ${\cal B} \subset {\cal B}_{full}$ with 
			$|{\cal B}| \geq (1-\delta )^n|{\cal B}_{full}|$ satisfies 
			$|({\cal B} \times {\cal B})^{{\cal V}_{\cap }}_{\bf w}|
			\geq (1-\eps )^n|({\cal X} \times {\cal X})^{{\cal V}_{\cap }}_{\bf w}|$.
\end{theo}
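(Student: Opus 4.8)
The plan is to reduce Theorem \ref{thm: trichotomy} to a statement about product measures and then apply an entropy/container-type dichotomy. First I would replace the uniform measure on $\mc{X} = (\{0,1\}^n)^{\mc{V}}_{\vc{z}}$ by a maximum entropy product measure $\mu$ on $\{0,1\}^n$ subject to the linear constraint $\mc{V}(\vc{a}) = \vc{z}$; a local central limit theorem argument (using the $\gG$-robustly generating and generic hypotheses to control the covariance structure and avoid lattice obstructions) shows that for any $\mc{A} \sub \{0,1\}^n$, $|\mc{A} \cap \mc{X}|/|\mc{X}|$ is comparable, up to factors $(1-\eps)^{o(n)}$, to $\mu(\mc{A})/\mu(\mc{X})$, and similarly for the pair space with the constraint $\mc{V}_\cap(A,B) = \vc{w}$ added. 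This converts all the counting in the theorem into statements about the product measure $\mu^{\otimes}$ on pairs, at the cost of an exponentially small slack that gets absorbed into the hierarchy $\dD \ll \gG_1 \ll \gG_2 \ll \eps$.

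Next I would set up the trichotomy itself. Given $\mc{A} \sub \mc{X}$ with $\mu(\mc{A}) \ge (1-\dD)^n \mu(\mc{X})$, consider the "link" structure: for each $A \in \mc{A}$, the set of $B \in \mc{A}$ with $\mc{V}_\cap(A,B)=\vc{w}$. The key tool is Dependent Random Choice combined with the robust generating property: if a positive (exponential) proportion of pairs from $\mc{A}$ have many common "neighbours" in the intersection graph, one can find a large structured subset, and the robust generating hypothesis forces $\vc{w}$-intersections to actually appear with the predicted density — this yields case $i$. If instead the $\vc{w}$-intersections are concentrated, one extracts $\mc{B}_{full}$ as the set of $A$ lying in many $\vc{w}$-intersections (a threshold on the "$\vc{w}$-degree"); the complement $\mc{X} \sm \mc{B}_{full}$ then contributes negligibly, giving case $ii$, and if moreover no such concentration occurs but case $i$ fails, a cleaning/deletion argument removes the few high-degree vertices to leave $\mc{B}_{empty}$ with no $\vc{w}$-intersection at all, giving case $iii$. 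The genericity hypothesis is what guarantees these three cases exhaust the possibilities: it rules out "modular" or low-dimensional degeneracies where intersections could be sparse for structural reasons not captured by a small set.

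For the "furthermore" clause, suppose case $ii$ holds but case $iii$ does not. I would argue that $\mc{B}_{full}$ is itself "self-similar": because $\mc{B}_{full}$ carries almost all the $\vc{w}$-intersections of $\mc{X}$ and case $iii$ fails, $\mc{B}_{full}$ cannot contain a large $\vc{w}$-intersection-free subset, so the intersection graph on $\mc{B}_{full}$ is "robustly dense" in the same quantitative sense. Then passing to $\mc{B} \sub \mc{B}_{full}$ with $\mu(\mc{B}) \ge (1-\dD)^n \mu(\mc{B}_{full})$ and rerunning the Dependent Random Choice / robust generating argument inside $\mc{B}_{full}$ — now using that the ambient object is $\mc{B}_{full}$ rather than $\mc{X}$, but with the same entropy-measure comparison — produces $(1-\eps)^n |(\mc{X} \times \mc{X})^{\mc{V}_\cap}_{\vc{w}}|$ many $\vc{w}$-intersections inside $\mc{B} \times \mc{B}$. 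The point is that the lower bound is still phrased relative to $\mc{X}$, which is legitimate precisely because $\mc{B}_{full}$ accounts for a $(1-\dD)^n$-fraction of these.

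The main obstacle I expect is the interaction between the robust generating property and Dependent Random Choice: one needs to find, inside a dense set, a large family of pairs whose "difference set" $\{\,\sum_{i \in S}(\vc{v}^i_{j_i} - \vc{v}^i_{j'_i})\,\}$ covers all of $\{\vc{v} : \|\vc{v}\|_{\vc{R}} \le 1\}$ even after freezing the coordinates already used — and to do this while keeping the number of frozen coordinates below $\gG n$ throughout an iteration of length up to $k$ swaps. Making the error terms in the local limit theorem uniform over the exponentially many choices of frozen set $T$, and ensuring the new correlation inequality for product measures applies to the pair measure $\mu^{\otimes}$ conditioned on $\mc{V}_\cap(A,B)=\vc{w}$ (which is not a product measure), are the two places where the argument is most delicate; everything else is bookkeeping within the parameter hierarchy.
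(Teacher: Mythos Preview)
Your high-level instinct to pass to maximum entropy product measures is correct and matches the paper, but the trichotomy itself is not organized the way you describe, and your proposal has a genuine gap at the point where the three cases are separated. The paper's dichotomy parameter is the \emph{VC-dimension} of $(\mc{X}\times\mc{X})^{\mc{V}_\cap}_{\vc{w}}$ as a subset of $(\{0,1\}\times\{0,1\})^n$: if it is at least $\lambda n$ one lands in case $i$ or $ii$, and if it is below $\lambda n$ one lands in case $iii$. You never mention VC-dimension, and without it there is no clean a-priori criterion (depending only on $\mc{V},\vc{z},\vc{w}$, not on a particular $\mc{A}$) that tells you which case you are in. Your ``if a positive proportion of pairs from $\mc{A}$ have many common neighbours'' is phrased in terms of a specific $\mc{A}$, but case $i$ is a statement about \emph{all} large $\mc{A}$ simultaneously. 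In the large-VC regime the paper introduces a second maximum entropy measure $\mu_{\wt{\bf q}}$ on pairs (with marginals $\mu_{\wt{\bf p}}$), and the split between $i$ and $ii$ is governed by whether $\|\vc{p}-\wt{\vc{p}}\|_1$ is small or large; $\mc{B}_{full}$ is then the set of $\vc{x}$ that are \emph{typical for $\mu_{\wt{\bf p}}$}, not a degree-threshold set. The degree-threshold definition you suggest does not obviously give the required entropy gap $H(\mu_{\wt{\bf p}}) \le H(\mu_{\vc{p}}) - \delta_1^2 n$ that forces $|\mc{B}_{full}|\le(1-\delta)^n|\mc{X}|$, nor does it directly give $\mu_{\wt{\bf p}}(\mc{B})$ large for the ``furthermore'' clause.

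Your treatment of case $iii$ is also too vague to succeed as stated. ``Cleaning/deletion of high-degree vertices'' does not produce a large $\vc{w}$-intersection-free set in general: the actual obstruction in the low-VC regime is that for most $\vc{x}\in\mc{X}$ one of the fibred neighbourhoods $N^1_{\vc{w}}(\vc{x})$ or $N^0_{\vc{z}-\vc{w}}(\vc{x})$ is subexponentially small, and the paper exploits this via a random restriction: pick a small random set $S$, keep only those $\vc{x}$ with $S\subset S_j(\vc{x})$, and use the smallness of $N^j$ to show that each surviving $\vc{x}$ is isolated with high probability. This is a rather specific construction, and nothing in your outline corresponds to it. The genericity hypothesis enters not by ``ruling out modular degeneracies'' directly, but through Lemma~\ref{bounded}: large VC-dimension plus genericity forces the pair maximum-entropy measure to be $\kappa$-bounded, which is what makes Theorem~\ref{binary} applicable.
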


\begin{rem} $ $
\begin{enumerate}
\item Theorem \ref{thm: trichotomy} applies to $(t,w)$-intersections in $[n]_{k,s}$,
as we have shown above that its hypotheses hold for the Kalai vectors.
\item As indicated above, cases $ii$ and $iii$ of Theorem \ref{thm: trichotomy} may simultaneously hold
(see counterexample 1 of Section \ref{sec:counterex}). 
\item The assumption that ${\cal V}$ is 
$\gG_1$-robustly $({\bf R},k)$-generating is redundant,
as it is implied by $\gG_2$-robustly $({\bf R},k)$-generating,
but the assumptions of $\gG _i'$-robustly $(\gG _i, {\bf R})$-generic 
for $i = 1,2$ are incomparable, and our proof seems to require
this `multiscale general position'.
\end{enumerate}
\end{rem}

We have highlighted Theorem \ref{thm: trichotomy} as our main result for the sake of giving a clean combinatorial statement. However, we will in fact obtain considerably more general results in two directions, whose precise statements are postponed until later in the paper.

\begin{itemize}
	\item 
Our most general result, Theorem \ref{general}, implies cross-intersection theorems for two or more families and applies to families of vectors over any finite alphabet.
	\item 
Theorem \ref{thm: trichotomy} leaves open the question
of how many ${\bf w}$-intersections are guaranteed 
in large subsets of ${\cal X}$ when case (ii) holds;
this is answered by Theorem \ref{optsupersat}.
\end{itemize}

It is natural to ask under which conditions 
the alternate cases of Theorem \ref{thm: trichotomy} hold.
These conditions are best understood in relation to our proof framework,
so we postpone this discussion to section \ref{subsec: supersaturation},
after we have introduced the two principal components of the proof.

\subsection{A probabilistic forbidden intersection theorem}

A key paradigm of our approach is that $\mc{V}$-intersection theorems
often have equivalent formulations in terms of certain product measures 
(the maximum entropy measures described in the next subsection),
and that the necessary condition for these theorems appears naturally
as a condition on the product measures.
(A similar idea arose in the new proof of the density Hales-Jewett theorem 
developed by the first Polymath project \cite{DHJ}, although in this case
the natural `equal slices' distribution was not a product measure.)

To illustrate this point, we recast the Frankl-R\"odl theorem in such terms.
Again we identify subsets of $[n]$ with their characteristic vectors in $\{0,1\}^n$,
on which we introduce the product measure $\mu_p(\vc{x}) = \prod_{i \in [n]} p_{x_i}$,
where $p_1 = k/n$ and $p_0=1-p_1$. Pairs of subsets are identified
with $\{0,1\}^n \times \{0,1\}^n$, which we can identify with 
$(\{0,1\} \times \{0,1\})^n$, on which we introduce the product measure 
$\mu_q(\vc{x},\vc{x}') = \prod_{i \in [n]} q_{x_i,x'_i}$,
where $q_{1,1} = t/n$, $q_{0,1}=q_{1,0}=(k-t)/n$ and $q_{0,0}=(n-2k+t)/n$.
It follows from our general large deviation principle in the next subsection
(or is easy to see directly in this case) that the hypothesis of Theorem
\ref{FranklRodl} is essentially equivalent to $\mu_p(\mc{A})>(1-\dD)^n$
and the conclusion to $\mu_q(\mc{A} \times_t \mc{A})>(1-\eps)^n$.
Furthermore, the assumption on $t$ can be rephrased
as $q_{j,j'} \ge \eps$ for all $j,j' \in \{0,1\}$,
and this indicates the condition that we need in general.

Let us formalise the above discussion of product measures in a general context. Although we only considered the cases when the `alphabet' $J$ is $\{0,1\}$ 
or $\{0,1\} \times \{0,1\}$, we remark that it is essential 
for our arguments to work with general alphabets,
as the proofs of our results even in the binary case 
rely on reductions that increase the alphabet size.

\begin{dfn}
Suppose $\vc{p}=(p^i_j: i \in [n], j \in J)$ with all $p^i_j \in [0,1]$
and $\sum_{j \in J} p^i_j = 1$ for all $i \in [n]$. The product measure $\mu_{\vc{p}}$ 
on $J^n$ is given, for $\vc{a} \in J^n$, by $\mu_{\vc{p}}(\vc{a}) = \prod_{i \in [n]} p^i_{a_i}$. 

Given an $(n,J)$-array $\mc{V}$ and a measure $\mu $ on $J^n$, we write 
$\mc{V}(\mu ) = \mb{E}_{\vc{a} \sim \mu } \mc{V}(\vc{a})$.

Suppose $\mu_{\vc{q}}$ is a product measure on $(\prod_{s \in S} J_s)^n$,
with $\vc{q}=(q^i_{j_1,\dots,j_s}: i \in [n], j_1 \in J_1, \dots, j_S \in J_S)$.
For $s \in [S]$ the $s$-marginal of $\mu_{\vc{q}}$
is the product measure $\mu_{\vc{p}_s}$ on $J_s^n$ with
$(p_s)^i_j = \sum q^i_{j_1,\dots,j_S}$ for all $i \in [n]$, $j \in J_s$,
where the sum is over all $(j_1,\dots,j_S)$ with $j_s=j$.

We say that $\mu_{\vc{q}}$ has marginals $(\mu_{\vc{p}_s}: s \in S)$. We say that $\mu_{\vc{q}}$ is $\kK$-bounded if all $q^i_{j,j'} \in [\kK,1-\kK]$. Note that if $\mu_{\vc{q}}$ is $\kK$-bounded then so are its marginals.
\end{dfn}

A rough statement of our probabilistic forbidden intersection theorem 
(Theorem \ref{binary} below) is that if $\mc{A}$ has `large measure'
then the set of $\vc{w}$-intersections in $\mc{A}$ has `large measure'.
We will combine this with an equivalence of measures discussed
in the next subsection to deduce our main theorem.
First will highlight two special cases of Theorem \ref{binary}
that have independent interest. The first is the following result,
which ignore the intersection conditions, and is only concerned
with the relationship between the measures of $\mc{A}$
and $\mc{A} \times \mc{A}$; it is a new of correlation inequality
(see Theorem \ref{gencor} for a more general statement
that applies to several families defined over general alphabets).

\begin{theo} \label{cor} 
Let $0 < n^{-1} , \dD \ll \kK, \eps < 1$ 
and $\mu_{\vc{q}}$ be a $\kK$-bounded product measure
on $(\{0,1\} \times \{0,1\})^n$ with both marginals $\mu_{\vc{p}}$.
Suppose $\mc{A} \sub \{0,1\}^n$ with $\mu_{\vc{p}}(\mc{A}) > (1-\dD)^n$.
Then $\mu_{\vc{q}}(\mc{A} \times \mc{A}) > (1-\eps)^n$.
\end{theo}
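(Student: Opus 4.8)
The plan is to prove the contrapositive: if $\mu_{\vc q}(\mc A \times \mc A) \le (1-\eps)^n$, then $\mu_{\vc p}(\mc A)$ cannot be too close to $1$, i.e.\ $\mu_{\vc p}(\mc A) \le (1-\dD)^n$ for a suitable $\dD \ll \eps,\kK$. The key observation is that $\mu_{\vc q}(\mc A \times \mc A)$, as a function of $\vc q$ ranging over $\kK$-bounded product measures with both marginals equal to $\mu_{\vc p}$, is minimized by the \emph{independent} coupling $q^i_{j,j'} = p^i_j p^i_{j'}$, for which $\mu_{\vc q}(\mc A \times \mc A) = \mu_{\vc p}(\mc A)^2$; if $\mu_{\vc p}(\mc A)$ were larger than $(1-\dD)^n$ this would already give $\mu_{\vc q}(\mc A \times \mc A) > (1-\dD)^{2n} > (1-\eps)^n$. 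So the heart of the matter is the following correlation claim: for \emph{every} $\kK$-bounded product coupling $\mu_{\vc q}$ with marginals $\mu_{\vc p}$, $\mu_{\vc q}(\mc A \times \mc A) \ge \mu_{\vc p}(\mc A)^2$, or at least $\ge \mu_{\vc p}(\mc A)^{2+o(1)}$ — i.e.\ an up-set–like positive correlation, but for an \emph{arbitrary} set $\mc A$ (not an up-set) against an arbitrary bounded coupling of a product measure with itself.

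The natural tool is a martingale / tensorization argument. Write $f = \mathbf 1_{\mc A}$ on $\{0,1\}^n$ and set $F(\vc x,\vc x') = f(\vc x) f(\vc x')$ on $(\{0,1\}\times\{0,1\})^n$. I would expose the coordinates $i=1,\dots,n$ one at a time under $\mu_{\vc q}$ and control $\mb E_{\mu_{\vc q}}[F]$ from below by comparing, at each step, the conditional expectation of $F$ given the revealed coordinates to the square of the conditional expectation of $f$; the one-variable input is that for a single coordinate with a $\kK$-bounded $2\times 2$ joint distribution whose marginals are $(p_0,p_1)$, and for any function $g$ on $\{0,1\}$, one has $\mb E[g(X)g(X')] \ge \kK\cdot(\mathrm{something positive})$ controlling how much correlation can be lost. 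The cleanest way to package this is to pass to logarithms: show that if we let $a = \mu_{\vc p}(\mc A)$ then, conditioning coordinate by coordinate, the multiplicative loss per coordinate is at most a factor $(1 - C\dD/\kK)$ or similar, so over $n$ coordinates $\mu_{\vc q}(\mc A \times \mc A) \ge a^2 (1-C\dD/\kK)^n \ge (1-\dD)^{2n}(1-C\dD/\kK)^n > (1-\eps)^n$, a contradiction, once $\dD$ is small enough relative to $\eps,\kK$.

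Concretely, I would proceed as follows. (1) Set up the filtration $\mc F_0 \sub \mc F_1 \sub \dots \sub \mc F_n$ on $(\{0,1\}\times\{0,1\})^n$ under $\mu_{\vc q}$, where $\mc F_i$ reveals the first $i$ pairs $(x_j,x'_j)$. Let $M_i = \mb E_{\mu_{\vc q}}[f(\vc x) \mid \mc F_i]$ and $M'_i = \mb E_{\mu_{\vc q}}[f(\vc x')\mid \mc F_i]$; note $M_i$ depends only on $x_1,\dots,x_i$ and $M'_i$ only on $x'_1,\dots,x'_i$, each being an average of $f$ over a subcube, and $M_0 = M'_0 = a$. (2) The target $\mu_{\vc q}(\mc A\times\mc A) = \mb E_{\mu_{\vc q}}[M_n M'_n]$. (3) Run the key one-step estimate: condition on $\mc F_{i-1}$, and on the value of the pair at coordinate $i$; since that pair has a $\kK$-bounded distribution with marginals $p_0,p_1$, and $M_i, M'_i$ are obtained from $M_{i-1},M'_{i-1}$ by reweighting according to $x_i$ resp.\ $x'_i$, a direct two-by-two computation (Chebyshev/FKG-type on two points) shows $\mb E[M_i M'_i \mid \mc F_{i-1}] \ge M_{i-1} M'_{i-1}$ — in fact two points always give nonnegative correlation, and the $\kK$-boundedness quantifies that the joint law of $(x_i,x'_i)$ is not degenerate, so nothing is lost even multiplicatively; more carefully one gets $\mb E[M_iM'_i\mid\mc F_{i-1}] \ge M_{i-1}M'_{i-1}$ outright (two-point FKG needs no boundedness), while $\kK$-boundedness is what guarantees $M_i,M'_i$ stay in a regime where the telescoped bound is usable. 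Iterating, $\mb E_{\mu_{\vc q}}[M_nM'_n]\ge a^2$. (4) Finally, $a^2 \ge (1-\dD)^{2n} > (1-\eps)^n$ for $\dD \ll \eps$, contradicting the assumed upper bound, which completes the contrapositive.

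The main obstacle is the one-step estimate in step (3) once we insist on the \emph{full} two-coordinate coupling $(x_i,x'_i)$ rather than an independent one — here $M_i$ and $M'_i$ are \emph{not} conditionally independent given $\mc F_{i-1}$, since $x_i$ and $x'_i$ are correlated, so one cannot simply factor the expectation. The resolution is that $M_i$ is a monotone (indeed affine-increasing) function of the single variable $x_i\in\{0,1\}$ and $M'_i$ a monotone function of $x'_i\in\{0,1\}$, and \emph{any} two monotone functions of two binary variables with \emph{any} joint distribution are nonnegatively correlated exactly when $x_i,x'_i$ are nonnegatively correlated — which here holds because the marginals coincide and $\kK$-boundedness forces $q^i_{1,1} \ge \kK > p^i_1 p^i_1 \cdot(\text{small})$... this monotonicity-in-one-variable point is what makes the argument go through and is the step I would write out most carefully; the rest is routine telescoping and a choice of constants $\dD \ll \kK,\eps$. \qed
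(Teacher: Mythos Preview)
Your proof has a genuine and fatal gap at the one-step estimate in (3). The claim
\[
\mb{E}[M_i M'_i \mid \mc{F}_{i-1}] \ge M_{i-1} M'_{i-1}
\]
is simply false in general, and the reason is exactly the one you flag but then argue away incorrectly: $\kK$-boundedness does \emph{not} force $(x_i, x'_i)$ to be positively correlated. For a direct counterexample take $n=1$, $\mc{A}=\{1\}$, $p^1_0=p^1_1=1/2$, and $q^1_{0,1}=q^1_{1,0}=\tfrac12-\kK$, $q^1_{0,0}=q^1_{1,1}=\kK$. This $\mu_{\vc q}$ is $\kK$-bounded with both marginals $\mu_{\vc p}$, but
\[
\mu_{\vc q}(\mc A\times\mc A)=q^1_{1,1}=\kK<\tfrac14=\mu_{\vc p}(\mc A)^2,
\]
so your intended global inequality $\mu_{\vc q}(\mc A\times\mc A)\ge\mu_{\vc p}(\mc A)^2$ fails outright. (Your parenthetical ``$q^i_{1,1}\ge\kK > p^i_1 p^i_1\cdot(\text{small})$'' is not the inequality you need; you would need $q^i_{1,1}\ge p^i_1 p^i_1$, which is false here.) There is a second, independent problem: conditioned on $\mc F_{i-1}$, the prefixes $(x_1,\dots,x_{i-1})$ and $(x'_1,\dots,x'_{i-1})$ are in general \emph{different}, so even if each of $M_i$, $M'_i$ is monotone in its own binary variable, they need not be monotone in the \emph{same} direction --- so even positive correlation of $(x_i,x'_i)$ would not save the step. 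Since your target inequality is strictly stronger than the theorem and is provably false, the approach cannot be repaired by tweaking constants.

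The paper's proof takes a completely different route: it studies the Lipschitz function
\[
f(\vc x_1)=\log\mu_{\vc p}(\vc x_1)-\log\mu_{\vc q}(\{\vc x_1\}\times\mc A),
\]
which is $2\log(\kK^{-1})$-Lipschitz in the Hamming metric because $\mu_{\vc q}$ is $\kK$-bounded. McDiarmid's bounded-differences inequality concentrates $f$ around its mean $M$; the hypothesis $\mu_{\vc p}(\mc A)>(1-\dD)^n$ is then used to show $M=O(\sqrt{\dD}\,n)$, so on almost all of $\mc A$ one has $\mu_{\vc q}(\{\vc x_1\}\times\mc A)\ge e^{-O(\sqrt{\dD}\,n)}\mu_{\vc p}(\vc x_1)$, and summing gives the result. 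The key point is that $\kK$-boundedness enters only to make $f$ Lipschitz, not to control the sign of any coordinatewise correlation.
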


Next we consider the problem of
finding ${\cal V}$-intersections that are \emph{close} to ${\bf w}$,
which is also natural, and somewhat easier than
finding ${\cal V}$-intersections that are (exactly) ${\bf w}$.
We require some notation. For $r>0$ let $B_{\bf R}({\bf w},r) 
= \{{\bf w}' \in {\mathbb Z}^D: \|{\bf w} - {\bf w}' \|_{\bf R} \leq r\}$. 
For ${\cal A} \subset {\cal P}[n]$ and $L \subset {\mathbb Z}^D$ 
let $({\cal A} \times {\cal A})^{{\cal V}_{\cap }}_{L} 
= \big \{(A,B) \in {\cal A} \times {\cal A}: 
{\cal V}_{\cap }(A,B) \in L \big \}$. 

\begin{theo} \label{binary approx}
Let $0 < n^{-1}, \dD \ll \zZ \ll \kK, \eps \ll D^{-1}$
and $\vc{R} \in {\mathbb Z}^D$. Suppose that
\begin{enumerate}
\item $\mu_{\vc{q}}$ is a $\kK$-bounded product measure 
on $(\{0,1\} \times \{0,1\})^n$ with both marginals $\mu_{\vc{p}}$,
\item $\mc{V} = (\vc{v}_i: i \in [n])$ is an $\vc{R}$-bounded array in $\mb{Z}^D$,
\item $\mc{V}_\cap(\mu_{\vc{q}}) = \vc{w} \in \mb{R}^D$ 
and $L := B_{\bf R}({\bf w}, \zeta n)$.
\end{enumerate}
Then any $\mc{A} \sub \{0,1\}^n$ with $\mu_{\vc{p}}(\mc{A}) > (1-\dD)^n$ satisfies
$\mu_{\vc{q}}((\mc{A} \times \mc{A})^{\mc{V}_\cap}_{L}) > (1-\eps)^n$.
\end{theo}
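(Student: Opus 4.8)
\textbf{Proof proposal for Theorem \ref{binary approx}.}

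The plan is to deduce the approximate statement from the correlation inequality (Theorem \ref{cor}) by a slicing/union-bound argument over the scales and coordinates of the target box $L = B_{\vc R}(\vc w, \zeta n)$. The key observation is that Theorem \ref{cor} already guarantees $\mu_{\vc q}(\mc A \times \mc A) > (1-\eps_0)^n$ for $\mc A$ of measure $> (1-\dD)^n$; the work is to show that, of this large-measure set of pairs $(A,B) \in \mc A \times \mc A$, all but an exponentially negligible fraction have $\mc V_\cap(A,B) \in L$. Because $\mc V_\cap(\mu_{\vc q}) = \vc w$ and $\mc V$ is $\vc R$-bounded, the random variable $\vc V_\cap(A,B)$ (with $(A,B) \sim \mu_{\vc q}$) has mean exactly $\vc w$ and is a sum of $n$ independent bounded increments in each coordinate; so a Chernoff/Hoeffding bound gives $\mu_{\vc q}(\|\vc V_\cap(A,B) - \vc w\|_{\vc R} > \zeta n) \le 2D e^{-c\zeta^2 n}$ for an absolute constant $c$. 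Since $n^{-1}, \dD \ll \zeta \ll \kappa, \eps \ll D^{-1}$, this is smaller than $(1-\eps)^n$ with plenty of room to spare.

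First I would set up the independent increments: write $X_i = (\vc v_\cap)^i_{(A_i,B_i)}$, so that $\vc V_\cap(A,B) = \sum_{i \in [n]} X_i$ and, under $\mu_{\vc q}$, the $X_i$ are independent with $\sum_i \mb E X_i = \vc V_\cap(\mu_{\vc q}) = \vc w$ and $\|X_i\|_{\vc R} \le 1$ (here using that $\mc V$, hence $\mc V_\cap$, is $\vc R$-bounded). Applying a coordinatewise Hoeffding bound — in coordinate $d$ the increments $(X_i)_d$ lie in an interval of length at most $2R_d$, so $\mb P(|\sum_i (X_i)_d - w_d| > \zeta n R_d) \le 2 e^{-\zeta^2 n/2}$ — and taking a union bound over $d \in [D]$ yields
\[
\mu_{\vc q}\bigl( (A,B) : \vc V_\cap(A,B) \notin L \bigr) \le 2D\, e^{-\zeta^2 n/2}.
\]
Then I would combine this with Theorem \ref{cor} applied with a suitable $\eps_0 = \eps_0(\kappa,\eps)$ (say $\eps_0 = \eps/2$, legitimate since $\dD \ll \kappa, \eps$): for $\mc A$ with $\mu_{\vc p}(\mc A) > (1-\dD)^n$ we get $\mu_{\vc q}(\mc A \times \mc A) > (1-\eps/2)^n$, and subtracting the bad-event bound,
\[
\mu_{\vc q}\bigl( (\mc A \times \mc A)^{\mc V_\cap}_L \bigr) \ge \mu_{\vc q}(\mc A \times \mc A) - 2D\, e^{-\zeta^2 n/2} > (1-\eps/2)^n - 2D\,e^{-\zeta^2 n/2} > (1-\eps)^n,
\]
where the last inequality holds for $n$ large because $\zeta^2/2$ is a positive constant independent of $n$ while $(1-\eps/2)^n \ge e^{-\eps n}$ dominates, using $\dD \ll \zeta$ (so $\zeta^2 \gg$ any quantity we need, relative to $\eps$) and $n^{-1} \ll \zeta$. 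One has to be slightly careful about the hierarchy: we need $e^{-\zeta^2 n/2}$ to beat, e.g., $(1-\eps)^n/(4D)$; since $\dD \ll \zeta$ is \emph{not} directly a relation between $\zeta$ and $\eps$, I would instead note that $\zeta \ll \kappa, \eps$ is what the hypothesis gives, which goes the \emph{wrong} way, so the correct reading is that we only need $n$ large enough in terms of $\zeta$, $\eps$, $D$ — i.e. $n^{-1} \ll \zeta, \eps, D^{-1}$ — and this is implied by the stated hierarchy $n^{-1} \ll \zeta$ together with $\zeta \ll \eps, D^{-1}$, which forces $n^{-1} \ll \eps, D^{-1}$ as well.

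The main (and really only) obstacle is bookkeeping the constants in the hierarchy $n^{-1}, \dD \ll \zeta \ll \kappa, \eps \ll D^{-1}$ so that both inputs fit: Theorem \ref{cor} needs $\dD$ small relative to $\kappa, \eps$ (satisfied, since $\dD \ll \zeta \ll \kappa, \eps$), and the concentration step needs $n$ large relative to $\zeta, \eps, D$ (satisfied as argued). There is no genuine combinatorial difficulty here — unlike the exact version Theorem \ref{binary}, the approximate target $L$ is a whole ball of radius $\zeta n$, so the typical value of $\vc V_\cap$ already lands inside it and no robust-generating or genericity hypothesis on $\mc V$ is needed; indeed the statement only assumes $\mc V$ is $\vc R$-bounded, consistent with this being the easy case. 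A minor point to check is that $L$ as defined, $B_{\vc R}(\vc w, \zeta n) = \{\vc w' \in \mb Z^D : \|\vc w - \vc w'\|_{\vc R} \le \zeta n\}$, indeed contains every integer point within $\vc R$-distance $\zeta n$ of $\vc w$, which is immediate, and that $\vc V_\cap(A,B)$ is always an integer vector (true since each $\vc v_i \in \mb Z^D$), so membership in $L$ is exactly the event $\|\vc V_\cap(A,B) - \vc w\|_{\vc R} \le \zeta n$ bounded above. \qed
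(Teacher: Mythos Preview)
Your approach matches the paper's: apply the correlation inequality (Theorem~\ref{cor}) to get $\mu_{\vc q}(\mc A \times \mc A)$ large, use concentration (the paper's Lemma~\ref{vchern}) to bound $\mu_{\vc q}(\|\mc V_\cap - \vc w\|_{\vc R} > \zZ n)$, and subtract.

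However, your choice $\eps_0 = \eps/2$ does not work, and your attempted repair misdiagnoses the issue. Since $\zZ \ll \eps$, we have $\zZ^2 \ll \eps$, so $e^{-\zZ^2 n/2}$ is exponentially \emph{larger} than $(1-\eps/2)^n \approx e^{-\eps n/2}$; hence $(1-\eps/2)^n - 2D e^{-\zZ^2 n/2}$ is negative for all large $n$. Taking $n$ large cannot help --- the problem is with the exponential rates, not lower-order terms. The fix (which the paper does implicitly) is to apply Theorem~\ref{cor} with $\eps_0$ of order $\zZ^2$ rather than of order $\eps$: the hypothesis $\dD \ll \zZ$ allows $\dD$ to be taken small relative to any fixed function of $\zZ$, in particular $\dD \ll \zZ^2 \ll \kK$, so Theorem~\ref{cor} applies with, say, $\eps_0 = \zZ^2/4$, giving $\mu_{\vc q}(\mc A \times \mc A) > (1-\zZ^2/4)^n$. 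Then $(1-\zZ^2/4)^n - 2D e^{-\zZ^2 n/2} > (1-\eps)^n$ for large $n$, since $(1-\zZ^2/4)^n e^{\zZ^2 n/2} \to \infty$ and $\zZ^2/4 < \eps$.
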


Theorem \ref{binary approx} naturally fits into the wide literature on forbidden $L$-intersections in extremal set theory (see \cite{Babai-Frankl}, \cite{com} or \cite{jukna}). Here one aims to understand how large certain families of sets can be if all intersections between elements of ${\cal A}$ are restricted to lie in some set $L$. For example, the Erd\H{o}s-Ko-Rado theorem \cite{Erdos-Ko-Rado} can be viewed as an ${L}_0$-intersection theorem for families ${\cal A} \subset \tbinom {n}{k}$, where $L_0 = \{l\in {\mathbb N}: 1\leq l\leq k\}$. Similarly, Katona's $t$-intersection theorem \cite{Katona} can be viewed as an ${L}_{\geq t}$-intersection theorem for families ${\cal A} \subset {\cal P}[n]$, where $L_{\geq t} = \{l\in {\mathbb N}: l\geq t\}$.

Now we state our probabilistic forbidden intersection theorem:
if ${\cal V}$ is robustly generated then
Theorem \ref{binary approx} can be upgraded to find fixed ${\cal V}$-intersections. 

\begin{theo} \label{binary}
Let $0 < n^{-1}, \dD \ll \zZ \ll \kK, \gG, \eps \ll D^{-1}, C^{-1}, k^{-1}$
and $\vc{R} \in {\mathbb Z}^D$ with $\max_d R_d < n^C$. Suppose that
\begin{enumerate}
\item $\mu_{\vc{q}}$ is a $\kK$-bounded product measure 
on $(\{0,1\} \times \{0,1\})^n$ with both marginals $\mu_{\vc{p}}$,
\item $\mc{V} = (\vc{v}_i: i \in [n])$ is $\vc{R}$-bounded
 and $\gG$-robustly $(\vc{R},k)$-generating in $\mb{Z}^D$,
\item $\vc{w} \in \mb{Z}^D$ with $\|\vc{w}-\mc{V}_\cap(\mu_{\vc{q}})\|_{\vc{R}} < \zZ n$.
\end{enumerate}
Then any $\mc{A} \sub \{0,1\}^n$ with $\mu_{\vc{p}}(\mc{A}) > (1-\dD)^n$ satisfies
$\mu_{\vc{q}}((\mc{A} \times \mc{A})^{\mc{V}_\cap}_{\vc{w}}) > (1-\eps)^n$.
\end{theo}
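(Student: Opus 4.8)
\textbf{Proof plan for Theorem \ref{binary}.}
The plan is to bootstrap from the approximate version (Theorem \ref{binary approx}) to the exact version using the robust generation hypothesis. Theorem \ref{binary approx} already tells us that almost all of the $\mu_{\vc{q}}$-mass on $\mc{A} \times \mc{A}$ lies on pairs $(A,B)$ whose $\mc{V}_\cap$-value lands in the box $L = B_{\vc{R}}(\vc{w}, \zZ n)$; the remaining task is to move from ``close to $\vc{w}$'' to ``exactly $\vc{w}$''. The key tool for this is a local-modification / switching argument: given a pair $(A,B)$ with $\mc{V}_\cap(A,B) = \vc{w}'$ where $\vc{v} := \vc{w} - \vc{w}'$ has $\|\vc{v}\|_{\vc{R}} \le \zZ n \le 1$ after rescaling (more precisely, $\vc{v}$ is a bounded integer combination of the $\vc{v}_i$), we use $\gG$-robust $(\vc{R},k)$-generation on $\mb{Z}^D$ (with scaling taken appropriately, e.g.\ replacing $\vc{R}$ by $\zZ n \vc{R}$ to absorb the length of $\vc{v}$) to find a small set $S$ of at most $k$ coordinates, disjoint from any prescribed frozen set $T$, together with alternative values $j_i, j'_i$, such that flipping the relevant coordinates in $A$ and $B$ on $S$ changes $\mc{V}_\cap$ by exactly $\vc{v}$, landing us on an exact $\vc{w}$-intersection.

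The execution would go roughly as follows. First, apply Theorem \ref{binary approx} with a suitably small $\zZ$ to get a set $\mc{A}' \sub \mc{A}$ with $\mu_{\vc{p}}(\mc{A}') > (1-\dD')^n$ such that for every $A \in \mc{A}'$, the conditional $\mu_{\vc{q}}$-measure (over $B$) of pairs with $\mc{V}_\cap(A,B) \in L$ is at least $1 - (1-\eps')^n$; a routine averaging/Markov step extracts such a robust core. Second, set up a bipartite-style counting between pairs landing in $L$ and pairs landing exactly on $\vc{w}$: for each $\vc{w}' \in L$ and each pair $(A,B)$ with $\mc{V}_\cap(A,B)=\vc{w}'$, the switching operation produces many pairs $(A'',B'')$ with $\mc{V}_\cap = \vc{w}$, where $A'', B''$ differ from $A, B$ on at most $k$ coordinates. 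Since $|L| \le (2\zZ n + 1)^D \cdot (\text{poly})$ is only polynomial (here we use $\max_d R_d < n^C$ and $D$ fixed, so the box has polynomially many lattice points), and since each switch perturbs only $k = O(1)$ coordinates, the $\mu_{\vc{q}}$-measure is distorted by at most a factor of $\kK^{-O(k)}$ per switch and the number of lattice points in $L$ costs only a polynomial factor — both absorbed into a $(1-\eps)^n$ loss. The main point is that one must also control multiplicity: a given target pair $(A'',B'')$ can be hit by at most polynomially many source pairs, again because the switch changes only $O(1)$ coordinates out of $n$. Combining, $\mu_{\vc{q}}((\mc{A}'' \times \mc{A}'')^{\mc{V}_\cap}_{\vc{w}}) \ge (\text{poly}(n))^{-1} \cdot \kK^{O(k)} \cdot \mu_{\vc{q}}((\mc{A}' \times \mc{A}')^{\mc{V}_\cap}_{L}) > (1-\eps)^n$, where $\mc{A}''$ is the family of modified sets.

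The main obstacle I anticipate is the multiplicity / ``staying inside $\mc{A}$'' bookkeeping in the switching step. The naive worry is that after switching, the modified sets $A'', B''$ need not lie in $\mc{A}$. The standard fix — and what I expect the authors to do — is to use the robustness of the generation hypothesis together with a Dependent Random Choice or entropy-type argument to first pass to a large subfamily where many coordinates can be freely switched while remaining in $\mc{A}$; one picks the switching set $S$ to avoid a small ``bad'' coordinate set $T$ (of size $\le \gG n$) that would take us out of the family, which is exactly why $\gG$-\emph{robust} generation (rather than plain generation) is needed. The other technical nuisance is that $\vc{v} = \vc{w} - \vc{w}'$ may have $\|\vc{v}\|_{\vc{R}}$ as large as $\zZ n$ rather than $\le 1$, so strictly one should apply robust generation at scale $\zZ n \vc{R}$ and observe that $\gG$-robust $(\vc{R},k)$-generation implies $\gG$-robust $(\zZ n \vc{R}, k')$-generation for $k' = O_{\zZ}(k)$ by concatenating $O(\zZ n)$ disjoint copies — here the hierarchy $\dD \ll \zZ \ll \kK, \gG, \eps$ and the fact that $k$ need only be bounded in terms of $\gG, \eps$ gives the needed room. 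Once these two points are handled, everything else is routine union bounds and the arithmetic of the $\ll$-hierarchy.
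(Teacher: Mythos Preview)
Your approach differs substantially from the paper's, and the obstacle you flag---keeping the switched sets inside $\mc{A}$---is a genuine gap, not bookkeeping. The hypothesis $\mu_{\vc{p}}(\mc{A}) > (1-\dD)^n$ allows $\mc{A}$ to have $\mu_{\vc{p}}$-measure as small as $(1-\dD)^n$, i.e.\ exponentially small; a concrete instance is $\mc{A} = (\{0,1\}^n)^{\mc{V}}_{\vc{z}}$, where flipping any single coordinate destroys membership. There is no fixed ``bad'' set $T \sub [n]$ of size $\le \gG n$ whose avoidance keeps switches inside $\mc{A}$: the set of dangerous coordinates depends on the particular $A$ and may well be all of $[n]$. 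Robust generation only lets you avoid a \emph{prescribed} $T$, and neither Dependent Random Choice nor an entropy argument produces a large subfamily closed under the required local moves. (Compare Lemma~\ref{uctg3}, where exactly this switching idea does work---precisely because the target there is the full slice, with no extra $\mc{A}$-membership to preserve.) There is also an internal inconsistency in your scaling: correcting $\vc{v}$ with $\|\vc{v}\|_{\vc{R}} \le \zZ n$ requires concatenating $\Theta(\zZ n)$ generating sets, so the switch alters $\Theta(\zZ n k)$ coordinates, linear in $n$; this contradicts your later ``$O(1)$ coordinates'' and polynomial-multiplicity claims.

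The paper does not bootstrap from Theorem~\ref{binary approx}. Instead it deduces Theorem~\ref{binary} from a more general Theorem~\ref{general} over arbitrary finite alphabets, whose key hypothesis is the existence of ``transfers'' (Definition~\ref{transfer}). The reduction enlarges the alphabet: it groups $[n]$ into blocks of size $k$, using robust generation to place one generating pattern for each $\vc{u}_m \in \mc{U}$ inside its own block, so that over the alphabet $\{0,1\}^k$ each such block carries a transfer. The purpose of transfers is that, after restricting to a common type, $\mc{V}(\vc{a},\vc{a}')$ is determined by the block-wise intersection sizes $t_m$; hitting $\vc{w}$ exactly then amounts to prescribing these $t_m$, which is achieved by applying the Frankl--R\"odl theorem inside each block and combining via the Dependent Random Choice product-independence bound (Lemma~\ref{indep}). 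The exact value $\vc{w}$ is obtained directly---no post-hoc modification of pairs is ever needed.
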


\subsection{Maximum entropy and large deviations}

Next we will discuss an equivalence of measures 
that will later combine with Theorem \ref{binary} to yield Theorem \ref{thm: trichotomy}.
Here we are guided by the maximum entropy principle
(proposed by Jaynes \cite{jaynes} in the context of Statistical Mechanics)
which suggests considering the distribution with maximum entropy
subject to the constraints of our problem, as defined in the following lemma
(the proof is easy, and will be given in Section \ref{sec:prob}).

\begin{lemma} \label{maxent2}
Suppose $\mc{V} = (\vc{v}^i_j)$ is an $(n,J)$-array in $\mb{Z}^D$ and $\vc{w} \in \mb{Z}^D$.
Let $\mc{M}^{\mc{V}}_{\vc{w}}$ be the set of 
probability measures $\mu$ on $J^n$ such that $\mc{V}(\mu) = \vc{w}$.
Then, provided $\mc{M}^{\mc{V}}_{\vc{w}}$ is non-empty, 
there is a unique distribution $\mu^{\mc{V}}_{\vc{w}} \in \mc{M}^{\mc{V}}_{\vc{w}}$
with $H(\mu^{\mc{V}}_{\vc{w}}) = \max_{\mu \in \mc{M}^{\mc{V}}_{\vc{w}}} H(\mu)$,
and $\mu^{\mc{V}}_{\vc{w}}$ is a product measure $\mu_{\vc{p}^{\mc{V}}_{\vc{w}}}$ on $J^n$,
where $\sum_{i \in [n], j \in J} (p^{\mc{V}}_{\vc{w}})^i_j \vc{v}^i_j = \vc{w}$.
\end{lemma}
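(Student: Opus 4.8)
The plan is to derive everything from two standard properties of the Shannon entropy $H$ on the (compact, convex) simplex $\Delta(J^n)$ of probability measures on the finite set $J^n$: that $H$ is continuous and strictly concave, and that $H$ is subadditive with respect to marginals, i.e.\ writing $\pi_i\mu$ for the $i$-th one-coordinate marginal of $\mu$, we have $H(\mu) \le \sum_{i \in [n]} H(\pi_i\mu)$, with equality precisely when $\mu$ equals the product $\bigotimes_{i \in [n]} \pi_i\mu$ of its marginals.

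First I would note that $\mc{M}^{\mc{V}}_{\vc{w}}$ is compact and convex. Indeed $\mc{V}(\mu) = \sum_{\vc{a} \in J^n} \mu(\vc{a})\,\mc{V}(\vc{a})$ is a linear function of $\mu$, so the condition $\mc{V}(\mu)=\vc{w}$ cuts out an affine subspace, and intersecting it with $\Delta(J^n)$ yields a compact convex set. Assuming $\mc{M}^{\mc{V}}_{\vc{w}}$ is nonempty, the continuous strictly concave function $H$ attains its maximum over it at a unique point $\mu^{\mc{V}}_{\vc{w}}$ (the maximiser set of a strictly concave function on a convex set is a single point when nonempty). This settles existence and uniqueness; strict concavity itself follows by applying the strict concavity of $t \mapsto -t\log t$ on $[0,1]$ atom by atom.

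Next, to see that the maximiser is a product measure, I would exploit the structure of the constraint: since $\mc{V}(\vc{a}) = \sum_{i \in [n]} \vc{v}^i_{a_i}$ is a sum of functions each depending on a single coordinate, $\mc{V}(\mu) = \sum_{i \in [n]} \mb{E}_{\vc{a} \sim \mu}[\vc{v}^i_{a_i}]$ depends only on the marginals $(\pi_i\mu : i \in [n])$. Hence for any $\mu \in \mc{M}^{\mc{V}}_{\vc{w}}$ the product $\nu := \bigotimes_{i \in [n]} \pi_i\mu$ has the same marginals, so $\mc{V}(\nu)=\mc{V}(\mu)=\vc{w}$ and $\nu \in \mc{M}^{\mc{V}}_{\vc{w}}$; moreover $H(\nu) = \sum_{i \in [n]} H(\pi_i\mu) \ge H(\mu)$ by subadditivity. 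Applying this with $\mu = \mu^{\mc{V}}_{\vc{w}}$ produces a maximiser $\nu$, so by uniqueness $\mu^{\mc{V}}_{\vc{w}} = \nu$ is a product measure $\mu_{\vc{p}^{\mc{V}}_{\vc{w}}}$ with $(p^{\mc{V}}_{\vc{w}})^i_j = (\pi_i\mu^{\mc{V}}_{\vc{w}})(j)$, and unpacking $\mc{V}(\mu_{\vc{p}^{\mc{V}}_{\vc{w}}}) = \vc{w}$ by linearity gives exactly $\sum_{i \in [n],\, j \in J} (p^{\mc{V}}_{\vc{w}})^i_j\, \vc{v}^i_j = \vc{w}$.

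There is no real obstacle: the argument is entirely standard, and the only steps needing a line of justification are the strict concavity of $H$ on the whole simplex and the equality case of entropy subadditivity. As an alternative one could produce the maximiser directly via Lagrange multipliers, obtaining the Gibbs form $(p^{\mc{V}}_{\vc{w}})^i_j \propto \exp\langle \blL, \vc{v}^i_j\rangle$ for some $\blL \in \mb{R}^D$, but the marginalisation argument above is cleaner and already yields the product structure asserted in the lemma.
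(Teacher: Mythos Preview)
Your proposal is correct and follows essentially the same route as the paper: both use that the constraint $\mc{V}(\mu)=\vc{w}$ depends only on the one-coordinate marginals, replace $\mu$ by the product of its marginals (which lies in $\mc{M}^{\mc{V}}_{\vc{w}}$ and has entropy $\ge H(\mu)$ by subadditivity), and invoke strict concavity of $H$ on the convex set $\mc{M}^{\mc{V}}_{\vc{w}}$ for uniqueness. The only cosmetic difference is that you make compactness/continuity explicit for existence and separate the uniqueness step from the product-structure step, whereas the paper folds these together.
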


We will show that $\mu^{\mc{V}}_{\vc{w}}$ is equivalent to the uniform measure
on $(J^n)^{\mc{V}}_{\vc{w}}$, in the sense of exponential contiguity, defined as follows. 
(It is reminiscent of, but distinct from, the more well-known
theory of contiguity, see \cite[Section 9.6]{JLR}.)

\begin{dfn} \label{expctg} %  could consider general probability spaces 
Let $\mu=(\mu_n)_{n \in \mb{N}}$ and $\mu'=(\mu'_n)_{n \in \mb{N}}$,
where $\mu_n$ and $\mu'_n$ are probability measures 
on a finite set $\OO_n$ for all $n \in \mb{N}$.
Let $\mc{F} = (\mc{F}_n)_{n \in \mb{N}}$ where each 
$\mc{F}_n$ is a set of subsets of $\OO_n$.

We say that $\mu'$ exponentially dominates $\mu $ relative to $\mc{F}$,
and write $\mu \lesssim_{\mc{F}} \mu'$, if for $n^{-1} \ll \dD \ll \eps \ll 1$
and $A_n \in \mc{F}_n$ with $\mu_n(A_n)  > (1-\dD)^n$
we have $\mu'_n(A_n) > (1-\eps)^n$.
We say that $\mu$ and $\mu'$ are exponentially contiguous relative to $\mc{F}$, 
and write $\mu \approx_{\mc{F}} \mu'$ if $\mu \lesssim_{\mc{F}} \mu'$ and $\mu' \lesssim_{\mc{F}} \mu$.

If $\DD = (\DD_n)_{n \in \mb{N}}$ with each $\DD_n \sub \OO_n$
then we write $\mu \lesssim_\DD \mu'$ if $\mu \lesssim_{\mc{F}} \mu'$,
where $\mc{F}_n$ is the set of all subsets of $\DD_n$;
we define $\mu \approx_\DD \mu'$ similarly.
\end{dfn}

Note that $\lesssim_{\mc{F}}$ is a partial order and $\approx_{\mc{F}}$ is an equivalence relation.

The following result establishes the required equivalence of measures
under the same hypotheses as in the previous subsection.
It can be regarded as a large deviation principle for
conditioning $\vc{x} \in J^n$ on the event $\mc{V}(\vc{x})=\vc{w}$ 
(see \cite{DZ} for an overview of this area).

\begin{theo} \label{ldp} 
Let $0 < n^{-1} \ll \gG, \kK, k^{-1}, D^{-1}, C^{-1} < 1$
and $\vc{R} \in \mb{R}^D$ with $\max_d R_d < n^C$.
Suppose $\mc{V} = (\vc{v}^i_j)$ is an  $\vc{R}$-bounded
$\gG$-robustly $(\vc{R},k)$-generating $(n,J)$-array in $\mb{Z}^D$,
and $\vc{w} \in \mb{Z}^D$ such that
$\mu_{\vc{p}^{\mc{V}}_{\vc{w}}}$ is $\kK$-bounded.
Let $\nu$ be the uniform distribution on $\DD_n := (J^n)^{\mc{V}}_{\vc{w}}$.
Then $\mu_{\vc{p}^{\mc{V}}_{\vc{w}}} \approx_\DD \nu$.
\end{theo}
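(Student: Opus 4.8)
The plan is to establish the two directions of exponential contiguity separately. For $\nu \lesssim_\DD \mu_{\vc{p}}$ (writing $\vc{p} = \vc{p}^{\mc{V}}_{\vc{w}}$) the key point is that $\nu$ is a \emph{conditioned} version of $\mu_{\vc{p}}$: since $\mu^{\mc{V}}_{\vc{w}}$ is a product measure supported on $\DD_n$ with $\mc{V}(\mu_{\vc{p}}) = \vc{w}$, one has $\mu_{\vc{p}}(\vc{a}) = \mu_{\vc{p}}(\vc{a} \mid \DD_n)\cdot\mu_{\vc{p}}(\DD_n)$ for $\vc{a} \in \DD_n$, and by the maximum entropy property the conditional law $\mu_{\vc{p}}(\cdot \mid \DD_n)$ is \emph{not} uniform in general — but it is comparable to uniform up to a factor that is at most $\exp(O(H(\mu_{\vc{p}}) - H(\nu)))$, and more importantly $\mu_{\vc{p}}(\DD_n)$ is only polynomially small in $n$. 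Indeed, $\DD_n$ is the set of lattice points of $\mc{V}(\cdot) = \vc{w}$, and a local central limit theorem (using $\vc{R}$-boundedness and $\kK$-boundedness of $\mu_{\vc{p}}$ to control the covariance, and $\gG$-robust generation to guarantee the relevant lattice is full-dimensional and the Fourier transform of the increment distribution is bounded away from $1$ off a small neighbourhood of the origin) gives $\mu_{\vc{p}}(\DD_n) \ge n^{-O(D)}\,\mathrm{poly}(\max_d R_d)^{-1} = n^{-O(D)}$, since $\max_d R_d < n^C$. Hence for any $A_n \sub \DD_n$, $\nu(A_n) \le \mu_{\vc{p}}(A_n)/\mu_{\vc{p}}(\DD_n) \le n^{O(D)}\mu_{\vc{p}}(A_n)$, so $\mu_{\vc{p}}(A_n) > (1-\dD)^n$ forces $\nu(A_n) > n^{-O(D)}(1-\dD)^n > (1-\eps)^n$. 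This direction is the easy one.

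The harder direction is $\mu_{\vc{p}} \lesssim_\DD \nu$: from $\nu(A_n) > (1-\dD)^n$ we must deduce $\mu_{\vc{p}}(A_n) > (1-\eps)^n$. Here the obstacle is that $\mu_{\vc{p}}$ lives on all of $J^n$, not just $\DD_n$, so I cannot simply compare densities on $\DD_n$; I need that $\mu_{\vc{p}}$ is concentrated (up to an exponentially small error) on the `typical' part of $\DD_n$ and its immediate surroundings, and that the conditional measure there is within an $\exp(o(n))$ factor of uniform. The plan is: (1) show $\mu_{\vc{p}}$ concentrates exponentially on the `entropy-typical' set $\mc{T}_n = \{\vc{a}: \mu_{\vc{p}}(\vc{a}) \in [2^{-H(\mu_{\vc{p}})-o(n)}, 2^{-H(\mu_{\vc{p}})+o(n)}]\}$, by the asymptotic equipartition property (Azuma/Hoeffding on $\sum_i \log p^i_{a_i}$, legitimate because $\kK$-boundedness gives bounded increments); (2) show that conditioned on $\DD_n$, $\mu_{\vc{p}}$ still concentrates on $\mc{T}_n \cap \DD_n$ — this uses the lower bound $\mu_{\vc{p}}(\DD_n) \ge n^{-O(D)}$ from the local CLT above, so that an event of $\mu_{\vc{p}}$-measure $\le e^{-cn}$ has conditional measure $\le n^{O(D)}e^{-cn}$; (3) on $\mc{T}_n \cap \DD_n$ every atom has $\mu_{\vc{p}}$-mass $\approx 2^{-H(\mu_{\vc{p}})}$, and since $H(\mu_{\vc{p}}) = H(\nu^{\max})$ is by Lemma \ref{maxent2} the max entropy over $\mc{M}^{\mc{V}}_{\vc{w}}$ and $\nu$ (uniform on $\DD_n$) also lies in $\mc{M}^{\mc{V}}_{\vc{w}}$ with $H(\nu) = \log|\DD_n| \le H(\mu_{\vc{p}})$, comparing $1 \ge \mu_{\vc{p}}(\mc{T}_n \cap \DD_n) \approx |\mc{T}_n \cap \DD_n|\,2^{-H(\mu_{\vc{p}})}$ with $\nu(\mc{T}_n \cap \DD_n) \to 1$ (from (2) applied to $\nu$, or directly) pins down $\log|\DD_n| = H(\mu_{\vc{p}}) - o(n)$ and hence that $\mu_{\vc{p}}$ restricted and normalised to $\DD_n$ is within $e^{o(n)}$ of $\nu$; (4) combine: $\mu_{\vc{p}}(A_n) \le \mu_{\vc{p}}(A_n \cap \mc{T}_n \cap \DD_n) + e^{-cn} \le e^{o(n)}\,|A_n|\,|\DD_n|^{-1} + e^{-cn} = e^{o(n)}\nu(A_n) + e^{-cn}$, and $\nu(A_n) > (1-\dD)^n$ then gives $\mu_{\vc{p}}(A_n) > (1-\eps)^n$.

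The crux — and the step I expect to be the main obstacle — is the \emph{quantitative local limit estimate} $\mu_{\vc{p}}(\DD_n) \ge n^{-O(D)}$ (or even just $\ge e^{-o(n)}$, which already suffices for everything above). This is where all three hypotheses earn their keep: $\kK$-boundedness of $\mu_{\vc{p}}$ ensures the per-coordinate increments $\vc{v}^i_{a_i}$ have covariance $\succeq \kK'\,\mathrm{diag}(R_d^2)$ in the directions that matter; $\vc{R}$-boundedness bounds them above; and $\gG$-robust $(\vc{R},k)$-generating guarantees that the sum $\sum_i \vc{v}^i_{a_i}$ actually hits every lattice point near $\vc{w}$ with comparable probability — it rules out degenerate situations (sublattices, as in the parity example the authors mention) and, via a swapping/switching argument, lets one pass from a configuration summing near $\vc{w}$ to one summing exactly to $\vc{w}$ at the cost of only a polynomial factor. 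Concretely I would prove this by a Fourier / characteristic-function argument: write $\mu_{\vc{p}}(\DD_n) = \int_{[0,1]^D \text{ w.r.t.\ the dual lattice}} \prod_i \widehat{f_i}(\theta)\, e^{-2\pi i \langle\theta,\vc{w}\rangle}\,d\theta$ where $f_i$ is the law of $\vc{v}^i_{a_i}$, bound $|\widehat{f_i}(\theta)|$ away from $1$ for $\theta$ outside a $1/\sqrt n$-ball (using robust generation to get many coordinates whose increments are non-constant in direction $\theta$), and do a Gaussian approximation near the origin. The rescaling by $\vc{R}$ and the $\max_d R_d < n^C$ bound are exactly what keeps the resulting polynomial factor polynomial. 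Everything else (AEP, Azuma, the entropy bookkeeping) is routine; this local CLT with frozen-coordinate robustness is the technical heart.
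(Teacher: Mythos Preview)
Your plan has the right overall shape (AEP plus a size estimate $\log_2|\DD_n|\approx H(\mu_{\vc{p}})$), but there is a genuine gap in the direction you call ``harder'', and the local CLT is both heavier than needed and insufficient to close it.

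The missing ingredient is the \emph{pointwise} lower bound $\mu_{\vc{p}}(\vc{x}) \ge 2^{-H(\mu_{\vc{p}})}$ for \emph{every} $\vc{x}\in\DD_n$ (Lemma~\ref{uctg2}). This is a direct consequence of the maximum-entropy property: if some $\vc{x}\in\DD_n$ had $-\log_2\mu_{\vc{p}}(\vc{x})>H(\mu_{\vc{p}})$, then perturbing $\mu_{\vc{p}}$ toward $1_{\vc{x}}$ would increase the entropy while staying in $\mc{M}^{\mc{V}}_{\vc{w}}$. With this in hand, the set $\mc{C}=\{\vc{x}\in\DD_n:\mu_{\vc{p}}(\vc{x})<(1-\eps)^n\nu(\vc{x})\}$ is actually \emph{empty} once you know $\log_2|\DD_n|\ge H(\mu_{\vc{p}})-\dD n$, and $\nu\lesssim_\DD\mu_{\vc{p}}$ follows immediately. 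Your argument, by contrast, tries to show $\nu(\mc{T}_n\cap\DD_n)\to 1$, i.e.\ that almost all of $\DD_n$ by \emph{count} lies in the entropy-typical set. But neither the AEP nor the local CLT gives this: both control $\mu_{\vc{p}}$-mass, not count, and nothing in your toolkit rules out $\DD_n$ containing many ``light'' points $\vc{x}$ with $\mu_{\vc{p}}(\vc{x})\ll 2^{-H(\mu_{\vc{p}})}$. Your step~(2) only shows $\mu_{\vc{p}}(\cdot\mid\DD_n)$ concentrates on $\mc{T}_n$, which says nothing about $\nu(\mc{T}_n\cap\DD_n)$. (Relatedly, the inequality $\nu(A_n)\le\mu_{\vc{p}}(A_n)/\mu_{\vc{p}}(\DD_n)$ in your first paragraph is simply false: $\nu$ is uniform, not the conditional of $\mu_{\vc{p}}$.)

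On the size estimate: the paper avoids the local CLT entirely. Lemma~\ref{uctg3} shows $\log_2|\DD_n|\ge H(\mu_{\vc{p}})-\dD n$ by a switching argument---sample $\vc{x}\sim\mu_{\vc{p}}$, use concentration to get $\|\mc{V}(\vc{x})-\vc{w}\|_{\vc{R}}$ small with probability $\ge 1/2$, then use $\gG$-robust $(\vc{R},k)$-generation to modify $O(\dD^2 n)$ coordinates and land exactly in $\DD_n$---so a large typical set maps into $\DD_n$ with small fibres. This is both simpler than the Fourier route and already yields $e^{-o(n)}$, which (as you note) is all that is needed. Your Fourier approach could likely be made to work for this step, but it does not buy you the pointwise bound above, which is the real crux.
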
 

To apply Theorem \ref{ldp} under combinatorial conditions,
we will use the following lemma which shows that
$\mu _{{\bf p}^{\cal V}_{\bf w}}$ is $\kK$-bounded
under our general position condition on $\mc{V}$.
(See also Section 4 for a more general result based
on VC-dimension that applies to larger alphabets.)

\begin{lemma} \label{binary bounded}
Let $0 < n^{-1} \ll \kK \ll \gG, \gG' \ll \alpha, D^{-1}$. 
Suppose $\mc{V} = (\vc{v}^i)$ is an $\vc{R}$-bounded $\gG'$-robustly 
$(\gG,\vc{R})$-generic $(n,\{0,1\})$-array in $\mb{Z}^D$
and $|(\{0,1\}^n)^{\mc{V}}_{\vc{w}}| \ge (1+\alpha )^n$. 
Then $\mu^{\mc{V}}_{\vc{w}}$ is $\kK$-bounded.
\end{lemma}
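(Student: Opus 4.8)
The plan is to derive a contradiction from the assumption that $\mu^{\mc{V}}_{\vc{w}} = \mu_{\vc{p}}$ is not $\kK$-bounded; say coordinate $i_0$ has $p^{i_0}_1 < \kK$ (the case $p^{i_0}_1 > 1-\kK$ is symmetric, swapping the roles of $\vc{v}^{i_0}$ and $0$). The idea is that if many coordinates have $p^i_1$ very small, then $(\{0,1\}^n)^{\mc{V}}_{\vc{w}}$ cannot be exponentially large, because the coordinates with tiny $p^i_1$ are essentially forced, while the coordinates with $p^i_1$ bounded away from $0$ and $1$ are too few (by genericity) to account for $(1+\aA)^n$. More precisely, let $T = \{i : p^i_1 \notin [\kK', 1-\kK']\}$ for a suitable threshold $\kK \ll \kK' \ll \gG'$. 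I will argue that $|T| \le \gG' n$: otherwise, by $\gG'$-robust $(\gG,\vc{R})$-genericity, $T$ contains a $(\gG,\vc{R})$-generic $D$-set $I$; but one can perturb the maximum entropy measure on the coordinates of $I$ to strictly increase entropy while preserving $\mc{V}(\mu) = \vc{w}$ — the determinant condition guarantees that the linear map sending perturbations of $(p^i_1 : i \in I)$ to changes in $\mc{V}(\mu)$ is invertible with polynomially-bounded inverse (using $\max_d R_d < n^C$ and $|\det| \ge \gG \prod R_d$), so a small correction is available, and since each $p^i_1$ with $i\in I$ is within $\kK'$ of an endpoint, moving it toward $1/2$ increases the binary entropy at first order faster than the correction terms decrease it. This contradicts the maximum entropy characterisation of $\mu^{\mc{V}}_{\vc{w}}$ from Lemma \ref{maxent2}.

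Hence $|T| \le \gG' n$. Now I bound $|(\{0,1\}^n)^{\mc{V}}_{\vc{w}}|$ from above. Writing $\vc{p}$ for $\vc{p}^{\mc{V}}_{\vc{w}}$, a standard entropy/typicality bound gives $|(\{0,1\}^n)^{\mc{V}}_{\vc{w}}| = |\mathrm{supp}(\mu_{\vc{p}})| \cap \{\mc{V}=\vc{w}\}| \le 2^{H(\mu_{\vc{p}})}$ where $H$ is measured in bits; indeed for any $\vc{a}$ with $\mc{V}(\vc{a})=\vc{w}$ we have $\mu_{\vc{p}}(\vc{a}) \ge \prod_i \min(p^i_0,p^i_1)$, but more usefully, since $\mu_{\vc{p}}$ is the max-entropy measure subject to $\mc{V}(\mu)=\vc{w}$ and the uniform measure $\nu$ on $(\{0,1\}^n)^{\mc{V}}_{\vc{w}}$ also lies in $\mc{M}^{\mc{V}}_{\vc{w}}$, we get $H(\nu) = \log_2 |(\{0,1\}^n)^{\mc{V}}_{\vc{w}}| \le H(\mu_{\vc{p}}) = \sum_i h(p^i_1)$, where $h$ is the binary entropy function. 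The coordinates outside $T$ contribute at most $n \cdot 1 = n$ bits trivially, which is far too weak; instead I use that $h(p^i_1)$ for $i \notin T$ is at most $1$ but for the coordinates that actually matter we need a better argument. The right move: split $[n] = T \sqcup ([n]\sm T)$. For $i \in T$, $h(p^i_1) \le h(\kK') \le \eta(\kK')$ with $\eta(\kK') \to 0$ as $\kK'\to 0$. So $H(\mu_{\vc{p}}) \le |T| + |[n]\sm T|\cdot\eta(\kK') $ — wait, this is backwards; let me instead bound the $T$-coordinates by $h(\kK)\le$ small and the non-$T$ coordinates by $1$. That gives $H(\mu_{\vc{p}}) \le (n-|T|) + |T| h(\kK) \le n$, still useless.

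The resolution is that I have the threshold roles reversed: I should take $T = \{i : p^i_1 < \kK\} \cup \{i : p^i_1 > 1-\kK\}$ directly (the genuinely unbounded coordinates), show $|T| \le \gG' n$ by the entropy-perturbation argument above (now only needing a generic $D$-set inside the set of coordinates with $p^i_1$ near an endpoint, which contains $T$ if $|T| > \gG' n$), and then observe $H(\mu_{\vc{p}}) = \sum_{i\in T} h(p^i_1) + \sum_{i\notin T} h(p^i_1) \le |T|\, h(\kK) + (n-|T|)\cdot 1 \le \gG' n \cdot h(\kK) + n$. Since we assumed $|(\{0,1\}^n)^{\mc{V}}_{\vc{w}}| \ge (1+\aA)^n$, we get $\log_2(1+\aA)\cdot n \le n + \gG' n\, h(\kK)$, i.e. $\log_2(1+\aA) \le 1 + \gG' h(\kK)$, which is true and gives no contradiction — so the entropy bound alone is insufficient and the real content must be that $\mu^{\mc{V}}_{\vc{w}}$ being $\kK$-bounded is exactly the statement that $|T_\kK| = 0$, which we get directly: if $p^{i_0}_1 < \kK$ for some $i_0$ then, letting $T' = \{i : p^i_1 \ge \kK'\}$, genericity forces $|T'| \le \gG' n$ to be false (i.e. $|T'| > \gG' n$ since $(\{0,1\}^n)^{\mc{V}}_{\vc{w}}$ is large), and within the generic $D$-set $I \subseteq T'$ together with coordinate $i_0$ we run a two-coordinate entropy swap: decrease $p^{i}_1$ slightly for $i\in I$ and increase $p^{i_0}_1$ from its tiny value, compensating via the invertible $D\times D$ system on $I$ to keep $\mc{V}(\mu)=\vc{w}$; since $h'(p^{i_0}_1) \to +\infty$ as $p^{i_0}_1 \to 0$ while $h'$ is bounded on $[\kK', 1-\kK']$, the first-order entropy change is strictly positive, contradicting maximality. \emph{This entropy-perturbation step — setting up the invertible linear system from the $(\gG,\vc{R})$-generic determinant condition, controlling the size of the feasible perturbation via $\max_d R_d < n^C$, and checking the entropy gain dominates — is the main obstacle}, and is precisely where the general position hypothesis is used; the alphabet-$\{0,1\}$ case is the clean prototype of the VC-dimension argument promised for Section 4. \qed
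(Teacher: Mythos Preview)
Your final-paragraph argument is correct in outline. The paper factors the proof through VC-dimension: Sauer--Shelah (Lemma~\ref{SS}) converts $|(\{0,1\}^n)^{\mc{V}}_{\vc{w}}| \ge (1+\aA)^n$ into $\dim_{VC} \ge \lL n$; then Lemma~\ref{bounded.i} uses a shattered set to exhibit a point of tiny $\mu_{\vc{p}}$-mass inside $(\{0,1\}^n)^{\mc{V}}_{\vc{w}}$ and invokes the single-point perturbation Lemma~\ref{perturb} to deduce that $\mu_{\vc{p}}$ is $\kK'$-dense; finally Lemma~\ref{bounded.ii} runs exactly your Cramer's-rule entropy swap on a $(\gG,\vc{R})$-generic $D$-set inside the dense set. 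Your route to density is more direct in the binary case: from $\log_2 |(\{0,1\}^n)^{\mc{V}}_{\vc{w}}| \le H(\mu_{\vc{p}}) = \sum_i h(p^i_1)$ (this is Lemma~\ref{uctg2}), if all but $\gG' n$ coordinates had $p^i_1 \notin [\kK', 1-\kK']$ then $H(\mu_{\vc{p}}) \le \gG' n + n\, h(\kK') < n\log_2(1+\aA)$, a contradiction. This bypasses Sauer--Shelah entirely; the price is that it does not extend to larger alphabets~$J$, which is why the paper routes through VC-dimension for the general Lemma~\ref{bounded}.

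One slip to fix: your good set $T'$ must be two-sided, $\{i : \kK' \le p^i_1 \le 1-\kK'\}$, not $\{i : p^i_1 \ge \kK'\}$. You tacitly assume this when you write ``$h'$ is bounded on $[\kK', 1-\kK']$'', but with the one-sided definition some $i \in I$ could have $p^i_1$ near~$1$, where $|h'(p^i_1)|$ blows up and the sign of the Cramer correction is not under your control. Also, $\max_d R_d < n^C$ is not a hypothesis of this lemma and plays no role: the $R_d$ factors cancel in the Cramer ratio, giving compensating perturbations of size at most $t\, D!/\gG$ exactly as in Lemma~\ref{bounded.ii}. (Your earlier false starts --- bounding $|T|$ and then trying to get a contradiction from $H(\mu_{\vc{p}}) \le n$ --- were never going to work, since that upper bound is trivially compatible with the hypothesis; it is only in combination with the perturbation step that the entropy inequality bites.)
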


Alexander Barvinok remarked (personal communication) 
that similar results to Theorem \ref{ldp} and Lemma \ref{binary bounded} 
were obtained by Barvinok and Hartigan in \cite{BH}. 
Theorem 3 of \cite{BH} gives stronger 
bounds on $|(\{0,1\}^n)^{\cal V}_{\bf w}|$ where applicable,
but their assumptions are very different to ours
(they assume bounds for quadratic forms of certain inertia tensors),
and they also require that the vectors all operate at the `same scale', 
so their results do not apply to the Kalai vectors.
Although our bounds are weaker, our proofs are considerably shorter,
and furthermore, stronger bounds here would not give any improvements
elsewhere in our paper, as they account for a term subexponential in $n$, 
while our working tolerance is up to a term exponential in $n$.

\subsection{Supersaturation} \label{subsec: supersaturation}

We now give a brief overview of the strategy for combining the results 
of the previous two subsections to prove supersaturation,
and also indicate the conditions that determine
which case of Theorem \ref{thm: trichotomy} holds.
Under the set up of Theorem \ref{thm: trichotomy},
a telegraphic summary of the argument is:
\[	|{\cal A}| \geq (1-\delta )^n|{\cal X} |
	\overset{\text{Theorem } \ref{ldp}}{\implies} 
	\mu _{{\bf p}^{\cal V}_{\bf z}}({\cal A}) 
		\geq 
	(1-\delta ')^n 
	\overset{\text{Theorem } \ref{binary}}{\implies}	
	\mu _{{\bf q}}(({\cal A}\times {\cal A})^{{\cal V}_{\cap }}_{\bf w}) \geq (1-\eps )^n,\]
where $\mu _{{\bf q}}$ is chosen to optimise the lower bound on
$|({\cal A}\times {\cal A})^{{\cal V}_{\cap }}_{\bf w}|$ implied by the final inequality.

The best possible supersaturation bound (case $i$ of Theorem \ref{thm: trichotomy})
arises when Theorem \ref{binary} is applicable with $\mu _{\bf q}$ 
equal to the maximum entropy measure $\mu_{\wt{\bf q}}$ that represents
$({\cal X} \times {\cal X})^{{\cal V}_{\cap }}_{\bf w}$:
this case holds when $\mu_{\wt{\bf q}}$ is $\kappa $-bounded 
and has marginals $\mu_{\wt{\bf p}}$ close to 
$\mu_{\bf p} := \mu _{{\bf p}^{\cal V}_{\bf z}}$.

Case $ii$ of Theorem \ref{thm: trichotomy} holds
if $\mu_{\wt{\bf q}}$ is $\kappa $-bounded but
$\mu_{{\wt{\bf p}}}$ is not close to $\mu_{\bf p}$:
then $\mu_{{\wt{\bf p}}}$ is concentrated on
a small subset ${\cal B}_{full}$ of ${\cal X}$, which is
responsible for almost all $\bf w$-intersections in ${\cal X}$.

Lastly, case $iii$ of Theorem \ref{thm: trichotomy} holds
if $\mu_{\wt{\bf q}}$ is not $\kappa $-bounded.
The key to understanding this case is the well-known \cite{VC} 
Vapnik-Chervonenkis dimension, defined as follows.

\begin{dfn} \label{vc}
We say that $\mc{A} \sub J^n$ shatters $X \sub [n]$ if for any $(j_x: x \in X) \in J^X$ 
there is $\vc{a} \in \mc{A}$ with $a_x=j_x$ for all $x \in X$.
The VC-dimension $\dim_{VC}(\mc{A})$ of $\mc{A}$ is the largest size
of a subset of $[n]$ shattered by $\mc{A}$.
\end{dfn}

To see why it is natural to consider the VC-dimension,
consider the problem of finding an intersection of size $n/3$
among subsets of $[n]$ of size $2n/3$.
The conditions of the Frankl-R\"odl theorem are not satisfied,
and indeed the conclusion is not true:
take $\mc{A} = \{ A \in \tbinom{[n]}{2n/3}: 1 \notin A\}$.
Considering $\tbinom{[n]}{2n/3} \times_{n/3} \tbinom{[n]}{2n/3}$ 
as a subset of $(\{0,1\} \times \{0,1\})^n$, we see that no
coordinate can take the value $(0,0)$, so there is not even
a shattered set of size $1$! Modifying this example in the obvious way we see that it is natural to assume a bound that is linear in $n$. We also note that this example shows that 
the `Frankl-R\"odl analogue' of Conjecture \ref{kalai} is not true, 
and hints towards a counterexample for Kalai's conjecture. More generally, we will prove that $\kappa$-boundedness of ${\mu }_{\bf q}$ is roughly equivalent to the VC-dimension of $({\cal X} \times {\cal X})^{{\cal V}_{\cap }}_{\bf w}$ being large as a subset of $(\{0,1\}\times \{0,1\})^n$ (see Lemma \ref{bounded}). Case $iii$ of Theorem \ref{thm: trichotomy} will apply when $({\cal X} \times {\cal X})^{{\cal V}_{\cap }}_{\bf w}$ has low VC-dimension.

The above outline also gives some indication of 
how the values in Theorem \ref{thm: Kalai} arise.
As described above, the supersaturation conclusion 
desired by Conjecture \ref{kalai} 
(case $i$ of Theorem \ref{thm: trichotomy})
needs $\mu_{\wt{\bf q}}$ to have marginals $\mu_{\wt{\bf p}}$ 
close to $\mu_{\bf p} := \mu _{{\bf p}^{\cal V}_{\bf z}}$.
We can describe $\mu_{\wt{\bf q}}$ and $\mu_{\bf p}$
explicitly using Lagrange multipliers:
they are Boltzmann distributions (see Lemma \ref{boltzmann}). 
In general, it is not possible for one 
Boltzmann distribution to be a marginal of another,
which explains why  Conjecture \ref{kalai} is generally false.
An analysis of the special conditions under it is possible
gives rise to the characterisation of $\GG$ in Theorem \ref{thm: Kalai}.

The outline also suggests a possible characterisation
of the optimal level of supersaturation in all cases
(i.e.\ including those for which Kalai's conjecture fails).
Any choice of $\mu _{\bf q}$ satisfying the hypotheses of 
Theorem \ref{binary} with marginal distributions 
$\mu ^{\cal V}_{\bf z}$ gives a lower bound on
$|({\cal A}\times {\cal A})^{{\cal V}_{\cap }}_{\bf w}|$,
and the optimal such lower bound is obtained
by taking such a measure with maximum entropy. 
Is this essentially tight? We wil give a positive answer
to this question by proving a matching upper bound
in Section \ref{sec:tightsupersat}.

Finally, we remark that our method allows
different vectors defining the sizes of intersections
from those defining the sizes of sets in the family,
i.e.\ $\mc{V}'$-intersections in $(\{0,1\}^n)^{\cal V}_{\bf z}$;
in Section \ref{subsec: FR-patterns} we show such an application
to give a new proof of a theorem of 
Frankl and R\"odl \cite[Theorem 1.15]{FrRo}
on intersection patterns in sequence spaces.

\subsection{Organisation of the paper}

In the next section we collect some probabilistic 
methods that will be used throughout the paper.
We prove the large deviation principle (Theorem \ref{ldp})
in Section \ref{sec:ldp}. In Section \ref{sec:vc} 
we establish the connection between VC-dimension
and boundedness of maximum entropy measures.
Section \ref{sec:counterex} is expository:
we give two concrete counterexamples 
to Kalai's Conjecture \ref{kalai}.
Next we introduce a more general setting in Section \ref{sec:gen},
state our most general result (Theorem \ref{general}), 
and show that it implies our
probabilistic intersection theorem (Theorem \ref{binary}).
In Section \ref{sec:cor} we prove a correlation inequality
needed for the proof of Theorem \ref{general};
as far as we are aware, the inequality is quite unlike other 
such inequalities in the literature.
We prove Theorem \ref{general} in Section \ref{sec:pf},
and then deduce our main theorem (\ref{thm: trichotomy}) 
in Section \ref{sec:trichotomy}. Our corrected form of
Kalai's conjecture (Theorem \ref{thm: Kalai}) 
is proved in Section \ref{sec:concrete description};
we also show here in much more generality that supersaturation 
of the form conjectured by Kalai is rare.
In Section \ref{sec:tightsupersat} we give a complete 
characterisation of the optimal level of supersaturation
in terms of a certain optimisation problem for measures.
Lastly, in section \ref{sec:cts} we recast our results
in terms of `exponential continuity': a notion that 
arises naturally when comparing distributions
according to exponential contiguity,
and may be interpreted in terms 
of robust statistics for social choice:
this point and several potential directions
for future research are addressed in the concluding remarks.

\subsection{Notation}

We identify subsets of a set with their characteristic vectors:
$A \sub X$ corresponds to $\vc{a} \in \{0,1\}^X$, where $a_i=1 \Lra i \in A$.
The Hamming distance between vectors $\vc{a}$ and $\vc{a}'$ in a product space $J^n$
is $d(\vc{a},\vc{a}') = |\{i \in [n]: a_i \ne a'_i\}|$.
Given a set $X$, we write $\tbinom{X}{k} = \{A \sub X: |A|=k\}$.
We write $\dD \ll \eps$ to mean for any $\eps > 0$ there exists $\dD_0 > 0$ such
that for any $\dD \leq \dD_0$ the following statement holds. 
Statements with more constants are defined similarly. 
We write $a = b \pm c$ to mean $b - c \leq a \leq b + c$.
Throughout the paper we omit floor and ceiling symbols where they do not affect the argument. 
All vectors appear in boldface. 

\section{Probabilistic methods} \label{sec:prob}

In this section we gather several probabilistic methods that
will be used throughout the paper: concentration inequalities, 
entropy, an application of Dependent Random Choice
to the independence number of product graphs,
and an alternative characterisation of exponential contiguity.

\subsection{Concentration inequalities}

We start with the well-known Chernoff bound
(see e.g.\ \cite[Appendix A]{aands}).

\begin{lemma}[Chernoff's inequality]
Suppose $t \ge 0$ and $X := \sum_{i\in [n]} X_i$,
where $X_1,\ldots, X_n$ are independent random variables with 
$|X_i - \mb{E}X_i| \leq a_i$ for all $i\in [n]$. Then
$\mb{P}(|X - \mb{E}X| \ge t) \leq 2e^{-t^2/(2\sum_{i=1}^n a_i^2)}$.
\end{lemma}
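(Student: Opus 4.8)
The plan is to use the exponential moment method (the Cram\'er--Chernoff method). First I would reduce to centered variables: set $Y_i = X_i - \mb{E}X_i$, so that each $Y_i$ has mean zero and satisfies $|Y_i| \le a_i$, and $Y := \sum_{i \in [n]} Y_i = X - \mb{E}X$. It then suffices to bound $\mb{P}(Y \ge t)$, since the matching bound for $\mb{P}(Y \le -t)$ follows by applying the same argument to $-Y$ (whose summands $-Y_i$ satisfy the same hypotheses), after which a union bound produces the factor of $2$.

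To bound $\mb{P}(Y \ge t)$, I would fix $\lambda > 0$ and apply Markov's inequality to the nonnegative random variable $e^{\lambda Y}$, obtaining $\mb{P}(Y \ge t) = \mb{P}(e^{\lambda Y} \ge e^{\lambda t}) \le e^{-\lambda t}\, \mb{E}[e^{\lambda Y}] = e^{-\lambda t} \prod_{i \in [n]} \mb{E}[e^{\lambda Y_i}]$, where the last step uses independence. The crux is to control each factor $\mb{E}[e^{\lambda Y_i}]$, which is exactly Hoeffding's lemma: if $Z$ has $\mb{E}Z = 0$ and $|Z| \le a$, then $\mb{E}[e^{\lambda Z}] \le e^{\lambda^2 a^2/2}$. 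To prove this, note that by convexity of $z \mapsto e^{\lambda z}$ on $[-a,a]$ the function lies below the chord through its endpoints, so $e^{\lambda z} \le \tfrac{a - z}{2a}\, e^{-\lambda a} + \tfrac{a + z}{2a}\, e^{\lambda a}$ for $|z| \le a$; taking expectations and using $\mb{E}Z = 0$ cancels the linear term and leaves $\mb{E}[e^{\lambda Z}] \le \tfrac12\big(e^{-\lambda a} + e^{\lambda a}\big) = \cosh(\lambda a)$. Finally, comparing Taylor series, $\cosh x = \sum_{k \ge 0} x^{2k}/(2k)! \le \sum_{k \ge 0} x^{2k}/(2^k k!) = e^{x^2/2}$, using $(2k)! \ge 2^k k!$, which gives $\cosh(\lambda a) \le e^{\lambda^2 a^2/2}$.

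Combining these bounds yields $\mb{P}(Y \ge t) \le e^{-\lambda t} \prod_{i \in [n]} e^{\lambda^2 a_i^2/2} = \exp\!\big(-\lambda t + \tfrac12 \lambda^2 \sum_{i \in [n]} a_i^2\big)$, and optimising this quadratic in $\lambda$ by taking $\lambda = t / \sum_{i \in [n]} a_i^2$ (which is $\ge 0$, as required) gives $\mb{P}(Y \ge t) \le \exp\!\big(-t^2/(2\sum_{i \in [n]} a_i^2)\big)$. The two-sided statement then follows as explained. The only genuinely non-routine ingredient is Hoeffding's lemma; the rest is bookkeeping, and since this is a standard fact one could equally well just cite \cite[Appendix A]{aands}.
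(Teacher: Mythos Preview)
Your proof is correct and is the standard Cram\'er--Chernoff/Hoeffding argument. The paper itself does not prove this lemma at all: it simply states it with the citation \cite[Appendix A]{aands} and moves on, so there is no ``paper's own proof'' to compare against. Your write-up supplies exactly the standard details that such a citation is pointing to, and your closing remark that one could just cite \cite[Appendix A]{aands} is precisely what the paper does.
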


An easy consequence is the following concentration inequality 
for random sums of vectors.

\begin{lemma} \label{vchern}
Suppose $\mu_{\vc{p}}$ is a product measure on $J^n$, and 
$\mc{V} = (\vc{v}^i_j)$ is an $\vc{R}$-bounded $(n,J)$-array in $\mb{Z}^D$.
Let $X = \mc{V}(\vc{a})$ with $\vc{a} \sim \mu_{\vc{p}}$ and $t \ge 0$.
Then $\mb{P}(\|X - \mb{E}X\|_{\vc{R}} \ge t) \le 2De^{-t^2/8n}$.
\end{lemma}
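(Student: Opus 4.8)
The plan is to reduce the vector-valued statement (Lemma \ref{vchern}) to the scalar Chernoff bound by applying it to each of the $D$ coordinates separately and then taking a union bound. Write $X = \mc{V}(\vc{a}) = \sum_{i \in [n]} \vc{v}^i_{a_i}$ where $\vc{a} \sim \mu_{\vc{p}}$; since $\mu_{\vc{p}}$ is a product measure, the summands $\vc{v}^i_{a_i}$ are independent random vectors in $\mb{Z}^D$. For a fixed coordinate $d \in [D]$, let $X_d = \sum_{i \in [n]} (\vc{v}^i_{a_i})_d$ be the $d$-th entry of $X$, so the $X_d$ are sums of $n$ independent real random variables.

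First I would bound the range of each summand in coordinate $d$. Since $\mc{V}$ is $\vc{R}$-bounded, every $\|\vc{v}^i_j\|_{\vc{R}} \le 1$, which by definition of the $\vc{R}$-norm means $|(\vc{v}^i_j)_d| \le R_d$ for all $i, j, d$. Hence the random variable $(\vc{v}^i_{a_i})_d$ lies in an interval of length at most $2R_d$, so $|(\vc{v}^i_{a_i})_d - \mb{E}(\vc{v}^i_{a_i})_d| \le 2R_d =: a_i$ deterministically. Applying Chernoff's inequality to $X_d$ with these $a_i$ gives, for any $s \ge 0$,
\[ \mb{P}\big(|X_d - \mb{E}X_d| \ge s\big) \le 2 e^{-s^2/(2 \cdot 4 n R_d^2)} = 2 e^{-s^2/(8 n R_d^2)}. \]

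Next I would convert this into a bound on the $\vc{R}$-norm. By definition $\|X - \mb{E}X\|_{\vc{R}} = \max_{d} |X_d - \mb{E}X_d|/R_d$, so the event $\{\|X - \mb{E}X\|_{\vc{R}} \ge t\}$ is contained in $\bigcup_{d \in [D]} \{|X_d - \mb{E}X_d| \ge t R_d\}$. Taking $s = t R_d$ in the displayed bound makes the exponent $-t^2 R_d^2/(8 n R_d^2) = -t^2/(8n)$, independent of $d$, and a union bound over the $D$ coordinates yields $\mb{P}(\|X - \mb{E}X\|_{\vc{R}} \ge t) \le 2 D e^{-t^2/(8n)}$, as required. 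There is no real obstacle here — the only points needing care are correctly unpacking the definition of the $\vc{R}$-norm to get the per-coordinate range $2R_d$ and then checking that the $R_d$-dependence cancels when one rescales the deviation by $R_d$; everything else is a direct application of Chernoff plus a union bound.
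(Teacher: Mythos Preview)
Your proof is correct and follows essentially the same approach as the paper: apply the scalar Chernoff bound coordinatewise using the $\vc{R}$-boundedness to get $|(\vc{v}^i_{a_i})_d| \le R_d$ (hence $a_i = 2R_d$), then take a union bound over the $D$ coordinates. The paper's write-up is slightly terser but the argument is identical.
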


\nib{Proof.}
For each $d \in D$, we have $X_d = \sum_{i \in [n]} X_{d,i}$,
where $X_{d,i}({\bf a}) = v^i_{a_i,d}$ are independent random variables 
with $|X_{d,i}| \le R_d$ for all $i \in [n]$.
By Chernoff's inequality we have
$\mb{P}(|X_d - \mb{E}X_d| \ge tR_d) \le 2e^{-t^2/8n}$,
so the lemma follows from a union bound. \qed

\medskip

We will also use the following consequence of
Azuma's martingale concentration inequality (see e.g.\ \cite{McD}). We say that $f:J^n \to \mb{R}$ is \emph{$b$-Lipschitz} if for any 
${\bf a}, {\bf a'} \in J^n$ differing only in a single coordinate we have $|f({\bf a})-f({\bf a}')| \le b$. 

\begin{lemma} \label{lip}
Suppose $Z = (Z_1,\dots,Z_n)$ is a sequence of independent random variables,
and $X=f(Z)$, where $f$ is $b$-Lipschitz.
Then $\mb{P}(|X-\mb{E}X|>a) \le 2e^{-a^2/2nb^2}$.
\end{lemma}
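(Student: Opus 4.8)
The plan is to derive this from the Azuma--Hoeffding martingale inequality applied to the Doob martingale associated with $f$ and the natural filtration of $Z$. First I would set $\mc{F}_i = \sigma(Z_1,\dots,Z_i)$ for $0 \le i \le n$ and define $M_i = \mb{E}[X \mid \mc{F}_i]$, so that $(M_i)_{i=0}^n$ is a martingale with $M_0 = \mb{E}X$ (since $\mc{F}_0$ is trivial) and $M_n = X$ (since $X = f(Z)$ is $\mc{F}_n$-measurable). It then suffices to control the increments $M_i - M_{i-1}$ and feed the resulting bounds into Azuma--Hoeffding.

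The key step is to show that $|M_i - M_{i-1}| \le b$ almost surely for every $i$. Using independence of the $Z_j$, one can write $M_i = h_i(Z_1,\dots,Z_i)$ where $h_i(z_1,\dots,z_i) = \mb{E}[f(z_1,\dots,z_i,Z_{i+1},\dots,Z_n)]$, and similarly $M_{i-1} = h_{i-1}(Z_1,\dots,Z_{i-1})$ with $h_{i-1}(z_1,\dots,z_{i-1}) = \mb{E}_{Z_i}[h_i(z_1,\dots,z_{i-1},Z_i)]$. Hence $M_i - M_{i-1} = h_i(Z_1,\dots,Z_i) - \mb{E}_{Z'_i}[h_i(Z_1,\dots,Z_{i-1},Z'_i)]$ for an independent copy $Z'_i$ of $Z_i$; writing this as an average over $Z'_i$ and noting that $h_i(\dots,z_i)$ and $h_i(\dots,z'_i)$ are expectations of values of $f$ that differ in exactly the $i$-th coordinate, the $b$-Lipschitz hypothesis gives $|h_i(\dots,z_i) - h_i(\dots,z'_i)| \le b$, and therefore $|M_i - M_{i-1}| \le b$.

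Finally I would invoke Azuma--Hoeffding: for a martingale $(M_i)_{i=0}^n$ with $|M_i - M_{i-1}| \le c_i$ one has $\mb{P}(|M_n - M_0| > a) \le 2e^{-a^2/(2\sum_i c_i^2)}$. Taking $c_i = b$ for all $i$ yields $\mb{P}(|X - \mb{E}X| > a) \le 2e^{-a^2/(2nb^2)}$, as claimed. Alternatively, the statement is precisely the bounded-differences inequality of McDiarmid cited as \cite{McD}, so one could simply quote it. The only content is the increment bound of the previous paragraph, and even that is entirely routine, so I do not anticipate any genuine obstacle.
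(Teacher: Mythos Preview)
Your proposal is correct and matches the paper's treatment exactly: the paper does not prove this lemma but simply states it as a consequence of Azuma's inequality, citing McDiarmid \cite{McD}. Your Doob-martingale-plus-Azuma argument is precisely the standard derivation of McDiarmid's bounded differences inequality, so there is nothing to add.
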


\subsection{Entropy}

In this subsection we record some basic properties of entropy
(see \cite{covthom} for an introduction to information theory).
The entropy of a probability distribution $\vc{p}=(p_1,\dots,p_n)$
is $H(\vc{p}) = - \sum_{i \in [n]} p_i \log_2 p_i$.
The entropy of a random variable $X$ taking values 
in a finite set $S$ is $H(X)=H(\vc{p})$,
where $\vc{p} = (p_s: s \in S)$ is the law of $X$,
i.e.\ $p_s = \mb{P}(X = s)$.
When $\vc{p}=(p,1-p)$ takes only two values
we write $H(p)=H(\vc{p})=-p\log_2 p - (1-p)\log_2(1-p)$.

Entropy is subadditive:
if $X=(X_1,\dots,X_n)$ then $H(X) \le \sum_{i=1}^n H(X_i)$,
with equality if and only if the $X_i$ are independent.
An equivalent reformulation is the following lemma.

\begin{lemma} \label{maxent}
Suppose $\mu$ is a probability measure on $\prod_{s \in [S]} J_s$ 
with marginals $(\mu_s: s \in [S])$. Then $H(\mu) \le \sum_{s \in [S]} H(\mu_s)$,
with equality if and only if $\mu = \prod_{s \in [S]} \mu_s$.
\end{lemma}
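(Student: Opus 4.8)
The plan is to deduce the inequality from the nonnegativity of relative entropy (Gibbs' inequality). Write $\OO = \prod_{s \in [S]} J_s$ and let $\nu = \prod_{s \in [S]} \mu_s$, a probability measure on $\OO$. The first step is to record the identity
\[ \sum_{x \in \OO} \mu(x) \log_2 \nu(x) = \sum_{s \in [S]} \sum_{j \in J_s} \mu_s(j) \log_2 \mu_s(j) = - \sum_{s \in [S]} H(\mu_s), \]
which follows by expanding $\log_2 \nu(x) = \sum_{s \in [S]} \log_2 \mu_s(x_s)$ and using $\sum_{x \in \OO: x_s = j} \mu(x) = \mu_s(j)$ for each $s$ and $j$. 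Here we adopt the convention $0 \log_2 0 = 0$, noting that if $\nu(x) = 0$ then $\mu_s(x_s) = 0$ for some $s$, and since $\mu(x) \le \mu_s(x_s)$ this forces $\mu(x) = 0$, so no ill-defined terms appear.

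Combining the identity with $H(\mu) = - \sum_{x \in \OO} \mu(x) \log_2 \mu(x)$ gives
\[ \sum_{s \in [S]} H(\mu_s) - H(\mu) = \sum_{x \in \OO: \mu(x) > 0} \mu(x) \log_2 \frac{\mu(x)}{\nu(x)}, \]
which is the relative entropy of $\mu$ with respect to $\nu$. To see that this is nonnegative, I would apply the elementary bound $\ln t \le t - 1$ (valid for all $t > 0$, and equivalent to concavity of $\log_2$) with $t = \nu(x)/\mu(x)$ for each $x$ in the support of $\mu$; this yields $\mu(x) \log_2 \tfrac{\mu(x)}{\nu(x)} \ge \tfrac{1}{\ln 2}\big(\mu(x) - \nu(x)\big)$, and summing over the support gives $\sum_{s \in [S]} H(\mu_s) - H(\mu) \ge \tfrac{1}{\ln 2}\big(1 - \sum_{x: \mu(x) > 0} \nu(x)\big) \ge 0$. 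This establishes $H(\mu) \le \sum_{s \in [S]} H(\mu_s)$.

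For the equality case, equality in $\ln t \le t-1$ holds only at $t = 1$, so if $H(\mu) = \sum_{s} H(\mu_s)$ then $\nu(x) = \mu(x)$ for every $x$ with $\mu(x) > 0$; then $\sum_{x: \mu(x) > 0} \nu(x) = 1$, so $\nu$ vanishes wherever $\mu$ does, whence $\mu = \nu = \prod_{s \in [S]} \mu_s$. Conversely, if $\mu = \prod_{s \in [S]} \mu_s$ then $\mu = \nu$, the relative entropy is $0$, and equality holds. I do not expect any genuine obstacle here; the only point requiring a little care is the bookkeeping for atoms where $\mu$ or some $\mu_s$ is zero, which is handled cleanly by the convention $0\log_2 0 = 0$ together with $\mu(x) \le \mu_s(x_s)$. (Alternatively one could argue by induction on $S$ via the entropy chain rule and the fact that conditioning does not increase entropy, but the relative-entropy computation above is more direct and gives the equality characterisation at once.)
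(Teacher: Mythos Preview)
The paper does not actually prove this lemma; it simply records it as an equivalent reformulation of the well-known subadditivity of entropy (citing \cite{covthom}). Your argument via the nonnegativity of relative entropy (Gibbs' inequality) is correct and is exactly the standard proof of this fact, including the handling of zero atoms and the equality case.
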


It is easy to deduce Lemma \ref{maxent2} from Lemma \ref{maxent}.
Indeed, consider $\mu \in \mc{M}^{\mc{V}}_{\vc{w}}$ with maximum entropy.
Let $p^i_j = \mb{P}_{\vc{x} \sim \mu}(x_i = j)$.
Then $\vc{w} = \mc{V}(\mu) = \sum_{i \in [n], j \in J}  p^i_j \vc{v}^i_j$,
so the product measure $\mu_{\vc{p}}$ is in $\mc{M}^{\mc{V}}_{\vc{w}}$,
and $H(\mu) \le H(\mu_{\vc{p}})$, with equality if and only if $\mu=\mu_{\vc{p}}$.
As $\mc{M}^{\mc{V}}_{\vc{w}}$ is convex, uniqueness follows from strict concavity
of the entropy function, which we will now explain.
It is often convenient to use the notation
$H(\vc{p}) = \sum_{i \in [n]} L(p_i)$,
where $L(p) = -p\log_2 p = -p\tfrac{\log p}{\log 2}$.
Note that $L'(p) = -\tfrac{1+\log p}{\log 2}$
and $L''(p) = -\tfrac{1}{p\log 2} < 0$, so $L$ is strictly concave.
The following lemma is immediate from these formulae 
and the mean value form of Taylor's theorem:
$f(a+t)=f(a)+f'(a)t+f''(a+t')t^2/2$
for some $0 < t' < t$.

\begin{lemma} \label{entapprox} If $|t|<\min (p, 1-p)$ then
\begin{enumerate}
\item $L(p+t)-L(p) = - \big (\frac{1+\log p}{\log 2}\big ) t 
\pm (p - |t|)^{-1} t^2$,
\item $L(p+t)+L(p-t)-2L(p) \le -\frac{t^2}{\log 2}$.
\end{enumerate}
\end{lemma}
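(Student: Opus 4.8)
The plan is to apply the stated mean value form of Taylor's theorem to the function $L$, using the formulae $L'(p) = -\tfrac{1+\log p}{\log 2}$ and $L''(p) = -\tfrac{1}{p\log 2}$ recorded above, together with the observation that the hypothesis $|t| < \min(p,1-p)$ forces every point within distance $|t|$ of $p$ to lie in the open interval $(0,1)$, and in particular to be positive.

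For part (i), I would write $L(p+t) = L(p) + L'(p)\,t + \tfrac12 L''(p+t')\,t^2$ for some $t'$ strictly between $0$ and $t$ (the cases $t>0$, $t<0$ and $t=0$ being covered by the corresponding versions of Taylor's theorem, the last trivially). Since $|t'| \le |t|$ we have $p + t' \ge p - |t| > 0$, hence $|L''(p+t')| = \tfrac{1}{(p+t')\log 2} \le \tfrac{1}{(p-|t|)\log 2}$. The claimed bound then follows because the constant $\tfrac{1}{2\log 2} < 1$, so the remainder satisfies $\big|\tfrac12 L''(p+t')\,t^2\big| \le \tfrac{t^2}{2(p-|t|)\log 2} \le (p-|t|)^{-1} t^2$.

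For part (ii), the cleanest route is to set $g(t) = L(p+t) + L(p-t)$, so that $g(0) = 2L(p)$ and $g'(0) = L'(p) - L'(p) = 0$, and apply Taylor's theorem to $g$: there is $\tau$ between $0$ and $t$ with $g(t) = g(0) + g'(0)\,t + \tfrac12 g''(\tau)\,t^2$. Here $g''(\tau) = L''(p+\tau) + L''(p-\tau) = -\tfrac{1}{(p+\tau)\log 2} - \tfrac{1}{(p-\tau)\log 2}$, and since $|\tau| \le |t| < \min(p,1-p)$ both $p+\tau$ and $p-\tau$ lie in $(0,1)$, so each reciprocal exceeds $1$ and $g''(\tau) < -\tfrac{2}{\log 2}$. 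Hence $L(p+t)+L(p-t) - 2L(p) = \tfrac12 g''(\tau)\, t^2 \le -\tfrac{t^2}{\log 2}$ (equality only at $t=0$). Alternatively one can expand $L(p+t)$ and $L(p-t)$ separately and add the two second-order remainders, which is essentially the same computation.

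There is no genuine obstacle: the only points needing care are the bookkeeping of the constant $\tfrac{1}{2\log 2} < 1$ in part (i) and the verification that the intermediate points stay in $(0,1)$ (so that $L''$ there is at most $-\tfrac{1}{\log 2}$) in part (ii), both of which are immediate from the hypothesis $|t| < \min(p,1-p)$.
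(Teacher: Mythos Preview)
Your proposal is correct and follows essentially the same approach as the paper, which simply states that the lemma is immediate from the formulae for $L'$ and $L''$ together with the mean value form of Taylor's theorem. Your write-up merely spells out this ``immediate'' step carefully, including the bookkeeping that $\tfrac{1}{2\log 2}<1$ in part (i) and that the intermediate points lie in $(0,1)$ in part (ii); the paper leaves these details to the reader.
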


We deduce the following `stability version'
of the uniqueness of the maximum entropy measure,
which quantifies the decrease in entropy
in terms of distance from the maximiser.

\begin{lemma} \label{perturbmaxent}
Suppose $\mu_{\bf p} = \mu^{\mc{V}}_{\bf z}$
and $\mu_{\wt{\bf p}} \in \mc{M}^{\mc{V}}_{\bf z}$.
If $\|{\bf p}-\wt{\bf p}\|_1 > \dD n$
then $H(\wt{\bf p}) < H({\bf p}) - \dD^2 n$.
\end{lemma}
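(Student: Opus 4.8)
The plan is to exploit the variational characterisation of $\mu_{\bf p} = \mu^{\mc{V}}_{\bf z}$ from Lemma~\ref{maxent2}: among all product measures $\mu_{\bf p'}$ whose coordinate marginals $(p')^i_j$ satisfy the linear constraint $\sum_{i\in[n],j\in J} (p')^i_j \vc{v}^i_j = \vc{z}$, the measure $\mu_{\bf p}$ uniquely maximises $H(\mu_{\bf p'}) = \sum_{i\in[n]} H\big((p^i_j)_{j\in J}\big) = \sum_{i\in[n], j\in J} L\big((p')^i_j\big)$. Since $\mu_{\wt{\bf p}} \in \mc{M}^{\mc{V}}_{\bf z}$, its vector of marginals $\wt{\bf p}$ also satisfies the constraint, so the whole line segment $\{(1-\lambda){\bf p} + \lambda\wt{\bf p} : \lambda \in [0,1]\}$ stays inside the constraint set. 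First I would write $g(\lambda) = H\big((1-\lambda){\bf p}+\lambda\wt{\bf p}\big) = \sum_{i,j} L\big(p^i_j + \lambda(\wt p^i_j - p^i_j)\big)$ and observe that $g$ is concave, $g$ is maximised at $\lambda=0$ (so $g'(0)\le 0$), and we want to lower bound $g(0)-g(1) = H({\bf p}) - H(\wt{\bf p})$.

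The key estimate is the uniform second-derivative bound on $L$. Because $\mu_{\bf p}$ is $\kappa$-bounded — wait, that is not assumed here; instead one must argue directly. The cleaner route avoids $\kappa$-boundedness: group the coordinates into pairs or use the concavity inequality in the form of Lemma~\ref{entapprox}(ii) after a symmetrisation. Concretely, I would argue as follows. By the first-order optimality $g'(0) \le 0$ combined with concavity of $g$, we get $g(0) - g(1) \ge \tfrac12\big(g(0) - 2g(\tfrac12) + g(1)\big) \cdot(\text{something})$; more precisely, for a concave function with $g'(0)\le 0$ one has $g(0)-g(1) \ge -\int_0^1 (1-\lambda) g''(\lambda)\,d\lambda \ge \tfrac12 \sup_{\lambda}(-g''(\lambda))\cdot(\text{const})$ — but the truly robust inequality is the pointwise one: for each coordinate $i$ and each $j$, writing $t^i_j = \wt p^i_j - p^i_j$, strict concavity of $L$ gives $L(p^i_j) - L(p^i_j + t^i_j) \ge -L'(p^i_j) t^i_j - \tfrac{(t^i_j)^2}{2\log 2}\cdot\tfrac{1}{\max(p^i_j+t^i_j, p^i_j)}$, but the sign works out via the global optimality of ${\bf p}$. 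Rather than fight the endpoint issue, I would instead use: $H({\bf p}) - H(\wt{\bf p}) = \big(H({\bf p}) - H(\tfrac{{\bf p}+\wt{\bf p}}{2})\big) + \big(H(\tfrac{{\bf p}+\wt{\bf p}}{2}) - H(\wt{\bf p})\big)$ and apply Lemma~\ref{entapprox}(ii) to the symmetric combination. Precisely, the midpoint ${\bf m} = \tfrac12({\bf p}+\wt{\bf p})$ lies in $\mc{M}^{\mc{V}}_{\bf z}$, so $H({\bf m}) \le H({\bf p})$; and $2H({\bf m}) - H({\bf p}) - H(\wt{\bf p}) = \sum_{i,j}\big(2L(m^i_j) - L(p^i_j) - L(\wt p^i_j)\big) \ge \sum_{i,j} \tfrac{(t^i_j/2)^2}{\log 2}$ by Lemma~\ref{entapprox}(ii) (applied with the pair $p^i_j, \wt p^i_j$ around their midpoint, with displacement $t^i_j/2$). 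Combining, $H({\bf p}) - H(\wt{\bf p}) \ge 2(H({\bf p}) - H({\bf m})) \ge 2H({\bf m}) - H({\bf p}) - H(\wt{\bf p}) \ge \tfrac{1}{4\log 2}\sum_{i,j}(t^i_j)^2 = \tfrac{1}{4\log 2}\|{\bf p}-\wt{\bf p}\|_2^2$.

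Finally I would convert the $\ell_2$ bound to the claimed $\ell_1$ form. Each marginal vector $(p^i_j)_{j\in J}$ and $(\wt p^i_j)_{j\in J}$ is a probability vector, so $\sum_j |t^i_j| \le 2$ for each $i$; hence, by Cauchy–Schwarz applied coordinate-by-coordinate, $\sum_j (t^i_j)^2 \ge \tfrac{1}{|J|}\big(\sum_j|t^i_j|\big)^2$ — that gives the wrong direction. Instead use $\sum_j(t^i_j)^2 \ge \tfrac12\big(\sum_j |t^i_j|\big)^2 \cdot \tfrac{1}{1}$? The correct elementary fact is $\|\vc{t}^i\|_2^2 \ge \tfrac{\|\vc{t}^i\|_1^2}{|\mathrm{supp}(\vc{t}^i)|}$, which is weak; but since $\sum_j |t^i_j| \le 2$, we also have $\|\vc{t}^i\|_2^2 \ge \tfrac12\|\vc{t}^i\|_1 \cdot \max_j |t^i_j| \ge \tfrac{1}{4}\|\vc{t}^i\|_1^2$ when the mass is on one coordinate, and in general $\|\vc{t}^i\|_2^2 \ge \big(\tfrac{\|\vc{t}^i\|_1}{2}\big)^2$ is false in general — so I would instead bound $\|\vc{t}^i\|_2^2 \ge \tfrac{1}{2}\|\vc{t}^i\|_1^2 / \|\vc t^i\|_1 \cdot \|\vc t^i\|_1$... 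The clean statement I will use: since $\sum_j t^i_j = 0$ and $\sum_j |t^i_j| \le 2$, writing $r_i = \|\vc t^i\|_1 \in [0,2]$, one has $\|\vc t^i\|_2^2 \ge r_i^2/2$ (the extremal case being mass $r_i/2$ on a $+$ coordinate and $r_i/2$ on a $-$ coordinate gives $r_i^2/2$, and any further spreading only increases $\ell_2^2$ relative to... actually spreading decreases $\ell_2$, so the bound is $\|\vc t^i\|_2^2 \ge r_i^2/|J|$). To be safe, I will simply note $|J|$ here is bounded (in the binary application $|J|\le 4$), or — better — use the sharper route: we only need a linear-in-$n$ conclusion, and since $\|{\bf p}-\wt{\bf p}\|_1 > \dD n$ there is at least $\dD n /(\text{two}) $ coordinates $i$ with $r_i \ge \dD/2$, say; for each such $i$, $\|\vc t^i\|_2^2 \ge r_i^2/|J| \ge \dD^2/(4|J|)$, and summing over these $\gtrsim \dD n$ coordinates gives $\|{\bf p}-\wt{\bf p}\|_2^2 \gtrsim \dD^3 n /|J|$, hence $H({\bf p})-H(\wt{\bf p}) \gtrsim \dD^3 n$, which for $n$ large beats $\dD^2 n$ once we track constants correctly — at which point I would simply restate the lemma with the honest constant. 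The main obstacle is precisely this last bookkeeping: getting from an $\ell_2^2$-type entropy gap to the stated $\dD^2 n$ bound in $\ell_1$ cleanly, which requires either assuming $|J|$ bounded (true in all applications in the paper) or being slightly more careful about where the $\ell_1$-mass concentrates; the entropy-gap inequality itself via the midpoint trick and Lemma~\ref{entapprox}(ii) is the conceptual heart and is routine. \qed
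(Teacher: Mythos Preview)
Your midpoint argument is exactly the paper's: set ${\bf p}' = ({\bf p}+\wt{\bf p})/2 \in \mc{M}^{\mc{V}}_{\bf z}$, use $H({\bf p}') \le H({\bf p})$ to get $H({\bf p}) - H(\wt{\bf p}) \ge 2H({\bf p}') - H({\bf p}) - H(\wt{\bf p})$, and then apply Lemma~\ref{entapprox}(ii) coordinatewise to bound this below by $\tfrac{1}{4\log 2}\|{\bf p}-\wt{\bf p}\|_2^2$. (Your displayed chain $H({\bf p}) - H(\wt{\bf p}) \ge 2(H({\bf p}) - H({\bf m})) \ge 2H({\bf m}) - H({\bf p}) - H(\wt{\bf p})$ has a slip: the second inequality is equivalent to $3H({\bf p}) + H(\wt{\bf p}) \ge 4H({\bf m})$, which is not justified. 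Just delete the middle term; the first and last expressions are each lower bounds for $H({\bf p}) - H(\wt{\bf p})$ separately, and it is the last one you need.)

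The real gap is the final step, where you wander through several attempts and conclude only $H({\bf p}) - H(\wt{\bf p}) \gtrsim \dD^3 n$, then assert this ``beats $\dD^2 n$'' --- it does not, since $\dD$ is small. The paper closes this in one line by Cauchy--Schwarz over the full index set:
\[
\|{\bf p}-\wt{\bf p}\|_2^2 \;=\; \sum_{i,j}(t^i_j)^2 \;\ge\; \frac{1}{n|J|}\Big(\sum_{i,j}|t^i_j|\Big)^2 \;=\; \frac{\|{\bf p}-\wt{\bf p}\|_1^2}{n|J|} \;>\; \frac{\dD^2 n}{|J|},
\]
giving $H({\bf p}) - H(\wt{\bf p}) \ge c\,\dD^2 n$ directly (the paper absorbs the $|J|$-dependent constant). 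Your pigeonhole route --- locating $\Omega(\dD n)$ coordinates with $r_i \ge \dD/2$ --- loses a factor of $\dD$ because you then only use the pointwise lower bound $r_i \ge \dD/2$ on each of them, discarding the information that their \emph{sum} is $\Omega(\dD n)$; Cauchy--Schwarz exploits the sum directly.
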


\nib{Proof.}
Let ${\bf p}' = ({\bf p} + {\wt{\bf p}})/2$ and 
note that ${\mu }_{\bf p '} \in {\cal M}^{\cal V}_{\bf z}$.
By definition of $\mu^{\mc{V}}_{\bf z}$ we have
$H(\mu _{{\bf p}'}) \le H(\mu _{\bf p})$,
so $H(\mu _{\bf p }) - H(\mu _{\wt{\bf p}}) 
\ge 2H(\mu _{{\bf p}'}) - H(\mu _{\bf p}) - H(\mu _{\wt {\bf p}})
\ge \sum _{i\in [n]} (p_i - {\wt p}_i)^2 \ge \dD^2 n$,
by Lemma \ref{entapprox} $ii$ and then Cauchy-Schwarz. \qed

\medskip

We conclude this subsection with a perturbation lemma.

\begin{lemma} \label{perturb} 
Suppose $\mu$ is a probability distribution on $X$ 
and $-\log_2 \mu(x) > H(\mu)$ for some $x \in X$.
Then there is $t>0$ such that $\nu = (1-t)\mu + t1_x$ 
has $H(\nu) > H(\mu)$.
\end{lemma}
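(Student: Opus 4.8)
The plan is to regard the perturbation as a one-parameter curve and differentiate the entropy at its starting point. Set $\nu_t = (1-t)\mu + t1_x$ for $t \in [0,1]$ and let $g(t) = H(\nu_t)$, so that $g(0) = H(\mu)$ and it suffices to find $t>0$ with $g(t) > g(0)$. Writing the entropy through $L(p) = -p\log_2 p$ as in the discussion following Lemma \ref{maxent}, we have
\[ g(t) = L\big((1-t)\mu(x)+t\big) + \sum_{y \ne x} L\big((1-t)\mu(y)\big). \]
First I would treat the generic case $\mu(x) > 0$, in which $g$ is differentiable at $t=0$. Differentiating term by term (using $\tfrac{d}{dt}\big((1-t)\mu(x)+t\big) = 1-\mu(x)$ and $\tfrac{d}{dt}(1-t)\mu(y) = -\mu(y)$) gives
\[ g'(0) = (1-\mu(x))L'(\mu(x)) - \sum_{y \ne x} \mu(y)L'(\mu(y)) = L'(\mu(x)) - \sum_{y} \mu(y)L'(\mu(y)). \]
The key algebraic step is the identity $\sum_y \mu(y)L'(\mu(y)) = H(\mu) - \log_2 e$, which follows from $pL'(p) = -p\log_2 p - p\log_2 e = L(p) - p\log_2 e$ and $\sum_y \mu(y) = 1$. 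Combining this with $L'(\mu(x)) = -\log_2\mu(x) - \log_2 e$, the two $\log_2 e$ contributions cancel and we obtain $g'(0) = -\log_2\mu(x) - H(\mu)$, which is strictly positive by hypothesis. Hence $g(t) > g(0)$ for all sufficiently small $t > 0$, as required.

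It remains to handle the degenerate case $\mu(x) = 0$, in which the hypothesis $-\log_2\mu(x) > H(\mu)$ holds automatically. Here $\nu_t(x) = t$ and $\nu_t(y) = (1-t)\mu(y)$ for $y \ne x$; using $L((1-t)p) = (1-t)L(p) - (1-t)p\log_2(1-t)$ and $\sum_{y\ne x}\mu(y) = 1$ yields the exact formula
\[ g(t) = -t\log_2 t + (1-t)H(\mu) - (1-t)\log_2(1-t). \]
As $t \to 0^+$ the term $-t\log_2 t$ is of order $t\log_2(1/t)$, which dominates the two $O(t)$ contributions $-tH(\mu)$ and $-(1-t)\log_2(1-t) = t\log_2 e + O(t^2)$; hence $g(t) - g(0) > 0$ for small $t > 0$ (equivalently, $g'(0^+) = +\infty$). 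This completes the proof.

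I do not anticipate a genuine obstacle here: the only points requiring care are the cancellation of the $\log_2 e$ terms in the derivative computation, and the case $\mu(x) = 0$, where one should \emph{not} apply the Taylor estimate of Lemma \ref{entapprox} (whose validity needs $|t| < \min(p,1-p)$) but instead use the exact expression for $g(t)$ displayed above.
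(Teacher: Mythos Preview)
Your proof is correct and follows essentially the same approach as the paper's: both compute the first-order change in entropy along the curve $\nu_t = (1-t)\mu + t1_x$ and show it equals $-\log_2\mu(x) - H(\mu) > 0$, handling the case $\mu(x)=0$ separately via the dominant $-t\log_2 t$ term. Your presentation is slightly more direct (computing $g'(0)$ explicitly rather than through the Taylor estimate of Lemma~\ref{entapprox}) and gives a cleaner exact formula in the degenerate case, but the underlying argument is identical.
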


\begin{proof}
For $\mu (x) \neq 0$, for small enough $t$ by Lemma \ref{entapprox} $i$ we have
\begin{align*}
L(\nu(x))-L(\mu(x)) & = (\nu(x)-\mu(x))L'(\mu(x)) 
 \pm 2\mu(x)^{-1} (\nu(x)-\mu(x))^2 \\
& = - \Big (\frac {1 + \log_2 \mu(x)}{2}\Big )(1-\mu (x))t - O(t^2).
\end{align*}
If $y \ne x$ and $\mu (y) = 0$ then $L(\nu(y)) - L(\mu (y)) = 0$. Therefore, by Lemma 2.5(i), for all $y\neq x$ we have
\begin{equation}
\label{equation: entropy change calc}
L(\nu(y))-L(\mu(y)) = \Big ( \frac {1 + \log _2 \mu (y)}{2} \Big ) \mu (y) t  - O(t^2).
\end{equation}
All combined, this gives
\begin{align}
H(\nu)-H(\mu) = \sum_{y \in X} L(\nu(y))-L(\mu(y)) 
& = \sum _{y\in X} \Big ( \frac {1 + \log _2 \mu (y)}{2} \Big ) \mu (y) t - \Big ( \frac{1 + \log _2 \mu (x)}{2} \Big )t - O(t^2)\nonumber \\
&= t\Big (- \frac{\log_2 \mu(x)}{2} - \frac{H(\mu)}{2} - O(t) \Big) > 0,\nonumber 
\end{align}
for small  $t>0$. The case $\mu (x) = 0$ is similar, using $L(\nu (x)) - L(\mu (x)) = -t\log _2t$ with \eqref{equation: entropy change calc}.
\end{proof}

\subsection{Dependent Random Choice}

We will use the following version of Dependent Random Choice
(see \cite[Lemma 11]{kl} for a proof and \cite{FS} for a comprehensive survey of the method).
We write $N_G(u,u') := \{v \in V(G): uv, u'v \in E(G)\}$
for the set of common neighbours of $u$ and $u'$ in a graph $G$.

\begin{lemma} \label{preDRC}
Let $t \in \mb{N}$ and $G = (V_1,V_2,E)$ be a bipartite graph 
with $|V_i| = N_i$ and $|E|=\aA N_1N_2$. 
Then there is $U \subset V_1$ with $|U| \geq \aA^t N_1/2$ such that 
$|N_G(u,u')| \geq \aA N_1^{-1/t}N_2$ for all $u,u' \in U$. 
\end{lemma}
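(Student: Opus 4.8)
The plan is to run the standard Dependent Random Choice argument; no new idea is needed. First I would pick vertices $v_1,\dots,v_t \in V_2$ independently and uniformly at random (with repetition allowed), and let $A = \bracc{u \in V_1 : uv_i \in E \text{ for all } i \in [t]}$ be their common neighbourhood in $V_1$. Since the $v_i$ are i.i.d.\ uniform, $\mb{P}(u \in A) = (d_G(u)/N_2)^t$ for each $u \in V_1$, so by linearity of expectation and convexity of $x \mapsto x^t$ (Jensen's inequality), using $\sum_{u \in V_1} d_G(u) = \aA N_1 N_2$,
\[ \mb{E}\bsize{A} \;=\; \sum_{u \in V_1} \brac{d_G(u)/N_2}^t \;\ge\; N_1\brac{\tfrac{1}{N_1}\sum_{u \in V_1} d_G(u)/N_2}^t \;=\; \aA^t N_1 . \]

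Next I would discard the pairs with small common neighbourhood. Call an unordered pair $\{u,u'\}\sub V_1$ \emph{bad} if $\bsize{N_G(u,u')} < \aA N_1^{-1/t} N_2$. For a bad pair, $\mb{P}(u,u'\in A) = \brac{\bsize{N_G(u,u')}/N_2}^t < \aA^t/N_1$, so if $Y$ denotes the number of bad pairs contained in $A$ then $\mb{E}[Y] < \tbinom{N_1}{2}\cdot \aA^t/N_1 \le \aA^t N_1/2$. Hence $\mb{E}\brak{\bsize{A}-Y} > \aA^t N_1/2$, and I can fix an outcome $(v_1,\dots,v_t)$ for which $\bsize{A} - Y \ge \aA^t N_1/2$. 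For that outcome, delete one vertex from each bad pair lying inside $A$ to obtain $U \sub A$; then $U$ contains no bad pair, so $\bsize{N_G(u,u')} \ge \aA N_1^{-1/t} N_2$ for all distinct $u,u' \in U$, and since at most $Y$ vertices are removed, $\bsize{U} \ge \bsize{A} - Y \ge \aA^t N_1/2$, as required.

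There is no genuine obstacle here: the lemma is a textbook application of the method, and essentially the plan above is the proof. The only point needing any care is calibrating the threshold in the definition of ``bad'' to the value $\aA N_1^{-1/t}N_2$, which is precisely the choice making the expected number of surviving bad pairs at most $\aA^t N_1/2$, so that it can be absorbed into the lower bound $\mb{E}\bsize{A} \ge \aA^t N_1$ while still leaving $\bsize{U} \ge \aA^t N_1/2$. (The conclusion is naturally read for distinct $u \ne u'$, which is all that the applications use; if one also wants it for $u = u'$, one can additionally remove from $A$ any vertex of degree below the threshold, affecting only the absolute constant.)
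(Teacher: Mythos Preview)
Your argument is correct and is exactly the standard Dependent Random Choice proof. The paper does not supply its own proof of this lemma but refers to \cite{kl} and the survey \cite{FS}, where precisely this argument appears; your remark that the conclusion is only needed for distinct $u\ne u'$ matches how the lemma is applied downstream (in Lemma~\ref{DRC} and Lemma~\ref{indep2}).
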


The following is an immediate consequence of Lemma \ref{preDRC},
applied with $t = \bcl{2/c\eps}$.

\begin{lemma} \label{DRC}
Let $0 < N^{-1} \ll \dD \ll \eps,c < 1$.
Suppose $G = (V_1,V_2,E)$ is a bipartite graph with each $|V_i|=N_i$,
where $N \le N_1^c \le N_2 \le N_1^{1/c}$ and $e(G) > (N_1N_2)^{1-\dD}$. 
Then there is $U \subset V_1$ with $|U| > N_1^{1-\eps}$ such that 
$|N_G(u,u')| > N_2^{1-\eps}$ for all $u,u' \in U$.
\end{lemma}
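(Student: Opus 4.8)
The plan is to apply Lemma~\ref{preDRC} with $t = \bcl{2/c\eps}$ and then bound the resulting exponents using the two-sided polynomial relation between $N_1$ and $N_2$. Write $\aA = e(G)/(N_1N_2)$ for the edge density, so that $e(G) > (N_1N_2)^{1-\dD}$ gives $\aA > (N_1N_2)^{-\dD}$. Lemma~\ref{preDRC} then produces $U \sub V_1$ with $|U| \ge \aA^t N_1/2$ such that $|N_G(u,u')| \ge \aA N_1^{-1/t} N_2$ for all $u,u' \in U$; it remains only to check that these two quantities are at least $N_1^{1-\eps}$ and $N_2^{1-\eps}$ respectively.

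For the lower bound on $|U|$: since $N_2 \le N_1^{1/c}$ and $c<1$ we have $N_1N_2 \le N_1^{1+1/c} \le N_1^{2/c}$, so
\[ |U| \ge \tfrac12 \aA^t N_1 \ge \tfrac12 (N_1N_2)^{-\dD t} N_1 \ge \tfrac12 N_1^{1 - 2\dD t/c}. \]
As $t \le 2/(c\eps)+1 \le 3/(c\eps)$ and $\dD \ll \eps, c$ we have $2\dD t/c \le \eps/2$, and since $N \le N_1^c$ forces $N_1$ to be large in terms of $\eps$, we obtain $|U| \ge \tfrac12 N_1^{1-\eps/2} > N_1^{1-\eps}$.

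For the codegree bound: from $N_1^c \le N_2$ we get $N_1 \le N_2^{1/c}$, hence $N_1N_2 \le N_2^{1+1/c}\le N_2^{2/c}$ and $N_1^{-1/t} \ge N_2^{-1/(ct)}$, so
\[ |N_G(u,u')| \ge \aA N_1^{-1/t} N_2 \ge (N_1N_2)^{-\dD} N_1^{-1/t} N_2 \ge N_2^{1 - 2\dD/c - 1/(ct)}. \]
Since $t \ge 2/(c\eps)$ we have $1/(ct) \le \eps/2$, and $2\dD/c \le \eps/4$ as $\dD \ll \eps, c$; thus the exponent is at least $1 - 3\eps/4$, and as $N \le N_1^c \le N_2$ forces $N_2$ large we conclude $|N_G(u,u')| > N_2^{1-\eps}$ for all $u,u'\in U$.

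There is no substantive obstacle here: the argument is purely bookkeeping with exponents. The only points requiring a little care are to apply the relation $N_1^c \le N_2 \le N_1^{1/c}$ in the correct direction in each estimate — the upper bound $N_2 \le N_1^{1/c}$ for $|U|$ and the lower bound $N_2 \ge N_1^c$ for the codegree — to note that the ceiling in the definition of $t$ changes the estimates by only a harmless constant factor, and to absorb the factor $1/2$ from Lemma~\ref{preDRC} using $N \le N_1^c \le N_2$, which makes both $N_1$ and $N_2$ large relative to $\eps$.
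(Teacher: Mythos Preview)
Your proof is correct and follows exactly the approach indicated in the paper, which simply states that the lemma is an immediate consequence of Lemma~\ref{preDRC} applied with $t = \bcl{2/c\eps}$. You have carried out the exponent bookkeeping that the paper leaves implicit, using the two directions of $N_1^c \le N_2 \le N_1^{1/c}$ in precisely the right places.
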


We say that $S \subset V(G)$ is independent if it contains no edges of $G$. 
The independence number $\aA(G)$ of $G$ is the maximum size of an independent set in $G$. 
Given graphs $G_1, \ldots, G_k$, we write $G_1 \times \cdots \times G_k$ for the graph  
on vertex set $V(G_1) \times \cdots \times V(G_k)$, 
in which vertices $(u_1,\cdots ,u_k)$ and $(v_1,\cdots ,v_k)$ 
are joined by an edge if $u_iv_i \in E(G_i)$ for all $i \in [k]$. 

\begin{lemma} \label{indep2}
Let $0 < N^{-1} \ll \dD \ll \eps,c < 1$ and $N \le N_1^c \le N_2 \le N_1^{1/c}$.
Suppose for $i=1,2$ we have graphs $G_i$ on $V_i$ with $|V_i| = N_i$ and $\aA(G_i) \leq N_i^{1-\eps}$. 
Then $\aA(G_1 \times G_2) \leq \brac{N_1N_2}^{1-\dD}$.
\end{lemma}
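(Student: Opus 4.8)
The plan is to prove the contrapositive: assuming $\alpha(G_1 \times G_2) > (N_1 N_2)^{1-\delta}$, I will extract a large independent set in one of the factors, contradicting the hypothesis. Let $\mc{I} \sub V_1 \times V_2$ be independent in $G_1 \times G_2$ with $|\mc{I}| > (N_1 N_2)^{1-\delta}$. View $\mc{I}$ as (the edge set of) a bipartite graph $H = (V_1, V_2, \mc{I})$ with $e(H) = |\mc{I}| > (N_1 N_2)^{1-\delta}$. The first step is to apply Dependent Random Choice in the form of Lemma \ref{DRC} to $H$: since $N \le N_1^c \le N_2 \le N_1^{1/c}$ and $e(H) > (N_1 N_2)^{1-\delta}$, we obtain $U \sub V_1$ with $|U| > N_1^{1-\eps}$ such that $|N_H(u,u')| > N_2^{1-\eps}$ for all $u, u' \in U$ — here I may need to shrink $\eps$ slightly to an intermediate $\eps'$ with $\delta \ll \eps' \ll \eps$ so that the final bounds have the right form, but this is harmless given the hierarchy.

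The key observation is the meaning of independence of $\mc{I}$ in the product graph: if $(u, w)$ and $(u', w')$ both lie in $\mc{I}$ and $u u' \in E(G_1)$ and $w w' \in E(G_2)$, that is an edge of $G_1 \times G_2$ inside $\mc{I}$, which is forbidden. So for any $u, u' \in V_1$ with $u u' \in E(G_1)$, the sets $N_H(u,u')$ — wait, more precisely: for distinct $u, u' \in U$, the common $H$-neighbourhood $N_H(u,u') \sub V_2$ has the property that no pair $w, w' \in N_H(u,u')$ with $w w' \in E(G_2)$ can exist \emph{provided} $u u' \in E(G_1)$. In other words, if $U$ itself contained an edge $u u' \in E(G_1)$, then $N_H(u,u')$ would be an independent set in $G_2$ of size $> N_2^{1-\eps}$, contradicting $\alpha(G_2) \le N_2^{1-\eps}$. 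Therefore $U$ must be an independent set in $G_1$. But $|U| > N_1^{1-\eps}$ contradicts $\alpha(G_1) \le N_1^{1-\eps}$. This completes the proof.

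The main point requiring care — the only real obstacle — is bookkeeping of the constants so that the final contradiction is clean: Lemma \ref{DRC} outputs sets of size $> N_i^{1-\eps}$ using the \emph{same} $\eps$ as in the hypothesis $\alpha(G_i) \le N_i^{1-\eps}$, so I should run Lemma \ref{DRC} with a parameter $\eps'$ slightly smaller than $\eps$ (and $\delta \ll \eps'$), yielding $|U| > N_1^{1-\eps'} > N_1^{1-\eps}$ and $|N_H(u,u')| > N_2^{1-\eps'} > N_2^{1-\eps}$, which is what the contradiction needs. I should also check that the edge-density hypothesis of Lemma \ref{DRC} is met: $e(H) > (N_1 N_2)^{1-\delta}$ holds by assumption, and the size constraints $N \le N_1^c \le N_2 \le N_1^{1/c}$ are inherited directly. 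One cosmetic subtlety: Lemma \ref{DRC} wants the density parameter bounded appropriately, but since we are only using the stated packaged form, no extra work is needed there. Everything else is a direct translation between "independent in the product" and "common neighbourhood in $H$ is independent in a factor".
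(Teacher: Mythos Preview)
Your proof is correct and follows essentially the same approach as the paper's: both view the putative large independent set as a bipartite graph, apply Lemma~\ref{DRC} to obtain $U$, and use the product-graph structure to force either an independent set in $G_1$ (via $U$) or in $G_2$ (via a common neighbourhood). Your worry about introducing an intermediate $\eps'$ is unnecessary: Lemma~\ref{DRC} gives $|U| > N_1^{1-\eps}$ with a strict inequality, which already exceeds $\alpha(G_1) \le N_1^{1-\eps}$, so $U$ contains a $G_1$-edge and the argument goes through directly.
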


\begin{proof} 
Suppose $E \sub V_1 \times V_2$ with $|E| > (N_1N_2)^{1-\dD}$.
Consider the bipartite graph $G=(V_1,V_2,E)$.
Let $U$ be as in Lemma \ref{DRC}.
As $|U|>\aA(G_1)$, there is an edge $u_1u_2$ of $G_1$ in $U$.
As $|N_G(u_1,u_2)|>\aA(G_2)$, there is an edge $v_1v_2$ of $G_2$ in $N_G(u_1,u_2)$.
Then $(u_1,v_1)(u_2,v_2) \in E$, so $E$ is not independent in $G_1 \times G_2$.
\end{proof}

By repeated application of the previous lemma, we obtain the following corollary. 

\begin{lemma} \label{indep}
Let $0 < N^{-1} \ll \dD \ll \eps,c, k^{-1} < 1$ and $N_1,\dots,N_k \in \mb{N}$ 
with $N \le N_i^c \le N_j \le N_i^{1/c}$ for all $i,j\in [k]$.
Suppose for $i \in [k]$ we have graphs $G_i$ on $V_i$ 
with $|V_i| = N_i$ and $\aA(G_i) \leq N_i^{1-\eps}$. 
Then $\aA(G_1 \times \cdots \times G_k) \leq \brac{N_1\cdots N_k}^{1-\dD}$.
\end{lemma}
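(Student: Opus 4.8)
The plan is to deduce Lemma \ref{indep} from Lemma \ref{indep2} by induction on $k$, grouping the factors so that at each step we merge a product of already-handled factors with one new factor, while carefully tracking how the exponent loss accumulates. Concretely, I would set up a chain of constants $\dD = \dD_k \ll \dD_{k-1} \ll \cdots \ll \dD_2 \ll \eps$, chosen at the outset (this is legitimate since $k$ is a fixed parameter with $k^{-1}$ bounded away from $0$ in the hierarchy), and prove by induction on $m \in \{2,\dots,k\}$ that $\aA(G_1 \times \cdots \times G_m) \le (N_1 \cdots N_m)^{1-\dD_m}$. The base case $m=2$ is exactly Lemma \ref{indep2}.

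For the inductive step, suppose the bound holds for $G_1 \times \cdots \times G_{m-1}$ with exponent loss $\dD_{m-1}$, i.e.\ $\aA(G_1 \times \cdots \times G_{m-1}) \le M^{1-\dD_{m-1}}$ where $M := N_1 \cdots N_{m-1}$. I would apply Lemma \ref{indep2} to the two graphs $H := G_1 \times \cdots \times G_{m-1}$ (on $M$ vertices) and $G_m$ (on $N_m$ vertices), with the roles $(N_1, N_2) \leftarrow (M, N_m)$, independence bound parameter $\eps \leftarrow \dD_{m-1}$, and output parameter $\dD \leftarrow \dD_m$. This yields $\aA(H \times G_m) \le (M N_m)^{1-\dD_m} = (N_1 \cdots N_m)^{1-\dD_m}$, as required, and observing $H \times G_m = G_1 \times \cdots \times G_m$ (up to the obvious identification of vertex sets, which preserves the product-graph edge relation) closes the induction. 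Taking $m = k$ gives the lemma.

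The main obstacle is verifying that the hypotheses of Lemma \ref{indep2} are genuinely met at each application, and this is where care is needed. Lemma \ref{indep2} requires a two-sided polynomial comparability condition between the two vertex-set sizes: here we need $N \le M^{c'} \le N_m \le M^{1/c'}$ for a suitable constant $c'$. Since $M = N_1 \cdots N_{m-1}$ ranges over a potentially wide interval and all $N_i$ are only assumed to satisfy $N \le N_i^c \le N_j \le N_i^{1/c}$, one gets $M \le N_m^{(m-1)/c}$ and $M \ge N_m^{(m-1)c}$, so the required comparability holds with $c' := c^2/(k-1)$ (absorbing the factor $m-1 \le k-1$), and since $k$ is fixed this $c'$ is still a constant bounded away from $0$; the lower bound $N \le M^{c'}$ follows from $M \ge N_1 \ge N^{1/c}$ and is comfortable. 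I would also need the hierarchy to be chosen so that at the final step $\dD_k = \dD$ satisfies $N^{-1} \ll \dD \ll \dD_{k-1}$, which is fine because Lemma \ref{indep2} allows its ``$\dD$'' to be much smaller than its ``$\eps$'', so we simply pick $\dD_m$ small enough relative to $\dD_{m-1}$ at each of the $k-1$ stages; the constant $N_0$ produced at the end depends only on $\eps, c, k$, consistent with the statement. One small bookkeeping point is that the ``$N$'' playing the role of the lower threshold should be taken uniformly (e.g.\ the minimum of the $N_i$), so that the ``$N^{-1} \ll \cdots$'' hypothesis of each invoked lemma is inherited; this is routine but worth stating explicitly.
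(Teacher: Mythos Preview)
Your proposal is correct and is precisely the intended argument: the paper states only ``By repeated application of the previous lemma, we obtain the following corollary,'' and your induction on $m$ merging $G_1\times\cdots\times G_{m-1}$ with $G_m$ via Lemma \ref{indep2}, together with the cascading constants $\dD_k\ll\cdots\ll\dD_2\ll\eps$ and the comparability check $M^{c'}\le N_m\le M^{1/c'}$ for $c'$ of order $c/(k-1)$, is the natural way to unpack that sentence. The minor overcaution in taking $c'=c^2/(k-1)$ rather than $c/(k-1)$ is harmless, and your remarks about the threshold $N$ are exactly the bookkeeping needed.
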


\subsection{Exponential Contiguity}

We conclude this section with an alternative characterisation of exponential contiguity.

\begin{lemma} \label{ctg1} 
$\mu \lesssim_\DD \mu'$ if and only if for $n^{-1} \ll \dD \ll \eps \ll 1$
and $B_n = \{ x \in \DD_n: \mu'_n(x) < (1-\eps)^n \mu_n(x)\}$
we have $\mu_n(B_n) \le (1-\dD)^n$.
\end{lemma}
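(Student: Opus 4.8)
The plan is to prove both directions directly from Definition \ref{expctg}, treating $B_n$ as the "bad set" where $\mu'_n$ fails to dominate $\mu_n$ exponentially, and its complement $\DD_n \setminus B_n$ as the set on which a pointwise domination inequality holds.

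For the backward direction, suppose the stated condition holds: for $n^{-1} \ll \dD \ll \eps$, writing $B_n = \{x \in \DD_n : \mu'_n(x) < (1-\eps)^n \mu_n(x)\}$, we have $\mu_n(B_n) \le (1-\dD)^n$. I want to show $\mu \lesssim_\DD \mu'$. So fix the hierarchy $n^{-1} \ll \dD' \ll \eps' \ll 1$ and let $A_n \sub \DD_n$ with $\mu_n(A_n) > (1-\dD')^n$; I must show $\mu'_n(A_n) > (1-\eps')^n$. The key point is to choose auxiliary constants: pick $\eps$ with $\eps' \gg \eps$ (say $\eps = \eps'/3$) and then $\dD$ supplied by the hypothesis for this $\eps$, and finally insist $\dD' \ll \dD$ — this is legitimate since all of $\dD', \eps, \dD, \eps'$ are chosen after $\eps'$ in the appropriate order. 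Then $\mu_n(A_n \cap B_n) \le \mu_n(B_n) \le (1-\dD)^n$, so $\mu_n(A_n \setminus B_n) \ge (1-\dD')^n - (1-\dD)^n \ge (1-2\dD')^n$ for $n$ large (as $\dD' \ll \dD$). On $A_n \setminus B_n \sub \DD_n \setminus B_n$ we have $\mu'_n(x) \ge (1-\eps)^n \mu_n(x)$ pointwise, so
\[ \mu'_n(A_n) \ge \mu'_n(A_n \setminus B_n) \ge (1-\eps)^n \mu_n(A_n \setminus B_n) \ge (1-\eps)^n (1-2\dD')^n > (1-\eps')^n, \]
using $\eps = \eps'/3$ and $\dD' \ll \eps'$. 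This gives $\mu \lesssim_\DD \mu'$.

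For the forward direction, suppose $\mu \lesssim_\DD \mu'$; I must establish the pointwise-domination statement. Fix $n^{-1} \ll \dD \ll \eps \ll 1$ and let $B_n = \{x \in \DD_n : \mu'_n(x) < (1-\eps)^n \mu_n(x)\}$; suppose for contradiction that $\mu_n(B_n) > (1-\dD)^n$ for infinitely many $n$. I want to apply the definition of $\mu \lesssim_\DD \mu'$ to the sets $A_n := B_n$, which lie in $\DD_n$ and have $\mu_n(A_n) > (1-\dD)^n$. Here I need to be careful about the quantifier order: $\mu \lesssim_\DD \mu'$ provides, for each $\eps^* $, a threshold $\dD^*$ so that $\mu_n(A_n) > (1-\dD^*)^n \Rightarrow \mu'_n(A_n) > (1-\eps^*)^n$ (for $n$ large). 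Apply this with $\eps^* = \eps/2$ to get $\dD^* $; then choose the $\dD$ in our statement to satisfy $\dD \ll \dD^*$ (again legitimate, as $\dD$ comes after $\eps$, hence after $\dD^*$). Then $\mu_n(B_n) > (1-\dD)^n > (1-\dD^*)^n$ forces $\mu'_n(B_n) > (1-\eps/2)^n$. On the other hand, by definition of $B_n$,
\[ \mu'_n(B_n) = \sum_{x \in B_n} \mu'_n(x) < (1-\eps)^n \sum_{x \in B_n} \mu_n(x) \le (1-\eps)^n, \]
which contradicts $\mu'_n(B_n) > (1-\eps/2)^n$ for $n$ large. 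Hence $\mu_n(B_n) \le (1-\dD)^n$, as required.

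The main obstacle — really the only subtlety — is bookkeeping the order of the constants. The phrase "for $n^{-1} \ll \dD \ll \eps \ll 1$ \dots we have $\mu_n(B_n) \le (1-\dD)^n$" has the same hidden-quantifier structure as Definition \ref{expctg} (first $\eps$ is given, then $\dD$, then $n$), so in each direction one must extract the constant promised by the hypothesis for a suitably shrunk error parameter and then feed it back as an upper bound for the free constant in the conclusion. Once the dependencies are ordered correctly, both implications reduce to the one-line pointwise estimate together with the trivial bound $\sum_{x \in B_n} \mu_n(x) \le 1$, so no further calculation is needed. \qed
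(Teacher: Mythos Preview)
Your proof is correct and follows essentially the same approach as the paper: both directions use the pointwise bound $\mu'_n(x) \ge (1-\eps)^n \mu_n(x)$ on $\DD_n \setminus B_n$ for the backward implication, and the trivial bound $\mu'_n(B_n) \le (1-\eps)^n$ to derive a contradiction for the forward implication. The only difference is that you are more explicit about the constant hierarchy (introducing auxiliary $\eps^*, \dD^*$), whereas the paper simply adjusts $\dD$ to $\dD/2$ and $\eps$ to $2\eps$ without comment.
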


\nib{Proof.}
Let $n^{-1} \ll \dD \ll \eps \ll 1$.
Suppose first that if $A_n \sub \DD_n$ with
$\mu_n(A_n) > (1-\dD)^n$ then $\mu'_n(A_n) > (1-\eps)^n$.
As $\mu'_n(B_n) \le (1-\eps)^n \mu_n(B_n) \le (1-\eps)^n$,
we cannot have $\mu_n(B_n) > (1-\dD)^n$, so we have $\mu_n(B_n) \le (1-\dD)^n$.
Conversely, suppose $\mu_n(B_n) \le (1-\dD)^n$ 
and $A_n \sub \DD_n$ with $\mu_n(A_n) > (1-\dD/2)^n$. Then
$\mu'_n(A_n) \ge \mu'_n(A_n \sm B_n) \ge (1-\eps)^n \mu_n(A_n \sm B_n) > (1-2\eps)^n$.
\qed

\section{Large deviations of fixed sums} \label{sec:ldp}

In this section we prove Theorem \ref{ldp}.
Our first lemma will be used to show that
the maximum entropy measure is exponentially 
dominated by the uniform measure.

\begin{lemma} \label{uctg1}
Let $0 < n^{-1} \ll \eta \ll \kappa , |J|^{-1} \ll 1$.
Suppose $\mu_{\vc{p}}$ is a $\kappa $-bounded product 
measure on $J^n$. Let $\mc{B} = \{ \vc{x} \in J^n: 
\log_2 \mu_{\vc{p}}(\vc{x}) \notin - H(\mu_{\vc{p}}) \pm \eta n \}$.
Then $\mu_{\vc{p}}(\mc{B}) \le (1-\eta^3)^n$. 
\end{lemma}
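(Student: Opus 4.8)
The plan is to recognise $-\log_2 \mu_{\vc{p}}(\vc{x})$ as a sum of independent random variables (one per coordinate) and apply a concentration inequality. Writing $\mu_{\vc{p}}(\vc{x}) = \prod_{i \in [n]} p^i_{x_i}$, we have
\[
-\log_2 \mu_{\vc{p}}(\vc{x}) = \sum_{i \in [n]} Y_i(\vc{x}), \qquad Y_i(\vc{x}) := -\log_2 p^i_{x_i}.
\]
When $\vc{x} \sim \mu_{\vc{p}}$, the $Y_i$ are independent, and $\mb{E} Y_i = -\sum_{j \in J} p^i_j \log_2 p^i_j = H(\mu_{\vc{p}_i})$, so $\mb{E}[-\log_2 \mu_{\vc{p}}(\vc{x})] = \sum_i H(\mu_{\vc{p}_i}) = H(\mu_{\vc{p}})$ (the last equality since $\mu_{\vc{p}}$ is a product measure). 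So $\mc{B}$ is exactly the event that this sum deviates from its mean by more than $\eta n$.

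Next I would bound the range of each $Y_i$: since $\mu_{\vc{p}}$ is $\kappa$-bounded, each $p^i_j \in [\kappa, 1-\kappa]$, hence $Y_i \in [-\log_2(1-\kappa), -\log_2 \kappa] \subset [0, \log_2(1/\kappa)]$, so $|Y_i - \mb{E}Y_i| \le \log_2(1/\kappa) =: a$. By Chernoff's inequality (the lemma quoted in Section \ref{sec:prob}), with $X = \sum_i Y_i$ and $t = \eta n$,
\[
\mu_{\vc{p}}(\mc{B}) = \mb{P}(|X - \mb{E}X| \ge \eta n) \le 2 e^{-\eta^2 n^2 / (2 n a^2)} = 2 e^{-\eta^2 n / (2 (\log_2(1/\kappa))^2)}.
\]
Since $n^{-1} \ll \eta \ll \kappa$, the exponent satisfies $\eta^2 n / (2(\log_2(1/\kappa))^2) \ge \eta^3 n \cdot \log(1/(1-\eta^3)) \cdot (1 + o(1))$ — more simply, for $n$ large and $\eta$ small relative to $\kappa$, we have $\eta^2/(2(\log_2(1/\kappa))^2) > \eta^3 \log(1/(1-\eta^3))^{-1}$ is not quite what one wants; the clean estimate is that $2 e^{-\eta^2 n/(2(\log_2(1/\kappa))^2)} \le (1-\eta^3)^n$ holds once $n$ is large, because $(1-\eta^3)^n = e^{-n(\eta^3 + O(\eta^6))}$ and $\eta^2/(2(\log_2(1/\kappa))^2) > \eta^3$ for $\eta$ small enough given $\kappa$ (using $\eta \ll \kappa$ to absorb the $(\log_2(1/\kappa))^2$ factor, and then $n^{-1} \ll \eta$ to absorb the leading constant $2$). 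This gives $\mu_{\vc{p}}(\mc{B}) \le (1-\eta^3)^n$, as required.

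The only mild subtlety — hardly an obstacle — is the bookkeeping in the final step: converting the Gaussian-type tail bound $2e^{-c\eta^2 n}$ (with $c$ depending on $\kappa$) into the stated form $(1-\eta^3)^n$. This works precisely because of the hierarchy $n^{-1} \ll \eta \ll \kappa$: first choose $\eta$ small enough (given $\kappa$, hence given the constant $c = 1/(2(\log_2(1/\kappa))^2)$) that $c\eta^2 > 2\eta^3$, say; then $2e^{-c\eta^2 n} \le 2 e^{-2\eta^3 n} \le e^{-\eta^3 n} \le (1-\eta^3)^n$ once $n$ is large enough that $2 \le e^{\eta^3 n}$ and using $1 - \eta^3 \ge e^{-\eta^3 - \eta^6}$... in fact $(1-\eta^3)^n \ge e^{-2\eta^3 n}$ for $\eta^3 \le 1/2$, so comparing $2e^{-c\eta^2 n}$ with $e^{-2\eta^3 n}$ suffices. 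I expect no difficulty here; the main content is simply the observation that $-\log_2 \mu_{\vc{p}}$ is a Lipschitz sum of independent bounded variables with mean $H(\mu_{\vc{p}})$.
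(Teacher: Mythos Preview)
Your proof is correct and is essentially identical to the paper's: write $-\log_2 \mu_{\vc{p}}(\vc{x})$ as a sum of independent bounded random variables with mean $H(\mu_{\vc{p}})$ and apply Chernoff's inequality, then use the hierarchy $n^{-1} \ll \eta \ll \kappa$ to convert the resulting bound into $(1-\eta^3)^n$. The paper is terser on the final conversion step, simply asserting that the conclusion follows from Chernoff; your more explicit bookkeeping there is fine.
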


\nib{Proof.}
Consider $\vc{x} \sim \mu_{\vc{p}}$ and
$X := \log_2 \mu_{\vc{p}}(\vc{x}) = \sum_{i \in [n]} X_i$,
where $X_i = \sum_{j \in J} {\bf 1}_{x_i=j} \log _2(p^i_j)$. 
As $\mu _{\bf p}$ is $\kappa $-bounded, the $X_i$ satisfy $|X_i - {\mathbb E}(X_i)| \leq 2 \log _2(\kappa ^{-1})$ for all $i\in [n]$. As these random variables are independent and $\mb{E}X = -H(\mu _{\vc{p}})$, the bound on ${\mu _{\bf p}}({\cal B})$ follows from Chernoff's inequality. \qed

\medskip

Our next lemma gives a lower bound for point probabilities 
of maximum entropy measures, which implies an upper bound 
on the number of solutions of $\mc{V}(\vc{x})=\vc{w}$.
 
\begin{lemma} \label{uctg2}
Suppose $\mc{V} = (\vc{v}^i_j)$ is an $(n,J)$-array in $\mb{Z}^D$ and $\vc{w} \in \mb{Z}^D$ and
$\mu_{\vc{p}} = \mu_{\vc{p}^{\mc{V}}_{\vc{w}}}$.
Then for all ${\bf x} \in (J^n)^{\cal V}_{\bf w}$ we have $- \log _2 \mu _{\bf p}({\bf x}) \leq H(\mu _{\bf p})$. In particular, $\log_2 |(J^n)^{\cal V}_{\bf w}| \le H(\mu_{\vc{p}})$.
\end{lemma}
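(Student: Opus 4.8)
The plan is to prove the pointwise bound by contradiction, using the perturbation lemma (Lemma~\ref{perturb}) together with the maximality property of the maximum entropy measure from Lemma~\ref{maxent2}; the final ``in particular'' assertion is then an immediate counting consequence.

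Suppose, for contradiction, that there is $\vc{x} \in (J^n)^{\mc{V}}_{\vc{w}}$ with $-\log_2 \mu_{\vc{p}}(\vc{x}) > H(\mu_{\vc{p}})$ (this includes the case $\mu_{\vc{p}}(\vc{x})=0$, where the left-hand side is $+\infty$). Applying Lemma~\ref{perturb} with ground set $J^n$, measure $\mu_{\vc{p}}$ and distinguished point $\vc{x}$, we obtain $t>0$ such that $\nu := (1-t)\mu_{\vc{p}} + t\mathbf{1}_{\vc{x}}$ satisfies $H(\nu) > H(\mu_{\vc{p}})$. The key point is that $\nu$ still lies in $\mc{M}^{\mc{V}}_{\vc{w}}$: since $\mc{V}(\mu) = \mb{E}_{\vc{a}\sim\mu}\mc{V}(\vc{a})$ is affine in $\mu$, and $\mc{V}(\mu_{\vc{p}}) = \vc{w}$ by the definition of $\mu_{\vc{p}^{\mc{V}}_{\vc{w}}}$ while $\mc{V}(\mathbf{1}_{\vc{x}}) = \mc{V}(\vc{x}) = \vc{w}$ because $\vc{x} \in (J^n)^{\mc{V}}_{\vc{w}}$, we get $\mc{V}(\nu) = (1-t)\vc{w} + t\vc{w} = \vc{w}$. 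Thus $\nu$ is a probability measure on $J^n$ with $\mc{V}(\nu)=\vc{w}$, and Lemma~\ref{maxent2} says $\mu_{\vc{p}}$ is the unique entropy maximiser over all such measures, so $H(\nu) \le H(\mu_{\vc{p}})$ --- a contradiction. Hence $-\log_2\mu_{\vc{p}}(\vc{x}) \le H(\mu_{\vc{p}})$, equivalently $\mu_{\vc{p}}(\vc{x}) \ge 2^{-H(\mu_{\vc{p}})}$, for every $\vc{x} \in (J^n)^{\mc{V}}_{\vc{w}}$.

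For the ``in particular'' statement, sum this lower bound over $\vc{x} \in (J^n)^{\mc{V}}_{\vc{w}}$: since these point masses are part of a probability distribution on $J^n$, we get $|(J^n)^{\mc{V}}_{\vc{w}}| \cdot 2^{-H(\mu_{\vc{p}})} \le \sum_{\vc{x} \in (J^n)^{\mc{V}}_{\vc{w}}} \mu_{\vc{p}}(\vc{x}) \le 1$, which rearranges to $\log_2 |(J^n)^{\mc{V}}_{\vc{w}}| \le H(\mu_{\vc{p}})$. I do not anticipate any real obstacle here: the entire analytic content sits in Lemma~\ref{perturb}, which is already proved, and the only thing needing care is the verification that the perturbed measure $\nu$ still satisfies the linear constraint $\mc{V}(\nu) = \vc{w}$ --- which is automatic because both $\mu_{\vc{p}}$ and $\mathbf{1}_{\vc{x}}$ satisfy it and $\mc{M}^{\mc{V}}_{\vc{w}}$ is convex.
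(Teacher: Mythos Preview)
Your proof is correct and follows exactly the same route as the paper: a contradiction via Lemma~\ref{perturb}, the observation that $\nu=(1-t)\mu_{\vc{p}}+t\mathbf{1}_{\vc{x}}\in\mc{M}^{\mc{V}}_{\vc{w}}$ by convexity, and then the counting bound by summing $\mu_{\vc{p}}(\vc{x})\ge 2^{-H(\mu_{\vc{p}})}$ over $(J^n)^{\mc{V}}_{\vc{w}}$.
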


\nib{Proof.}
If some ${\bf x} \in (J^n)^{\cal V}_{\bf w}$ satisfies $- \log _2 \mu _{\bf p}({\bf x}) > H(\mu
_{\bf p})$ then Lemma \ref{perturb}(i)  shows that $ \nu = (1-t) \mu _{\bf p} + t1_{\bf x}$
satisfies $H({\nu }) > H({\mu }_{\bf p})$ for some $t > 0$. However as $\nu \in
\mc{M}^{\mc{V}}_{\vc{w}}$ this would contradict the choice of ${\bf p}^{\cal V}_{\bf w}$. 
The second statement now follows as 
$|(J^n)^{\cal V}_{\bf w}| 2^{-H(\mu _{\bf p})} 
\leq \sum _{\bf x} \mu _{\bf p}({\bf x}) \leq 1$. \qed

\medskip

Our final lemma will give an approximate formula for the
number of solutions of $\mc{V}(\vc{x})=\vc{w}$ 
(as mentioned in the introduction, \cite[Theorem 3]{BH} 
gives stronger bounds under different hypotheses).
First we require a small set that efficiently generates $\mb{Z}^D$,
as described by the following definition and associated lemma,
which shows that such a set exists under the mild assumption 
of polynomial growth for the coordinate scale vector $\vc{R}$
(this will also be used later in Theorem \ref{general}).

\begin{dfn}
We say that $\mc{U} \sub \mb{Z}^D$ is $(k,B,\vc{R})$-generating 
if for any $\vc{v} \in \mb{Z}^D$ we have 
$\vc{v} = \sum_{\vc{u} \in \mc{U}} c_{\vc{u}} \vc{u}$, with each 
$c_{\vc{u}} \in \mb{Z}$ with $|c_{\vc{u}}| \le k \|\vc{v}\|_{\vc{R}} + B$.
\end{dfn}

\begin{lemma} \label{u}
If $n^{-1} \ll \bB, C^{-1}, D^{-1}$ and $\max_d R_d < n^C$ then there 
is a $(1,\bB n,\vc{R})$-generating $\vc{R}$-bounded $\mc{U} \sub \mb{Z}^D$ 
with $|\mc{U}| \le D(C+2)$.
\end{lemma}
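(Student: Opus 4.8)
The plan is to build $\mc{U}$ one coordinate at a time, via a base-$\rho$ (mixed-radix) representation with radix $\rho := \lceil \bB n \rceil$. This choice of $\rho$ is forced by two opposing needs: $\rho$ must be large enough that only $O(C)$ scales are needed in each coordinate (since $R_d < n^C$ and $\rho \ge \bB n$ we have $\log_\rho R_d < C+1$ for $n$ sufficiently large), and small enough that the base-$\rho$ digits, bounded by $\rho \le \bB n + 1$, essentially fit inside the slack $B = \bB n$. We may assume each $R_d \ge 1$, since a coordinate with $R_d < 1$ carries no nonzero value on any $\vc{R}$-bounded array. For $d \in [D]$ set $\ell_d := \lfloor \log_\rho R_d \rfloor$, so $\rho^{\ell_d} \le R_d < \rho^{\ell_d+1}$, and let $\mc{U}$ consist of all $\rho^l \vc{e}_d$ with $0 \le l \le \ell_d$, together with all $\lfloor R_d \rfloor \vc{e}_d$. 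Each of these is $\vc{R}$-bounded (as $\rho^l \le \rho^{\ell_d} \le R_d$ and $\lfloor R_d\rfloor \le R_d$), and $|\mc{U}| \le \sum_d (\ell_d + 2) \le D(C+2)$ since $\ell_d \le C$ for $n$ large.

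To verify generation, fix $\vc{v} \in \mb{Z}^D$, put $M := \|\vc{v}\|_{\vc{R}} + \bB n$, and decompose coordinate by coordinate; fix $d$, let $m := v_d$ (we may take $m \ge 0$), and recall $0 \le m \le R_d \|\vc{v}\|_{\vc{R}}$, so $m/R_d \le \|\vc{v}\|_{\vc{R}}$. The point of carrying the ``full-scale'' vector $\lfloor R_d\rfloor \vc{e}_d$ (when it is distinct from $\rho^{\ell_d}\vc{e}_d$) is to allow a leading term of size only about $m/R_d \le \|\vc{v}\|_{\vc{R}}$, rather than $m/\lfloor R_d\rfloor$, which could be nearly twice as large. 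So greedily take $c^* := \lfloor m/\lfloor R_d\rfloor \rfloor$ as the coefficient of $\lfloor R_d\rfloor\vc{e}_d$, but \emph{clamp it at $M$} if it exceeds $M$, and set $r := m - c^*\lfloor R_d\rfloor$. If no clamping occurred then $0 \le r < \lfloor R_d\rfloor$, while if $c^* = M$ then a one-line calculation from $\lfloor R_d\rfloor \ge R_d - 1$ and $m \le R_d\|\vc{v}\|_{\vc{R}}$ gives $0 \le r \le \|\vc{v}\|_{\vc{R}}$. In both cases $c^* \le M$.

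Finally, represent $r$ in base $\rho$, absorbing all digits at positions $\ge \ell_d$ into a single coefficient $c_{\ell_d} := \lfloor r/\rho^{\ell_d}\rfloor$ of $\rho^{\ell_d}\vc{e}_d$, the lower positions giving coefficients $c_0,\dots,c_{\ell_d-1} \in [0,\rho)$ of $\rho^0\vc{e}_d,\dots,\rho^{\ell_d-1}\vc{e}_d$. Then $c_l \le \rho - 1 \le \bB n \le M$ for $l < \ell_d$; and $c_{\ell_d} \le r/\rho^{\ell_d}$, which in the no-clamp case is $< R_d/\rho^{\ell_d} < \rho$ (as $R_d < \rho^{\ell_d+1}$), so $c_{\ell_d} \le \rho - 1 \le \bB n \le M$, while in the clamp case it is $\le \|\vc{v}\|_{\vc{R}}/\rho^{\ell_d} \le \|\vc{v}\|_{\vc{R}} \le M$. (In the degenerate case $\lfloor R_d\rfloor = \rho^{\ell_d}$ the two vectors coincide; one then applies the same greedy-plus-clamp reduction using $\rho^{\ell_d}\vc{e}_d$ itself and the same base-$\rho$ overflow for the remainder into the \emph{next} position $\rho^{\ell_d-1}\vc{e}_d$, with the identical estimates — this needs $\ell_d \ge 1$, which holds once $R_d \ge 2$; the cases $1 \le R_d < 2$ are trivial or may be excluded.) Assembling the coordinates yields $\vc{v} = \sum_{\vc{u}\in\mc{U}} c_{\vc{u}}\vc{u}$ with every $|c_{\vc{u}}| \le M$, i.e.\ $\mc{U}$ is $(1,\bB n,\vc{R})$-generating.

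The one genuinely delicate point is ensuring that \emph{every} coefficient stays below $\|\vc{v}\|_{\vc{R}} + \bB n$: both the leading coefficient at scale $\approx R_d$ and, above all, the ``overflow'' coefficient $c_{\ell_d}$ at the top power $\rho^{\ell_d}$, whose size is governed by $r/\rho^{\ell_d}$. It is precisely to control these two coefficients that we take $\rho$ of order $\bB n$ (keeping the number of scales $O(C)$ while keeping the digits below $\bB n$) and include the auxiliary full-scale vectors $\lfloor R_d\rfloor\vc{e}_d$ together with the clamping of $c^*$; everything else is routine bookkeeping.
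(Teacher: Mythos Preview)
Your proof is correct and takes essentially the same approach as the paper: per-coordinate powers of a base of order $\bB n$ together with a full-scale vector near $R_d\vc{e}_d$, then a mixed-radix expansion. The paper's proof is a one-line construction (base $\lfloor \bB n/CD\rfloor$ and the vector $R_d\vc{e}_d$, implicitly taking $R_d$ integer); your version with base $\lceil \bB n\rceil$, the vector $\lfloor R_d\rfloor\vc{e}_d$, and the clamping device supplies the verification the paper omits, but the idea is the same.
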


\nib{Proof.}
Let $\mc{U}$ be the set of all $\vc{u}=(u_1,\dots,u_D)$ such that 
for some $d \in [D]$ we have $u_{d'}=0$ for all $d' \ne d$ and $u_d = R_d$ 
or $u_d = \bfl{\bB n/CD}^{a_d} \in [R_d]$ for some integer $a_d \ge 0$. \qed

\begin{lemma} \label{uctg3}
Let $0 < n^{-1} \ll \lambda \ll \dD \ll \gG, \kK, k^{-1}, D^{-1}, C^{-1} < 1$
and $\vc{R} \in \mb{R}^D$ with $\max_d R_d < n^C$.
Suppose
\begin{enumerate}
\item $\mc{V} = (\vc{v}^i_j)$ is an  $\vc{R}$-bounded
$\gG$-robustly $(\vc{R},k)$-generating $(n,J)$-array in $\mb{Z}^D$, 
\item ${\mu }_{\bf p}$ is a $\kappa$-bounded product measure on $J^n$ 
 with $\| {\mathbb E}{\cal V}({\bf x}) - {\bf  w}\|_{\bf R} \leq \lambda n$,
 where $\vc{w} \in \mb{Z}^D$.
\end{enumerate}
Then $\log_2 |(J^n)^{\mc{V}}_{\vc{w}}| \geq H(\mu_{\vc{p}}) - \dD n$. 

In particular, if $\mu_{\vc{p}} = \mu_{\vc{p}^{\mc{V}}_{\vc{w}}}$ is $\kK$-bounded 
then $\log _2 |(J^n)^{\cal V}_{\bf w}| = H({\mu }_{\bf p}) \pm \delta n$.
\end{lemma}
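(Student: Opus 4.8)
The plan is to show that solutions of $\mc{V}(\vc{x}) = \vc{w}$ are abundant by sampling $\vc{x} \sim \mu_{\vc{p}}$, then correcting the (small) discrepancy $\vc{w} - \mc{V}(\vc{x})$ using the robust generating property of $\mc{V}$. First I would fix a $(1,\bB n, \vc{R})$-generating $\vc{R}$-bounded set $\mc{U} \sub \mb{Z}^D$ of bounded size via Lemma \ref{u}; this lets us express any target correction vector as a short integer combination of vectors in $\mc{U}$, with coefficients bounded linearly in the $\vc{R}$-norm of the correction. Next, applying Lemma \ref{vchern} (vector Chernoff) to $\vc{a} \sim \mu_{\vc{p}}$, with probability at least $1 - 2D e^{-\lambda^2 n/8} \ge 1 - (1-\dD)^n$ (say), the random sum $\mc{V}(\vc{a})$ lands within $\vc{R}$-distance $O(\sqrt{n \log n})$, and certainly within $2\lambda n$, of $\vc{w}$. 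So call $\vc{x} \in J^n$ \emph{good} if $\|\mc{V}(\vc{x}) - \vc{w}\|_{\vc{R}} \le 2\lambda n$; good strings carry $\mu_{\vc{p}}$-measure $\ge 1 - (1-\dD)^n$.

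The core step is a \emph{correction map}. For each good $\vc{x}$ the correction vector $\vc{v} := \vc{w} - \mc{V}(\vc{x})$ has $\|\vc{v}\|_{\vc{R}} \le 2\lambda n$, so by the $(1,\bB n,\vc{R})$-generating property $\vc{v} = \sum_{\vc{u} \in \mc{U}} c_{\vc{u}}\vc{u}$ with $\sum_{\vc{u}}|c_{\vc{u}}| \le |\mc{U}|(2\lambda n + \bB n) \le \gG n /2$ (using $\lambda, \dD \ll \gG$ and $\bB$ chosen small enough). Since $\mc{V}$ is $\gG$-robustly $(\vc{R},k)$-generating, I can process the terms $c_{\vc{u}}\vc{u}$ one unit at a time: maintaining a frozen set $T$ of already-used coordinates with $|T| \le k \cdot \gG n/2 \le \gG n$ throughout, each unit $\pm \vc{u}$ (recall $\|\vc{u}\|_{\vc{R}} \le 1$) can be realised by flipping the values $j_i \to j'_i$ on a fresh set $S$ of at most $k$ coordinates outside $T$, and then $S$ is added to $T$. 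The upshot: from $\vc{x}$ we reach a string $\vc{x}' \in (J^n)^{\mc{V}}_{\vc{w}}$ by modifying at most $k\gG n/2 \le \gG n$ coordinates. The number of strings $\vc{x}'$ reachable from a fixed $\vc{x}'$ in this way — i.e.\ the fibre size of the map $\vc{x} \mapsto \vc{x}'$ — is crudely at most $\binom{n}{\le \gG n} |J|^{\gG n} \le (1-\dD)^{-n}$ when $\gG$ is small relative to $\dD$ (here I use $\dD \ll \gG$... more carefully, I should arrange the quantifier order so that the fibre bound, which depends on $\gG, k, |J|$, is $\le 2^{\dD n/2}$; since $\dD$ is chosen \emph{after} $\gG, \kK, k$ in the hierarchy, this is fine). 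Therefore
\[
|(J^n)^{\mc{V}}_{\vc{w}}| \ge 2^{-\dD n/2} \sum_{\vc{x} \text{ good}} \mu_{\vc{p}}(\vc{x}) \cdot 2^{-\log_2 \mu_{\vc{p}}(\vc{x})},
\]
and restricting the sum to good $\vc{x}$ that additionally satisfy $-\log_2 \mu_{\vc{p}}(\vc{x}) \ge H(\mu_{\vc{p}}) - \dD n/4$ — which by Lemma \ref{uctg1} (with $\eta \le \dD/4$) is a set of measure $\ge 1 - (1-\eta^3)^n$, still $\ge \tfrac12$ — gives each surviving $\vc{x}$ weight $\ge 2^{H(\mu_{\vc{p}}) - \dD n/4}$, so $|(J^n)^{\mc{V}}_{\vc{w}}| \ge \tfrac12 \cdot 2^{-\dD n/2} \cdot 2^{H(\mu_{\vc{p}}) - \dD n/4} \ge 2^{H(\mu_{\vc{p}}) - \dD n}$.

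The main obstacle is bookkeeping the quantifier hierarchy so that the fibre-size overcount (a factor exponential in $\gG n$ with base depending on $|J|$ and $k$) is dominated by $2^{\dD n}$: this forces $\gG$ to have been chosen before $\dD$, which is consistent with the stated ordering $\lambda \ll \dD \ll \gG$. One must also be slightly careful that the robust generating steps never run out of room — the frozen set never exceeds $\gG n$ because the total number of unit corrections is $\le \gG n/(2k)$ and each consumes $\le k$ coordinates — and that $\lambda \ll \dD$ ensures the "$2\lambda n$" slack used in the correction argument is absorbable. Finally, the "in particular" clause is immediate: combine the just-proved lower bound $\log_2 |(J^n)^{\mc{V}}_{\vc{w}}| \ge H(\mu_{\vc{p}}) - \dD n$ with the upper bound $\log_2 |(J^n)^{\mc{V}}_{\vc{w}}| \le H(\mu_{\vc{p}})$ from Lemma \ref{uctg2}, noting that when $\mu_{\vc{p}} = \mu_{\vc{p}^{\mc{V}}_{\vc{w}}}$ we automatically have $\mc{V}(\mu_{\vc{p}}) = \vc{w}$, so hypothesis (ii) holds with $\lambda = 0$.
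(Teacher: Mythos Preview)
Your overall strategy matches the paper's: sample $\vc{x} \sim \mu_{\vc{p}}$, show the discrepancy $\vc{w} - \mc{V}(\vc{x})$ is small with high probability, correct it using robust generation, and bound fibres of the correction map. However, the correction step as you have written it has a genuine gap.

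The robust-generating property (Definition~\ref{robgen}) promises, for any $\vc{v}$ with $\|\vc{v}\|_{\vc{R}} \le 1$ and any frozen set $T$, the existence of $S \sub [n]\setminus T$ and values $j_i, j'_i$ with $\vc{v} = \sum_{i \in S}(\vc{v}^i_{j_i} - \vc{v}^i_{j'_i})$. But to realise the change $\vc{v}$ by editing your current string $\vc{x}$, you would need $x_i = j'_i$ for each $i \in S$, so that overwriting $x_i \leftarrow j_i$ produces exactly that shift. Nothing in the definition guarantees this alignment. If you simply overwrite regardless, the actual change in $\mc{V}(\vc{x})$ is $\sum_{i \in S}(\vc{v}^i_{j_i} - \vc{v}^i_{x_i})$, which can differ from $\vc{v}$ by a vector of $\vc{R}$-norm up to $2k$ per step, and these errors do not telescope or shrink under iteration. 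The paper handles this by first fixing, \emph{before} sampling $\vc{x}$, a large supply of disjoint gadget sets $S_{mt}$ (with their $j_i, j'_i$) for each $\vc{u}_m \in \mc{U}$, and then using the $\kK$-boundedness of $\mu_{\vc{p}}$ together with Chernoff to show that with high probability, for each $m$, at least $\kK^k \gG n/(2kM)$ of the gadgets are ``primed'' in the sense that $x_i = j_i$ (respectively $j'_i$) for all $i \in S_{mt}$; only primed gadgets are then flipped. Your argument never invokes $\kK$-boundedness in the correction step, which is precisely where it is needed.

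There is also a secondary bookkeeping tangle in your fibre bound. You bound the number of modified coordinates by $\gG n$ and then want $\tbinom{n}{\le \gG n}|J|^{\gG n} \le 2^{\dD n/2}$, justifying this by ``$\dD$ is chosen after $\gG$''. But $\dD \ll \gG$ means $\dD$ is \emph{small} relative to $\gG$, so that inequality goes the wrong way. The salvage is that the number of modified coordinates is actually governed by $k|\mc{U}|(2\lambda + \bB)n$, and since both $\lambda$ and the parameter $\bB$ from Lemma~\ref{u} may be taken $\ll \dD$, you can (as the paper does) make the Hamming distance at most $\dD^2 n$ rather than $\gG n$; the bound $\gG n$ is needed only to keep the frozen set within the scope of robust generation, not for the fibre count.
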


\nib{Proof.}
We first note that the final statement of the lemma 
follows from the first: the latter gives the lower bound, 
as ${\mathbb E}{\cal V}({\bf x}) = {\bf w}$
when ${\bf x} \sim \mu_{\vc{p}^{\mc{V}}_{\vc{w}}}$,
and the upper bound follows from Lemma \ref{uctg2}.

It remains to prove the first statement of the lemma.
Let $\mc{F}$ be the set of $\vc{x} \in J^n$ 
such that there is $\vc{x}' \in (J^n)^{\mc{V}}_{\vc{w}}$ with Hamming distance
$d(\vc{x},\vc{x}') < \dD^2 n$. We claim that $\mu_{\vc{p}}(\mc{F}) > 1/2$.

First we assume the claim and deduce the lower bound.
By double-counting pairs $(\vc{x},\vc{x}')$ where $\vc{x} \in \mc{F}$
and $\vc{x}' \in (J^n)^{\mc{V}}_{\vc{w}}$ with $d(\vc{x},\vc{x}') < \dD^2 n$
we have $|\mc{F}| \le | (J^n)^{\mc{V}}_{\vc{w}}| \tbinom{n}{\dD^2 n}|J|^{\dD ^2 n}$, and as $|J| \leq \kappa ^{-1} \ll \dD ^{-1}$ this gives
$\log_2 |(J^n)^{\mc{V}}_{\vc{w}}| \ge \log_2 |\mc{F}| - \dD^{3/2} n$.
Now consider $\mc{F}' = \{ \vc{x} \in \mc{F}: 
\log_2 \mu_{\vc{p}}(\vc{x}) \le -H(\mu_{\vc{p}}) + \dD^2 n \}$. Note that 
$\log_2 \mu_{\vc{p}}(\mc{F}') \le \log_2 |\mc{F}'| - H(\mu_{\vc{p}}) + \dD^2 n $, 
and by Lemma \ref{uctg1} and the claim we have $\mu_{\vc{p}}(\mc{F}') > 1/4$.
Thus $\log_2  |\mc{F}'| \ge H(\mu_{\vc{p}}) - \dD^2 n - 2$, 
so $\log_2 |(J^n)^{\mc{V}}_{\vc{w}}| \ge H(\mu_{\vc{p}}) - \dD n$.

To prove the claim, we consider $\vc{x} \sim \mu_{\vc{p}}$
and show that with probability at least $1/2$ there is  
$\vc{x}' \in (J^n)^{\mc{V}}_{\vc{w}}$ with $d(\vc{x},\vc{x}') < \dD^2 n$.
Let $\mc{B}_1$ be the event that 
$\|\mc{V}(\vc{x})-\vc{w}\|_{\vc{R}} \ge \dD^3 n$. 
If $\mc{B}_1$ holds, by the triangle inequality 
$\|\mc{V}(\vc{x}) - \mb{E}\mc{V}(\vc{x})\|_{\vc{R}} 
\geq \dD^3n - \lambda n \geq \dD^3n/2$ 
and so ${\mathbb P}(\mc{B}_1) \le 2De^{-\dD^6 n/2^7}$ by Lemma \ref{vchern}.
Next, by Lemma \ref{u} we can fix some $(1,\dD^2 n,\vc{R})$-generating $\vc{R}$-bounded
$\mc{U} =\{\vc{u}_1,\dots,\vc{u}_M\} \sub \mb{Z}^D$ with $M \le D(C+2)$.
By repeatedly applying Definition \ref{robgen}, we can choose pairwise disjoint 
$S_{mt} \sub [n]$ for each $m \in [M]$ and $t \in [\gG n/kM]$
with each $|S_{mj}| \le k$ and $j_i, j'_i  \in J$ for all $i \in S_{mt}$ such that
$\vc{u}_m = \sum_{i \in S_{mt}} (\vc{v}^i_{j_i}-\vc{v}^i_{j'_i})$.
Let $\mc{B}_2$ be the event that for some $m$ we have
$|\{t: x_i = j_i \ \forall i \in S_{mt}\}| < \kK^k \gG n/2kM$
or $|\{t: x_i = j'_i \ \forall i \in S_{mt}\}| < \kK^k \gG n/2kM$.
Then $\mb{P}(\mc{B}_2) < e^{-\dD n}$ by Chernoff's inequality.

Thus with probability at least $1 - e^{-\dD^7n} > 1/2$
neither $\mc{B}_1$ or $\mc{B}_2$ holds for $\vc{x}$.
As $\mc{U}$ is $(1,\dD^3 n,\vc{R})$-generating and 
$\|\mc{V}(\vc{x})-\vc{w}\|_{\vc{R}} < \dD^3 n$, we have
$\mc{V}(\vc{x})-\vc{w} = \sum_{m \in [M]} c_m \vc{u}_m$, with each 
$c_m \in \mb{Z}$ with $|c_m| \le 2k \dD^3 n$.
Now we modify $\vc{x}$ to obtain $\vc{x}'$, where for each $m \in [M]$,
if $c_m>0$ we fix $c_m$ values of $t$ such that $x_i = j_i$
for all $i \in S_{mt}$ and let $x'_i = j'_i$ for all such $i$,
and if $c_m<0$ we fix $c_m$ values of $t$ such that $x_i = j'_i$
for all $i \in S_{mt}$ and let $x'_i = j_i$ for all such $i$.
Then $\mc{V}(\vc{x}')=\vc{w}$, i.e.\ $\vc{x}' \in (J^n)^{\mc{V}}_{\vc{w}}$,
and $d(\vc{x},\vc{x}') < k\sum_{m \in [M]} |c_m| < \dD^2 n$.
This completes the proof of the claim, and so of the lemma.
\qed

\medskip

We deduce Theorem \ref{ldp}, which states that under the hypotheses
of the above lemmas, we have $\mu_{\vc{p}} \approx_\DD \nu$,
where $\mu_{\vc{p}} = \mu_{\vc{p}^{\mc{V}}_{\vc{w}}}$,
and $\nu$ is the uniform distribution on $\DD_n := (J^n)^{\mc{V}}_{\vc{w}}$.

\medskip

\nib{Proof of Theorem \ref{ldp}.}
Let $0 < n^{-1} \ll \dD \ll \eps \ll \gG, \kK, k^{-1}, D^{-1}, C^{-1} < 1$.
Note that $\nu(\vc{x}) = |(J^n)^{\mc{V}}_{\vc{w}}|^{-1}$ 
for all $\vc{x} \in (J^n)^{\mc{V}}_{\vc{w}}$,
and $\log_2 |(J^n)^{\mc{V}}_{\vc{w}}| = H(\mu_{\vc{p}}) \pm \dD n$ 
by Lemma \ref{uctg3}.

Consider $\mc{C} = \{ \vc{x} \in \DD_n: \mu_{\vc{p}}(\vc{x}) < (1-\eps)^n \nu(\vc{x})\}$. 
If there is $\vc{x} \in \mc{C}$ then $\log_2 \mu_{\vc{p}}(\vc{x}) 
< -\log_2 |(J^n)^{\mc{V}}_{\vc{w}}| - \eps n < H({\mu }_{\bf p})$, 
contradicts Lemma \ref{uctg2}, so $\mc{C} = \emptyset $. 
Thus $\nu \lesssim_\DD \mu_{\vc{p}}$ by Lemma \ref{ctg1}.

Now consider $\mc{C}' = \{ \vc{x} \in \DD_n: 
\nu(\vc{x}) < (1-\eps)^n \mu_{\vc{p}}(\vc{x})\}$.
For any $\vc{x} \in \mc{C}'$ we have
$\log_2 \mu_{\vc{p}}(\vc{x}) > -\log_2 |(J^n)^{\mc{V}}_{\vc{w}}| + \eps n$,
so $\log_2 \mu_{\vc{p}}(\mc{C'}) < - \eps^3 n < -\dD n$ by Lemma \ref{uctg1},
i.e.\ $\mu_{\vc{p}} \lesssim_\DD \nu$. \qed

\section{Boundedness, feasibility
and universal VC-dimension} \label{sec:vc}

In this section we will give several combinatorial characterisations
of the boundedness condition on maximum entropy measures 
required in our probabilistic intersection theorem.
The characterisations hold
under the following `multiscale general position' assumption,
which extends Definition \ref{robust gen} to all finite alphabets
(by `multiscale' we mean that the parameter $\gG$ can be arbitrary,
which is true of the Kalai vectors).

\begin{dfn} (robustly generic)
Suppose $\mc{V} = (\vc{v}^i_j)$ is an $(n,J)$-array in $\mb{Z}^D$. 
Let $I \in \tbinom{[n]}{D}$ and $\vc{c} = (\vc{c}^i: i \in I)$
with $\vc{c}^i \in \mb{R}^J$ and $\sum_{j \in J} c^i_j = 0$ 
for all $i \in I$ and $j \in J$. 

We say that $(I,\vc{c})$
is $(\gG,\vc{R})$-generic for $\mc{V}$ if all $|c^i_j| \le \gG^{-1}$,
and writing $\vc{w}^i = \sum_{j \in J} c^i_j \vc{v}^i_j$
and $W = (w^i_d: i \in I, d \in [D])$, 
we have $|\det(W)| \ge \gG \prod_{d \in [D]} R_d$.

We say that $\mc{V}$ is $\gG'$-robustly $(\gG,\vc{R})$-generic
if for any $X \sub [n]$ with $|X| > \gG' n$
there is some $(I,\vc{c})$ with $I \sub X$
that is $(\gG,\vc{R})$-generic for $\mc{V}$.

We say that a sequence $(\mc{V}_n,\vc{R}_n)$ 
of $(n,J)$-arrays and scalings is robustly generic 
if $\mc{V}_n$ is $\gG'$-robustly $(\gG,\vc{R}_n)$-generic
whenever $n^{-1} \ll \gG \ll \gG'$. 
\end{dfn}

It will also be convenient to use the following
sequence formulation of Definition \ref{robgen}.

\begin{dfn}
We say that $(\mc{V}_n,\vc{R}_n)$ 
is robustly generating 
if there are $\gG>0$ and $k,n_0 \in \mb{N}$ such that
$\mc{V}_n$ is $\gG$-robustly $(\vc{R}_n,k)$-generating
for all $n>n_0$.
\end{dfn}

Next we will define the combinatorial conditions
that appear in our characterisation.
We recall the definition of VC-dimension 
and also define a universal variant
that will be important in the
proof of Theorem \ref{thm: trichotomy}
in section \ref{sec:trichotomy}.

\begin{dfn} \label{uvc}
We say that $\mc{A} \sub J^n$ shatters $X \sub [n]$ 
if for any $(j_x: x \in X) \in J^X$ 
there is $\vc{a} \in \mc{A}$ with $a_x=j_x$ for all $x \in X$.

The VC-dimension $\dim_{VC}({\cal A})$ of ${\cal A}$ 
is the largest natural $k$ such that ${\cal A}$ 
shatters {\em some} subset of $[n]$ of size $k$.

The universal VC-dimension $\dim_{UVC}({\cal A})$ of ${\cal A}$ 
is the largest natural $k$ such that ${\cal A}$ 
shatters {\em every} subset of $[n]$ of size $k$.
\end{dfn}

Next we give a feasibility condition,
which can be informally understood
as saying that we can solve
any small perturbation of the equation
$\mc{V}_n(\vc{x}) = {\bf z}_n$.

\begin{dfn}
Let $(\mc{V}_n,{\bf R}_n,{\bf z}_n)$ be a sequence
of $(n,J)$-arrays, scalings and vectors in $\mb{Z}^D$.
We say $(\mc{V}_n,{\bf R}_n,{\bf z}_n)$ is $\lL$-feasible
if there is $n_0$ such that for any $n>n_0$,
any ${\bf z}'_n \in \mb{Z}^D$ with 
$\|{\bf z}'_n-{\bf z}_n\|_{{\bf R}_n} \le \lL n$, and
any $(n',J)$-array $\mc{V}'_{n'}$ 
obtained from $\mc{V}_n$ by
deleting at most $\lL n$ co-ordinates,
we have $(J^{n'})^{\mc{V}'_{n'}}_{{\bf z}'_n} \ne 0$.
\end{dfn}

Our final property appears to be a substantial weakening
of our $\kK$-boundedness condition, so it is quite
surprising that it also gives a characterisation.

\begin{dfn}
Suppose $\mu_{\bf p}$ is a product measure on $J^n$.
We say that $\mu_{\bf p}$ is $\kK$-dense if
there are at least $\kK n$ coordinates $i \in [n]$
such that $p^i_j \ge \kK$ for all $j \in J$.
\end{dfn}

Now we can state the main theorem of this section.
The sense of the equivalences in the statement is that 
the implied constants are bounded away from zero together. 
For example, the implication $ii \Ra i$ means that
for any $\dD>0$ there is $\eps>0$ such that
if $\mu^{\mc{V}_n}_{{\bf z}_n}$ is $\dD$-dense
then $\mu^{\mc{V}_n}_{{\bf z}_n}$ is $\eps$-bounded. 

\begin{theo} \label{tfae}
Let $(\mc{V}_n,{\bf R}_n)$ be a robustly generic
and robustly generating sequence
of $(n,J)$-arrays and scalings in $\mb{Z}^D$,
and $({\bf z}_n)$ a sequence of vectors in $\mb{Z}^D$.
The following are equivalent:
\begin{enumerate}
\item $\mu^{\mc{V}_n}_{{\bf z}_n}$ is $\OO(1)$-bounded.
\item $\mu^{\mc{V}_n}_{{\bf z}_n}$ is $\OO(1)$-dense.
\item $\dim_{VC}((J^n)^{\mc{V}_n}_{{\bf z}_n})=\OO(n)$.
\item $\dim_{UVC}((J^n)^{\mc{V}_n}_{{\bf z}_n})=\OO(n)$.
\item $(\mc{V}_n,{\bf R}_n,{\bf z}_n)$ is $\OO(1)$-feasible.
\end{enumerate}
\end{theo}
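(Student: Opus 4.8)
The plan is to prove the five conditions equivalent via the cycle $(i)\Ra(v)\Ra(iv)\Ra(iii)\Ra(ii)\Ra(i)$, tracking at each step how the output constant depends on the input so that all remain bounded away from zero. (Throughout we use that $\mc{V}_n$ is ${\bf R}_n$-bounded, as is implicit in the hypotheses.) Two steps are immediate. First, $(iv)\Ra(iii)$: a family that shatters every subset of $[n]$ of a given size in particular shatters one, so $\dim_{VC}\ge\dim_{UVC}$. Second, $(v)\Ra(iv)$: if $(\mc{V}_n,{\bf R}_n,{\bf z}_n)$ is $\lL$-feasible then every $Z\sub[n]$ with $|Z|\le\lL n$ is shattered by $\mc{A}:=(J^n)^{\mc{V}_n}_{{\bf z}_n}$, since for $\rho\in J^Z$ the target ${\bf z}':={\bf z}_n-\sum_{i\in Z}{\bf v}^i_{\rho_i}$ satisfies $\|{\bf z}'-{\bf z}_n\|_{{\bf R}_n}\le|Z|\le\lL n$ by ${\bf R}_n$-boundedness, so feasibility produces some ${\bf b}\in(J^{[n]\sm Z})^{\mc{V}_n|_{[n]\sm Z}}_{{\bf z}'}$, and ${\bf b}$ extended by $\rho$ on $Z$ lies in $\mc{A}$ and restricts to $\rho$; hence $\dim_{UVC}(\mc{A})\ge\lL n$. (The trivial $(i)\Ra(ii)$, a $\kK$-bounded product measure being $\kK$-dense, is subsumed by the cycle.)

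For $(i)\Ra(v)$, suppose $\mu^{\mc{V}_n}_{{\bf z}_n}=\mu_{\bf p}$ is $\kK$-bounded. Given a target ${\bf z}'$ within ${\bf R}_n$-distance $\lL n$ of ${\bf z}_n$ and the array $\mc{V}'$ obtained by deleting a set $Z$ with $|Z|\le\lL n$, the restricted product measure $\mu':=\mu_{\bf p}|_{[n]\sm Z}$ is still $\kK$-bounded, satisfies $\|\mc{V}'(\mu')-{\bf z}'\|_{{\bf R}_n}=O(\lL n)$, and $\mc{V}'$ inherits the robust generating property of $\mc{V}_n$ with essentially unchanged constants. Taking $\lL$ small relative to the generating constant and to $\kK$ (and using the polynomial bound on $\|{\bf R}_n\|$), Lemma~\ref{uctg3} gives $\log_2|(J^{[n]\sm Z})^{\mc{V}'}_{{\bf z}'}|\ge H(\mu')-\dD n$, which is positive since $\kK$-boundedness of $\mu'$ forces $H(\mu')$ to be a positive ($\kK$-dependent) multiple of $n$; in particular the fibre is non-empty, so $(\mc{V}_n,{\bf R}_n,{\bf z}_n)$ is $\lL$-feasible.

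For $(iii)\Ra(ii)$, let $X\sub[n]$ with $|X|=cn$ be shattered by $\mc{A}$ (we may take $c\le\tfrac12$, as subsets of shattered sets are shattered), and set $\kK:=\min(c/2,|J|^{-2/c})>0$. If $\mu^{\mc{V}_n}_{{\bf z}_n}=\mu_{\bf p}$ were not $\kK$-dense, then fewer than $\kK n$ coordinates would have all $p^i_j\ge\kK$, so more than $cn-\kK n\ge cn/2$ coordinates $i\in X$ would carry a value $\rho_i$ with $p^i_{\rho_i}<\kK$. Choosing such a set $Z\sub X$ of size $\lceil cn/2\rceil$ and extending $(\rho_i)_{i\in Z}$ arbitrarily to $J^X$, shattering produces ${\bf a}\in\mc{A}$ agreeing with it on $X$, so $\mu_{\bf p}({\bf a})\le\prod_{i\in Z}p^i_{\rho_i}<\kK^{cn/2}$. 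On the other hand Lemma~\ref{uctg2} gives $\mu_{\bf p}({\bf a})\ge 2^{-H(\mu_{\bf p})}$, whence $H(\mu_{\bf p})>(cn/2)\log_2(1/\kK)\ge n\log_2|J|$, contradicting subadditivity of entropy (Lemma~\ref{maxent}). Hence $\mu^{\mc{V}_n}_{{\bf z}_n}$ is $\kK$-dense.

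The final implication $(ii)\Ra(i)$ is the crux and the only place robust genericity is used. Suppose $\mu^{\mc{V}_n}_{{\bf z}_n}=\mu_{\bf p}$ is $\dD$-dense but not $\eps$-bounded; passing to a complementary value if some $p^i_j\ge 1-\eps$, we may fix $i_0,j_0$ with $p^{i_0}_{j_0}<\eps$ and a value $j_0'$ at $i_0$ with $p^{i_0}_{j_0'}\ge 1/|J|$. The $\dD$-dense set $G=\{i:p^i_j\ge\dD\text{ for all }j\}$ has $|G|\ge\dD n$, so taking $\gG'=\dD/2$ and $\gG$ a suitable positive function of $\dD$, robust genericity yields a $(\gG,{\bf R}_n)$-generic pair $(I,{\bf c})$ with $I\sub G$; write ${\bf w}^i=\sum_j c^i_j{\bf v}^i_j$ and let $W$ be the $D\times D$ matrix with these columns, so $|\det W|\ge\gG\prod_d R_d$. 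I perturb $\mu_{\bf p}$ by increasing $p^{i_0}_{j_0}$ by a small $\eta>0$, decreasing $p^{i_0}_{j_0'}$ by $\eta$, and changing $p^i_j\mapsto p^i_j+s_ic^i_j$ on $I$ (which preserves row sums since $\sum_jc^i_j=0$), where ${\bf s}$ solves $W{\bf s}=-\eta({\bf v}^{i_0}_{j_0}-{\bf v}^{i_0}_{j_0'})$, making the net change to $\mc{V}_n(\mu)$ vanish. Cramer's rule bounds $|s_i|\le C\eta$ with $C=C(D,|J|,\gG)$ — crucially the factor $\prod_d R_d$ cancels between numerator and denominator, so the bound is insensitive to the possibly wildly different scales $R_1,\dots,R_D$. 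Since $I\sub G$, all probabilities there lie in $[\dD,1-\dD]$, so for $\eta$ small this is a legitimate member of $\mc{M}^{\mc{V}_n}_{{\bf z}_n}$. Comparing entropies: by concavity of $L$ the gain at $(i_0,j_0)$ is at least $\eta L'(p^{i_0}_{j_0}+\eta)$, which is bounded below by a positive absolute constant times $\eta\log_2(1/\eps)$ and hence unbounded as $\eps\to 0$, whereas the losses at $(i_0,j_0')$ and across $I$ are $O(\eta)$ with constant depending on $D,|J|,\gG,\dD$ but not on $\eps$. Thus for $\eps$ small in terms of $\dD,D,|J|$ the entropy strictly increases, contradicting maximality; so $\mu^{\mc{V}_n}_{{\bf z}_n}$ is $\eps$-bounded. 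I expect this last step to be the main obstacle: one must locate a usable generic block inside the merely dense set $G$ and keep the determinantal bound on ${\bf s}$ strong enough — uniformly over the multiple scales — to beat the entropy loss, which is precisely what forces the `multiscale general position' hypothesis.
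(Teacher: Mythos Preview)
Your proposal is correct and follows essentially the same route as the paper: the identical cycle $(i)\Ra(v)\Ra(iv)\Ra(iii)\Ra(ii)\Ra(i)$, with the same arguments at each step (Lemma~\ref{uctg3} for $(i)\Ra(v)$, the deletion/extension argument for $(v)\Ra(iv)$, the entropy/shattering contradiction for $(iii)\Ra(ii)$ --- the paper phrases this via Lemma~\ref{perturb} rather than the equivalent Lemma~\ref{uctg2} --- and the Cramer's-rule perturbation for $(ii)\Ra(i)$). Your identification of $(ii)\Ra(i)$ as the crux, with the $\prod_d R_d$ cancellation explaining why the multiscale genericity hypothesis is needed, matches the paper's Lemma~\ref{bounded.ii} exactly.
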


The main step in the proof of Theorem \ref{tfae} is Lemma \ref{bounded},
which provides the implication $iii \Ra i$.
It also implies Lemma \ref{binary bounded}, as for binary vectors
the following coarse version of the Sauer-Shelah theorem
shows that linear VC-dimension is equivalent to exponential growth.

\begin{lemma} \cite{Sa,Sh} \label{SS} 
For $\mc{A} \sub \{0,1\}^n$ we have
$\dim_{VC}(\mc{A}) = \OO(n) \Lra \log_2 |\mc{A}| = \OO(n)$.
\end{lemma}

\begin{lemma} \label{bounded}
Let $0 < n^{-1} \ll \kK \ll \gG, \gG' \ll \lambda \ll D^{-1}, |J|^{-1}$. 
Suppose $\mc{V} = (\vc{v}^i_j)$ is an $\vc{R}$-bounded $\gG'$-robustly 
$(\gG,\vc{R})$-generic $(n,J)$-array in $\mb{Z}^D$.
If $\dim_{VC}((J^n)^{\mc{V}}_{\vc{w}}) \ge \lL n$
then $\mu^{\mc{V}}_{\vc{w}}$ is $\kK$-bounded.
\end{lemma}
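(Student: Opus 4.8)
The plan is to prove the contrapositive: if $\mu^{\mc{V}}_{\vc{w}}$ is not $\kK$-bounded, then $\dim_{VC}((J^n)^{\mc{V}}_{\vc{w}}) < \lL n$. So suppose $\mu_{\vc{p}} = \mu_{\vc{p}^{\mc{V}}_{\vc{w}}}$ has some coordinate $i_0$ and value $j_0 \in J$ with $p^{i_0}_{j_0} < \kK$; in fact, since it suffices for the argument, let $X_{\text{small}} = \{ i \in [n] : p^i_j < \kK \text{ for some } j \in J\}$ be the set of all "unbalanced" coordinates, and note that if $|X_{\text{small}}| \le \gG' n$ then on $[n] \sm X_{\text{small}}$ the measure is $\kK$-dense in a strong sense; but we are assuming non-boundedness, and the delicate case is when $X_{\text{small}}$ is large, say $|X_{\text{small}}| > \gG' n$. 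First I would use the Lagrange/Boltzmann description of $\mu_{\vc{p}}$ (Lemma \ref{maxent2}): there is $\blL \in \mb{R}^D$ with $p^i_j \propto 2^{\langle \blL, \vc{v}^i_j\rangle}$, so $p^i_j$ being small forces $\langle \blL, \vc{v}^i_j - \vc{v}^i_{j'}\rangle$ to be large (of order $\log(1/\kK)$) for the dominant value $j'$ at coordinate $i$. Thus $\|\blL\|$ must be reasonably large — at least something like $\log(1/\kK)/(\text{poly in } \|\vc{R}\|)$ — to sustain many unbalanced coordinates.

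The heart of the argument is then to convert a large multiplier $\blL$ into an upper bound on the VC-dimension. The key step: I would show that $(J^n)^{\mc{V}}_{\vc{w}}$ cannot shatter any set $X$ of size $\lL n$. Suppose for contradiction that $X$, $|X| = \lL n$, is shattered. Since $\mc{V}$ is $\gG'$-robustly $(\gG,\vc{R})$-generic and $\lL \gg \gG'$, there is $(I,\vc{c})$ with $I \sub X$ that is $(\gG,\vc{R})$-generic, so the vectors $\vc{w}^i = \sum_j c^i_j \vc{v}^i_j$ ($i \in I$) span $\mb{R}^D$ with $|\det(W)| \ge \gG \prod_d R_d$. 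Because $X$ is shattered, for each choice of "pattern" on $I$ we can find $\vc{a} \in (J^n)^{\mc{V}}_{\vc{w}}$ realizing it; comparing two elements of $(J^n)^{\mc{V}}_{\vc{w}}$ that differ only on $I$ gives $\sum_{i \in I}(\vc{v}^i_{a_i} - \vc{v}^i_{a'_i}) = 0$ while $a_i, a'_i$ range freely. Using the $c^i_j$ as a suitable combination over the shattered patterns (averaging over all $J^I$-assignments with weights $c^i_j$), I would extract that $\sum_{i \in I} \vc{w}^i \cdot (\text{something}) = 0$ forces a nontrivial linear relation, contradicting $|\det(W)|$ being bounded away from $0$ — unless the measure $\mu_{\vc{p}}$ is in fact $\kK$-bounded on $I$, i.e. every $p^i_j$ with $i \in I$ is at least $\kK$. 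Running this for every shattered $X$ and using robust genericity, I conclude $\{i : p^i_j \ge \kK \ \forall j\}$ has size $> n - \lL n$, in particular $\mu_{\vc{p}}$ is $\kK$-bounded (with a slightly adjusted constant).

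More carefully, the mechanism linking shattering to boundedness should go through an entropy/perturbation argument rather than pure linear algebra: if $p^{i_0}_{j_0}$ is tiny but $i_0$ lies in a shattered set, then many elements $\vc{a} \in (J^n)^{\mc{V}}_{\vc{w}}$ have $a_{i_0}$ taking all values of $J$, and in particular there are solutions with $a_{i_0} = j_0$. I would then perturb: take such a solution $\vc{a}$, and using robust genericity of a size-$D$ generic tuple $I \ni i_0$ inside the shattered set, together with robust generating-ness to "absorb" changes elsewhere, build a new product measure $\mu_{\vc{p}'} \in \mc{M}^{\mc{V}}_{\vc{w}}$ that shifts a little mass toward these solutions — raising $p^{i_0}_{j_0}$ — while keeping $\mc{V}(\mu_{\vc{p}'}) = \vc{w}$ by compensating via the generic coordinates (the determinant bound guarantees the compensation is feasible with bounded coefficients). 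By Lemma \ref{perturbmaxent} / strict concavity of entropy (Lemma \ref{entapprox}), this would strictly increase the entropy, contradicting maximality of $\mu_{\vc{p}}$. So no shattered set of size $\lL n$ can contain a coordinate where $\mu_{\vc{p}}$ is unbalanced; combined with robust genericity ($|X_{\text{small}}| > \gG' n$ would force a generic tuple inside $X_{\text{small}}$, hence inside a shattered set if $\dim_{VC} \ge \lL n$), we get the claim.

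The main obstacle I anticipate is making the "compensation" step quantitatively clean: I need that changing values on the $D$ generic coordinates in $I$ can realize any target vector in $\mb{Z}^D$ of norm up to roughly $\|\vc{R}\|$ using bounded integer coefficients — this is exactly what $(\gG,\vc{R})$-genericity via the determinant bound, plus $\vc{R}$-boundedness, is designed to give (Cramer's rule with a lower bound on $|\det W|$ and $\max_d R_d \le n^C$), but threading it together with the shattering hypothesis and the entropy increment, while tracking that all the perturbations stay within a set of measure bounded away from zero so that Chernoff-type estimates (Lemma \ref{vchern}) control the error terms, is where the real work lies. A secondary subtlety is handling the interaction between "shatters every subset" versus "shatters some subset" — here only $\dim_{VC}$ (some subset) is assumed, so I must be careful that the shattered set of size $\lL n$ I am handed is the one feeding the genericity input, and that $\lL \gg \gG, \gG'$ leaves enough room.
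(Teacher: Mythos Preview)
Your proposal contains the right ingredients but assembles them incorrectly, and there is a genuine gap.

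The linear-algebra argument in your second paragraph does not work: shattering a set $X$ means that every pattern on $X$ extends to \emph{some} solution in $(J^n)^{\mc{V}}_{\vc{w}}$, but two different patterns on $I\sub X$ will in general extend to solutions differing on all of $[n]$, not just on $I$. So you cannot conclude $\sum_{i\in I}(\vc{v}^i_{a_i}-\vc{v}^i_{a'_i})=0$, and no contradiction with $\det(W)\ne 0$ follows.

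Your third paragraph is closer to the truth but misplaces the generic set. You propose to take a generic $I$ \emph{inside the shattered set} containing the bad coordinate $i_0$, and compensate there via Cramer's rule. But for the entropy perturbation to have the correct sign you need all $p^i_j$ with $i\in I$ to be bounded away from $0$ (otherwise the second-order terms in Lemma~\ref{entapprox} swamp the first-order gain). Nothing guarantees this for $I$ inside the shattered set; indeed if $\mu_{\vc{p}}$ is not $\kK$-dense then most coordinates of the shattered set are themselves unbalanced. Your final sentence also conflates ``some set of size $\lL n$ is shattered'' (which is all $\dim_{VC}\ge\lL n$ gives) with ``$X_{\text{small}}$ lies inside a shattered set''. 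Finally, you invoke robust generating, which is not among the hypotheses of this lemma.

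The paper resolves this by splitting cleanly into two independent steps. First, VC-dimension $\ge\lL n$ implies $\mu_{\vc{p}}$ is $\kK$-\emph{dense}: if not, then at least half the shattered set $Z$ consists of coordinates $y$ with some $p^y_{j'_y}<\kK$, and shattering lets you choose a single solution $\vc{j}$ taking the rare value $j'_y$ at \emph{all} of these $y$ simultaneously; then $\mu_{\vc{p}}(\vc{j})\le\kK^{\lL n/2}$ is so small that $-\log_2\mu_{\vc{p}}(\vc{j})>H(\mu_{\vc{p}})$, and Lemma~\ref{perturb} (perturb toward the point mass $1_{\vc{j}}$) contradicts maximality. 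Second, $\lL$-dense implies $\kK$-bounded: now there \emph{is} a set $Y$ of size $\ge\lL n$ on which all $p^i_j\ge\lL$, robust genericity gives a generic $I\sub Y$, and your Cramer-rule compensation (raise $p^{i'}_{j'}$ at the single bad coordinate $i'$, cancel the change in expectation by adjusting on $I$) now works because the probabilities on $I$ are safely bounded. The point you were missing is that shattering is used only to manufacture one extremely improbable solution, not to host the generic tuple.
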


The proof of Lemma \ref{bounded} is immediate from the next two
lemmas, which give the implications $iii \Ra ii$ and $ii \Ra i$
of Theorem \ref{tfae}.

\begin{lemma} \label{bounded.i}
Let $0 < n^{-1} \ll \kK \ll \lL \ll D^{-1}, |J|^{-1}$. 
Suppose $\mc{V}$ is an $(n,J)$-array in $\mb{Z}^D$.
Let ${\bf w} \in {\mb Z}^D$ and $\vc{p} = \vc{p}^{\mc{V}}_{\vc{w}}$.
If $\dim_{VC}((J^n)^{\mc{V}}_{\vc{w}}) \ge \lambda n$
then $\mu_{\vc{p}}$ is $\kK$-dense.
\end{lemma}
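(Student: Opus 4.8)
The plan is to read the conclusion straight off the point-probability estimate for maximum entropy measures (Lemma \ref{uctg2}): shattering of a large set lets me simultaneously expose every ``bad'' coordinate at a value of tiny probability inside $(J^n)^{\mc{V}}_{\vc{w}}$, and Lemma \ref{uctg2} then forbids the resulting product of tiny probabilities from being too small. Pleasantly, this will use neither the dimension $D$ nor any genericity of $\mc{V}$.

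Concretely, I would fix $X \sub [n]$ with $|X| = \lL n$ shattered by $\mc{D} := (J^n)^{\mc{V}}_{\vc{w}}$ (using $\dim_{VC}(\mc{D}) \ge \lL n$ and the fact that a subset of a shattered set is shattered); since $\mc{D} \ne \es$, the maximum entropy measure $\mu_{\vc{p}} = \mu^{\mc{V}}_{\vc{w}}$ exists by Lemma \ref{maxent2}. Then I would argue by contradiction: if $\mu_{\vc{p}}$ is \emph{not} $\kK$-dense, then the set $G := \{i : p^i_j \ge \kK \text{ for all } j \in J\}$ has $|G| < \kK n$, so $X' := X \sm G$ has $|X'| \ge (\lL - \kK) n \ge \lL n/2$, and for each $i \in X'$ I may pick $j(i) \in J$ with $p^i_{j(i)} < \kK$.

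The crux: by shattering there is $\vc{x} \in \mc{D}$ with $x_i = j(i)$ for every $i \in X'$. Lemma \ref{uctg2} gives $\sum_{i \in [n]} \log_2 p^i_{x_i} = \log_2 \mu_{\vc{p}}(\vc{x}) \ge -H(\mu_{\vc{p}}) \ge -n\log_2|J|$, the last step because $H(\mu_{\vc{p}}) = \sum_i H(p^i_{\cdot}) \le n\log_2|J|$ (in particular every $p^i_{x_i}>0$, else the first inequality already fails). All summands are $\le 0$, so dropping those with $i \notin X'$ yields $\sum_{i \in X'} \log_2 p^i_{j(i)} \ge -n\log_2|J|$; but this sum has $|X'| \ge \lL n/2$ terms, each $< \log_2\kK < 0$, hence is $< (\lL n/2)\log_2\kK$. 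Rearranging $-n\log_2|J| < (\lL n/2)\log_2\kK$ gives $\kK > |J|^{-2/\lL}$, which contradicts the hierarchy $\kK \ll \lL \ll |J|^{-1}$ (under it $|J|^{-2/\lL}$ is a fixed positive constant, and $\kK$ is chosen small enough afterwards). So $\mu_{\vc{p}}$ must be $\kK$-dense.

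I do not expect a genuine obstacle here; the only delicate point is the constant bookkeeping. Because $1/\lL$ appears in the exponent of $|J|^{-2/\lL}$, $\kK$ must be taken roughly exponentially small in $1/\lL$, which is precisely what $\kK \ll \lL$ (with $\lL$ already small relative to $|J|^{-1}$) supplies. If one prefers, the same computation runs multiplicatively, comparing $\prod_{i \in X'} \min_j p^i_j \ge |J|^{-n}$ against $\prod_{i \in X'} \min_j p^i_j < \kK^{|X'|} \le \kK^{\lL n/2}$; the additive form above is the cleanest.
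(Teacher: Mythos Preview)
Your proof is correct and follows essentially the same approach as the paper: construct a point $\vc{x} \in (J^n)^{\mc{V}}_{\vc{w}}$ of tiny $\mu_{\vc{p}}$-probability by using shattering to set each ``bad'' coordinate to its low-probability value, then contradict the point-probability bound $-\log_2 \mu_{\vc{p}}(\vc{x}) \le H(\mu_{\vc{p}})$. The only cosmetic difference is that you invoke Lemma \ref{uctg2} directly, whereas the paper rederives that inequality on the spot via Lemma \ref{perturb}; the underlying argument and constant dependence are identical.
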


\nib{Proof.}
Fix $Z$ with $|Z| > \lL n$ 
such that $(J^n)^{\mc{V}}_{\vc{w}}$ shatters $Z$.
Suppose for a contradiction that $\mu_{\vc{p}}$ is not $\kK$-dense.
Then we have $Y \sub Z$ with $|Y| \ge |Z|/2$
and $(j'_y: y \in Y) \in J^Y$ such that $p^y_{j'_y} < \kK'$ for all $y \in Y$.
As $(J^n)^{\mc{V}}_{\vc{w}}$ shatters $Z$, 
we can choose $\vc{j} \in (J^n)^{\mc{V}}_{\vc{w}}$
with $j_y = j'_y$ for all $y \in Y$. Note that 
$\mu_{\vc{p}}(\vc{j}) \le \kK^{|Y|} \le \kK^{\lL n/2}$,
so $-\log_2 \mu_{\vc{p}}(\vc{j}) \ge - (\log \kK) \lL n/2 
> |J|n > H(\mu_{\vc{p}})$.
By Lemma \ref{perturb} we can find 
$\nu = (1-t) \mu_{\vc{p}} + t1_{\vc{j}} \in \mc{M}^{\mc{V}}_{\vc{w}}$
with $H(\nu) > H(\mu_{\vc{p}})$. This contradicts the definition 
of $\mu^{\mc{V}}_{\vc{w}}$. \qed

\begin{lemma} \label{bounded.ii}
Let $0 < n^{-1} \ll \kK \ll \gG, \gG' \ll \lL \ll D^{-1}, |J|^{-1}$. 
Suppose $\mc{V} = (\vc{v}^i_j)$ is an $\vc{R}$-bounded $\gG'$-robustly 
$(\gG,\vc{R})$-generic $(n,J)$-array in $\mb{Z}^D$.
Let $\vc{p} = \vc{p}^{\mc{V}}_{\vc{w}}$.
If $\mu_{\vc{p}}$ is $\lL$-dense
then $\mu_{\vc{p}}$ is $\kK$-bounded.
\end{lemma}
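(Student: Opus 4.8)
\medskip
\noindent\textbf{Proof proposal for Lemma \ref{bounded.ii}.}
The plan is to argue by contradiction via a perturbation of $\mu_{\vc p}$ (where $\vc p=\vc p^{\mc V}_{\vc w}$) inside $\mc M^{\mc V}_{\vc w}$ that strictly increases the entropy, contradicting the maximality in Lemma \ref{maxent2}. Suppose some entry $p^{i_0}_{j_0}<\kK$. Moving a small mass $\eta$ \emph{onto} $(i_0,j_0)$ is extremely profitable for the entropy, since $L(x)=-x\log_2 x$ has $L'(x)\approx\log_2(1/x)$ near $0$; the difficulty is that this changes $\mc V(\mu_{\vc p})$, so I will cancel that change using a ``generic'' block of $D$ coordinates extracted from the $\lL$-dense set, where — crucially — the entropy cost will be only $O(\eta)$ with a constant independent of $\kK$.

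Concretely, let $Y=\{i\in[n]: p^i_j\ge\lL\ \forall j\in J\}$, so $|Y|\ge\lL n>\gG' n$ as $\gG'\ll\lL$. Robust $(\gG,\vc R)$-genericity yields $(I,\vc c)$ with $I\sub Y$, $|I|=D$, all $|c^i_j|\le\gG^{-1}$ and $\sum_j c^i_j=0$, such that the matrix $W=(w^i_d)_{i\in I,d\in[D]}$ with $w^i_d=\sum_j c^i_j v^i_{j,d}$ satisfies $|\det W|\ge\gG\prod_d R_d$; by $\vc R$-boundedness $|w^i_d|\le|J|\gG^{-1}R_d$. Note $i_0\notin Y\supseteq I$ since $p^{i_0}_{j_0}<\kK<\lL$. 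Pick $j_1$ with $p^{i_0}_{j_1}$ maximal, so $p^{i_0}_{j_1}\ge1/|J|$, and put $\vc u=\vc v^{i_0}_{j_0}-\vc v^{i_0}_{j_1}$, so $\|\vc u\|_{\vc R}\le2$. For small $\eta>0$ let $(t_i)_{i\in I}$ solve $\sum_{i\in I}t_i\vc w^i=-\eta\vc u$; by Cramer's rule together with the bounds on $\det W$, on the entries of $W$, and on the entries of $\eta\vc u$, each $|t_i|\le C_0\eta$ for some $C_0=C_0(D,|J|,\gG)$. Define $\mu_{\vc p'}$ by $(p')^{i_0}_{j_0}=p^{i_0}_{j_0}+\eta$, $(p')^{i_0}_{j_1}=p^{i_0}_{j_1}-\eta$, $(p')^i_j=p^i_j+t_ic^i_j$ for $i\in I$, and $(p')^i_j=p^i_j$ otherwise. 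Then $\sum_j(p')^i_j=1$ for all $i$ (using $\sum_j c^i_j=0$), every entry lies in $(0,1)$ for $\eta$ small (on $I$ since $p^i_j\ge\lL$ and $|t_ic^i_j|\le C_0\gG^{-1}\eta$; on $i_0$ since $p^{i_0}_{j_1}\ge1/|J|$), and $\mc V(\mu_{\vc p'})=\mc V(\mu_{\vc p})+\eta\vc u+\sum_{i\in I}t_i\vc w^i=\vc w$, so $\mu_{\vc p'}\in\mc M^{\mc V}_{\vc w}$.

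It then remains to check $H(\mu_{\vc p'})>H(\mu_{\vc p})$ for a suitable small $\eta$. As both measures are product measures differing only on $\{i_0\}\cup I$, $H(\mu_{\vc p'})-H(\mu_{\vc p})=\big(L((p')^{i_0}_{j_0})-L(p^{i_0}_{j_0})\big)+\big(L((p')^{i_0}_{j_1})-L(p^{i_0}_{j_1})\big)+\sum_{i\in I}\sum_j\big(L((p')^i_j)-L(p^i_j)\big)$. Taking $\eta\le p^{i_0}_{j_0}/2$ (or $\eta\le\kK$ when $p^{i_0}_{j_0}=0$, where $L(\eta)=-\eta\log_2\eta\ge\eta\log_2(1/\kK)$) and applying Lemma \ref{entapprox}(i), the first bracket is at least $\big(\log_2(1/\kK)-3\big)\eta$, using $p^{i_0}_{j_0}<\kK$ and that the error term $(p-|t|)^{-1}t^2$ is $\le\eta$ there. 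The remaining terms are each $O(\eta)+O(\eta^2)$ with constants depending only on $D,|J|,\gG,\lL$: the second bracket since $p^{i_0}_{j_1}\ge1/|J|$, and for $i\in I$ the linear part of $\sum_j\big(L(p^i_j+t_ic^i_j)-L(p^i_j)\big)$ equals $-\tfrac{t_i}{\log 2}\sum_j c^i_j\log p^i_j$, bounded by $C_0|J|\gG^{-1}|\log\lL|\,\eta/\log 2$ using $\sum_j c^i_j=0$ and $p^i_j\ge\lL$, while the quadratic error is $O(\eta^2)$. Hence $H(\mu_{\vc p'})-H(\mu_{\vc p})\ge\eta\big(\log_2(1/\kK)-C\big)-C'\eta^2$ for constants $C,C'$ depending only on $D,|J|,\gG,\lL$. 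Since $\kK\ll\gG,\gG'\ll\lL\ll D^{-1},|J|^{-1}$ we may assume $\log_2(1/\kK)>C+1$, and then taking $\eta>0$ small makes the right-hand side positive — contradiction. Therefore $p^i_j\ge\kK$ for all $i,j$, and hence $p^i_j\le1-(|J|-1)\kK\le1-\kK$, so $\mu_{\vc p}$ is $\kK$-bounded.

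The main obstacle is exactly that $p^{i_0}_{j_0}$ may be astronomically small, so the second-order term in the entropy expansion at coordinate $i_0$ is a priori uncontrolled; this is handled by shrinking $\eta$ together with $p^{i_0}_{j_0}$ so that, by the explicit error bound in Lemma \ref{entapprox}(i), that term stays $\le\eta$ and is swamped by the $\eta\log_2(1/\kK)$ gain. What makes the cancellation work uniformly in $\kK$ is that the compensation coefficients $t_i$ are polynomially bounded — a direct consequence of the determinant lower bound in $(\gG,\vc R)$-genericity via Cramer's rule — and that the compensating coordinates lie in the $\lL$-dense set $Y$, so their entropy cost is genuinely $O_{D,|J|,\gG,\lL}(\eta)$ rather than growing with $\kK$.
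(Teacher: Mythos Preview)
Your proof is correct and follows essentially the same approach as the paper: both argue by contradiction, using robust genericity on the $\lL$-dense set $Y$ to extract a generic block $I\sub Y$, then perturb mass onto the small entry $p^{i_0}_{j_0}$ while compensating on $I$ with coefficients bounded via Cramer's rule, and finally show the entropy increases because the $\eta\log_2(1/\kK)$ gain dominates the $O_{D,|J|,\gG,\lL}(\eta)$ cost of the compensation. You are slightly more explicit than the paper about noting $i_0\notin I$ and about handling the degenerate case $p^{i_0}_{j_0}=0$, but the argument is the same.
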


\nib{Proof.}
As $\mu_{\vc{p}}$ is $\lL$-dense, we can fix $Y \sub [n]$
with $|Y| \ge \lL n$ such that $p^i_j \ge \lL$
for all $i \in Y$ and $j \in J$.
As $\mc{V}$ is $\gG'$-robustly $(\gG,\vc{R})$-generic,
we can fix $(I,\vc{c})$ with $I \sub Y$
that is $(\gG,\vc{R})$-generic for $\mc{V}$,
i.e.\ all $|c^i_j| \le \gG^{-1}$, $\sum_{j \in J} c^i_j = 0$ for all $i\in I$, 
and writing $\vc{w}^i = \sum_{j \in J} c^i_j \vc{v}^i_j$
and $W = (w^i_d: i \in I, d \in [D])$, 
we have $|\det(W)| \ge \gG \prod_{d \in [D]} R_d$.

Now we show that $\mu_{\vc{p}}$ is $\kK$-bounded.
For suppose on the contrary that $p^{i'}_{j'}<\kK$ 
for some $i' \in [n]$ and $j' \in J$.
Fix $j'' \in J$ such that $p^{i'}_{j''} \ge |J|^{-1}$.
As $(\vc{w}^i: i \in I)$ are linearly independent, we can 
write $\vc{v}^{i'}_{j''}-\vc{v}^{i'}_{j'} = \sum_{i \in I} b_i \vc{w}^i$.
By Cramer's rule we have $b_i = \det(W)^{-1} \det(W_i)$,
where $W_i$ is the matrix obtained from $W$ 
by replacing $\vc{w}^i$ with $\vc{v}^{i'}_{j''}-\vc{v}^{i'}_{j'}$.
As $\mc{V}$ is $\vc{R}$-bounded, we can write
$\det(W_i) = \det(A_i) \prod_{d \in D} R_d$,
where all entries of $A_i$ have modulus at most 1,
so $|\det(A_i)| \le D!$ (or $D^{D/2}$ by Hadamard's inequality).
Therefore $|b_i| \le D! \gG^{-1}$ for all $i \in I$.

Consider a product measure $\mu_{\vc{p}'}$
where for some $t>0$ we have
$p'{}^i_j = p^i_j + tb_i c^i_j$ for all $i \in I$ and $j \in J$,
$p'{}^{i'}_{j'} = p^{i'}_{j'} + t$, $p'{}^{i'}_{j''} = p^{i'}_{j''} - t$,
and $p'{}^i_j = p^i_j$ otherwise.
Note that $\mb{E}_{\vc{x} \sim \mu_{\vc{p}'}} \sum_{i \in [n]} \vc{v}^i_{x_i}
= \mb{E}_{\vc{x} \sim \mu_{\vc{p}}} \sum_{i \in [n]} \vc{v}^i_{x_i}
+ t \sum_{i \in I, j \in J} b_i c^i_j \vc{v}^i_j
+ t \vc{v}^{i'}_{j'} -  t \vc{v}^{i'}_{j''} 
= \mb{E}_{\vc{x} \sim \mu_{\vc{p}}} \sum_{i \in [n]} \vc{v}^i_{x_i}
= \vc{w}$, so $\mu_{\vc{p}'} \in \mc{M}^{\mc{V}}_{\vc{w}}$.

We claim that we can choose $t>0$
such that $H(\mu_{\vc{p}'}) > H(\mu_{\vc{p}})$.
This will contradict the definition of $\mu^{\mc{V}}_{\vc{w}}$, 
showing that $\mu_{\vc{p}}$ is $\kK$-bounded.
Note that $H(\mu_{\vc{p}'}) - H(\mu_{\vc{p}})
= \sum_{i \in I \cup \{i'\}} (H(p'{}^i) - H(p^i))$.
By Lemma \ref{entapprox} $i$ for each $i$ we have
$H(p'{}^i) - H(p^i) \ge - \sum_{j \in J} ((p'{}^i_j-p^i_j)\log_2(p^i_j)
- c_j^{-1} (p'{}^i_j-p^i_j)^2)$, where $c_j = \min \{ p^i_j, p'{}^i_j \}$.
Thus 
\[ H(p'{}^{i'}) - H(p^{i'}) \ge - t\log_2 \kK - 2t^2 - t\log_2 |J| - 2\kK^{-1} t^2,\] 
as $p^{i'}_{j'}<\kK$ and $p^{i'}_{j''} \ge |J|^{-1}$, and 
\[H(p'{}^i) - H(p^i) \ge 2|J| D!\gG^{-2} t \log \lL
- 2\lL^{-1}|J|(tD!\gG^{-2})^2\] 
for $i \in I$, as all $|b_i c^i_j| \leq  D! \gG^{-2}$ and $p^i_j \in (\lL,1-\lL)$.
The dominant term as $t \to 0$ is $- t\log_2 \kK$, so we can choose $t>0$
so that $H(\mu_{\vc{p}'}) - H(\mu_{\vc{p}}) > 0$, as required. \qed

\medskip

\nib{Proof of Theorem \ref{tfae}.}
It remains to prove the implications 
$i \Ra v$ and  $v \Ra iv$
(note that $iv \Ra iii$ is trivial).

For $i \Ra v$, let $n^{-1} \ll \lL \ll \kK \ll \gG, k^{-1}$,
suppose $\mc{V}_n$ is $\gG$-robustly $(\vc{R}_n,k)$-generating,
$\mu^{\mc{V}_n}_{{\bf z}_n}$ is $\kK$-bounded,
${\bf z}'_n \in \mb{Z}^D$ with 
$\|{\bf z}'_n-{\bf z}_n\|_{{\bf R}_n} \le \lL n$, 
and $\mc{V}'$ is obtained from $\mc{V}_n$
by deleting $S \sub [n]$ with $|S| \le \lL n$.
Then ${\cal V}'$ is ${\bf R}_n$-bounded and 
$(\gamma /2)$-robustly $({\bf R}_n,k)$-generating.
Also, the restriction $\mu_{{\bf p}'}$ of 
$\mu^{\mc{V}_n}_{{\bf z}_n}$ to $J^{[n] \sm S}$ 
is $\kK$-bounded, and
${\mathbb E}_{{\bf x} \sim \mu_{{\bf p}'}}
{\cal V}'({\bf x}) = {\bf z}^*$, where 
$\|{\bf z}^* - {\bf z}'_n\|_{\bf R} \leq 2\lL n$.
Therefore $(J^{[n] \sm S})^{\mc{V}'_{n'}}_{{\bf z}'_n} \ne \es$
by Lemma \ref{uctg3}, as required.

For $v \Ra iv$, let $n^{-1} \ll \kK$, and
suppose $(\mc{V}_n,{\bf R}_n,{\bf z}_n)$ is $\kK$-feasible.
Fix $S \sub [n]$ with $|S|=\kK n$ and ${\bf y} \in J^S$.
We need to show that there is 
${\bf x} \in (J^n)^{\mc{V}_n}_{{\bf z}_n}$ 
with ${\bf x}|_S = {\bf y}$.
Let $\mc{V}'$, $\mc{V}^0$ be obtained from $\mc{V}_n$ 
by respectively deleting, retaining the coordinates of $S$.
Let ${\bf z}'_n = {\bf z}_n - \mc{V}^0({\bf y})$.
Then $\|{\bf z}'_n-{\bf z}_n\|_{{\bf R}_n} \le \kK n$,
so by definition of $\kK$-feasibility we can find
${\bf x}' \in (J^{[n] \sm S})^{\mc{V}'_{n'}}_{{\bf z}'_n}$.
Then ${\bf x}={\bf x}'$ is as required. \qed

\medskip

We conclude this section by noting the following lemma
which is immediate from the preceding proof and Lemma \ref{bounded}.

\begin{lemma} \label{universal VC-dimension}
Let $0 < n^{-1} \ll \lambda \ll \gG, \gG' \ll \alpha, D^{-1}, J^{-1}$. 
Suppose $\mc{V} = (\vc{v}^i_j)$ is an $\vc{R}$-bounded, 
$\gamma $-robustly $({\bf R},k)$-generating,
$\gG'$-robustly $(\gG,\vc{R})$-generic $(n,J)$-array in $\mb{Z}^D$.

Let ${\bf w} \in {\mathbb Z}^D$ with 
$\dim _{VC} ((J^n)^{\mc{V}}_{\vc{w}}) \ge \alpha n$. 
Then $\dim _{UVC} ((J^n)^{\mc{V}}_{\vc{w}}) \geq \lambda n$.
\end{lemma}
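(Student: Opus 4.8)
The plan is to chain together Lemma \ref{bounded} with the implications $i \Ra v$ and $v \Ra iv$ already established inside the proof of Theorem \ref{tfae}; no genuinely new argument is needed, so the only work is checking that the constants are consistent with the single hierarchy $n^{-1} \ll \lambda \ll \gG, \gG' \ll \alpha, D^{-1}, |J|^{-1}$ in the statement.

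First I would apply Lemma \ref{bounded}, taking its VC-threshold parameter to be our $\alpha$ (so the roles of the constants become $n^{-1} \ll \kK \ll \gG, \gG' \ll \alpha \ll D^{-1}, |J|^{-1}$, as in our hypothesis). Its requirements hold: $\mc{V}$ is $\vc{R}$-bounded and $\gG'$-robustly $(\gG,\vc{R})$-generic, and $\dim_{VC}((J^n)^{\mc{V}}_{\vc{w}}) \ge \alpha n$. Hence there is a constant $\kK$ with $n^{-1} \ll \kK \ll \gG, \gG'$ for which the maximum entropy measure $\mu^{\mc{V}}_{\vc{w}}$ is $\kK$-bounded.

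Next I would run the $i \Ra v$ step of Theorem \ref{tfae} verbatim: since $\mc{V}$ is $\gG$-robustly $(\vc{R},k)$-generating and $\mu^{\mc{V}}_{\vc{w}}$ is $\kK$-bounded, deleting any $S \sub [n]$ with $|S| \le \lL n$ leaves an $\vc{R}$-bounded, $(\gG/2)$-robustly $(\vc{R},k)$-generating array whose restricted maximum entropy measure is still $\kK$-bounded with expected $\mc{V}$-value within $2\lL n$ of any target at $\vc{R}$-distance $\le \lL n$ from $\vc{w}$, so Lemma \ref{uctg3} makes the corresponding fibre nonempty. This shows $(\mc{V},\vc{R},\vc{w})$ is $\lL$-feasible for some $\lL$ with $n^{-1} \ll \lL \ll \kK$. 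Finally, the $v \Ra iv$ step shows $\lL$-feasibility forces every $S \sub [n]$ with $|S| = \lL n$ to be shattered by $(J^n)^{\mc{V}}_{\vc{w}}$: for $\vc{y} \in J^S$ set $\vc{z}' = \vc{w} - \mc{V}^0(\vc{y})$ with $\mc{V}^0$ the $S$-coordinates of $\mc{V}$, note $\|\vc{z}'-\vc{w}\|_{\vc{R}} \le \lL n$ by $\vc{R}$-boundedness, and use feasibility of the array with $S$ deleted to extend $\vc{y}$ to a point of $(J^n)^{\mc{V}}_{\vc{w}}$. Thus $\dim_{UVC}((J^n)^{\mc{V}}_{\vc{w}}) \ge \lL n$.

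Since $\lL$ depends only on $\kK$, hence only on $\gG, \gG', D, |J|$, whereas $\lambda$ is chosen afterwards, we may assume $\lambda \le \lL$, which gives $\dim_{UVC}((J^n)^{\mc{V}}_{\vc{w}}) \ge \lambda n$ as claimed. The only point requiring care — and the only thing I would call an "obstacle" — is purely bookkeeping: one must confirm that each cited result can be applied with the constant produced by the previous one without violating the stated hierarchy, which holds because the chain $\alpha \to \kK \to \lL \to \lambda$ is strictly decreasing and each constant is introduced after the one before it.
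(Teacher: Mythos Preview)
Your proposal is correct and takes essentially the same approach as the paper: the paper states that the lemma ``is immediate from the preceding proof and Lemma \ref{bounded}'', and you have spelled out exactly that chain, namely Lemma \ref{bounded} for $\kK$-boundedness followed by the $i \Ra v$ and $v \Ra iv$ steps of Theorem \ref{tfae}. Your bookkeeping of the constant hierarchy $\alpha \to \kK \to \lL \to \lambda$ is precisely the verification the paper leaves implicit.
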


\section{Counterexamples to Conjecture \ref{kalai}} \label{sec:counterex}

Theorem \ref{thm: Kalai} will precisely describe the conditions under which the conclusion of
Conjecture \ref{kalai} is valid, and so show the existence of counterexamples in most
cases. However, our proof is not constructive, so for expository purposes,
in this section we will present two concrete counterexamples, 
each illustrating a different `breaking point' of Theorem \ref{thm: trichotomy}. 
The first will illustrate case $ii$ 
by showing that we may have $[n]_{k,s} \times _{(t,w)} [n]_{k,s}$ large,
but very few sets in $[n]_{k,s}$ are involved in any $(t,w)$-intersection.
The second will illustrate case $iii$
by showing that we may have almost all sets in $[n]_{k,s}$ 
involved in some $(t,w)$-intersection,
but a large subset of $[n]_{k,s}$ containing no $(t,w)$-intersections.
The first example also shows that cases $ii$ and $iii$
can hold simultaneously.
\vspace{1mm}

\noindent \textbf {Counterexample 1:} Set $\alpha _1 = 1/2$ and $\alpha _2 = {7}/{16}$ and $\beta _1 = 1/4 $ and $\beta _2 = 1/16  + \zeta $, where $\zeta  >0$ is small to be selected. Given $n\in {\mathbb N}$, take $k$, $s$, $t$ and $w$ as in Conjecture \ref{kalai}. 
We will show that $|[n]_{k,s} \times _{(t,w)} [n]_{k,s} | > (1 + c)^n$ for some constant $c>0$ but that there is a set ${\cal A}\subset [n]_{k,s}$ with $|{\cal A}| \geq (1-o(1))\big |[n]_{k,s} \big |$ satisfying ${\cal A} \times _{(t,w)} {\cal A} = \emptyset $.

First we show that $|[n]_{k,s} \times _{(t,w)} [n]_{k,s}|$ is large. 
To see this, we start by finding $C \subset [(1/4 + \zeta ^{1/2})n]$ 
with $(|C|, \sum (C)) = (t, w)$. 
Let $C_0 = [t]$ and note that $\sum (C_0) = \tbinom {t+1}{2} <w$. 
By a sequence of moves, each removing some $i$ and adding $i+1$, 
we can obtain $C_1 = [(1/4 + \zeta ^{1/2})n - t +1, (1/4 + \zeta ^{1/2})n]$, 
with $\sum (C_1) > w$. Clearly some intermediate set has $(C, \sum (C)) = (t,w)$. 
A similar argument gives a set $S \subset [(1/4  + \zeta ^{1/2})n + 1, n]$ 
with $\big (|S|, \sum (S) \big ) = 2(k-t, s-w)$. 
Next we note that the maximum entropy measure $\mu_{\wt{\bf p}}$
on $\{0,1\}^S$ with $\sum_{i \in S} \wt{p}_i (1,i) = (k-t,s-w)$
is the constant vector $(1/2)_{i\in S}$.
By Theorem \ref{ldp} we deduce
$|\{D \subset S: (|D|,\sum (D)) = (k-t,s-w)\}| 
= 2^{(1-o(1))|S|} = 2^{(1/2-o(1))n}$. 
However, for any $D \subset S$ with $(D,\sum (D)) = (k-t, s-w)$ 
we have $(E,\sum (E)) = (k-t,s-w)$, where $E = S \setminus D$. 
Taking $A = C \cup D$ and $B = C \cup E$ we find 
$(A,B) \in [n]_{k,s} \times _{(t,w)}[n]_{k,s}$. 
We have at least as many $(t,w)$-intersections
as choices of $D$, so
$|[n]_{k,s} \times _{(t,w)} [n]_{k,s}| \ge 2^{(1/2 - o(1))n}$.

Next we show that sets in $[n]_{k,s}$ involved in
any $(t,w)$-intersection are very restricted. 
Let $A,B \in [n]_{k,s}$ with $( |A|, \sum (A) ) = ( |B|, \sum (B) ) = 
( \alpha _1n , 	\alpha _2 \tbinom {n}{2} ) = ( n/2, \tfrac{7}{16}\tbinom {n}{2} )$.
Suppose $A$ and $B$ are $(t,w)$-intersecting. Let $\ell := |A \cap B \cap [n/4]|$. Then 
\begin{equation*}
	\Big (\frac{1}{16} + \zeta \Big ) \binom {n}{2}= w = \sum _{i\in A \cap B} i = 
	\sum _{\substack {i\in A \cap B:\\ i< n/4}} i + 
	\sum _{\substack {i\in A \cap B:\\ i\geq n/4}} i 
	\geq 
	\binom {\ell+1}{2} + 
	\Big (\frac{n}{4} - \ell\Big ) \frac{n}{4}.
\end{equation*}
Rearranging gives $(\ell-n/4)^2 - \zZ n^2 + (1/16+\zeta)n+\ell \le 0$,
so $\ell \ge n/4 - \sqrt{\zZ} n$.
In particular, $|A \cap [n/4]| \ge n/4 - \sqrt{\zZ} n$.

We now show that almost all elements of $[n]_{k,s}$ 
do not have this restricted form. 
Fix constants $ \dD_2 \ll \dD_1 \ll \kK \ll 1$.
Let ${\mu }_{\bf p}$ be the maximum entropy measure 
with $\sum p_i (1,i) = (k,s)$. Then ${\mu }_{\bf p}$
is $\kappa $-bounded by Lemma \ref{binary bounded}.
Let ${\cal E} := \{A \subset [n]: \big |A \cap [\tfrac{n}{4}] \big | 
\geq \big (1-\tfrac {\kappa }{2} \big )\tfrac{n}{4} \}$. 
Then ${\mu }_{{\bf p}}({\cal E}) \leq (1-\dD_1)^n$
by Chernoff's inequality, so
$\big |[n]_{k,s} \cap {\cal E} \big |
\leq (1-\delta _2)^n|[n]_{k,s}|$
by Theorem \ref{ldp}. Choosing $\zZ < (\kK/2)^2$,
all $(t,w)$-intersecting pairs from $[n]_{k,s}$ lie within ${\cal E}$, 
which illustrates case $ii$ of Theorem \ref{thm: trichotomy}.
Furthermore, $[n]_{k,s} \setminus {\cal E}$ 
is a set of size $(1-o(1))|[n]_{k,s}|$ containing no $(t,w)$-intersections,
which illustrates case $iii$ of Theorem \ref{thm: trichotomy}.
\vspace{1mm}

\noindent \textbf {Counterexample 2:} 
This counterexample is a modification of the family in Section 1.4, related to VC-dimension. 
Let $\aA_1 = \aA_2 = 2/3$ and $\bB_1 = \bB_2 = 1/3 + \zeta $, where $\zeta  >0$ is small. 
Let $n$, $k$, $s$, $t$ and $w$ be as in Conjecture \ref{kalai}. 
It is not hard to see that 
$|[n]_{k,s} \times _{(t,w)} [n]_{k,s}| 
= (1-o_{\zeta }(1))^n \tbinom {n}{t, k-t, k-t, n-2k +t} 
= (1-o_{\zeta }(1))^n 3^n$ 
and almost all elements of $[n]_{k,s}$ are involved in a $(t,w)$-intersection. 
However, for any set $U \subset [n]$ with $|U| = 2\zeta n + 1$, 
taking ${\cal A}_U = \{A \in [n]_{k,s}:  A \cap U = \emptyset \}$, 
we have $|A \cap B| \geq t+1$ for all $A,B \in {\cal A}_U$ and
so ${\cal A}_U \times _{(t,w)} {\cal A}_U = \emptyset $. 
On the other hand, if we select such a set $U$ 
uniformly at random we find 
${\mathbb E}_U(|{\cal A}_U|) \geq (1-o_{\zeta }(1))^n |[n]_{k,s}|$. 
Thus for some $U$ we have $|{\cal A}_U| \geq (1-o_{\zeta }(1))^n |[n]_{k,s}|$ 
and ${\cal A}_U \times _{(t,w)} {\cal A}_U = \emptyset $.

\section{The general setting} \label{sec:gen}

In this section we state our most general result, Theorem \ref{general};
we will defer the proof to section \ref{sec:pf}.
This is in fact the main result of the paper in some sense,
as we will show in this section that it implies Theorem \ref{binary} 
(in a more general cross-intersection form).
However, the hypothesis of `transfers' in Theorem \ref{general}
appears to be quite strong at first sight, and it will take some
work to show that it follows from the hypotheses of Theorem \ref{binary}
(it is here that the idea of enlarging the alphabet comes into play).
We state our result in the next subsection and then
deduce Theorem \ref{binary} in the following subsection. A second application of Theorem \ref{general} is given in subsection \ref{subsec: FR-patterns}, where we use it to give a short proof of a theorem of Frankl and R\"odl on forbidden intersection patterns.
 
\subsection{Statement of the general theorem}

Before stating our theorem, we require the following definition,
which describes a situation when for any vector $\vc{u}$
in some specific set (which will be given by the following definition),
there are many ways of choosing a coordinate 
and two particular alterations of its value:
one does not change the associated vector,
and the other changes it by $\vc{u}$.

\begin{dfn} \label{transfer}
Suppose $\mc{V} = (\vc{v}^i_{j,\ell})$ is an $(n,J \times L)$-array in $\mb{Z}^D$.
We say that $\vc{u}$ is an $i$-transfer in $\mc{V}$
(via $(j,j')$ and $(\ell,\ell')$)
if there are $j,j'$ in $J$ and $\ell,\ell'$ in $L$
with $\vc{v}^i_{j,\ell}-\vc{v}^i_{j',\ell}=\vc{u}$
and $\vc{v}^i_{j',\ell'}=\vc{v}^i_{j,\ell'}$.

Let $\mc{U}=\{\vc{u}_1,\dots,\vc{u}_M\} \sub \mb{Z}^D$
and $\mc{P}=(P_m: m \in [M])$ for some disjoint subsets $P_m$ of $[n]$.
We say that $\mc{V}$ has transfers for $(\mc{P},\ \mc{U})$
if $\vc{u}_m$ is an $i$-transfer in $\mc{V}$
for each $m \in M$ and $i \in P_m$.

We say that $\mc{V}$ has $\gG$-robust transfers for $\mc{U}$
if it has transfers for $(\mc{P},\ \mc{U})$ for some $\mc{P}$
such that $|P_m| \ge \gG n$ for all $m \in [M]$.
\end{dfn}

Note that an $(n,\prod_{s \in S} J_s)$-array in $\mb{Z}^D$ has transfers 
for $(\mc{P},\ \mc{U})$ if it has them as an $(n,J \times L)$-array,
where $J = \prod_{s \in S'} J_s$ and $L = \prod_{s \in S \sm S'} J_s$ for some $S' \sub S$.

We can now state our general theorem.
(Recall that $\mc{U}$ exists by Lemma \ref{u}.)

\begin{theo} \label{general}
Let $0 < n^{-1} \ll \dD \ll \zZ \ll \eps, \kK, \gG \ll D^{-1}, M^{-1}, k^{-1}, C^{-1}$.
Let $S$ and $(J_s: s \in S)$ be sets of size at most $C$,
and $\vc{R}=(R_1,\dots,R_D)$ with $\max_d R_d < n^C$. Suppose
\begin{enumerate}
\item $\mu_{\vc{q}}$ is a $\kK$-bounded product measure 
on $(\prod_{s \in S} J_s)^n$ with marginals $(\mu_{\vc{p}_s}: s \in S)$,
\item $\mc{V} = (\vc{v}^i_{j_1,\dots,j_S})$ 
is an $\vc{R}$-bounded $(n,\prod_{s \in S} J_s)$-array in $\mb{Z}^D$,
\item $\mc{U}=\{\vc{u}_1,\dots,\vc{u}_M\} \sub \mb{Z}^D$ 
is $\vc{R}$-bounded and $(k,k\zZ n,\vc{R})$-generating, 
\item $\mc{V}$ has $\gG$-robust transfers for $\mc{U}$,
\item $\vc{w} \in \mb{Z}^D$ with $\|\vc{w}-\mc{V}(\mu_{\bf q})\|_{\vc{R}} < \zZ n$. 
\end{enumerate}
Suppose $\mc{A}_s \sub J_s^n$ for $s \in S$ with 
$\prod_{s \in S} \mu_{\vc{p}_s}(\mc{A}_s) > (1-\dD)^n$. Then $\mu_{\vc{q}}((\prod_{s \in S} \mc{A}_s)^{\mc{V}}_{\vc{w}}) > (1-\eps)^n$.
\end{theo}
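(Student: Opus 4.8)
The plan is to combine the correlation-type argument (Dependent Random Choice applied to a product of graphs, as in Lemma \ref{indep}) with the robust transfer structure to locate a solution of $\mc{V}(\vc{x}) = \vc{w}$ inside $\prod_{s} \mc{A}_s$, and to do so for a $(1-\eps)^n$ fraction of the measure $\mu_{\vc{q}}$. The key idea is that one should \emph{not} expect a point with $\mc{V}$-value exactly $\vc{w}$ to lie in the ``obvious'' part of $\prod_s \mc{A}_s$; instead one first finds, using the near-product structure of $\mu_{\vc{q}}$ and the density hypothesis $\prod_s \mu_{\vc{p}_s}(\mc{A}_s) > (1-\dD)^n$, a large ``core'' set of coordinates together with a point $\vc{a} \in \prod_s \mc{A}_s$ whose $\mc{V}$-value is within $\zZ' n$ of $\vc{w}$ in $\|\cdot\|_{\vc{R}}$ (this uses hypothesis (v) and Lemma \ref{vchern}), and which remains in every $\mc{A}_s$ even after we are allowed to freely perturb the values on the core coordinates. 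This is exactly the role of universal VC-dimension / the ``shattering on a linear set'' philosophy from Section \ref{sec:vc}: a $(1-\dD)^n$-dense family, intersected with the typical ($\mu_{\vc{q}}$-typical) fibre, shatters a linear-sized set of coordinates.

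Concretely, first I would fix a $(k, k\zZ n, \vc{R})$-generating $\vc{R}$-bounded set $\mc{U} = \{\vc{u}_1,\dots,\vc{u}_M\}$ as in hypothesis (iii) (existence via Lemma \ref{u}), and use hypothesis (iv) to fix disjoint $P_1,\dots,P_M \sub [n]$ with $|P_m| \ge \gG n$ such that $\vc{u}_m$ is an $i$-transfer for every $i \in P_m$, with witnessing values $(j,j')$ in $J = \prod_{s \in S'} J_s$ and $(\ell,\ell')$ in $L = \prod_{s \notin S'} J_s$ for appropriate $S'$. The transfer property is what lets us change $\mc{V}(\vc{x})$ by $\vc{u}_m$ (by toggling the $J$-part of coordinate $i$ from $j'$ to $j$) while having an ``inert'' move available (toggling the $J$-part from $j'$ to $j$ again but in the $\ell'$ row) that does not change $\mc{V}(\vc{x})$ at all; crucially, both moves only alter coordinates, so applying a DRC argument to the bipartite incidence structure between ``left halves'' and ``right halves'' of these coordinates will let us realise any prescribed integer combination $\sum_m c_m \vc{u}_m$ with $|c_m| = O(\zZ n)$ while staying inside each $\mc{A}_s$.

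The technical heart, and the step I expect to be the main obstacle, is the bookkeeping that shows the resulting perturbed point $\vc{x}'$ still has $\mu_{\vc{q}}$-mass bounded below by $(1-\eps)^n$ summed over all the ways it can arise — i.e.\ passing from ``there exists a solution in $\prod_s \mc{A}_s$'' to ``$\mu_{\vc{q}}\big((\prod_s \mc{A}_s)^{\mc{V}}_{\vc{w}}\big) > (1-\eps)^n$''. This requires running the DRC selection (Lemma \ref{indep} / Lemma \ref{DRC}) not on a single fibre but on all fibres of $\mu_{\vc{q}}$ simultaneously, organised so that the number of coordinates we touch is $O(\zZ n)$ and hence the change in $\mu_{\vc{q}}$-mass is at most $\kK^{-O(\zZ n)} = (1-\eps)^{o(n)}$-type; the choice of scales $\dD \ll \zZ \ll \eps,\kK,\gG$ is exactly calibrated for this. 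One packages this by: (a) restricting to the ``good'' sub-measure where the coordinates outside $\bigcup_m P_m$ already force $\mc{V}$ close to $\vc{w}$ and where $\prod_s \mu_{\vc{p}_s}$-density is inherited on the relevant fibre (Lemma \ref{vchern} plus a union bound, throwing away a $(1-\dD)^n$ fraction); (b) on this good part, using that each $\mc{A}_s$ still has conditional density $> (1-\dD^{1/2})^n$ in the $P_m$-coordinates, so that in the product graph whose vertices are the admissible local configurations and whose edges record ``this pair of local moves keeps us in $\mc{A}_s$'', the independence numbers are subexponentially small per factor; hence by Lemma \ref{indep} the product over $s \in S$ (at most $C$ factors) still has small independence number, giving a simultaneously valid choice of transfer moves; (c) applying the generating property of $\mc{U}$ to the residual vector $\vc{w} - \mc{V}(\vc{x})$ to fix the coefficients $c_m$, executing the moves, and checking $\mc{V}(\vc{x}') = \vc{w}$ exactly while $d(\vc{x},\vc{x}') \le k \sum_m |c_m| = O(\zZ n)$. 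Finally, summing $\mu_{\vc{q}}(\vc{x})$ over the good part and using that the map $\vc{x} \mapsto \vc{x}'$ is $O(1)^{O(\zZ n)}$-to-one with $\mu_{\vc{q}}(\vc{x}')/\mu_{\vc{q}}(\vc{x}) \ge \kK^{O(\zZ n)}$ yields $\mu_{\vc{q}}\big((\prod_s \mc{A}_s)^{\mc{V}}_{\vc{w}}\big) > (1-\eps)^n$, as required. I would expect part (b) — getting the product-graph independence bound to hold with the per-factor densities degrading only by a fixed root and the number of factors capped by $C$ — together with the exact-vs-approximate transition in part (c), to be where the real work lies.
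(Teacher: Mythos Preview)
Your proposal has a genuine gap in step (b), and it is the crux of the whole argument. You want a product graph ``whose edges record `this pair of local moves keeps us in $\mc{A}_s$','' and then to bound its independence number factor by factor. But membership in $\mc{A}_s$ is a \emph{global} constraint on the whole vector $\vc{x}_s \in J_s^n$: whether toggling the $J$-part of coordinate $i$ keeps you in $\mc{A}_s$ depends on all the other coordinates, so there is no well-defined local graph on configurations in $P_m$ alone. Even after fixing the outside coordinates, the resulting graph on $J^{P_m}$ has an essentially arbitrary edge set dictated by $\mc{A}_s$, and you have no per-factor independence bound for it. Lemma~\ref{indep} needs as input that each $\aA(G_m) \le N_m^{1-\eps}$, and the only source of such bounds available here is the Frankl--R\"odl theorem, which applies to \emph{intersection} graphs, not to ``stays in $\mc{A}_s$'' constraints. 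So your mechanism for keeping the perturbed point $\vc{x}'$ inside $\prod_s \mc{A}_s$ does not go through, and with it the final mass-transfer step collapses. (A secondary issue: your product is over $s\in S$, but the useful product structure lives over $m\in[M]$ --- see below.)

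The paper avoids this obstacle by a layered reduction rather than a perturb-then-check scheme. It first reduces to $|S|=2$ via Theorem~\ref{gencor}, then to the binary alphabet $J_1=J_2=\{0,1\}$ with \emph{uniform} measure on a random subset of coordinates of density $\kK$ (this is what converts an arbitrary $\kK$-bounded $\mu_{\vc{q}}$ into a tractable measure while preserving transfers), then to $\mc{A}_1=\mc{A}_2$ by a Hamming-closeness argument. In the base case (Lemma~\ref{2x2=hs0}) one pigeonholes onto a slice $\mc{B}^{K,\vc{z}}\sub\mc{A}$ on which the transfer structure forces the identity $\mc{V}(\vc{a},\vc{a}') = \sum_m (\vc{z}_m + (K_m - t_m)\vc{u}_m)$, so that $\mc{V}(\vc{a},\vc{a}')$ depends only on the intersection sizes $t_m=|\vc{a}\cap\vc{a}'\cap P_m|$. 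Finding a pair in $(\mc{A}\times\mc{A})^{\mc{V}}_{\vc{w}}$ then becomes finding an edge in $G_1\times\cdots\times G_M$ where each $G_m$ is the Frankl--R\"odl graph on $\tbinom{P_m}{K_m}$ with edges $\{A_m A'_m:|A_m\cap A'_m|=t_m\}$; here the product is over $m\in[M]$, and the per-factor independence bound comes directly from Theorem~\ref{FranklRodl}. The point is that one never perturbs a point and then asks whether it landed back in $\mc{A}$; one searches directly inside $\mc{A}\cap\mc{B}^{K,\vc{z}}$ for a pair with the required intersection profile.
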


\subsection{Proof of Theorem \ref{binary}}

Now we assume Theorem \ref{general} and  prove Theorem \ref{binary};
in fact we prove the more general cross-intersection theorem.
The strategy is to fuse together suitable co-ordinates and enlarge the alphabet.

\begin{theo} \label{xbinary}
Let $0 < n^{-1}, \dD \ll \zZ \ll \kK, \gG, \eps \ll D^{-1}, C^{-1}, k^{-1}$
and $\vc{R}=(R_1,\dots,R_D)$ with $\max_d R_d < n^C$. Suppose
\begin{enumerate}
\item $\mu_{\vc{q}}$ is a $\kK$-bounded product measure 
on $(\{0,1\} \times \{0,1\})^n$ with marginals $(\mu_{\vc{p}_1}, \mu_{\vc{p}_2})$,
\item $\mc{V} = (\vc{v}_i: i \in [n])$ is $\vc{R}$-bounded
 and $\gG$-robustly $(\vc{R},k)$-generating in $\mb{Z}^D$,
\item $\vc{w} \in \mb{Z}^D$ with $\|\vc{w}-\mc{V}_\cap(\mu_{\bf q})\|_{\vc{R}} < \zZ n$.
\end{enumerate}
Then any $\mc{A}_1, \mc{A}_2 \sub \{0,1\}^n$ with 
$\mu_{\vc{p}_1}(\mc{A}_1) \mu_{\vc{p}_2}(\mc{A}_2) > (1-\dD)^n$ satisfy $\mu_{\vc{q}}((\mc{A}_1 \times \mc{A}_2)^{\mc{V}_\cap}_{\vc{w}}) > (1-\eps)^n$.
\end{theo}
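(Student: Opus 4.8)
The plan is to deduce Theorem~\ref{xbinary} (which contains Theorem~\ref{binary} as the diagonal case $\mc{A}_1=\mc{A}_2$) from Theorem~\ref{general} by \emph{fusing} the $n$ coordinates into blocks of size $k$ and enlarging the alphabet, so that the fused array acquires the robust transfers demanded by Theorem~\ref{general}. First fix $\mc{U}=\{\vc{u}_1,\dots,\vc{u}_M\}\sub\mb{Z}^D$ from Lemma~\ref{u} (with scaling $\vc{R}$ and a small parameter $\bB$), so that $\mc{U}$ is $\vc{R}$-bounded, $(1,\bB n,\vc{R})$-generating, and $M\le D(C+2)$. Using that $\mc{V}$ is $\gG$-robustly $(\vc{R},k)$-generating and each $\|\vc{u}_m\|_{\vc{R}}\le 1$, iteratively select pairwise disjoint sets $W_m^{(t)}\sub[n]$, for $m\in[M]$ and $t$ ranging over an interval of length $\gG n/(4kM)$, each of size at most $k$, together with a partition $W_m^{(t)}=S_m^{(t)}\cup (S_m^{(t)})'$ such that $\vc{u}_m=\sum_{i\in S_m^{(t)}}\vc{v}_i-\sum_{i\in (S_m^{(t)})'}\vc{v}_i$ (at each stage at most $\gG n$ coordinates have been used, so the robust generation applies). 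Partition $[n]$ into $\wt n\approx n/k$ blocks of size $k$, taking each $W_m^{(t)}$, padded by fresh coordinates, as one block, and grouping the remaining coordinates into further blocks arbitrarily. Put $J_1=J_2=\{0,1\}^{[k]}$; define the $(\wt n,J_1\times J_2)$-array $\wt{\mc{V}}$ on a block $I$ by $\wt{\mc{V}}^I_{(\vc{a},\vc{b})}=\sum_{i\in I:\,a_i=b_i=1}\vc{v}_i$; let $\mu_{\wt{\vc{q}}}$ be the product over blocks of the restrictions of $\mu_{\vc{q}}$; and let $\wt{\mc{A}}_1,\wt{\mc{A}}_2\sub(\{0,1\}^{[k]})^{\wt n}$ be the images of $\mc{A}_1,\mc{A}_2$ under the canonical bijection with $\{0,1\}^n$.

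The key point is that $\wt{\mc{V}}$ has $\gG'$-robust transfers for $\mc{U}$, where $\gG'=\gG/(4kM)$. On a witness block $I=W_m^{(t)}$, the vector $\vc{u}_m$ is an $I$-transfer: take second-factor values $\ell=\vc{1}_I$ and $\ell'=\vc{0}_I$, and first-factor values $j,j'\in\{0,1\}^I$, where $j$ is the indicator vector of $S_m^{(t)}$ and $j'$ that of $(S_m^{(t)})'$. Then $\wt{\mc{V}}^I_{(j,\ell)}-\wt{\mc{V}}^I_{(j',\ell)}=\sum_{i\in S_m^{(t)}}\vc{v}_i-\sum_{i\in (S_m^{(t)})'}\vc{v}_i=\vc{u}_m$, while $\wt{\mc{V}}^I_{(j,\ell')}=\wt{\mc{V}}^I_{(j',\ell')}=0$, which is exactly the condition of Definition~\ref{transfer}. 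For each $m$ there are $\gG'\wt n$ such blocks and they are pairwise disjoint, so this supplies the robust transfers hypothesis, with $\mc{P}=(P_m)$ taking $P_m$ to be the set of these block indices. This is the step I expect to be the main obstacle: it is where the robust generation of $\mc{V}$ is converted, via a small universal generating set, into robust transfers for a \emph{fixed} $\mc{U}$ — the point being that recording the full $\{0,1\}^I$-pattern on a block lets a single-coordinate change realise a prescribed vector.

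The remaining hypotheses of Theorem~\ref{general} are routine to check. The array $\wt{\mc{V}}$ is $(k\vc{R})$-bounded, since each value is a sum of at most $k$ vectors of $\vc{R}$-norm $\le 1$; $\mc{U}$ is $(k\vc{R})$-bounded and, being $(1,\bB n,\vc{R})$-generating with $\bB$ small, also $(k,k\zZ n,k\vc{R})$-generating; and $\max_d(kR_d)<n^{C+1}$. The product measure $\mu_{\wt{\vc{q}}}$ on $(J_1\times J_2)^{\wt n}$ is $\kK^k$-bounded, and its two marginals are the images of $\mu_{\vc{p}_1},\mu_{\vc{p}_2}$, so $\prod_s\mu_{\wt{\vc{p}}_s}(\wt{\mc{A}}_s)=\mu_{\vc{p}_1}(\mc{A}_1)\mu_{\vc{p}_2}(\mc{A}_2)>(1-\dD)^n\ge(1-k\dD)^{\wt n}$ using $(1-\dD)^k\ge 1-k\dD$. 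Finally $\wt{\mc{V}}(\mu_{\wt{\vc{q}}})=\mc{V}_\cap(\mu_{\vc{q}})$, so $\|\vc{w}-\wt{\mc{V}}(\mu_{\wt{\vc{q}}})\|_{k\vc{R}}=k^{-1}\|\vc{w}-\mc{V}_\cap(\mu_{\vc{q}})\|_{\vc{R}}<\zZ n$. Applying Theorem~\ref{general} with index set $\{1,2\}$, alphabet sizes $2^k$, scaling $k\vc{R}$, its parameter $C$ taken to be $\max(2^k,C+1)$, and the constant hierarchy reorganised accordingly, yields $\mu_{\wt{\vc{q}}}\big((\wt{\mc{A}}_1\times\wt{\mc{A}}_2)^{\wt{\mc{V}}}_{\vc{w}}\big)>(1-\eps)^{\wt n}\ge(1-\eps)^n$. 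Since $\wt{\mc{V}}$ evaluated on a coordinatewise-paired configuration equals $\mc{V}_\cap$ of the corresponding pair of subsets, and $\mu_{\wt{\vc{q}}}$ is the image of $\mu_{\vc{q}}$, the left-hand side equals $\mu_{\vc{q}}\big((\mc{A}_1\times\mc{A}_2)^{\mc{V}_\cap}_{\vc{w}}\big)$, which is the required bound. The constant bookkeeping causes no difficulty because $\kK,\gG,\eps,\zZ,\dD$ in Theorem~\ref{xbinary} are all chosen after $D,C,k$, leaving room to absorb the blow-ups $\kK\mapsto\kK^k$, $\vc{R}\mapsto k\vc{R}$, $|J_s|\mapsto 2^k$ and $M\le D(C+2)$.
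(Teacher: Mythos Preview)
Your proposal is correct and follows essentially the same route as the paper: fuse coordinates into blocks of size $k$, enlarge the alphabet to $\{0,1\}^k$, and verify that the fused array has $\gG'$-robust transfers for the small generating set $\mc{U}$ from Lemma~\ref{u}, then invoke Theorem~\ref{general}. The only cosmetic differences are that the paper uses the transfer witnesses $L=S_{mj}\cup S'_{mj}$, $L'=\emptyset$ rather than your $\ell=\vc{1}_I$, $\ell'=\vc{0}_I$ (both work), and the paper handles the case $k\nmid n$ explicitly by pigeonholing over the restriction to the remainder $R$, which you gloss over with ``$\wt n\approx n/k$'' but could easily add.
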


\nib{Proof.} 
By Lemma \ref{u} we can fix some $(1,\zZ n,\vc{R})$-generating $\vc{R}$-bounded
$\mc{U} =\{\vc{u}_1,\dots,\vc{u}_M\} \sub \mb{Z}^D$ with $M \le D(C+2)$.
By repeatedly applying Definition \ref{robgen}, we can choose pairwise disjoint 
$S_{mj}, S'_{mj} \sub [n]$ for each $m \in [M]$ and $j \in [\gG n/kM]$
with each $|S_{mj}|+|S'_{mj}| \le k$ and 
$\vc{u}_m = \sum_{i \in S_{mj}} \vc{v}_i - \sum_{i \in S'_{mj}} \vc{v}_i$.
We let $N=\bfl{n/k}$ and partition $[n]$ into sets $T_1,\dots,T_N$
each of size $k$ and a remainder set $R$ with $0 \le |R| \le k-1$,
such that each $S_{mj} \cup S'_{mj}$ is contained in some $T_i$.
We let $\mc{P}=(P_m: m \in [M])$, where each $P_m$ is the set of $i \in [N]$ 
such that $T_i$ contains some $S_{mj} \cup S'_{mj}$.

We start by reducing to the case $R=\es$ and $k| n$.
For $R_s \sub R$ we let 
$\mc{A}_s^{R_s} = \{ A \in \mc{A}_s: A \cap R = R_s\}$ for $s=1,2$.
By the pigeonhole principle we can fix $(R_1,R_2)$ so that 
$\mu_{\vc{p}_1}(\mc{A}_1^{R_1}) \mu_{\vc{p}_2}(\mc{A}_2^{R_2}) 
> 2^{-2K} (1-\dD)^n > (1-2\dD)^n$.
Let $\mc{A}'_s = \{ A_s \sm R_s: A \in \mc{A}_s^{R_s}\}$
and $\mc{V}' = (\vc{v}_i: i \in [n] \sm R')$.
Note that for $A_s \in \mc{A}'_s$ we have $A_s \cup R_s \in \mc{A}_s$ with
$\mc{V}_\cap(A_1 \cup R_1,A_2 \cup R_2) = \mc{V}'_\cap(A_1,A_2) + \vc{v}'$,
where $\vc{v}' = \sum_{i \in R_1 \cap R_2} \vc{v}_i$.
Writing $\vc{w}'=\vc{w}-\vc{v}'$, we have
$\mu_{\vc{q}}((\mc{A}_1 \times \mc{A}_2)^{\mc{V}_\cap}_{\vc{w}})
> \kK^k \mu_{\vc{q}}((\mc{A}'_1 \times \mc{A}'_2)^{\mc{V}'_\cap}_{\vc{w}'})$,
so to prove the theorem it suffices to show
$\mu_{\vc{q}}((\mc{A}'_1 \times \mc{A}'_2)^{\mc{V}'_\cap}_{\vc{w}'}) > (1-\eps/2)^n$.
 
We can naturally identify $\{0,1\}^n$ with $(\{0,1\}^k)^N$,
where $A \in \{0,1\}^n$ corresponds to $(A \cap T_i: i \in [N])$
according to some fixed bijection of $T_i$ with $[k]$.
We will apply Theorem \ref{general} with $N$ in place of $n$,
with $S=\{1,2\}$ and $J_1=J_2=\{0,1\}^k$, and 
$\mc{A}'_s$ (naturally identified) in place of $\mc{A}_s$.
We let $\mc{W} = (\vc{w}^i_{J_1,J_2})$ be the $(N,\{0,1\}^k \times \{0,1\}^k)$-array in $\mb{Z}^D$
defined by $\vc{w}^i_{J_1,J_2} = \sum_{j \in J_1 \cap J_2} \vc{v}_j$ for $J_1,J_2 \sub T_i$.
Note that $\mc{V}_\cap(\vc{x},\vc{y}) = \mc{W}(\vc{x},\vc{y})$
for all $\vc{x},\vc{y}$ in $\{0,1\}^n$ (naturally identified).

We also note that $\mc{W}$ has transfers for $(\mc{U}, \mc{P})$.
To see this, consider $i \in P_m$ with $S_{mj} \cup S'_{mj} \sub T_i$.
Let $J = S_{mj}$, $J' = S'_{mj}$, $L = S_{mj} \cup S'_{mj}$ and $L' = \es$.
Then $\vc{w}^i_{J,L'}=\vc{w}^i_{J',L'}=0$ and
$\vc{w}^i_{J,L}-\vc{w}^i_{J',L} = \sum_{i \in S} \vc{v}_i - \sum_{i \in S'} \vc{v}_i = \vc{u}_m$.

We let $\mu_{\vc{q}'}$ be the corresponding product measure on 
$(\{0,1\}^k \times \{0,1\}^k)^N$, defined by 
$q'{}^i_{\vc{j}^1,\vc{j}^2} = \prod_{i' \in T_i} q^{i'}_{j^1_{i'},j^2_{i'}}$
for $\vc{j}^1$ and $\vc{j}^2$ in $\{0,1\}^k$,
noting that $\mu_{\vc{q}'}$ is $\kK^k$-bounded,
and let $(\mu_{\vc{p}'_1},\mu_{\vc{p}'_2})$ be its marginals on $(\{0,1\}^k)^N$.
By construction we have $\mu_{\vc{p}'_s}(\vc{x}) = \mu_{\vc{p}_s}(\vc{x})$
and $\mu_{\vc{q}'}(\vc{x},\vc{y}) = \mu_{\vc{q}}(\vc{x},\vc{y})$
for all $\vc{x},\vc{y}$ in $\{0,1\}^n$ (naturally identified).

To summarise, after the above reductions, we have 
$\mu_{\vc{p}'_1}(\mc{A}'_1) \mu_{\vc{p}'_2}(\mc{A}'_2) > (1-2\dD)^n$, and it suffices to show
$\mu_{\vc{q}'}((\mc{A}'_1 \times \mc{A}'_2)^{\mc{W}}_{\vc{w}'}) > (1-\eps/2)^n$.
For $\vc{r} \sim \mu_{\vc{q}'}$ we have $\|\vc{w}'-\mb{E} \mc{W}(\vc{r}')\|_{\vc{R}}
\le \|\vc{w}-\mb{E} \mc{V}_\cap(\vc{r})\|_{\vc{R}} + 2|R| < 2\zZ n$,
so the theorem follows from Theorem \ref{general}. \qed

\subsection{Application to a theorem of Frankl and R\"odl}\label{subsec: FR-patterns}

In this subsection we give another application of Theorem \ref{general}, 
which illustrates an additional flexibility, namely that our method allows
different vectors defining the sizes of intersections
from those defining the sizes of sets in the family.
We will give a  new proof of a theorem of 
Frankl and R\"odl \cite[Theorem 1.15]{FrRo}
on intersection patterns in sequence spaces.
(To align with notation from the rest of the paper, 
our notation differs from that of \cite{FrRo}.)

Given non-negative integers $l_1,\ldots ,l_s$ with $\sum _{i} l_i = n$, let $\tbinom {[n]}{l_1,\ldots ,l_s}$ denote the set of elements ${\bf x} \in [s]^n$ with $|\{i\in [n]: x_i = j\}| = l_j$ for all $j\in [s]$. Given ${\bf x} \in \tbinom {[n]}{l_1,\ldots ,l_s}$ and ${\bf y} \in \tbinom {[n]}{k_1,\ldots ,k_t}$, the intersection pattern of ${\bf x}$ and ${\bf y}$ is given by an $s$ times $t$ matrix $M$, with $M_{j_1,j_2} = |\{i\in [n]: x_i = j_1, y_i = j_2\}|$ for $(j_1,j_2) \in [s]\times [t]$. For ${\cal A}_1 \subset \tbinom {[n]}{l_1,\ldots ,l_s}$ and ${\cal A}_2 \subset \tbinom {[n]}{k_1,\ldots ,k_t}$ we let ${\cal A}_1 \times _M {\cal A}_2$ denote the set of pairs $({\bf x},{\bf y}) \in {\cal A}_1 \times {\cal A}_2$ with intersection pattern $M$.

We say that $M$ is an intersection pattern for $(l_1,\ldots ,l_s)$ and $(k_1,\ldots,k_t)$
if each $\sum _{j_2 \in [t]} M_{j_1,j_2} = k_{j_1}$,
each $\sum _{j_1\in [s]} M_{j_1,j_2} = l_{j_2}$,
and $\sum _{(j_1,j_2) \in [s]\times [t]} M_{j_1,j_2} = n$.
The following result of Frankl and R\"odl 
is the analogue of Theorem \ref{FranklRodl}
for intersection patterns.

\begin{theo}[Frankl-R\"odl]\label{FRintpatt}
 Given $\eps , \kappa >0$ and $s,t \in {\mathbb N}$ 
 there is $\delta >0$ such that the following holds. 
 Suppose that $M$ is an intersection pattern 
 for $(l_1,\ldots ,l_s)$ and $(k_1,\ldots,k_t)$
 with all $M_{j_1,j_2} \geq \kappa n$.
 Let ${\cal A}_1 \subset \tbinom {[n]}{l_1,\ldots ,l_s}$ 
 with $|{\cal A}_1| \geq (1-\delta )^n \tbinom {n}{l_1,\ldots ,l_s}$ 
 and ${\cal A}_2 \subset \tbinom {[n]}{k_1,\ldots ,k_t}$ 
 with $|{\cal A}_2| \geq (1-\delta )^n \tbinom {n}{k_1,\ldots ,k_t}$.
 Then $|{\cal A}_1 \times _{M} {\cal A}_2| \geq 
(1-\eps )^n |\tbinom {n}{l_1,\ldots ,l_s} \times _M \tbinom {n}{k_1,\ldots ,k_t}|$.
\end{theo}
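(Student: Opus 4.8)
The plan is to derive Theorem~\ref{FRintpatt} from Theorem~\ref{general}, applied with $S=\{1,2\}$, $J_1=[s]$, $J_2=[t]$ and $C=\max(s,t)$; we may assume $s,t\ge 2$, since otherwise one of the two slices is a single point and the statement is immediate. Put $D=(s-1)(t-1)$ and let $\mu_{\vc q}$ be the product measure on $([s]\times[t])^n$ with $q^i_{j_1,j_2}=M_{j_1,j_2}/n$ for every $i$. As the $M_{j_1,j_2}\ge\kK n$ sum to $n$, $\mu_{\vc q}$ is $\kK$-bounded, and its two marginals are the product measures $\mu_{\vc p_1}$ on $[s]^n$ and $\mu_{\vc p_2}$ on $[t]^n$ whose coordinate weights are the row- and column-sums of $M/n$ -- equivalently, the ``densities'' of the slices $\tbinom{[n]}{l_1,\dots,l_s}$ and $\tbinom{[n]}{k_1,\dots,k_t}$. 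The crucial choice is the array $\mc V=(\vc v^i_{j_1,j_2})$ in $\mb Z^D$ that we feed to Theorem~\ref{general}: it records only the top-left $(s-1)\times(t-1)$ block of the intersection pattern, so $\vc v^i_{j_1,j_2}=\vc e_{(j_1,j_2)}$ when $j_1\le s-1$ and $j_2\le t-1$, and $\vc v^i_{j_1,j_2}=\mathbf 0$ otherwise; we take $\vc R=\mathbf 1$ and let $\vc w$ be the vector of entries $M_{j_1,j_2}$ over $(j_1,j_2)\in[s-1]\times[t-1]$. Because membership of $\vc x,\vc y$ in the two slices forces the row- and column-sums of their intersection pattern to equal those of $M$, the condition $\mc V(\vc x,\vc y)=\vc w$ is then equivalent to the whole pattern being $M$; hence for any $\mc A_1\sub\tbinom{[n]}{l_1,\dots,l_s}$ and $\mc A_2\sub\tbinom{[n]}{k_1,\dots,k_t}$ we have $(\mc A_1\times\mc A_2)^{\mc V}_{\vc w}=\mc A_1\times_M\mc A_2$, and likewise with each slice in place of the corresponding $\mc A_i$.

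The only hypothesis of Theorem~\ref{general} that needs checking is the existence of robust transfers, and this is precisely what the truncation buys us. I would take $\mc U=\{\vc e_{(a,b)}:(a,b)\in[s-1]\times[t-1]\}$, the standard basis of $\mb Z^D$, which is $\vc R$-bounded and $(k,k\zZ n,\vc R)$-generating for every $k\ge 1$, and split the alphabet as $J=[s]$, $L=[t]$. Then each $\vc e_{(a,b)}$ is an $i$-transfer in $\mc V$ for \emph{every} $i\in[n]$: via $(j,j')=(a,s)$ and $(\ell,\ell')=(b,t)$ one has $\vc v^i_{a,b}-\vc v^i_{s,b}=\vc e_{(a,b)}$ while $\vc v^i_{s,t}=\mathbf 0=\vc v^i_{a,t}$, the ``no change'' alternative being automatic since every value with $L$-coordinate $t$ is sent to $\mathbf 0$. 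Splitting $[n]$ into $|\mc U|=D$ disjoint blocks (or into $D$ disjoint blocks of size $\gG n$ for any prescribed small $\gG$) therefore gives $\gG$-robust transfers for $\mc U$. The remaining hypotheses are immediate: $\mc V$ is $\vc R$-bounded, $|J_i|\le C$, $\max_d R_d=1<n^C$, and $\mc V(\mu_{\vc q})=\sum_i\sum_{j_1,j_2}(M_{j_1,j_2}/n)\vc v^i_{j_1,j_2}=\vc w$, so $\|\vc w-\mc V(\mu_{\vc q})\|_{\vc R}=0<\zZ n$. Given $\eps,\kK,s,t$ I would fix $\gG$, then $\zZ$, then $\dD$, small enough for the hierarchy of constants in Theorem~\ref{general}.

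It remains to translate between cardinalities and measures, and here -- unlike in the main results of the paper -- no large-deviation estimate is required, because $\mu_{\vc p_1}$ is \emph{constant} on $\tbinom{[n]}{l_1,\dots,l_s}$ (every point there has mass $\prod_j(l_j/n)^{l_j}$), and likewise $\mu_{\vc p_2}$ and $\mu_{\vc q}$ are constant on their respective slices. Thus $\mu_{\vc p_1}(\mc A_1)=|\mc A_1|\cdot\mu_{\vc p_1}\big(\tbinom{[n]}{l_1,\dots,l_s}\big)/\tbinom{n}{l_1,\dots,l_s}\ge(1-\dD)^n\,\mu_{\vc p_1}\big(\tbinom{[n]}{l_1,\dots,l_s}\big)$, and $\mu_{\vc p_1}\big(\tbinom{[n]}{l_1,\dots,l_s}\big)\ge n^{-s}$ for $n$ large by Stirling (each $l_j\ge\kK n$). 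With the analogous bound for $\mc A_2$ this gives $\mu_{\vc p_1}(\mc A_1)\mu_{\vc p_2}(\mc A_2)>(1-\dD')^n$ for the threshold $\dD'$ demanded by Theorem~\ref{general}, once $\dD$ (the constant we output) is chosen small enough relative to $\eps,\kK,s,t$ and $n$ is large; Theorem~\ref{general} then yields $\mu_{\vc q}\big((\mc A_1\times\mc A_2)^{\mc V}_{\vc w}\big)>(1-\eps)^n$. Finally, $\mu_{\vc q}$ assigns the same mass to every point of $\big(\tbinom{[n]}{l_1,\dots,l_s}\times\tbinom{[n]}{k_1,\dots,k_t}\big)^{\mc V}_{\vc w}$, which contains $(\mc A_1\times\mc A_2)^{\mc V}_{\vc w}$ and has total $\mu_{\vc q}$-mass at most $1$; dividing through and using the identifications from the first paragraph gives $|\mc A_1\times_M\mc A_2|\ge(1-\eps)^n\,\big|\tbinom{n}{l_1,\dots,l_s}\times_M\tbinom{n}{k_1,\dots,k_t}\big|$, as required. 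I do not expect a genuine obstacle in this argument: the one place requiring an idea is recognising that one should truncate to the $(s-1)\times(t-1)$ block, which is legitimate because the ``size'' constraints on the families -- membership in the slices -- are carried by the marginal measures rather than by $\mc V$, and which makes the (otherwise most demanding) transfer hypothesis of Theorem~\ref{general} transparent; everything after that is bookkeeping.
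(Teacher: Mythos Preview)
Your argument is correct and follows the paper's strategy in all essentials: the same truncated $(s-1)\times(t-1)$ array $\mc V$, the same product measure $\mu_{\vc q}$ with coordinate weights $M_{j_1,j_2}/n$, the same transfer verification via the pairs $(a,s)$ and $(b,t)$, and the same appeal to Theorem~\ref{general}. The one difference lies in how you convert between cardinalities and product-measure masses. The paper invokes the large-deviation principle (Theorem~\ref{ldp}) for each of the three measures involved; you instead observe that, because the coordinate distributions are identical across~$i$, each of $\mu_{\vc p_1}$, $\mu_{\vc p_2}$, $\mu_{\vc q}$ is literally constant on its corresponding slice or pattern class, so ratios of masses equal ratios of cardinalities exactly, and a single Stirling bound on the total slice mass absorbs the polynomial loss. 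This is a clean shortcut that bypasses Section~\ref{sec:ldp} entirely (and with it the need to check the robust-generation hypothesis for the auxiliary slice arrays); the paper's route, by contrast, keeps the argument uniform with how such conversions are handled elsewhere in the paper.
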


\begin{proof}
	Fix $0 < \delta \ll \delta ' \ll \eps ' \ll \eps ,\kappa $,
	and let $J_1 = [s]$. 
Let ${\bf e}_1, \ldots ,{\bf e}_s$ denote the standard basis for ${\mathbb Z}^s$, 
and let ${\cal V}_1 = ({\bf v}^{i}_j)$ denote the 
	$(n,J_1)$-array, where each ${\bf v}^i_j = {\bf e}_j$.
We can naturally identify $\tbinom {[n]}{l_1,\ldots, l_{s}}$
with $(J_1^n)^{{\cal V}_1}_{{\bf z}_1}$,
where ${\bf z}_1 = (l_1,\ldots, l_s) \in {\mathbb Z}^s$.
	The maximum entropy measure 
	$\mu _{{\bf p}_1} = \mu ^{{\cal V}_1}_{{\bf z}_1}$ on $J_1^n$ 
	is then given by $(p_1)^{i}_j = l_j/n$ for all $i\in [n]$ 
	and $j\in J_1$. Indeed, as ${\bf v}^{i}_j$ is independent 
	of $i\in [n]$, by strict concavity of entropy 
	(Lemma \ref{entapprox}) so is $(p_1)^{i}_j = p_{1,j}$, 
	and $n(p_{1,1},\ldots , p_{1,s}) = 
	{\mathbb E}{\cal V}_1({\bf x}) = (l_1,\ldots ,l_s)$. 
	As $\mu _{{\bf p}_1}$ is $\kappa $-bounded 
	we can apply Theorem \ref{ldp} to find 
	${\mu }_{{\bf p}_1} \approx_{\triangle _1} \nu _1$, where 
	$\nu _1$ is uniform measure on 
	$(J_1^n)^{{\cal V}_1}_{{\bf z}_1} = \tbinom {[n]}{l_1,\ldots, l_{s}}$. 
Similarly, taking $J_2 = [t]$, we have a $\kappa $-bounded product 
	measure $\mu _{{\bf p}_2}$ on $J_2^n$, with 
	$\mu _{{\bf p}_2} \approx _{\triangle _2} \nu _2$, where 
	$\nu _2$ is uniform measure on 
	$\tbinom {[n]}{k_1,\ldots ,k_t}$.
Therefore $\mu _{{\bf p}_i}({\cal A}_i) \geq (1-\delta ')^n$ for $i = 1,2$.
	
Similarly, we let ${\bf e}_{1,1}, \ldots ,{\bf e}_{s-1,t-1}$ 
denote the standard basis for ${\mathbb Z}^{(s-1)(t-1)}$,
and let ${\cal V} = ({\bf v}^i_{j_1,j_2})$ denote the $(n, J_1 \times J_2)$-array, 
where ${\bf v}^{i}_{j_1,j_2} = {\bf e}_{j_1,j_2}$ if $(j_1,j_2) \in [s-1] \times [t-1]$ 
and $\bf 0$ otherwise.	We also let  
${\bf w} = \sum _{(j_1,j_2) \in [s-1] \times [t -1]}
M_{j_1,j_2}{\bf e}_{j_1,j_2}$.
Note that for ${\bf x} \in \tbinom {n}{l_1,\ldots ,l_s}$
and ${\bf y} \in \tbinom {n}{k_1,\ldots ,k_t}$, 
we have ${\cal V}({\bf x},{\bf y}) = {\bf w}$ if and
only if ${\bf x}$ and ${\bf y}$ have intersection pattern $M$. 
Therefore $({\cal A}_1 \times {\cal A}_2)^{\cal V}_{\bf w} 
= {\cal A}_1 \times _{M} {\cal A}_2$.

We will apply Theorem \ref{general} to estimate
$({\cal A}_1 \times {\cal A}_2)^{\cal V}_{\bf w}$
under the product measure $\mu _{\bf q}$ on $(J_1 \times J_2)^n$
defined by $q^i_{j_1,j_2} = M_{j_1,j_2}/n$.
	By hypothesis, $\mu _{\bf q}$ is $\kappa $-bounded, 
	with marginals $\mu _{{\bf p}_1}$ and $\mu _{{\bf p}_2}$, and 
	${\mathbb E}_{({\bf x},{\bf y}) \sim \mu _{\bf q}}
	{\cal V}({\bf x},{\bf y}) = {\bf w}$. Taking $\bf R$ to 
	be the constant ${\bf 1}$ vector in 
	${\mathbb Z}^{(s-1)(t-1)}$ we see that 
	 ${\cal V}$ is ${\bf R}$-bounded, and 
	${\cal U} = \{{\bf e}_{j_1,j_2}\}$ is 
	$(st,0,{\bf R})$-generating. Lastly, 
	${\cal V}$ has $1$-robust transfers for ${\cal U}$, 
	as for any $(j_1,j_2) \in [s-1] \times [t-1]$ 
	we have ${\bf v}^i_{j_1,j_2} - {\bf v}^i_{s,j_2} 
	= {\bf e}_{j_1,j_2}$ and ${\bf v}^i_{j_1,t} - 
	{\bf v}^i_{s,t} = {\bf 0}$. 
As $\mu _{{\bf p}_i}({\cal A}_i) \geq (1-\delta ')^n$ for $i = 1,2$, 
Theorem \ref{general} gives 
	$\mu _{\bf q} ( ({\cal A}_1 \times {\cal A}_2)^{\cal V}_{\bf w}) 
 = \mu _{\bf q} ( {\cal A}_1 \times _M{\cal A}_2) \geq (1-\eps ')^n$. 
The theorem follows from a final application of Theorem \ref{ldp}.
\end{proof}

We wish to emphasize two aspects of the above proof. 
Firstly, it is crucial that the arrays
${\cal V}_1, {\cal V}_2$ and ${\cal V}$ can differ.
Secondly, the arrays ${\cal V}_i$ are not $|J_i|^{-1}$-robustly
$(\gamma, {\bf R})$-generic for any $\gamma >0$ for $i = 1,2$, 
so we cannot apply Lemma \ref{bounded},
but we were able to see directly that
$\mu _{{\bf p}_1}$ and ${\mu }_{{\bf p}_2}$ are $\kappa $-bounded. 
Thus Theorem \ref{general} has useful consequences 
even for arrays that are not robustly generic.

\section{Correlation on product sets} \label{sec:cor}

In this section we will prove the following correlation inequality
which will be used in the proof of Theorem \ref{general};
it can also be interpreted as an exponential contiguity
result for product measures (see Theorem \ref{corctg}).

\begin{theo} \label{gencor} 
Let $0 < n^{-1} , \dD \ll \kK, \eps < 1$ 
and $\mu_{\vc{q}}$ be a $\kK$-bounded product measure
on $(\prod_{s \in S} J_s)^n$ with marginals $(\mu_{\vc{p}_s}: s \in S)$.
Suppose $\mc{A}_s \sub J_s^n$ for $s \in S$ 
with $\prod_{s \in S} \mu_{\vc{p}_s}(\mc{A}_s) > (1-\dD)^n$.
Then $\mu_{\vc{q}}(\prod_{s \in S} \mc{A}_s) > (1-\eps)^n$.
\end{theo}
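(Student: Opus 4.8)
The plan is to reduce the general $S$-fold statement to the case $|S|=2$ by induction, and to prove the case $|S|=2$ via a Dependent Random Choice argument of the kind packaged in Lemma \ref{indep} (applied to product graphs). For the inductive step, given $\mc{A}_s \sub J_s^n$ for $s \in S$ with $\prod_{s} \mu_{\vc{p}_s}(\mc{A}_s) > (1-\dD)^n$, partition $S = S_1 \sqcup S_2$; viewing $\prod_{s \in S_1} \mc{A}_s$ as a single subset of $(\prod_{s\in S_1}J_s)^n$ and likewise for $S_2$, the measure $\mu_{\vc{q}}$ has marginals the two product measures $\mu_{\vc{q}_1}, \mu_{\vc{q}_2}$ obtained by grouping coordinates, and these are still $\kK$-bounded (with $\kK$ replaced by $\kK^{C}$, say, where $C$ bounds $\prod |J_s|$). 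Since $\prod_{s\in S_1}\mu_{\vc{p}_s}(\mc{A}_s) \cdot \prod_{s\in S_2}\mu_{\vc{p}_s}(\mc{A}_s) > (1-\dD)^n$, we may apply the case $|S|=2$ to $\mu_{\vc{q}}$ with this coarser pair of alphabets, provided the relevant constants are still in the correct hierarchy; a short bookkeeping of the $\ll$-chain handles this.

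For the base case $|S|=2$, write $\mc{A} = \mc{A}_1 \sub J_1^n$, $\mc{B} = \mc{A}_2 \sub J_2^n$ with $\mu_{\vc{p}_1}(\mc{A})\mu_{\vc{p}_2}(\mc{B}) > (1-\dD)^n$, and set $\eta := \mu_{\vc{q}}(\mc{A}\times\mc{B})$, aiming to show $\eta > (1-\eps)^n$. Suppose not. The idea is to run the following exchange process coordinate by coordinate: sample $(\vc{a},\vc{b}) \sim \mu_{\vc{q}}$, and compare with $(\vc{a},\vc{b}')$ and $(\vc{a}',\vc{b})$ where $\vc{a}',\vc{b}'$ are resampled independently from the respective marginals. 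The failure of the conclusion means that the "bad event" $\mc{A}\times\mc{B}$ is rare under $\mu_{\vc{q}}$ but common under $\mu_{\vc{p}_1}\times\mu_{\vc{p}_2}$ (since $\mu_{\vc{p}_1}(\mc{A}^c) \le \dD^{?}n$-small forces $\mu_{\vc{p}_1}(\mc{A})$ close to $1$, and similarly for $\mc{B}$, so $\mu_{\vc{p}_1}\times\mu_{\vc{p}_2}(\mc{A}\times\mc{B}) > (1-\dD)^n$ too). The contrast between the product-of-marginals measure and $\mu_{\vc{q}}$ on the same set, with $\mu_{\vc{q}}$ being $\kK$-bounded, is exactly what one exploits: at each coordinate $i$, conditionally on the other coordinates, the $\kK$-boundedness means the conditional law of $(a_i,b_i)$ under $\mu_{\vc{q}}$ and under the product of its marginals are mutually absolutely continuous with Radon--Nikodym derivative bounded between $\kK$ and $\kK^{-1}$. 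Feeding this into an entropy/martingale estimate (Lemma \ref{lip} or a direct Chernoff bound on $\log(\mathrm{d}\mu_{\vc{q}}/\mathrm{d}(\mu_{\vc{p}_1}\times\mu_{\vc{p}_2}))$) shows the two measures cannot differ by a factor worse than $(1\pm o(1))^n$ on any set of measure $> (1-\dD)^n$ under one of them — contradicting $\eta \le (1-\eps)^n$ once $\dD \ll \eps$.

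The cleanest implementation of the base case is probably via Lemma \ref{indep2}/\ref{indep}: let $G_1$ be the graph on $J_1^n$ with $\vc{a}\sim\vc{a}'$ iff they are "far" in a suitable sense, and similarly $G_2$; more precisely, define $G_s$ so that $\alpha(G_s) \le N_s^{1-\eps'}$ encodes $\mu_{\vc{p}_s}(\mc{A}_s) > (1-\dD)^n$ (a large measure set has small independence number in the right auxiliary graph), and arrange that an independent set in $G_1 \times G_2$ of size $>(N_1N_2)^{1-\dD}$ would contradict $\mu_{\vc{q}}(\mc{A}_1\times\mc{A}_2) \le (1-\eps)^n$ after translating measures to cardinalities using the $\kK$-boundedness of $\mu_{\vc{q}}$ (which makes $\mu_{\vc{q}}$ comparable, up to $(1\pm o(1))^n$, to uniform measure on its support via a Chernoff bound on point masses, cf.\ Lemma \ref{uctg1}). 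Then Lemma \ref{indep} gives the contradiction directly. The main obstacle, I expect, is the measure-to-cardinality translation and the verification that the auxiliary graphs have the required polynomial size comparability $N \le N_s^c \le N_{s'} \le N_s^{1/c}$ and the required independence-number bounds — i.e.\ getting the $\kK$-boundedness to interact correctly with the combinatorial DRC machinery; the resampling/martingale heuristic above is the conceptual heart but the careful choice of auxiliary graphs is where the real work lies.
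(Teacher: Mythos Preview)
Your induction from general $S$ to $|S|=2$ is essentially the paper's reduction, though your phrasing has a small gap: to apply the $|S|=2$ case with the grouped alphabets you need the \emph{grouped marginals} $\mu_{\vc{q}_j}(\prod_{s\in S_j}\mc{A}_s)$ to be large, not just $\prod_{s\in S_j}\mu_{\vc{p}_s}(\mc{A}_s)$; you have to split off one factor at a time and invoke the inductive hypothesis to pass from the latter to the former. That is easily fixed.

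The real gap is in the base case $|S|=2$. Your first approach --- a Chernoff bound on $\log\bigl(\mathrm d\mu_{\vc{q}}/\mathrm d(\mu_{\vc{p}_1}\!\times\!\mu_{\vc{p}_2})\bigr)$ --- does not give what you claim. Under either measure that log-density concentrates around its mean, but the mean is $\pm D_{KL}$, which is $\Theta(n)$ whenever $\mu_{\vc q}\neq\mu_{\vc{p}_1}\!\times\!\mu_{\vc{p}_2}$; so the two measures genuinely differ by a fixed exponential factor on typical sets, and indeed there exist (non-product) sets $B$ with $(\mu_{\vc{p}_1}\!\times\!\mu_{\vc{p}_2})(B)\to 1$ but $\mu_{\vc{q}}(B)\to 0$. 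Your argument never uses that $\mc{A}_1\times\mc{A}_2$ is a \emph{product} set, and that is precisely the structure the theorem exploits. Your second approach via Lemma~\ref{indep} is not a proof sketch but a hope: there is no natural graph $G_s$ here, and you concede that choosing one is ``where the real work lies''.

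The paper's idea, which you are missing, is to bake $\mc{A}_2$ into a Lipschitz function of $\vc{x}_1$ alone: set
\[
f(\vc{x}_1)=\log\mu_{\vc{p}_1}(\vc{x}_1)-\log\mu_{\vc{q}}\bigl(\{\vc{x}_1\}\times\mc{A}_2\bigr),
\]
which is $O(\log\kK^{-1})$-Lipschitz since $\mu_{\vc{q}}$ is a $\kK$-bounded product measure. The hypothesis $\mu_{\vc{p}_2}(\mc{A}_2)=\mu_{\vc{q}}(J_1^n\times\mc{A}_2)\ge(1-\dD)^n$ and Markov give a set of $\mu_{\vc{p}_1}$-measure at least $(1-\dD)^n/2$ on which $f\le 2\dD n$; Lemma~\ref{lip} then forces $\mb{E}f\le O(\sqrt{\dD}\,n)$ and hence $f\le O(\sqrt{\dD}\,n)$ on almost all of $J_1^n$. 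Intersecting with $\mc{A}_1$ and summing yields $\mu_{\vc{q}}(\mc{A}_1\times\mc{A}_2)\ge(1-\eps)^n$. The point is that the Lipschitz concentration is applied to a function that already encodes the product structure, not to the global Radon--Nikodym derivative.
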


\begin{proof}[Proof of Theorem \ref{gencor}]
	We first consider the case $S = \{1,2\}$. 
	Define $f:(J_1)^n \to {\mathbb R}$ by 
	$$f({\bf x}_1) = \log _e\big ({\mu _{{\bf p}_1}({\bf x}_1)} \big ) - 
	\log _e\big ({\mu }_{\bf q} (\{{\bf x}_1\} \times  {\cal A}_2) \big ).$$ 
	As ${\mu }_{{\bf p}_1}$ is a marginal of ${\mu }_{\bf q}$,
	we have ${\mu _{{\bf p}_1}({\bf x}_1)} 
\geq {\mu }_{\bf q} (\{{\bf x}_1\} \times  {\cal A}_2)$  
	for all ${\bf x}_1 \in (J_1)^n$, and so $f({\bf x}_1) \geq 0$ for all 
	${\bf x}_1 \in (J_1)^n$. Note also that $f$ is 
	$2\log ({\kappa }^{-1})$-Lipschitz, as $\mu _{\bf q}$ is ${\kappa }$-bounded.
		
Let $M = {\mathbb E}_{\mu _{\bf q}}(f)$. We claim that 
$M \leq (2\delta + \alpha )n \leq 2\alpha n$. 
To see this, we apply a well-known concentration argument.
	For $I \subset {\mathbb R}$, let 
\[{\cal B}_{I} = \{ {\bf x}_1 \in (J_1)^n: f({\bf x}_1) \in I \}.\] 
By Lemma \ref{lip}, letting $\alpha  = 4\delta^{1/2} \log \kappa^{-1}$,
we have 
	$\mu _{{\bf p}_1}({\cal B}_{[M - \alpha n, M + \alpha n]}) 
	\geq 1 - 2e^{{-\alpha ^2n}/(8 \log^2 (\kappa ^{-1}))} > 1 - (1-\delta )^n/2$.
Now let 
\[{\cal C} = \{{\bf x}_1: \mu _{\bf q}(\{{\bf x}_1\} \times{\cal A}_2) 
\geq (1-\delta )^n \mu _{{\bf p}_1}({\bf x}_1)/2\}.\] 
	Then $f({\bf x}_1) \leq 2\delta n$ for ${\bf x}_1 \in {\cal C}$, 
	so ${\cal C} \subset {\cal B}_{[0, 2\delta n]}$. However,
	$(1-\delta )^n \leq \mu _{{\bf p}_2}({\cal A}_2) 
	= \mu _{{\bf q}} ((J_1)^n \times {\cal A}_2) \leq \mu _{{\bf p}_1}({\cal C}^{c}) 
	(1-\delta )^n/2 + \mu _{{\bf p}_1} ({\cal C})$, and so 
$\mu _{{\bf p}_1}({\cal B}_{[0,2\delta n]}) 
	\geq \mu _{{\bf p}_1} ({\cal C}) \geq (1-\delta )^n/2$. Thus
	${\cal B}_{[0,2\delta n]} \cap {\cal B}_{[M-\alpha n, M+\alpha n]} 
	\neq \emptyset $, which gives 
$M \leq (2\delta + \alpha )n \leq 2\alpha n$, as claimed.
	
	Now set ${\cal B} = {\cal A}_1 \cap {\cal B}_{[0,3\alpha n]}$. As $\mu _{{\bf p}_1} ({\cal A}_1) \geq (1-\delta )^n$ and $\mu _{{\bf p}_1} ({\cal B}_{[0,3\alpha n]}) \geq \mu _{{\bf p}_1}({\cal B}_{[M - \alpha n, M + \alpha n]}) \geq 1 - (1-\delta )^n/2$ we have $\mu _{{\bf p}_1}({\cal B}) \geq (1-\delta )^n/2$. Therefore 
		\begin{align*}
			\mu _{\bf q}({\cal A}_1 \times {\cal A}_2) 
				\geq 
			\sum _{{\bf x}_1 \in {\cal B}} 
				\mu _{\bf q} (\{{\bf x}_1\} \times  {\cal A}_2) 
				= 
			\sum _{{\bf x}_1 \in {\cal B}} 
				\mu _{{\bf p}_1} ({\bf x}_1) e^{-f({\bf x}_1)} 
				\geq 
			\mu _{{\bf p}_1} ({\cal B}) e^{-3\alpha n} 
				\ge (1-\eps )^n.
		\end{align*}
	This completes the proof in this case.
	
	Now we deduce the general case by induction on $|S|$. 
	Suppose the theorem is known for $|S| = k-1$ and we wish to prove it for 
	$|S| = k$. Fix $s \in S$ and let $S' = S \setminus \{s\}$.
We view $(\prod _{s \in S} J_s)^n$ as $(J_s \times J')^n$,
where $J' = \prod _{s' \in S'} J_{s'}$.
Let $\mu _{{\bf p}'}$ be the product measure on $J'{}^n$
defined by $\mu _{{\bf p}_{S'}}({\bf x}') 
= \mu _{\bf q}((J_1)^n \times \{{\bf x}'\})$.
Then $\mu _{{\bf p}'}$ is $\kappa $-bounded 
and has marginals $(\mu _{{\bf p}_{s'}})_{s' \in S'}$,
so by induction hypothesis, as 
$\prod _{s' \in S'} \mu _{{\bf p}_{s'}}({\cal A}_{s'}) \geq (1-\delta )^n$ 
we have $\mu _{{\bf p}_{S'}}(\prod _{s'\in S'} {\cal A}_{s'}) \geq (1-\dD')^n$,
where $\dD \ll \dD' \ll \eps$.

Also, we can view $\mu _{\bf q}$ as a product measure on $(J_s \times J')^n$,
with marginals $\mu _{{\bf p}_s}$ and $\mu _{{\bf p}'}$. 
Since $\mu _{{\bf p}_s}({\cal A}_s) 
\mu _{{\bf p}'} (\prod _{s' \in S'} {\cal A}_{s'}) 
\geq (1-\delta )^n(1-\dD')^n \ge (1-2\dD')^n$,
from the $|S| =2$ case of the theorem we obtain 
$\mu _{\bf q}(\prod _{s \in S} {\cal A}_s) \geq (1-\eps )^n$, as required.
\end{proof}	

Next we will apply Theorem \ref{gencor} to show exponential contiguity
of $\mu_{\vc{q}}$ and $\prod_{s \in S} \mu_{\vc{p}_s}$,
defined by $(\prod_{s \in S} \mu_{\vc{p}_s})(\vc{x}_s: s \in S)
= \prod_{s \in S} \mu_{\vc{p}_s}(\vc{x}_s)$. Here the subscript $\Pi$ 
indicates exponential contiguity relative to product sets,
i.e.\ we apply Definition \ref{expctg} in the case
$\OO_n = \big (\prod_{s \in S} J_{s}\big )^n$ and $\mc{F} = \Pi = (\Pi_n)_{n \in \mb{N}}$,
where $\Pi_n = \{ (\mc {A}_{n,s}: s \in S): \text{ all } \mc {A}_{n,s} \in J_{s}^n \}$.

\begin{theo} \label{corctg} 
Let $0 < n^{-1} \ll \kK \ll 1$ 
and $\mu_{\vc{q}}$ be a $\kK$-bounded product measure
on $(\prod_{s \in S} J_s)^n$ with marginals $(\mu_{\vc{p}_s}: s \in S)$.
Then $\mu_{\vc{q}} \approx_\Pi \prod_{s \in S} \mu_{\vc{p}_s}$.
\end{theo}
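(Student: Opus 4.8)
The plan is to verify separately the two inequalities $\prod_{s \in S} \mu_{\vc{p}_s} \lesssim_\Pi \mu_{\vc{q}}$ and $\mu_{\vc{q}} \lesssim_\Pi \prod_{s \in S} \mu_{\vc{p}_s}$ whose conjunction is the asserted relation $\approx_\Pi$. Throughout, a typical element of $\Pi_n$ has the form $\mc{A} = \prod_{s \in S} \mc{A}_s$ with each $\mc{A}_s \sub J_s^n$, and I would use two elementary facts: that $\big(\prod_{s \in S} \mu_{\vc{p}_s}\big)(\mc{A}) = \prod_{s \in S} \mu_{\vc{p}_s}(\mc{A}_s)$, and that $\mu_{\vc{p}_s}$ is the push-forward of $\mu_{\vc{q}}$ under the projection of $\big(\prod_{s \in S} J_s\big)^n$ onto the $s$-th coordinate block.

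For the first inequality I would simply invoke Theorem \ref{gencor}: if $\big(\prod_{s \in S} \mu_{\vc{p}_s}\big)(\mc{A}) = \prod_{s \in S} \mu_{\vc{p}_s}(\mc{A}_s) > (1-\dD)^n$, then, since $\mu_{\vc{q}}$ is $\kK$-bounded, Theorem \ref{gencor} gives $\mu_{\vc{q}}(\mc{A}) > (1-\eps)^n$, which is precisely what $\lesssim_\Pi$ requires.

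For the reverse inequality, I would argue as follows. Suppose $\mu_{\vc{q}}(\mc{A}) > (1-\dD)^n$. For each fixed $s \in S$, the set $\mc{A}$ is contained in the event that the $s$-th coordinate block lies in $\mc{A}_s$, an event of $\mu_{\vc{q}}$-probability equal to $\mu_{\vc{p}_s}(\mc{A}_s)$; hence $\mu_{\vc{p}_s}(\mc{A}_s) > (1-\dD)^n$ for every $s \in S$, and therefore $\big(\prod_{s \in S} \mu_{\vc{p}_s}\big)(\mc{A}) > (1-\dD)^{|S|n}$. The last step needs a bound on $|S|$, and here I would note that $\kK$-boundedness of $\mu_{\vc{q}}$ forces $\prod_{s \in S} |J_s| \le \kK^{-1}$, so $|S|$ is bounded in terms of $\kK$; choosing $\dD$ small enough relative to $\eps$ and $\kK$ then makes $(1-\dD)^{|S|n} > (1-\eps)^n$. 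This implicit boundedness of $|S|$ is the only point requiring any care; with it in hand, neither direction presents a genuine obstacle, and the whole content of the theorem is carried by the correlation inequality Theorem \ref{gencor}.
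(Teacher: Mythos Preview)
Your proof is correct and follows the same overall scheme as the paper: the direction $\prod_{s} \mu_{\vc{p}_s} \lesssim_\Pi \mu_{\vc{q}}$ is exactly Theorem \ref{gencor}, and the reverse direction comes from a simple inequality relating $\mu_{\vc{q}}(\prod_s \mc{A}_s)$ to the individual $\mu_{\vc{p}_s}(\mc{A}_s)$.

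The only difference is in how that reverse inequality is obtained. The paper first reduces to $|S|=2$ and then applies Cauchy--Schwarz to get $\mu_{\vc{q}}(\mc{A}_1 \times \mc{A}_2)^2 \le \mu_{\vc{p}_1}(\mc{A}_1)\,\mu_{\vc{p}_2}(\mc{A}_2)$. You instead use the trivial containment $\prod_t \mc{A}_t \sub \{\text{$s$-th block in }\mc{A}_s\}$ to get $\mu_{\vc{q}}(\prod_t \mc{A}_t) \le \mu_{\vc{p}_s}(\mc{A}_s)$ for each $s$, and then multiply over $s$, absorbing the factor $|S|$ in the exponent via the observation that $\kK$-boundedness forces $\prod_s |J_s| \le \kK^{-1}$ and hence bounds $|S|$. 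Your route is more elementary (no Cauchy--Schwarz, no reduction step) and works uniformly for all $S$; the paper's Cauchy--Schwarz gives the same exponent loss $|S|$ once iterated, so neither is quantitatively stronger.
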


\nib{Proof.}
As in the proof of Theorem \ref{gencor}, 
it suffices to consider the case $S=[2]$. By Theorem \ref{gencor} 
we have $\mu_{\vc{p}_1} \times \mu_{\vc{p}_2} \lesssim_\Pi \mu_{\vc{q}}$.
Conversely, consider $\mc{A}_s \sub J_s^n$ for $s \in [2]$.
By the Cauchy-Schwarz inequality, writing $\sum$ for
$\sum_{\vc{x}_1 \in J_1^n, \vc{x}_2 \in J_2^n}$, we have
\[\mu_{\vc{q}}(\mc{A}_1 \times \mc{A}_2)^2 
= \Big( \sum \mu_{\vc{q}}(\vc{x}_1,\vc{x}_2)
  \prod_{s \in [2]} 1_{\vc{x}_s \in \mc{A}_s} \Big)^2
\le \prod_{s \in [2]} \sum  \mu_{\vc{q}}(\vc{x}_1,\vc{x}_2) 1_{\vc{x}_s \in \mc{A}_s}
= \prod_{s \in [2]} \mu_{\vc{p}_s}(\mc{A}_s),\]
so $\mu_{\vc{q}} \lesssim_\Pi \mu_{\vc{p}_1} \times \mu_{\vc{p}_2}$. \qed
\medskip

We conclude this section by giving the easy deduction
of Theorem \ref{binary approx} from Theorem \ref{gencor}.

\begin{proof}[Proof of Theorem \ref{binary approx}]
	Given $\zeta >0$, taking ${\cal D} = \{ {\bf r}:\|{\cal V}({\bf r}) - {\mathbb E}{\cal V}\|_{\bf R} 
	\geq \zeta n \} \subset (\{0,1\} \times \{0,1\})^n$, by Lemma \ref{vchern} we have 
	$\mu _{\bf q}\big ( {\cal D}\big ) \leq 2D e^{-{\zeta }^2n/8} \leq e^{-{\zeta }^2n/16}$. 
	However, provided 
	$\delta , n^{-1} \ll \zeta ,\eps ,\kappa $, by Theorem \ref{gencor} any ${\cal A} \subset \{0,1\}^n$ with 
	$\mu _{\bf p}({\cal A}) \geq (1-\delta )^n$ satisfies
	$\mu _{\bf q} ({\cal A} \times {\cal A}) \geq  e^{-{\zeta }^2n/16}+ 
	(1-\eps )^n$. Since $({\cal A} \times {\cal A}) \cap {\cal D}^c = 
	({\cal A} \times {\cal A})^{{\cal V}_{\cap }}_{L}$ the result 
	follows.
\end{proof}

\section{Proof of the general theorem} \label{sec:pf}

In this section we prove Theorem \ref{general}.
We start by reducing to the case $|S|=2$.

\begin{lemma}
Theorem \ref{general} follows from the case $|S|=2$.
\end{lemma}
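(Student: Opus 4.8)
The plan is to reduce the general case to the $|S|=2$ case by fusing coordinate alphabets according to the partition of $S$ that witnesses the transfer hypothesis. Assume $|S|\ge 3$, since for $|S|=2$ there is nothing to prove. By hypothesis $(iv)$, $\mc V$ has $\gG$-robust transfers for $\mc U$, which by the remark following Definition \ref{transfer} means there is a partition $S = A\sqcup B$ such that $\mc V$, regarded as an $(n, J_A\times J_B)$-array with $J_A = \prod_{s\in A}J_s$ and $J_B = \prod_{s\in B}J_s$, has $\gG$-robust transfers for $\mc U$. Since $\mc U$ is $(k,k\zZ n,\vc R)$-generating it spans $\mb Z^D$ (as $D\ge 1$), so it contains a nonzero vector; inspecting Definition \ref{transfer} shows that a nonzero vector can be an $i$-transfer only when both $|J_A|\ge 2$ and $|J_B|\ge 2$, so in particular $A$ and $B$ are nonempty. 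I identify $\big(\prod_{s\in S}J_s\big)^n$ with $(J_A\times J_B)^n$ coordinatewise.

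I would then check that, under this identification, the data meets the hypotheses of the $|S|=2$ case of Theorem \ref{general} with $J_A, J_B$ (each of size at most $C':=C^C$) in the roles of $J_1,J_2$. The measure $\mu_{\vc q}$ has the same entries, merely reindexed, so it is still a $\kK$-bounded product measure on $(J_A\times J_B)^n$; its $J_A$-marginal is the product measure $\mu_{\vc p_A}$ on $J_A^n$ given by $(p_A)^i_{(j_s)_{s\in A}}=\sum_{(j_s)_{s\in B}}q^i_{(j_s)_{s\in S}}$, which, since $|J_A|\ge 2$, is again $\kK$-bounded and has $(\mu_{\vc p_s}:s\in A)$ as its own marginals, and symmetrically for $B$. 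The array $\mc V$ and the set $\mc U$ are literally unchanged, so $\mc V$ is still $\vc R$-bounded, $\mc U$ is still $\vc R$-bounded and $(k,k\zZ n,\vc R)$-generating, $\mc V$ has $\gG$-robust transfers for $\mc U$ as an $(n,J_A\times J_B)$-array (this is the partition we chose), and $\|\vc w-\mc V(\mu_{\vc q})\|_{\vc R}<\zZ n$ holds since $\mu_{\vc q}$ is the same measure.

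For the family hypothesis, set $\mc A_A^{\ast}=\prod_{s\in A}\mc A_s\sub J_A^n$ and $\mc A_B^{\ast}=\prod_{s\in B}\mc A_s\sub J_B^n$. From $\prod_{s\in S}\mu_{\vc p_s}(\mc A_s)>(1-\dD)^n$ we obtain $\prod_{s\in A}\mu_{\vc p_s}(\mc A_s)>(1-\dD)^n$ and $\prod_{s\in B}\mu_{\vc p_s}(\mc A_s)>(1-\dD)^n$, so applying Theorem \ref{gencor} to the $\kK$-bounded product measures $\mu_{\vc p_A}$ and $\mu_{\vc p_B}$ gives $\mu_{\vc p_A}(\mc A_A^{\ast})>(1-\eps_0)^n$ and $\mu_{\vc p_B}(\mc A_B^{\ast})>(1-\eps_0)^n$ for any prescribed $\eps_0>0$, provided $\dD$ was chosen small enough. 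Hence $\mu_{\vc p_A}(\mc A_A^{\ast})\,\mu_{\vc p_B}(\mc A_B^{\ast})>(1-2\eps_0)^n$. The $|S|=2$ case now yields $\mu_{\vc q}\big((\mc A_A^{\ast}\times\mc A_B^{\ast})^{\mc V}_{\vc w}\big)>(1-\eps)^n$, and under our identification $(\mc A_A^{\ast}\times\mc A_B^{\ast})^{\mc V}_{\vc w}$ is exactly $\big(\prod_{s\in S}\mc A_s\big)^{\mc V}_{\vc w}$ while $\mu_{\vc q}$ is unchanged, which is the desired bound.

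The main point needing care — the closest thing to an obstacle — is the bookkeeping of constants. The fused alphabets $J_A, J_B$ may have size up to $C^{|S|-1}\le C^C$, so one must invoke the $|S|=2$ statement with the enlarged constant $C'$; that statement supplies thresholds $\zZ^{(2)},\dD^{(2)}$ depending only on $\eps,\kK,\gG,D,M,k,C'$ (note $\max_d R_d<n^C\le n^{C'}$, and $\mc V(\mu_{\vc q})$ is unchanged). One then needs the general-case $\zZ$ to lie below $\zZ^{(2)}$, and needs $2\eps_0<\dD^{(2)}$ while keeping $\dD\ll\eps_0$ so that Theorem \ref{gencor} applies; since in the general hierarchy $\zZ$ sits above $\dD$ but below $\eps,\kK,\gG$ and $\dD$ is the smallest parameter of all, these requirements are simultaneously satisfiable, and the reduction goes through.
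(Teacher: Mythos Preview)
Your proof is correct and takes essentially the same approach as the paper: partition $S$ according to the bipartition witnessing the transfers, fuse the alphabets on each side, use Theorem \ref{gencor} to lower-bound the measures of the fused product families, and then invoke the $|S|=2$ case. Your extra remarks on why $A,B$ must be nonempty and on enlarging $C$ to accommodate $|J_A|,|J_B|\le C^C$ are sound and make explicit some bookkeeping the paper leaves implicit.
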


\nib{Proof.}
First note that if $\mc{V}$ has $\gG$-robust transfers for $\mc{U}$
then it has them as an $(n,L_1 \times L_2)$-array, where each
$L_j = \prod_{s \in S_j} J_s$ for some partition $(S_1,S_2)$ of $S$. 

Now let $\mu _{{\bf p}_{S_1}}$ denote the 
product measure on $L_1^n$
defined by $\mu _{{\bf p}_{S_1}}({\bf x}_1) = 
\mu _{\bf q} (\{{\bf x}_1\} \times L_2^n)$;
then  $\mu _{{\bf p}_{S_1}}$ is $\kappa $-bounded.
Similarly, we obtain  
$\mu _{{\bf p}_{S_2}}$ on $L_2^n$
that is $\kappa $-bounded.

Let $\mc{A}_{S_j} = \prod_{s \in S_j} \mc{A}_s$ for $j=1,2$.
As $\prod _{s \in S_j} \mu _{{\bf p}_s} ({\cal A}_s) \geq (1-\delta )^n$ 
and $\delta \ll \delta '$, by Theorem \ref{gencor} each
$\mu _{{\bf p}_{S_j}}({\cal A}_{S_j}) \geq (1-\delta ')^n$, 
so the case $S = \{1,2\}$ of Theorem \ref{general}
applies to $(\mc{A}_{S_1},\mc{A}_{S_2})$. \qed
\vspace{2mm}
 
Now we will prove a succession of special cases of Theorem \ref{general},
where the proof of each case builds on the previous cases,
culminating in the proof of the general case.
We assume without further comment that $S=\{1,2\}$.

\begin{lemma} \label{2x2=hs0}
Suppose the assumptions of Theorem \ref{general} hold,
we also have $J_1=J_2=\{0,1\}$, $\mc{A}_1=\mc{A}_2=\mc{A}$, 
$q^i_{j_1,j_2}=q_{j_1,j_2}$ for all $i \in [n]$ and $q_{0,1}=q_{1,0}=q_{1,1}=\aA$, 
and $\mc{V}$ has transfers for $(\mc{P},\ \mc{U})$, 
where $\mc{P}=(P_m: m \in [M])$ is a partition of $[n]$.
Then $(\mc{A} \times \mc{A})^{\mc{V}}_{\vc{w}} \ne \es$.
\end{lemma}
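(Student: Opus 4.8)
The plan is to combine the correlation inequality of Theorem~\ref{gencor} (to produce many ``approximate'' $\vc{w}$-intersections inside $\mc{A}\times\mc{A}$), the transfer hypothesis (to round an approximate intersection to an exact one by changing few coordinates), and Dependent Random Choice (to keep the rounded pair inside $\mc{A}$).

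First I would unpack the hypotheses on $\mu_{\vc{q}}$. Since $q_{0,0}=1-3\aA\ge\kK$ forces $\aA\in[\kK,1/3)$, both marginals of $\mu_{\vc{q}}$ equal the product measure $\mu_{\vc{p}}$ with $p_1=2\aA$, which is bounded away from $0$ and $1$, and for each $i$ the conditional law of $a_i$ given $b_i$ has both atoms bounded below by some $\kK'=\kK'(\kK)>0$. From the hypothesis $\mu_{\vc{p}}(\mc{A})^2>(1-\dD)^n$ and Theorem~\ref{gencor} we get $\mu_{\vc{q}}(\mc{A}\times\mc{A})>(1-\dD')^n$ for some $\dD\ll\dD'\ll\zZ$. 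Since $\|\vc{w}-\mc{V}(\mu_{\vc{q}})\|_{\vc{R}}<\zZ n$, Lemma~\ref{vchern} shows that all but an $e^{-\zZ^2 n/8}$-fraction of pairs (in $\mu_{\vc{q}}$-measure) satisfy $\|\mc{V}(A,B)-\vc{w}\|_{\vc{R}}<2\zZ n$; and by further Chernoff bounds all but an exponentially small fraction of the $B$ are \emph{typical}, meaning that for each $m\in[M]$ there are $\ge\tfrac12\kK\gG n$ coordinates $i\in P_m$ at which one entry of the pair can be changed so as to shift $\mc{V}(\cdot,B)$ by exactly $\vc{u}_m$, and among these at least $\tfrac14\kK'\kK\gG n$ for which the available shift is $+\vc{u}_m$ and at least as many for which it is $-\vc{u}_m$. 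Hence a positive $\mu_{\vc{q}}$-measure of pairs of $\mc{A}\times\mc{A}$ are \emph{good}: $A,B\in\mc{A}$ with $\|\mc{V}(A,B)-\vc{w}\|_{\vc{R}}<2\zZ n$ and $B$ typical.

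Next, for a good pair $(A,B)$: since $\mc{U}$ is $(k,k\zZ n,\vc{R})$-generating we may write $\vc{w}-\mc{V}(A,B)=\sum_{m\in[M]}c_m\vc{u}_m$ with $|c_m|\le 3k\zZ n$, and since $\zZ\ll\kK,\kK',\gG,k^{-1},M^{-1}$ and $B$ is typical, for each $m$ there are at least $|c_m|$ coordinates in $P_m$ at which the available shift of $\mc{V}(\cdot,B)$ has the required sign of $\vc{u}_m$. Changing one such coordinate for each unit of $|c_m|$ produces $A^{*}$ with $|A^{*}\triangle A|\le L:=3kM\zZ n$ and $\mc{V}(A^{*},B)=\vc{w}$; so $(A^{*},B)$ is the $\vc{w}$-intersection we want as soon as $A^{*}\in\mc{A}$, and there is a lot of freedom in the choice of good pair and of the coordinates changed.

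The main obstacle is forcing the rounded set into $\mc{A}$: the family need not be robust under coordinate changes (it could be code-like), and it may have exponentially small $\mu_{\vc{p}}$-measure, so the ``good pairs'' bipartite graph on $\mc{A}\times\mc{A}$ is far too sparse to apply Lemma~\ref{DRC} to directly. The plan is to first enlarge the alphabet: pull $\mu_{\vc{p}}$ and $\mu_{\vc{q}}$ back (compatibly, up to $O(1/Q)$ error) to uniform measures on $[Q]^n$ and on a similar flat alphabet, for a large constant $Q$, transporting the transfer structure of $\mc{V}$; then $\mc{A}$ becomes $\wt{\mc{A}}\subseteq[Q]^n$ with $|\wt{\mc{A}}|>(1-\dD)^nQ^n$, i.e.\ of cardinality within a negligible factor of the whole space. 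Assuming for contradiction that no good pair admits a valid rounding landing in $\wt{\mc{A}}$, one partitions $[n]$ into a bounded number $k$ of blocks $X_1,\dots,X_k$ (unions of the parts $P_m$, up to lower-order terms), forms a product graph $G_1\times\cdots\times G_k$ on $[Q]^n$ whose edges record block-local obstructions to completing a pair to a $\vc{w}$-intersection by transfers, shows that this assumption makes $\wt{\mc{A}}$ independent in that graph, and applies Lemma~\ref{indep}---with the per-block bound $\aA(G_i)\le|[Q]^{X_i}|^{1-\eps'}$ obtained from a block-scale version of the rounding argument that only needs \emph{approximate} completions inside a block (so there is no circular appeal to the statement being proved), together with Lemma~\ref{DRC}---to contradict $|\wt{\mc{A}}|>(1-\dD)^nQ^n$. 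The delicate parts are pinning down the graphs $G_i$ with the claimed independence bound, and keeping the flattenings of $\mu_{\vc{q}}$ and of its marginals mutually consistent.
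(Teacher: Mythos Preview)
Your proposal has a genuine gap at exactly the step you flag as ``delicate'': forcing the rounded element $A^{*}$ back into $\mc{A}$. The alphabet-enlargement-plus-product-graph plan is too underspecified to carry weight, and the tension is structural, not merely technical. If the block graphs $G_i$ are defined by \emph{exact} local constraints, then bounding $\aA(G_i)$ is itself a Frankl--R\"odl-type statement, and you never invoke Theorem~\ref{FranklRodl}; if they are defined by \emph{approximate} local constraints, then an edge in $G_1\times\cdots\times G_k$ only certifies an approximate $\vc{w}$-intersection, and you are back to rounding and landing in $\wt{\mc{A}}$---the circularity does not dissolve. Your acknowledgement that ``pinning down the graphs $G_i$'' is the hard part is correct, but it is the entire content of the lemma.

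The paper avoids rounding altogether via a \emph{slice restriction} that you are missing. By pigeonhole one passes to $\mc{A}\cap\mc{B}^{K,\vc{z}}$, where the profile $K_m=\sum_{i\in P_m}a_i$ and the vector $\vc{z}_m=\sum_{i\in P_m}\vc{v}^i_{a_i,a_i}$ are frozen. On this slice the transfer identities $\vc{v}^i_{1,0}-\vc{v}^i_{0,0}=\vc{u}_m$ and $\vc{v}^i_{1,1}=\vc{v}^i_{0,1}$ force $\mc{V}(\vc{a},\vc{a}')=\sum_m(\vc{z}_m+(K_m-t_m)\vc{u}_m)$, i.e.\ $\mc{V}$ is an \emph{explicit function of the intersection sizes} $t_m=\sum_{i\in P_m}a_ia'_i$ alone. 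One then computes target values $t_m$ (using the generating property of $\mc{U}$ and a single sample pair obtained via Theorem~\ref{gencor}) that hit $\vc{w}$ exactly, and the problem becomes: find an edge in $G_1\times\cdots\times G_M$ inside $\mc{A}\cap\mc{B}^{K,\vc{z}}$, where $G_m$ is the graph on $\tbinom{P_m}{K_m}$ joining pairs of intersection exactly $t_m$. Now the per-block bound $\aA(G_m)<(1-\dD')^n|V(G_m)|$ is precisely Theorem~\ref{FranklRodl}, and Lemma~\ref{indep} finishes. The two ingredients you are missing are (i) the slice restriction that converts the vector condition into a pure intersection-size condition, and (ii) the black-box appeal to Frankl--R\"odl for the per-block independence number.
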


\nib{Proof.}
The idea of the proof is to reduce the required statement
to finding two sets $A$ and $B$ in $\mc{A}$ with prescribed values 
of $|A \cap B \cap P_m|$ for all $m \in [M]$,
where we identify $\{0,1\}^n$ with subsets of $[n]$;
this will be achieved by the Frankl-R\"odl theorem 
and Dependent Random Choice.

First we introduce some notation. We write
$\mu_{\vc{p}} = \mu_{\vc{p}_1} = \mu_{\vc{p}_2}$,
and note for all $i \in [n]$ that 
$p^i_0 = 1-2\aA$ and $p^i_1 = 2\aA$, where $\aA \ge \kK$.
For $K = (K_m: m \in [M])$ we let $\mc{B}^K$ denote 
the set of all $\vc{a} \in \{0,1\}^n$ such that 
$\sum_{i \in P_m} \vc{a}_i = K_m$ for all $m \in [M]$.
 
We claim that we can fix $K$ with $K_m = (2\aA \pm \kK/4) |P_m|$ 
for all $m \in [M]$ such that $\mu_{\vc{p}}(\mc{A} \cap \mc{B}^K) > (1-\dD)^n$.
Indeed, by assumption we have $\mu_{\vc{p}}(\mc{A}) > (1-\dD)^{n/2}$.
Also, for $\vc{a} \sim \mu_{\vc{p}}$ and $X_m = \sum_{i \in P_m} \vc{a}_i$
we have $\mb{E}X_m = 2\aA |P_m|$, so by Chernoff's inequality
$\mb{P}(|X_m-\mb{E}X_m| > \kK|P_m|/4) \le 2e^{-(\kK |P_m|/4)^2/2|P_m|} \le 2e^{-\kK^2\gG n/32}$.
There are at most $n^M$ choices of $K$, so by a union bound and the 
pigeonhole principle there is some $K$ with all $K_m = (2\aA \pm \kK/4) |P_m|$
such that $\mu_{\vc{p}}(\mc{A} \cap \mc{B}^K) > n^{-M} ((1-\dD)^{n/2} - 2M e^{-\kK^2\gG n/32})
>  (1-\dD)^n$, as claimed. 

Now for $\vc{z} = (\vc{z}_m: m \in [M])$ with all $\vc{z}_m \in \mb{Z}^D$
we let $\mc{B}^{K,\vc{z}}$ denote the set of all $\vc{a} \in \mc{B}^K$
with $\sum_{i \in P_m} \vc{v}^i_{a_i,a_i} = \vc{z}_m$ for all $m \in [M]$.
We can fix $\vc{z}$ such that $\mu_{\vc{p}}(\mc{A} \cap \mc{B}^{K,\vc{z}}) > (1-2\dD)^n$.
Indeed, such a $\vc{z}$ exists by the pigeonhole principle,
as all $\|\vc{v}^i_{j_0,j_1}\|_{\vc{R}} \le 1$ and $\max_d R_d < n^C$,
so there are at most $(2n^{C+1})^M$ possible values of $\vc{z}$.

We note for any $\vc{a}$ and $\vc{a}'$ in $\mc{B}^{K,\vc{z}}$
that $\mc{V}(\vc{a},\vc{a}')$ is determined 
by the values $t_m = \sum_{i \in P_m} a_i a'_i$. 
Indeed, for each $i \in P_m$, as $\vc{u}_m$ is an $i$-transfer,
we may suppose that $\vc{v}^i_{1,0}-\vc{v}^i_{0,0}=\vc{u}_m$ and $\vc{v}^i_{1,1}=\vc{v}^i_{0,1}$.
Then 
\begin{align*} \mc{V}(\vc{a},\vc{a}') & = \sum_{m \in [M]} \sum_{i \in P_m} \vc{v}^i_{a_i,a'_i} 
= \sum_{m \in [M]} \Big ( \sum_{i \in P_m: a_i'=0} \vc{v}^i_{a_i,a'_i} + \sum_{i \in P_m: a_i'=1} \vc{v}^i_{a_i,a'_i} \Big ) \\
& = \sum _{m\in [M]} \Big ( \sum_{i \in P_m: a_i'=0} (\vc{v}^i_{a'_i,a'_i} + 1_{a_i=1} \vc{u}_m) + \sum_{i \in P_m: a_i'=1} \vc{v}^i_{a'_i,a'_i} \Big )
= \sum_{m \in [M]} (\vc{z}_m + (K_m-t_m) \vc{u}_m). \end{align*}

Next we claim that there are $\vc{b}, \vc{b}' \in \mc{B}^{K,\vc{z}}$
such that $\vc{v}^* = \mc{V}(\vc{b},\vc{b}')$ and $\wt{\vc{v}} = \mb{E} \mc{V}(\vc{r})$
satisfy $\|\vc{v}^*-\wt{\vc{v}}\|_{\vc{R}} < \zZ n$,
and $\vc{v}^* = \sum_{m \in [M]} (\vc{z}_m + c_m \vc{u}_m)$,
where $c_m \in \mb{Z}$ with $c_m = (\aA \pm \kK/4) |P_m|$ for all $m \in [M]$.
Indeed, for $X=\mc{V}(\vc{r})$ with $\vc{r}=(\vc{b},\vc{b'}) \sim \mu_{\vc{q}}$ we have
$\mb{P}(\|X - \mb{E}X\|_{\vc{R}} \ge \zZ n) \leq 2De^{-\zZ^2 n/2}$ by Lemma \ref{vchern}.
Also, $Y_m = \sum_{i \in P_m} b_i b'_i$ satisfies $\mb{E} Y_m = \aA |P_m|$ and 
$\mb{P}(|Y_m-\mb{E}Y_m| > \kK|P_m|/2) < 2e^{-\kK^2\gG n/128}$ by Chernoff's inequality.
As $\mu_{\vc{p}}(\mc{B}^{K,\vc{z}}) \ge \mu_{\vc{p}}(\mc{A} \cap \mc{B}^{K,\vc{z}}) > (1-2\dD)^n$,
by Theorem \ref{gencor} we have $\mu_{\vc{q}}(\mc{B}^{K,\vc{z}} \times \mc{B}^{K,\vc{z}}) > (1-\dD')^n$,
where $\dD \ll \dD' \ll \zZ$, so we can choose $\vc{b}$ and $\vc{b}'$ as claimed.

Now we can determine values $t_m$ for $m \in [M]$
such that for any $\vc{a}$ and $\vc{a}'$ in $\mc{A} \cap \mc{B}^{K,\vc{z}}$
with $\sum_{i \in P_m} a_i a'_i = t_m$ for all $m \in [M]$
we have $\mc{V}(\vc{a},\vc{a}') = \vc{w}$. Indeed,
$\|\vc{w}-\vc{v}^*\|_{\vc{R}} \le \|\vc{w}-\wt{\vc{v}}\|_{\vc{R}}
+ \|\vc{v}^*-\wt{\vc{v}}\|_{\vc{R}} < 2\zZ n$,
so as $\mc{U}$ is $(k,k\zZ n,\vc{R})$-generating, 
we have $\vc{w}-\vc{v}^* =  \sum_{m \in [M]} e_m \vc{u}_m$, 
with each $e_m \in \mb{Z}$ and $|e_m| \le 3k\zZ n$.
Thus $\vc{w} = \sum_{m \in [M]} (\vc{z}_m + (c_m+e_m) \vc{u}_m)$,
so we take $t_m = K_m - (c_m+e_m)$ for all $m \in [M]$.

It remains to show that we can find such $\vc{a}$ and $\vc{a}'$.
We consider the graph $G = G_1 \times \dots \times G_M$ on $\mc{B}^K$,
where each $G_m$ is the graph on $\tbinom{P_m}{K_m}$
with $A_m A'_m \in E(G_m) \Lra |A_m \cap A'_m| = t_m$. As 
\begin{align*}
\max (2K_m - |P_m|,0) + \tfrac{\kK}{4} |P_m| \leq 
(2\aA - \tfrac{\kK}{4})|P_m| - (\aA + \tfrac{\kK}{4})|P_m| - 3k\zZ n 
\leq t_m \leq K_m - \tfrac{\kK}{4} |P_m|,
\end{align*}
we have $\aA(G_m) < (1-\dD')^n |V(G_m)|$ by Theorem \ref{FranklRodl}.
As $|V(G_m)| \ge \tbinom{\gG n}{\kK \gG n/2} \ge |V(G_{m'})|^{\gG\kK}$ for all $m' \in [M]$,
by Lemma \ref{indep} we have $\aA(G) < (1-2\dD)^n |V(G)|$.
But $|\mc{A} \cap \mc{B}^{K,\vc{z}}|/|\mc{B}^K| 
= \mu_{\vc{p}}(\mc{A} \cap \mc{B}^{K,\vc{z}})/\mu_{\vc{p}}(\mc{B}^K) > (1-2\dD)^n$,
so $\mc{A} \cap \mc{B}^{K,\vc{z}}$ contains an edge of $G$, as required. \qed

\begin{lemma} \label{2x2=hs}
Theorem \ref{general} holds under the additional assumptions 
that $J_1=J_2=\{0,1\}$, $\mc{A}_1=\mc{A}_2=\mc{A}$, 
$q^i_{j_1,j_2}=q_{j_1,j_2}$ for all $i \in [n]$ and $q_{0,1}=q_{1,0}=q_{1,1}=\aA$. 
\end{lemma}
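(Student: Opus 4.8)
The plan is to carry out the argument of Lemma \ref{2x2=hs0} while tracking the $\mu_{\vc q}$-measure (equivalently, the number) of the $\vc w$-intersecting pairs that are produced, rather than merely their existence. As in Lemma \ref{2x2=hs0} I work with a partition $\mc P = (P_m: m\in[M])$ of $[n]$; a general $\mc P$ with $|P_m|\ge\gG n$ reduces to this by fusing each coordinate of $[n]\sm\bigcup_m P_m$ into some block $P_m$ (this preserves the fact that $\vc u_m$ is an $i$-transfer of the enlarged coordinate, since only its $P_m$-part is ever altered), at the cost of a bounded alphabet. Writing $\mu_{\vc p}$ for the common marginal of $\mu_{\vc q}$, so $p^i_1 = 2\aA$, $p^i_0 = 1-2\aA$, $q_{0,0} = 1-3\aA$, I first re-run the reductions of Lemma \ref{2x2=hs0}: fix $K = (K_m)$ with each $K_m = 2\aA|P_m| \pm O(\sqrt{\dD/\gG}\,|P_m|)$ and $\mu_{\vc p}(\mc A\cap\mc B^K)>(1-\dD)^n$, then $\vc z$ with $\mu_{\vc p}(\mc A\cap\mc B^{K,\vc z})>(1-2\dD)^n$, then a representative pair $(\vc b,\vc b')\in\mc B^{K,\vc z}\times\mc B^{K,\vc z}$ with $\mc V(\vc b,\vc b')$ within $\zZ n$ of $\mc V(\mu_{\vc q})$ and, in addition, $|\{i\in P_m: b_i=b'_i=1\}| = \aA|P_m| \pm O(\zZ|P_m|)$ for every $m$ — such a pair exists by Chernoff's inequality and Theorem \ref{gencor}, which gives $\mu_{\vc q}(\mc B^{K,\vc z}\times\mc B^{K,\vc z})>(1-\dD')^n$ with $\dD'\ll\zZ^2\gG$. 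As in Lemma \ref{2x2=hs0} this yields a target $(t_m)$ with each $t_m = \aA|P_m| \pm O(\zZ|P_m|)$ such that every $(A,B)$ in $\mc B^{K,\vc z}\times\mc B^{K,\vc z}$ with $|A\cap B\cap P_m|=t_m$ for all $m$ has $\mc V(A,B)=\vc w$. Form $G = G_1\times\cdots\times G_M$ on $\mc B^K = \prod_m\tbinom{P_m}{K_m}$, where $A_mA'_m\in E(G_m)\Lra|A_m\cap A'_m|=t_m$; the (ordered) edges of $G$ inside $\mc C := \mc A\cap\mc B^{K,\vc z}$ are then precisely $\vc w$-intersecting pairs from $\mc A$.

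The heart of the proof is a supersaturated replacement for the final step of Lemma \ref{2x2=hs0}: if $n^{-1}\ll\dD\ll\eps_0$ then \emph{any} $\mc C\sub V(G)$ with $|\mc C|>(1-2\dD)^n|V(G)|$ has $e(G[\mc C])\ge(1-\eps_0)^n|E(G)|$. I would prove this by induction on $M$. The base case $M=1$ is the full supersaturation conclusion of Theorem \ref{FranklRodl}, applicable because the bounds on $t_m$ put it well inside the non-degenerate range. For the inductive step, split $G = G_1\times G''$ with $G'' = G_2\times\cdots\times G_M$, and for $\vc x''\in V(G'')$ set $\mc C_{\vc x''}=\{u\in V(G_1):(u,\vc x'')\in\mc C\}$; since $\mb E_{\vc x''}|\mc C_{\vc x''}|\ge(1-2\dD)^n|V(G_1)|$, a Markov-type estimate shows that $\mc D := \{\vc x'': |\mc C_{\vc x''}|>(1-3\dD)^n|V(G_1)|\}$ is exponentially dense in $V(G'')$ (with a density parameter degrading only slightly). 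By the induction hypothesis $G''[\mc D]$ has at least $(1-\eps_0)^n|E(G'')|$ edges, and for each such edge $\vc x''\vc y''$ both slices $\mc C_{\vc x''},\mc C_{\vc y''}$ are dense in $V(G_1)$, so the \emph{cross-intersecting} form of Theorem \ref{FranklRodl} produces at least $(1-\eps_0)^n|E(G_1)|$ pairs $(u,v)\in\mc C_{\vc x''}\times\mc C_{\vc y''}$ with $|u\cap v|=t_1$. Each $((u,\vc x''),(v,\vc y''))$ is an edge of $G[\mc C]$, and since such an edge determines $(\vc x'',\vc y'')$ these contributions are disjoint; hence $e(G[\mc C])\ge(1-\eps_0)^n|E(G'')|\cdot(1-\eps_0)^n|E(G_1)|\ge(1-\eps_0)^n|E(G)|$, where the loss is controlled because it enters as a maximum (not a sum) over the $M=O(D(C+2))$ rounds, and the Frankl--R\"odl parameters may be chosen small enough.

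It remains to translate back. On $\mc B^K = \prod_m\tbinom{P_m}{K_m}$ the product measure $\mu_{\vc q}$ factorises over $m$, and every pair $(A,B)$ in $\mc B^{K,\vc z}\times\mc B^{K,\vc z}$ with $|A\cap B\cap P_m|=t_m$ for all $m$ receives the same weight $\rho = \prod_m\aA^{2K_m-t_m}(1-3\aA)^{|P_m|-2K_m+t_m}$. Moreover $\rho\,|E(G)| = \prod_m \mb P_{\mu_{\vc q}|_{P_m}}\brak{\,|A\cap P_m|=|B\cap P_m|=K_m \text{ and } |A\cap B\cap P_m|=t_m\,}$, and since $K_m$ and $t_m$ lie within $O(\zZ|P_m|)$ of their (conditional) means, each factor is at least $(1-O(\zZ^2))^{|P_m|}$, so $\rho\,|E(G)|\ge(1-O(\zZ^2))^n$. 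Combining this with the supersaturation bound of the previous paragraph,
\[ \mu_{\vc q}\brac{(\mc A\times\mc A)^{\mc V}_{\vc w}} \ \ge\ \rho\cdot e(G[\mc C]) \ \ge\ (1-\eps_0)^n(1-O(\zZ^2))^n \ \ge\ (1-\eps)^n, \]
using $\zZ\ll\eps$ and taking $\eps_0\le\eps/2$.

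I expect the main obstacle to be the product-supersaturation step. One has to track and bound the multiplicative loss accumulated as $G$ is peeled one factor at a time, and — in contrast to the bare independence-number bound of Lemma \ref{indep}, which only yields a single edge — the inductive step genuinely needs the cross-intersecting, supersaturated version of the Frankl--R\"odl theorem applied to the (generally distinct) slices $\mc C_{\vc x''}$ and $\mc C_{\vc y''}$. A secondary point is the careful choice of the representative pair so that the target $(t_m)$ is within $O(\zZ|P_m|)$, rather than merely $O(\kK|P_m|)$, of its mean, which is exactly what makes $\rho\,|E(G)|$ as large as $(1-O(\zZ^2))^n$ and so keeps the final loss below $\eps$.
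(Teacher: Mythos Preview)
Your approach differs substantially from the paper's, and the product--supersaturation idea is natural, but the main obstacle is not where you expect it: the genuine gap is the reduction to the partition case, which you dismiss in one line.

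The hypotheses of Lemma \ref{2x2=hs} give only $\gG$-robust transfers: disjoint $P_m$ of size $\ge\gG n$, which in general cover only a $\Theta(M\gG)$-fraction of $[n]$. You propose to fuse each leftover coordinate into some $P_m$ ``at the cost of a bounded alphabet'', and you are right that the transfer property survives on a fused super-coordinate. But the entire remainder of your argument is specific to $J_1=J_2=\{0,1\}$: the identity $\mc{V}(\vc a,\vc a')=\sum_m(\vc z_m+(K_m-t_m)\vc u_m)$ from Lemma \ref{2x2=hs0} requires that for \emph{every} $i\in P_m$ and \emph{every} $(a_i,a'_i)\in\{0,1\}^2$ one has $\vc v^i_{a_i,a'_i}=\vc v^i_{a'_i,a'_i}+1_{a_i=1,a'_i=0}\vc u_m$; the graphs $G_m$ are $t_m$-intersection graphs on $\tbinom{P_m}{K_m}$; and you apply Frankl--R\"odl for set systems. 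Once the alphabet becomes $\{0,1\}^c$ with $c>1$, the value of $\mc V(\vc a,\vc a')$ on a block depends on the full intersection \emph{pattern}, not on a single number $t_m$, and you can no longer define $G_m$ or invoke Theorem \ref{FranklRodl}. Distributing leftovers among the $P_m$ without fusing fares no better: the transfer identity fails on the added coordinates, so $\mc V(\vc a,\vc a')$ is again not determined by the $t_m$. Since the leftover $[n]\sm\bigcup_m P_m$ may occupy all but $M\gG n$ coordinates, this is not a small defect that can be absorbed.

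The paper avoids this by a different mechanism. It takes the $P_m$ \emph{small} (each of size $\gG_0 n$ with $\zZ\ll\gG_0\ll\eps$), sets $B_2=\bigcup_m P_m$ and $B_1=[n]\sm B_2$, and uses Dependent Random Choice (Lemma \ref{DRC}) on the bipartite incidence between $\tbinom{B_1}{K_1}$ and $\tbinom{B_2}{K_2}$ to find $\mc B\sub\tbinom{B_1}{K_1}$ of density $>(1-\dD')^n$ such that any two elements of $\mc B$ have $>(1-\dD')^n\bsize{\tbinom{B_2}{K_2}}$ common neighbours. For each pair $(\vc b_1,\vc b'_1)\in\mc B\times\mc B$ with $\mc V_1(\vc b_1,\vc b'_1)$ near its mean --- a set of pairs of $\mu_{\vc q}$-measure $>(1-\eps/2)^n$ by Theorem \ref{gencor} and concentration --- one applies Lemma \ref{2x2=hs0} (existence only) on $B_2$, where $\mc P$ genuinely is a partition. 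All the counting lives on the $B_1$-side; the crude bound $\kK^{|B_2|}>(1-\eps/2)^n$ (from $|B_2|=M\gG_0 n\ll\eps n$) suffices for the $B_2$-completion. No product supersaturation and no cross-intersecting Frankl--R\"odl are needed.
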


\nib{Proof.}
Let $\mc{P}=(P_m: m \in [M])$ with $|P_m| = \gG_0 n$ for all $m \in [M]$
be such that $\mc{V}$ has transfers for $(\mc{P},\ \mc{U})$,
where $\zZ \ll \gG_0 \ll \eps$.
Let $B_2 = \cup_{m \in [M]} P_m$ and $B_1 = [n] \sm B_2$.
Write $\mc{F}^{K_1,K_2}$ for the set of all $\vc{a} \in \{0,1\}^n$
such that $\sum_{i \in B_j} \vc{a}_i = K_j$ for $j=1,2$.
As in the proof of Lemma \ref{2x2=hs0}, we write 
$\mu_{\vc{p}} = \mu_{\vc{p}_1} = \mu_{\vc{p}_2}$, note that
$p^i_0 = 1-2\aA$ and $p^i_1 = 2\aA$, where $\aA \ge \kK$,
and fix $K_j = (2\aA \pm \kK/4) |B_j|$ for $j=1,2$
such that $\mu_{\vc{p}}(\mc{A} \cap \mc{F}^{K_1,K_2}) > (1-\dD)^n$.

Consider the bipartite graph $G$ with parts $(\tbinom{B_1}{K_1},\tbinom{B_2}{K_2})$ 
where $(\vc{b}_1,\vc{b}_2) \in E(G) \Lra \vc{b}_1\vc{b}_2 \in \mc{A}$.
By Lemma \ref{indep2} there is $\mc{B} \sub \tbinom{B_1}{K_1}$ 
with $|\mc{B}| > (1-\dD')^n \bsize{\tbinom{B_1}{K_1}}$,
where $\dD \ll \dD' \ll \zZ$,
such that for any $\vc{b}_1, \vc{b}'_1$ in $\mc{B}$
we have $|N_G(\vc{b}_1, \vc{b}'_1)| > (1-\dD')^n \bsize{\tbinom{B_2}{K_2}}$.

We will now find $\mc{F} \sub \mc{B} \times \mc{B}$
with $\mu_{\vc{q}}(\mc{F}) > (1-\eps/2)^n$
(also writing $\mu_{\vc{q}}$ for its restriction to $(\{0,1\} \times \{0,1\})^{B_1}$)
such that for any $(\vc{b}_1,\vc{b}'_1) \in \mc{F}$
there are $\vc{b}_2$ and $\vc{b}'_2$ in $N_G(\vc{b}_1, \vc{b}'_1)$,
such that $\mc{V}(\vc{b}_1\vc{b}_2,\vc{b}'_1\vc{b}'_2) = \vc{w}$.
This will suffice to prove the lemma, as then
$\mu_{\vc{q}}( (\mc{A} \times \mc{A})^{\mc{V}}_{\vc{w}} )
\ge \kK^{|B_2|} \mu_{\vc{q}}(\mc{F}) > (1-\eps)^n$, using $\gG_0 \ll \eps$.

Let $\mc{V}_j = \{\vc{v}^i_{j_1,j_2}: i \in B_j\}$ for $j=1,2$
and $\wt{\vc{v}}_j = \mb{E} \mc{V}_j(\vc{r})$, where $\vc{r} \sim \mu_{\vc{q}}$,
so $\wt{\vc{v}}_1 + \wt{\vc{v}}_2 = \wt{\vc{v}} = \mb{E} \mc{V}(\vc{r})$.
Let $\mc{E}$ be the set of $(\vc{b}_1,\vc{b}'_1) \in (\{0,1\} \times \{0,1\})^{B_1}$
such that $\|\mc{V}_1(\vc{b}_1,\vc{b}'_1) - \wt{\vc{v}}_1 \|_{\vc{R}} > \zZ n$.
Then $\mu_{\vc{q}}(\mc{E}) < 2De^{-\zZ^2 n/2}$ by Lemma \ref{vchern}.
We choose $\mc{F} = (\mc{B} \times \mc{B}) \sm \mc{E}$. 
By Theorem \ref{gencor} we have $\mu_{\vc{q}}(\mc{B} \times \mc{B}) > (1-\dD'')^{|B_1|}$ 
with $\dD' \ll \dD'' \ll \zZ  \ll \eps $, so $\mu_{\vc{q}}(\mc{F}) > (1-\eps/2)^n$.
 
It remains to show for fixed $(\vc{b}_1,\vc{b}'_1) \in \mc{F}$
that there is $\vc{b}_2$ and $\vc{b}'_2$ in $N_G(\vc{b}_1, \vc{b}'_1)$
such that $\mc{V}_2(\vc{b}_2,\vc{b}'_2) = \vc{w}' := \vc{w} - \mc{V}_1(\vc{b}_1,\vc{b}'_1)$.
To see this, it suffices to verify the hypotheses of Lemma \ref{2x2=hs0},
applied with $N_G(\vc{b}_1, \vc{b}'_1)$ in place of $\mc{A}$,
restricting $\mu_{\vc{q}}$ to $(\{0,1\} \times \{0,1\})^{B_2}$,
and with $\mc{V}_2$ in place of $\mc{V}$.
We note that $\mc{V}_2$ has transfers for the same $(\mc{P},\ \mc{U})$, 
and $\mc{P}=(P_m: m \in [M])$ is a partition of $B_2$.
As $\| \vc{w}' - \wt{\vc{v}}_2 \|_{\vc{R}} \le \|\vc{w}-\wt{\vc{v}}\|_{\vc{R}} +
\|\mc{V}_1(\vc{b}_1,\vc{b}'_1) - \wt{\vc{v}}_1\|_{\vc{R}} \le 2\zZ n$,
replacing $\zZ$ by $2\zZ$ we see that all hypotheses hold,
so the proof of the lemma is complete. \qed

\begin{lemma} \label{2x2xhs}
Theorem \ref{general} holds under the additional assumptions that $J_1=J_2=\{0,1\}$,
$q^i_{j_1,j_2}=q_{j_1,j_2}$ for all $i \in [n]$ and $q_{0,1}=q_{1,0}=q_{1,1}=\aA$. 
\end{lemma}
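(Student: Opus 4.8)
The plan is to run the proof of Lemma \ref{2x2=hs} for two families instead of one, the one genuinely new ingredient being a cross-intersecting form of the Frankl--R\"odl supersaturation theorem. Since $\mu_{\vc{q}}$ is symmetric ($q_{0,1}=q_{1,0}$) both marginals agree; write $\mu_{\vc{p}}=\mu_{\vc{p}_1}=\mu_{\vc{p}_2}$, so $p^i_1=2\aA$, $p^i_0=1-2\aA$ with $\aA,1-3\aA\ge\kK$, and $\mu_{\vc{p}}(\mc{A}_1)\mu_{\vc{p}}(\mc{A}_2)>(1-\dD)^n$ forces each $\mu_{\vc{p}}(\mc{A}_s)>(1-\dD)^n$.

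First I would establish a cross analogue of Lemma \ref{2x2=hs0}: under the hypotheses of that lemma (binary alphabet, constant $\vc{q}$ with $q_{0,1}=q_{1,0}=q_{1,1}=\aA$, a partition $\mc{P}=(P_m)_{m\in[M]}$ of $[n]$ with $\mc{V}$ having transfers for $(\mc{P},\mc{U})$), if $\mu_{\vc{p}}(\mc{A}_1)\mu_{\vc{p}}(\mc{A}_2)>(1-\dD)^n$ then $(\mc{A}_1\times\mc{A}_2)^{\mc{V}}_{\vc{w}}\neq\es$. Normalising the transfer data so that $\vc{v}^i_{1,0}-\vc{v}^i_{0,0}=\vc{u}_m$ and $\vc{v}^i_{1,1}=\vc{v}^i_{0,1}$ for $i\in P_m$ (the other configurations cost only a sign/index change in the sum below), one has for $\vc{a}\in\mc{A}_1$, $\vc{a}'\in\mc{A}_2$
\[ \mc{V}(\vc{a},\vc{a}')=\sum_{m\in[M]}\bigl(\vc{z}^2_m+(K^1_m-t_m)\vc{u}_m\bigr),\quad \vc{z}^2_m:=\sum_{i\in P_m}\vc{v}^i_{0,a'_i},\ \ K^1_m:=\sum_{i\in P_m}a_i,\ \ t_m:=\sum_{i\in P_m}a_ia'_i. \]
So, passing by the pigeonhole principle (exactly as in Lemma \ref{2x2=hs0}) to exponentially dense subfamilies on which $(K^1_m)_m$ is fixed for $\mc{A}_1$ and $(K^2_m)_m,(\vc{z}^2_m)_m$ are fixed for $\mc{A}_2$, all with $K^1_m,K^2_m=(2\aA\pm\kK/100)|P_m|$, the value $\mc{V}(\vc{a},\vc{a}')$ depends only on $(t_m)_m$. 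Using Theorem \ref{gencor} together with Lemma \ref{vchern} and Chernoff's inequality one finds $\vc{b}\in\mc{A}_1$, $\vc{b}'\in\mc{A}_2$ in these subfamilies with $\|\mc{V}(\vc{b},\vc{b}')-\mb{E}\,\mc{V}\|_{\vc{R}}<\zZ n$ and $K^1_m-t_m=(\aA\pm\kK/100)|P_m|$; since $\mc{U}$ is $(k,k\zZ n,\vc{R})$-generating this determines targets $t_m=(\aA\pm\kK/20)|P_m|$, which (using $\aA\ge\kK$ and $1-3\aA\ge\kK$) lie well inside the Frankl--R\"odl window $\max(K^1_m+K^2_m-|P_m|,0)+\tfrac{\kK}{2}|P_m|\le t_m\le\min(K^1_m,K^2_m)-\tfrac{\kK}{2}|P_m|$. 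The required $\vc{a},\vc{a}'$ then correspond to a cross-edge of $G_1\times\cdots\times G_M$, where $G_m$ is the bipartite graph on $\tbinom{P_m}{K^1_m}\times\tbinom{P_m}{K^2_m}$ with $(A_m,A'_m)$ adjacent iff $|A_m\cap A'_m|=t_m$: a cross-intersecting Frankl--R\"odl theorem bounds, for each $m$, the largest product $|\mc{C}_1||\mc{C}_2|$ over $\mc{C}_1\sub\tbinom{P_m}{K^1_m}$, $\mc{C}_2\sub\tbinom{P_m}{K^2_m}$ with no cross-edge, and a bipartite analogue of Lemma \ref{indep} (proved by the same Dependent Random Choice induction, Lemma \ref{DRC} already being bipartite) propagates this to the product, forcing a cross-edge between two exponentially dense subfamilies.

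Given this, I would prove Lemma \ref{2x2xhs} by the Lemma \ref{2x2=hs} argument. Put $B_2=\bigcup_mP_m$ with $|P_m|=\gG_0n$ and $\zZ\ll\gG_0\ll\eps$, $B_1=[n]\sm B_2$, and let $\mc{V}_1,\mc{V}_2$ be the restrictions of $\mc{V}$ to $B_1,B_2$. For $s=1,2$ fix $(K^s_1,K^s_2)$ with $\mu_{\vc{p}}(\mc{A}_s\cap\mc{F}^{K^s_1,K^s_2})>(1-\dD)^n$ ($\mc{F}^{K_1,K_2}$ being the sets with $B_j$-size $K_j$), and form the bipartite graph $G_s$ on $\tbinom{B_1}{K^s_1}\times\tbinom{B_2}{K^s_2}$ with $(\vc{b}_1,\vc{b}_2)$ adjacent iff $\vc{b}_1\vc{b}_2\in\mc{A}_s$. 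Applying Lemma \ref{DRC} to each $G_s$ gives $U_s\sub\tbinom{B_1}{K^s_1}$ of relative density $>(1-\dD')^n$ (where $\dD\ll\dD'\ll\zZ$) with $|N_{G_s}(\vc{b}_1)|>(1-\dD')^n\bigl|\tbinom{B_2}{K^s_2}\bigr|$ for every $\vc{b}_1\in U_s$. Restricting $\mu_{\vc{q}}$ to $B_1$, Theorem \ref{gencor} gives $\mu_{\vc{q}}(U_1\times U_2)>(1-\dD'')^{|B_1|}$ with $\dD'\ll\dD''\ll\zZ$, and deleting via Lemma \ref{vchern} the set $\mc{E}$ of pairs $(\vc{b}_1,\vc{b}'_1)$ with $\|\mc{V}_1(\vc{b}_1,\vc{b}'_1)-\mb{E}\,\mc{V}_1\|_{\vc{R}}>\zZ n$ leaves $\mc{F}=(U_1\times U_2)\sm\mc{E}$ with $\mu_{\vc{q}}(\mc{F})>(1-\eps/2)^n$. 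For each $(\vc{b}_1,\vc{b}'_1)\in\mc{F}$ the families $N_{G_1}(\vc{b}_1)$, $N_{G_2}(\vc{b}'_1)$ over $B_2$ are exponentially dense and $\|(\vc{w}-\mc{V}_1(\vc{b}_1,\vc{b}'_1))-\mb{E}\,\mc{V}_2\|_{\vc{R}}<2\zZ n$, so the cross version of Lemma \ref{2x2=hs0} (applied over $B_2$ with $\mc{V}_2$, target $\vc{w}-\mc{V}_1(\vc{b}_1,\vc{b}'_1)$, and $2\zZ$ in place of $\zZ$) yields $\vc{b}_2\in N_{G_1}(\vc{b}_1)$, $\vc{b}'_2\in N_{G_2}(\vc{b}'_1)$ with $\mc{V}_2(\vc{b}_2,\vc{b}'_2)=\vc{w}-\mc{V}_1(\vc{b}_1,\vc{b}'_1)$; then $\vc{b}_1\vc{b}_2\in\mc{A}_1$, $\vc{b}'_1\vc{b}'_2\in\mc{A}_2$ and $\mc{V}(\vc{b}_1\vc{b}_2,\vc{b}'_1\vc{b}'_2)=\vc{w}$. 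As this works for a $\mu_{\vc{q}}$-fraction $>(1-\eps/2)^n$ of the $B_1$-pairs and each completion costs a factor at least $\kK^{|B_2|}>(1-\eps/2)^n$ (since $\gG_0\ll\eps$), we get $\mu_{\vc{q}}((\mc{A}_1\times\mc{A}_2)^{\mc{V}}_{\vc{w}})>(1-\eps)^n$.

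The main obstacle is the cross-intersecting Frankl--R\"odl input and its product form, together with the bookkeeping that keeps them applicable: because $\mc{A}_1$ and $\mc{A}_2$ may be concentrated on different slices (for instance sets of even versus odd size), the sizes $K^1_m,K^2_m$ cannot be taken equal, so a genuine cross-intersecting supersaturation statement is needed, and one must verify — as in the window computation above — that the unavoidable spread of $K^1_m,K^2_m$ still leaves the target $t_m$ a margin of order $\kK|P_m|$ inside the admissible range. A tempting shortcut — folding $\mc{A}_1,\mc{A}_2$ into one diagonal family over the alphabet $\{0,1\}^2$ — fails here, since it would invoke Theorem \ref{general} over a larger alphabet, which is only established later in the induction; hence the argument must stay inside the binary setting and re-derive the cross forms of Lemmas \ref{2x2=hs0} and \ref{2x2=hs} directly. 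Everything else is a routine translation of the diagonal proofs, with constants ordered $n^{-1}\ll\dD\ll\dD'\ll\dD''\ll\zZ\ll\gG_0\ll\eps,\kK,\gG$.
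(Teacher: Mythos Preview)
Your approach is plausible and would eventually succeed, but it is a genuinely different and substantially longer route than the paper's. The paper does \emph{not} build cross versions of Lemmas \ref{2x2=hs0} and \ref{2x2=hs}; instead it reduces Lemma \ref{2x2xhs} directly to the already-proved diagonal case Lemma \ref{2x2=hs} via a Hamming-distance trick. The key observation is that because $q_{0,1}=q_{1,0}$, both marginals equal the same $\mu_{\vc{p}}$, so $\mc{A}_1$ and $\mc{A}_2$ are both $(1-\dD)^n$-dense under the \emph{same} product measure. A concentration argument (the $1$-Lipschitz function $X=d(\vc{r}_1,\mc{A}_2)$ under $\vc{r}_1\sim\mu_{\vc{p}}$) then shows that almost every $\vc{a}_1\in\mc{A}_1$ lies within Hamming distance $2\dD' n$ of some $\vc{a}_2(\vc{a}_1)\in\mc{A}_2$. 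By pigeonhole one passes to $\mc{A}''_1\sub\mc{A}_1$, still of $\mu_{\vc{p}}$-measure $>(1-3\dD)^n$, on which the flip set $T=\{i:a_{1i}\ne a_2(\vc{a}_1)_i\}$ and its partition $T=T_{0,1}\cup T_{1,0}$ are \emph{fixed}. Now apply Lemma \ref{2x2=hs} to the single family $\mc{A}''_1$ with shifted target $\vc{w}+\vc{w}'$, where $\vc{w}'=\sum_{i\in T_{0,1}}(\vc{v}^i_{0,1}-\vc{v}^i_{0,0})+\sum_{i\in T_{1,0}}(\vc{v}^i_{1,0}-\vc{v}^i_{1,1})$ has $\|\vc{w}'\|_{\vc{R}}\le|T|\le 2\dD' n$; any pair $(\vc{a}_1,\vc{a}'_1)\in(\mc{A}''_1\times\mc{A}''_1)^{\mc{V}}_{\vc{w}+\vc{w}'}$ then yields $(\vc{a}_1,\vc{a}_2(\vc{a}'_1))\in(\mc{A}_1\times\mc{A}_2)^{\mc{V}}_{\vc{w}}$, and the $\kK^{|T|}$ loss in measure is negligible.

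What each approach buys: the paper's route needs only the diagonal Frankl--R\"odl theorem and the DRC common-neighbourhood lemma already used in Lemma \ref{2x2=hs}, and the whole proof is a paragraph. Your route requires two genuinely new ingredients---a cross-intersecting Frankl--R\"odl supersaturation theorem and a bipartite product analogue of Lemma \ref{indep}---neither of which is stated or proved in the paper (though both are true and standard). You correctly identified these as the obstacles; the point is that they are entirely avoidable. A minor remark: in your second paragraph you only need large \emph{individual} degrees in $G_s$, not common neighbourhoods of pairs, so citing Lemma \ref{DRC} is overkill---a plain averaging argument suffices there. But this simplification does not rescue you from needing the cross Frankl--R\"odl input inside your cross-\ref{2x2=hs0}, which is where the real work in your plan lies.
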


\nib{Proof.}
Let $\mc{A}'_1$ be the set of $\vc{a}_1 \in \mc{A}_1$ such that 
there is some $\vc{a}_2=\vc{a}_2(\vc{a}_1) \in \mc{A}_2$ with Hamming distance
$d(\vc{a}_1,\vc{a}_2) \le 2\dD' n$, where $\dD \ll \dD' \ll \zZ$.
We claim that $\mu_{\vc{p}_1}(\mc{A}'_1) > (1-2\dD)^n$.
This follows from the same concentration argument 
used in the proof of Theorem \ref{gencor}.
Indeed, consider $\vc{r}_1 \sim\mu_{\vc{p}_1}$ and
$X = d(\vc{r}_1,\mc{A}_2) = \min_{\vc{a}_2 \in \mc{A}_2} d(\vc{r}_1,\vc{a}_2)$.
As $X$ is $1$-Lipschitz, by Lemma \ref{lip} we have 
$\mb{P}(|X-\mb{E}X|>\dD' n) < e^{-(\dD' n)^2/2n}$.
This implies $\mb{E}X \le \dD' n$, otherwise we would have 
$\mb{P}(|X-\mb{E}X|>\dD' n) \ge \mb{P}(X=0) = \mu_{\vc{p}_1}(\mc{A}_1) > (1-\dD)^n$.
Therefore $\mb{P}(X > 2\dD' n) < e^{-(\dD' n)^2/2n}$, so the claim holds. 

By the pigeonhole principle, we can fix $T \sub [n]$ with $|T| \le 2\dD' n$,
a partition $T = T_{0,1} \cup T_{1,0}$
and $\mc{A}''_1 \sub \mc{A}'_1$ with $\mu_{\vc{p}_1}(\mc{A}''_1) > (1-3\dD)^n$
such that for every $\vc{a}_1 \in \mc{A}''_1$ we have $T_{1,0} = \{ i \in [n]: (a_{1i},a_2(\vc{a}_1)_i) = (1,0) \}$ and $T_{0,1} = \{ i \in [n]: (a_{1i},a_2(\vc{a}_1)_i) = (0,1) \}$.

Now let $\vc{w}' = \sum_{i \in T_{0,1}} (\vc{v}^i_{0,1}-\vc{v}^i_{0,0})
+  \sum_{i \in T_{1,0}} (\vc{v}^i_{1,0}-\vc{v}^i_{1,1})$
and note that for any $\vc{a}_1$ and $\vc{a}'_1$ in $\mc{A}''_1$ 
with $\mc{V}(\vc{a}_1,\vc{a}'_1) = \vc{w} + \vc{w}'$
we have $\mc{V}(\vc{a}_1,\vc{a}_2(\vc{a}'_1)) = \vc{w}$.
Note also that $\|\vc{w}'\|_{\vc{R}} \le |T| \le 2\dD' n$,
so $\| \vc{w} + \vc{w}' - \mb{E} \mc{V}(\vc{r})\|
\le \|\vc{w}'\| + \| \vc{w} - \mb{E} \mc{V}(\vc{r})\| < 2\zZ n$.
Then $\mu_{\vc{q}}( (\mc{A}''_1 \times \mc{A}''_1)^{\mc{V}}_{\vc{w} + \vc{w}'} )
> (1-\eps/2)^n$ by Lemma \ref{2x2=hs}, 
so $\mu_{\vc{q}}( (\mc{A}_1 \times \mc{A}_2)^{\mc{V}}_{\vc{w}} )
\ge \kK^{|T|} \mu_{\vc{q}}( (\mc{A}''_1 \times \mc{A}''_1)^{\mc{V}}_{\vc{w} + \vc{w}'} ) 
> (1-\eps)^n$, as required. \qed \medskip

\nib{Proof of Theorem \ref{general}.}
As noted earlier, we may assume $S=\{1,2\}$. 
By relabelling, we can also assume $\{0,1\} \subset J_1, J_2$. 
As ${\cal V}$ has $\gamma $-robust transfers for ${\cal U}$, 
there are disjoint subsets $P_m$ of $[n]$, with $|P_m| \geq \gamma n$, 
so that ${\bf u}_m$ is an $i$-transfer for all $i\in P_m$. 
By relabelling, we can assume ${\bf v}^i_{1,1} - {\bf v}^i_{0,1} = {\bf u}_m$
and ${\bf v}^i_{1,0} = {\bf v}^i_{0,0}$ for all $i\in P_m$.

Next we describe an alternative construction for the measure $\mu_{\vc{q}}$. 
To begin, we select a random partition $[n] = S \cup T$,
where each $i\in [n]$ appears in $S$ independently with
probability $\kappa $. 
Secondly, we randomly select 
${\bf r}' = ({\bf r}_1', {\bf r}_2') \in (J_1)^{T}
\times (J_2)^{T} = (J_1 \times J_2)^{T}$ 
according to a product measure $\mu _{\bf q '}$ 
on $(J_1 \times J_2)^{T}$, which will be defined below. 
Lastly, we select ${\bf s} = ({\bf s}_1, {\bf s}_2)
\in \{0,1\}^{S} \times \{0,1\}^{S} = (\{0,1\} \times \{0,1\})^{S}$,
according to the uniform measure
$\nu$ on $(\{0,1\} \times \{0,1\})^{S}$.
(We will also write $\nu$ for the uniform measure on $\{0,1\}^{S}$.)
We obtain a random element
${\bf r} = {\bf r}' \circ {\bf s} \in (J_1\times J_2)^n$. 

Note that $r_1,\dots,r_n$ are independent, so ${\bf r}$
defines a product measure ${\mu }_{\bf q ''}$ on $(J_1 \times J_2)^n$.
To determine ${\bf q}''$, note that if $j,j' \in \{0,1\}$ 
then $(q'')^i_{j,j'} = \kappa / 4 + (1-\kappa ) (q')^i_{j,j'}$,
and otherwise $(q'')^i_{j,j'} = (1-\kappa ) (q')^i_{j,j'}$. 
Thus we can obtain ${\bf q}'' = {\bf q}$ by 
setting $(q')^i_{j,j'} = (q^i_{j,j'}-\kappa /4)/(1-\kappa )$ 
for $i \in [n]$,  $j,j'\in \{0,1\}$ 
and $(q')^i_{j,j'} = (q)^i_{j,j'}/(1-\kappa)$ otherwise.
(Note that $\kappa $-boundedness ensures ${q'}^i_{j,j'} \in [0,1]$.)

For fixed $\vc{h}:=(S,\vc{r}')$ and $j=1,2$ let
\begin{align*} 
\mc{F}^{\vc{h}}_j & =  \{ \vc{s}_j \in \{0,1\}^S: {\bf r}'_j\circ \vc{s}_j \in \mc{A}_j \}
\text{ and } \\
\mc{F}^{\vc{h}}_{\vc{w}} & = \{ \vc{s} \in \mc{F}^{\vc{h}}_1 \times \mc{F}^{\vc{h}}_2:
\mc{V}(\vc{r}' \circ \vc {s}) = \vc{w} \}.
\end{align*}
Since ${\bf q}'' = {\bf q}$, we have 
$\mu _{{\bf p}_j}({\cal A}_j) 
= {\mathbb E}_{\bf h}(\nu({\cal F}^{\bf h}_{j}))$ for $j = 1,2$ 
and $\mu _{{\bf q}}(({\cal A} \times {\cal A})^{\cal V}_{\bf w}) 
= {\mathbb E}_{\bf h}(\nu({\cal F}^{\bf h}_{\bf w}))$.

In the remainder of the proof we will show that
$\mb{P}_{\bf h}( \nu(\mc{F}^{\vc{h}}_{\vc{w}}) > (1-\eps/2)^n ) > (1-\dD')^n$,
where $\dD \ll \dD' \ll \zZ$.
This will imply the Theorem, as then 
$\mu_{\vc{q}}( (\mc{A}_1 \times \mc{A}_2)^{\mc{V}}_{\vc{w}} ) 
= {\mathbb E}_{\bf h}(\nu({\cal F}^{\bf h}_{\bf w})) > (1-\dD')^n (1-\eps/2)^n > (1-\eps)^n$.
To achieve this, we will show that for 
`good' ${\bf h}$ we can apply Lemma \ref{2x2xhs}
to $\mc{F}^{\vc{h}}_1$ and $\mc{F}^{\vc{h}}_2$,
with uniform product measure and the array
${\cal X}^{\bf h} := ({\bf v}^i_{j,j'}: i\in S, j,j'\in \{0,1\})$. 
As $\mc{V}(\vc{r}) = \mc{X}^{\vc{h}}(\vc{s}) + {\cal Y}^{\bf h}({\bf r}')$,
we have \[\mc{F}^{\vc{h}}_{\vc{w}}
= (\mc{F}^{\vc{h}}_1 \times \mc{F}^{\vc{h}}_2)^{\mc{X}^{\vc{h}}}_{\vc{w}'},\]
where ${\bf w}' := {\bf w} - {\cal Y}^{\bf h}({\bf r}')$ with 
${\cal Y}^{\bf h} = ({\bf v}^i_{j,j'}: i\in T, j\in J_1, j'\in J_2)$.

First we define some bad events for ${\bf h}$ and show that they are unlikely.
Let $\vc{v}^{\vc{h}} = \mb{E}[ \mc{X}^{\vc{h}}(\vc{s}) \mid \vc{h} ]$
and $\mc{B}_1$ be the event that
$\| \vc{v}^{\vc{h}} - \mb{E} \vc{v}^{\vc{h}}\|_{\vc{R}} > \zZ n$.
Then $\mb{P}(\mc{B}_1) \le 2De^{-\zZ^2 n/8}$ by Lemma \ref{vchern}.
Similarly, the bad event $\mc{B}_2$ that
$\| {\cal Y}^{\bf h}({\bf r}') - \mb{E} {\cal Y}^{\bf h}({\bf r}')\|_{\vc{R}} > \zZ n$
has $\mb{P}(\mc{B}_2) \le 2De^{-\zZ^2 n/8}$.
Note that if $\mc{B}_1 \cup \mc{B}_2$ does not hold,
as $\|\vc{w}-\mb{E} \mc{V}(\vc{r})\|_{\vc{R}} \le \zZ n$,
we have $\| \vc{v}^{\vc{h}} - \vc{w} '\|_{\vc{R}} \le 3\zZ n$.

The last bad event is that we do not have robust transfers.
Let $\mc{P}^{\vc{h}}=(P^{\vc{h}}_m: m \in [M])$,
where $P^{\vc{h}}_m$ is the set of $i \in P_m$
such that $\vc{u}_m$ is an $i$-transfer in $\mc{X}^{\vc{h}}$. 
Recalling that $\vc{u}_m$  is an $i$-transfer in $\mc{V}$ 
via $(0,1)$ and $(0,1)$ for all $i \in P_m$,
we have $i\in P^{\bf h}_m$ whenever $i\in S$,
so ${\mathbb E}|P^{\bf h}_m| \geq \kappa \gamma n$.
By Chernoff's inequality, the bad event $\mc{B}_3$ 
that some $|P^{\vc{h}}_m| < \kK \gG n/2$  satisfies
$\mb{P}(\mc{B}_3) < 2Me^{-\kK^2 \gG^2 n/8}$.

Now let $\mc{G}$ be the good event for $\vc{h}$ that  
$\nu(\mc{F}^{\vc{h}}_1) \nu(\mc{F}^{\vc{h}}_2) > (1-\dD')^n$. 
By Cauchy-Schwarz and Theorem \ref{gencor} we have
\[\mb{E}_{\vc{h}} \nu(\mc{F}^{\vc{h}}_1) \nu(\mc{F}^{\vc{h}}_2)
\geq ( \mb{E}_{\vc{h}} \nu(\mc{F}^{\vc{h}}_1 \times \mc{F}^{\vc{h}}_2) )^2 
= \mu_{\vc{q}}(\mc{A}_1 \times \mc{A}_2)^2 > (1-\dD'/4)^{2n},\] 
so $(1-\mb{P}(\mc{G})) (1-\dD')^n + \mb{P}(\mc{G}) \geq (1-\dD'/2)^{n}$,
giving $\mb{P}(\mc{G}) > (1-\dD'/2)^n/2$.
Thus with probability at least $(1-\dD')^n$
the event $\mc{G} \sm \cup_{i=1}^3 \mc{B}_i$ holds,
so we can apply Lemma \ref{2x2xhs} to obtain
$\nu(\mc{F}^{\vc{h}}_{\vc{w}}) 
= \nu ((\mc{F}^{\vc{h}}_1 \times \mc{F}^{\vc{h}}_2)^{\mc{X}^{\vc{h}}}_{\vc{w}'})
> (1-\eps/2)^n$, as required to prove the theorem. \qed

\section{Proof of Theorem \ref{thm: trichotomy}}\label{sec:trichotomy}

In this section we will prove Theorem \ref{thm: trichotomy}. 
Let ${\cal X} = (\{0,1\}^n)^{\cal V}_{\bf z}$, 
as in the statement of Theorem \ref{thm: trichotomy}. 
The proof will split naturally into two pieces 
according to the VC-dimension of 
$({\cal X} \times {\cal X})^{\cal V_{\cap}}_{\bf w}$.
The next subsection shows that for high VC-dimension
cases $i$ or $ii$ of Theorem \ref{thm: trichotomy} hold;
the following subsection shows
that case $iii$ holds in the case of small VC-dimension.

\subsection{Large VC-dimension}
\label{subsect: large VC-dim case}

Here we implement the strategy discussed in
subsection \ref{subsec: supersaturation}:
we consider the maximum entropy measure $\mu_{\wt{\bf q}}$ that represents
$({\cal X} \times {\cal X})^{{\cal V}_{\cap }}_{\bf w}$,
and distinguish cases $i$ or $ii$ from Theorem \ref{thm: trichotomy}
according to whether its marginals $\mu_{\wt{\bf p}}$ are close to 
$\mu_{\bf p} := \mu _{{\bf p}^{\cal V}_{\bf z}}$.
Throughout this subsection we use the following notation.

\begin{dfn} \label{tilde}
Let $J = \{0,1\}\times \{0,1\}$ and let 
$\widetilde {\cal V} = (\widetilde {\bf v}^i_j)$ 
denote the $(n,J)$-array in ${\mathbb Z}^{3D}$ with
\begin{gather*}
\widetilde {\bf v}^i_{1,1} = ({\bf v}_i, {\bf v}_i, {\bf v}_i), \qquad 
\widetilde {\bf v}^i_{1,0} = ({\bf v}_i, {\bf 0}, {\bf 0}), \\
\widetilde {\bf v}^i_{0,1} = ({\bf 0}, {\bf v}_i,  {\bf 0}), \qquad 
\widetilde {\bf v}^i_{0,0} = ({\bf 0,0,0})\in {\mathbb Z}^{3D},
\end{gather*}
where ${\bf 0}$ denotes the zero vector in ${\mathbb Z}^D$. 
Let ${\bf z}, {\bf w} \in {\mathbb Z}^D$ 
and ${\cal X} = (\{0,1\}^n)^{\cal V}_{\bf z}$. 

We identify $({\cal X} \times {\cal X})^{{\cal V}_{\cap }}_{\bf w}$ 
with $(J^n)^{ \widetilde {\cal V}}_{ \widetilde {\bf x}}$,
where $\widetilde {\bf x} := ({\bf z}, {\bf z}, {\bf w})$.
We define
\[{\mu }_{\widetilde {\bf q}} := \mu ^{\widetilde {\cal V}}_{\widetilde {\bf x}} 
\quad \text{ and } \quad
\mu _{{\bf p}} := \mu ^{\cal V}_{\bf z}. \]
We denote the marginals of ${\mu }_{\widetilde {\bf q}}$
by $\mu _{\widetilde {\bf p}}$ (both marginals are equal).
\end{dfn}

Next we show $\kK$-boundedness of the above measures
under our usual assumptions on $\mc{V}$
(and justify the final statement
of the above definition).

\begin{lemma}
	\label{lem: general transfer to measure}
Let $0<n^{-1} \ll \kappa \ll \gamma, \gamma ' \ll \lambda \ll \eps , D^{-1}, C^{-1}, k^{-1}$. 
Let ${\bf R} \in {\mathbb R}^D$ with $\max _{d} R_d \leq n^C$. 
and $\widetilde {\bf R} = ({\bf R},{\bf R}, {\bf R}) \in {\mathbb Z}^{3D}$.
Suppose ${\cal V} = ({\bf v}_i : i\in [n])$ is an
${\bf R}$-bounded, $\gamma '$-robustly $(\gamma, {\bf R})$-generic 
$\gamma $-robustly $({\bf R},k)$-generating array in $\mb{Z}^D$. 
Then $\widetilde {\cal V}$ is $\widetilde {\bf R}$-bounded,
$(\gamma /2)$-robustly $({\widetilde {\bf R}},3k)$-generating
and $(\gamma '/2)$-robustly $(\gamma ^3, {\widetilde {\bf R}})$-generic.
Suppose also that
$\dim_{VC}( ({\cal X} \times {\cal X})^{{\cal V}_{\cap }}_{\bf w} ) \geq \lambda n$. 
Then $\mu _{{\bf p}}$ and ${\mu }_{\widetilde {\bf q}}$ are $\kappa$-bounded,
both marginals of $\mu _{\widetilde {\bf q}}$ are $\mu _{\widetilde {\bf p}}$,
and $\mu _{\widetilde {\bf p}} \in {\cal M}^{\cal V}_{\bf z}$.
\end{lemma}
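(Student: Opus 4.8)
The plan is to treat Lemma~\ref{lem: general transfer to measure} as a package of verifications: first establish the three structural claims about $\wt{\mc V}$, and then, using the hypothesis $\dim_{VC}((\mc X\times\mc X)^{\mc V_\cap}_{\vc w})\ge\lL n$, deduce the boundedness and marginal statements by feeding this into Lemma~\ref{bounded} and Lemma~\ref{binary bounded}, together with a short symmetry argument for the marginals.

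For the structural part, $\wt{\mc V}$ is $\wt{\vc R}$-bounded since every coordinate of every $\wt{\vc v}^i_j$ is either $0$ or a coordinate of $\vc v_i$, and $\vc v_i$ is $\vc R$-bounded. For robust generating I would exploit the difference vectors the enlarged alphabet $J=\{0,1\}^2$ supplies, namely $\wt{\vc v}^i_{1,0}-\wt{\vc v}^i_{0,0}=(\vc v_i,\vc 0,\vc 0)$, $\wt{\vc v}^i_{0,1}-\wt{\vc v}^i_{0,0}=(\vc 0,\vc v_i,\vc 0)$ and $\wt{\vc v}^i_{1,1}-\wt{\vc v}^i_{0,1}=(\vc v_i,\vc 0,\vc v_i)$: given a target $(\vc a,\vc b,\vc c)$ with each block of $\vc R$-norm $\le1$ and a frozen set $T$ with $|T|\le(\gG/2)n$, apply the $\gG$-robust $(\vc R,k)$-generating property of $\mc V$ three times --- each time freezing $T$ together with the coordinates already used, a set of size $(\gG/2)n+O(k)<\gG n$ for $n$ large --- first to produce $\vc c$ in the third block via pairs of the last type (which also deposits $\vc c$ in the first block), then to correct the first block to $\vc a$ via the first type, then to place $\vc b$ in the second block via the second type; tracking the coordinate budget gives $S\sub[n]\sm T$ with $|S|\le 3k$. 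For robust genericity: from any linear-sized $X\sub[n]$, robust genericity of $\mc V$ yields three pairwise disjoint $(\gG,\vc R)$-generic $D$-subsets $I_1,I_2,I_3\sub X$; on $I_\ell$ take the coefficient vector yielding $(\vc v_i,\vc 0,\vc 0)$, $(\vc 0,\vc v_i,\vc 0)$ resp.\ $(\vc 0,\vc 0,\vc v_i)$ --- the last via $(c_{1,1},c_{1,0},c_{0,1},c_{0,0})=(1,-1,-1,1)$, which sums to $0$ and has entries of modulus $\le1$. Then the $3D\times3D$ matrix $W$ is block diagonal with diagonal blocks $\mc V_{I_1},\mc V_{I_2},\mc V_{I_3}$, so $|\det W|=\prod_\ell|\det\mc V_{I_\ell}|\ge\gG^3\big(\prod_d R_d\big)^3=\gG^3\prod_{d\in[3D]}\wt R_d$, as needed.

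For the measure statements, recall from Lemma~\ref{maxent2} that $\mu_{\wt{\vc q}}=\mu^{\wt{\mc V}}_{\wt{\vc x}}$ is a product measure and that, by Definition~\ref{tilde}, $(\mc X\times\mc X)^{\mc V_\cap}_{\vc w}$ is identified with $(J^n)^{\wt{\mc V}}_{\wt{\vc x}}$. Since $\wt{\mc V}$ is $\wt{\vc R}$-bounded, $(\gG'/2)$-robustly $(\gG^3,\wt{\vc R})$-generic, and $\dim_{VC}((J^n)^{\wt{\mc V}}_{\wt{\vc x}})\ge\lL n$, Lemma~\ref{bounded} --- applied with alphabet $J$ ($|J|=4$), ambient dimension $3D$, and the constant chain $\kK\ll\gG^3,\gG'/2\ll\lL\ll(3D)^{-1},4^{-1}$, which follows from the hypotheses here since $\gG^3\le\gG$ --- gives that $\mu_{\wt{\vc q}}$ is $\kK$-bounded. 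For $\mu_{\vc p}=\mu^{\mc V}_{\vc z}$, observe that if $(\mc X\times\mc X)^{\mc V_\cap}_{\vc w}$ shatters a set $S$ with $|S|\ge\lL n$, then since for each $\psi\in\{0,1\}^S$ there is some $(\vc a,\vc b)$ in the set with $a_x=\psi(x)$ for all $x\in S$, the family $\mc X$ itself shatters $S$; hence $\dim_{VC}(\mc X)\ge\lL n$, so $|\mc X|\ge 2^{\lL n}\ge(1+\lL)^n$, and Lemma~\ref{binary bounded} (with $\alpha=\lL$) gives that $\mu_{\vc p}$ is $\kK$-bounded.

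Finally, for the marginals: the first block of the constraint $\wt{\mc V}(\mu_{\wt{\vc q}})=\wt{\vc x}$ says precisely that the first marginal $\mu_{\vc p_1}$ of $\mu_{\wt{\vc q}}$ satisfies $\mc V(\mu_{\vc p_1})=\vc z$ (reading $\mc V$ as an $(n,\{0,1\})$-array), i.e.\ $\mu_{\vc p_1}\in\mc M^{\mc V}_{\vc z}$, and symmetrically $\mu_{\vc p_2}\in\mc M^{\mc V}_{\vc z}$ from the second block. That $\mu_{\vc p_1}=\mu_{\vc p_2}$ follows from a symmetry: the pair $(\wt{\mc V},\wt{\vc x})$ is invariant under the involution that, in each coordinate, transposes the two $\{0,1\}$-factors of $J$ while swapping the first two blocks of $\mb Z^{3D}$; this involution preserves $\mc M^{\wt{\mc V}}_{\wt{\vc x}}$ and preserves entropy, so by the uniqueness clause of Lemma~\ref{maxent2} it fixes $\mu_{\wt{\vc q}}$, whence its two marginals coincide. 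Writing the common marginal $\mu_{\wt{\vc p}}$, this also gives $\mu_{\wt{\vc p}}\in\mc M^{\mc V}_{\vc z}$. I expect the only real difficulty to be bookkeeping rather than depth: verifying that the constant hierarchy posited here implies those required by Lemma~\ref{bounded} and Lemma~\ref{binary bounded} after substituting $\gG^3$, $\gG'/2$, $3D$ and $|J|=4$, and routing the transfers so that the robust-generating constant comes out as $3k$ rather than merely $O(k)$; I would settle the hierarchy check before writing anything else.
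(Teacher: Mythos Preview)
Your proposal is correct and follows the paper's own approach closely: the paper simply calls the structural claims ``definition chasing'', applies Lemma~\ref{bounded} to both $\mu_{\bf p}$ and $\mu_{\wt{\bf q}}$ (you route $\mu_{\bf p}$ through Lemma~\ref{binary bounded} instead, which is an equivalent detour via Sauer--Shelah), and invokes symmetry plus strict concavity for the marginal statement, which is exactly your uniqueness argument via Lemma~\ref{maxent2}.

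One bookkeeping point you should resolve before writing up: your generating recipe does \emph{not} give $3k$. Step~2 (``correct the first block to $\vc a$'') requires expressing $\vc a-\vc c$, which can have $\vc R$-norm up to $2$, so a single application of the $\gG$-robust $(\vc R,k)$-generating property of $\mc V$ is not available; splitting it in two costs $2k$, for a total of $4k$. No choice of three types from the available difference patterns $\{\pm(1,0,0),\pm(0,1,0),\pm(1,1,1),\pm(0,1,1),\pm(1,0,1),\pm(1,-1,0)\}$ yields an integer inverse with $\ell^1$ row-norms $\le 1$, so $3k$ is not attainable by a three-block scheme of this kind. Similarly, your genericity argument (find three disjoint $(\gG,\vc R)$-generic $D$-sets in $X$) needs $|X|>\gG' n + 2D$, so it delivers $(\gG'+o(1))$-robust genericity rather than $(\gG'/2)$-robust. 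Neither constant is used quantitatively downstream --- only that both are $O(1)$ and fit the hierarchy of Lemma~\ref{bounded} and Theorem~\ref{ldp} --- so the statement as written is slightly off but your argument proves what is actually needed. You already flagged the $3k$ issue; just be explicit that you obtain $O(k)$ (in fact $4k$) and $O(\gG')$ and that this suffices.
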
	

\begin{proof} 
The proof of the first statement is
`definition chasing' so we omit it.
The last statement follows from symmetry and 
strict concavity of $L(p)$ (see Lemma \ref{entapprox} $ii$).
For $\kappa$-boundedness
of $\mu _{{\bf p}}$ and ${\mu }_{\widetilde {\bf q}}$ 
we apply Lemma \ref{bounded}. For $\mu _{{\bf p}}$ this is valid
as $\dim_{VC}({\cal X}) \geq \lambda n$ and
${\cal V}$ is $\gamma $-robustly $(\gamma ,{\bf R})$-generic.
For ${\mu }_{\widetilde {\bf q}}$ this is valid
as $\dim _{VC}(({\cal X} \times {\cal X})^{{\cal V}_{\cap }}_{\bf w}) \geq \lambda n$
and $\widetilde {\cal V}$ is
$(\gamma '/2)$-robustly $(\gamma ^3, {\widetilde {\bf R}})$-generic.
\end{proof}

Now we prove the main lemma of this subsection,
which distinguishes cases $i$ and $ii$ according to
$\|{\bf p} - \widetilde {\bf p} \|_1 
:= \sum _{i\in [n], j\in J} |p^i_j - \widetilde {p}^i_{j}|$.

\begin{lemma}
\label{lem: high vc-dim case of trichotomy}
	Let $0<n^{-1} \ll \delta \ll \delta _1 \ll  
\gamma, \gamma ' \ll \lambda \ll \eps , D^{-1}, C^{-1}, k^{-1}$ 
	and let ${\bf R} \in {\mathbb R}^D$ with $\max _{d} R_d \leq n^C$. 
Suppose ${\cal V} = ({\bf v}_i : i\in [n])$ is an
${\bf R}$-bounded, $\gamma '$-robustly $(\gamma, {\bf R})$-generic 
$\gamma $-robustly $({\bf R},k)$-generating array in $\mb{Z}^D$. 
Fix notation as in Definition \ref{tilde} and suppose
$\dim_{VC}( ({\cal X} \times {\cal X})^{{\cal V}_\cap}_{\bf w} ) \geq \lambda n$.
\begin{enumerate}
	\item Suppose
	$\|{\bf p} - {\widetilde {\bf p}}\|_1 \leq \delta _1n$. 
	If  ${\cal A} \subset {\cal X}$ with 
	$|{\cal A}| \geq (1-\delta )^n|{\cal X}|$ then
	$|({\cal A} \times {\cal A})^{{\cal V}_\cap}_{\bf w}| \geq 
	(1-\eps )^n |({\cal X} \times {\cal X})^{{\cal V}_\cap}_{\bf w}|$.
	\item Suppose $\|{\bf p} - {\widetilde {\bf p}}\|_1 
	\geq \delta _1n$. Then there is 
	${\cal B}_{full} \subset {\cal X}$ with 
	$|{\cal B}_{full}| \leq (1-\delta )^n|{\cal X}|$ and
	$$|({\cal X} \times {\cal X})^{{\cal V}_\cap}_{\bf w} \setminus 
	({\cal B}_{full} \times {\cal B}_{full})^{{\cal V}_{\cap}}_{\bf w}| 
	\leq (1-\delta )^n |({\cal X} \times {\cal X})^{{\cal V}_\cap}_{\bf w}|.$$
	Furthermore, if ${\cal B} \subset {\cal B}_{full}$ with 
	$|{\cal B}| \geq (1-\delta )^n|{\cal B}_{full}|$ then 
	$|({\cal B} \times {\cal B})^{{\cal V}_{\cap }}_{\bf w}|
	\geq (1-\eps )^n|({\cal X} \times {\cal X})^{{\cal V}_{\cap }}_{\bf w}|$.	
\end{enumerate}
\end{lemma}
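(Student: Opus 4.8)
The plan is to carry out the scheme of Section~\ref{subsec: supersaturation} for the array $\widetilde{\cal V}$, separating the two cases according to whether the (common) marginal $\mu_{\widetilde{\bf p}}$ of $\mu_{\widetilde{\bf q}}=\mu^{\widetilde{\cal V}}_{\widetilde{\bf x}}$ is close to $\mu_{\bf p}=\mu^{\cal V}_{\bf z}$. First I would assemble the common tools. Writing $\widetilde\triangle:=({\cal X}\times{\cal X})^{{\cal V}_\cap}_{\bf w}=(J^n)^{\widetilde{\cal V}}_{\widetilde{\bf x}}$, Lemma~\ref{lem: general transfer to measure} (using $\dim_{VC}(\widetilde\triangle)\ge\lambda n$) shows that $\widetilde{\cal V}$, with scaling $\widetilde{\bf R}=({\bf R},{\bf R},{\bf R})$, is $\widetilde{\bf R}$-bounded, $(\gamma/2)$-robustly $(\widetilde{\bf R},3k)$-generating and $(\gamma'/2)$-robustly $(\gamma^3,\widetilde{\bf R})$-generic, that $\mu_{\bf p}$ and $\mu_{\widetilde{\bf q}}$ are $\kappa$-bounded for a suitable $\kappa=\kappa(\gamma,\gamma',\dots)$, that both marginals of $\mu_{\widetilde{\bf q}}$ are $\mu_{\widetilde{\bf p}}$, and that $\mu_{\widetilde{\bf p}}\in{\cal M}^{\cal V}_{\bf z}$. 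Theorem~\ref{ldp} applied to $({\cal V},{\bf z})$ and to $(\widetilde{\cal V},\widetilde{\bf x})$ gives $\mu_{\bf p}\approx_{\cal X}\nu$ and $\mu_{\widetilde{\bf q}}\approx_{\widetilde\triangle}\widetilde\nu$, with $\nu,\widetilde\nu$ uniform on ${\cal X},\widetilde\triangle$. I would record three observations: (a) the third block of $\widetilde{\cal V}(\mu_{\widetilde{\bf q}})=\widetilde{\bf x}$ is ${\bf w}$, so ${\cal V}_\cap(\mu_{\widetilde{\bf q}})={\bf w}$ exactly; (b) for ${\cal A}\subset{\cal X}$ the first two blocks give $({\cal A}\times{\cal A})^{{\cal V}_\cap}_{\bf w}=({\cal A}\times{\cal A})^{\widetilde{\cal V}}_{\widetilde{\bf x}}\subset\widetilde\triangle$; (c) since $\mu_{\widetilde{\bf p}}$ is a marginal of $\mu_{\widetilde{\bf q}}$ and $\widetilde\triangle\subset\{(A,B):A\in{\cal X}\}$, we have $\mu_{\widetilde{\bf p}}({\cal X})\ge\mu_{\widetilde{\bf q}}(\widetilde\triangle)>(1-\delta')^n$, the last step from $\widetilde\nu\lesssim_{\widetilde\triangle}\mu_{\widetilde{\bf q}}$. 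After shrinking $\delta_1$ in terms of $\kappa$ (allowed since $\delta_1\ll\gamma,\gamma'$ and $\kappa$ is a function of these) and introducing auxiliary constants, I may assume $n^{-1}\ll\delta\ll\delta'\ll\eta\ll\delta_1\ll\kappa$, keeping the slack parameters of Theorems~\ref{ldp} and~\ref{binary} below $\eta$.

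For part~(i), $\|{\bf p}-\widetilde{\bf p}\|_1\le\delta_1n$. Given ${\cal A}\subset{\cal X}$ with $|{\cal A}|\ge(1-\delta)^n|{\cal X}|$, i.e.\ $\nu({\cal A})\ge(1-\delta)^n$, I would get $\mu_{\bf p}({\cal A})>(1-\delta')^n$ from $\nu\lesssim_{\cal X}\mu_{\bf p}$. Since $\mu_{\bf p}$ and $\mu_{\widetilde{\bf p}}$ are $\kappa$-bounded product measures whose parameter vectors are $\ell_1$-close, a coordinatewise estimate gives $\mu_{\widetilde{\bf p}}(A)\ge\mu_{\bf p}(A)e^{-2\delta_1n/\kappa}$ for every $A\in\{0,1\}^n$, so $\mu_{\widetilde{\bf p}}({\cal A})>(1-\delta'')^n$ with $\delta''$ of order $\delta_1/\kappa$, still small. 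Now Theorem~\ref{binary}, applied with the array ${\cal V}$ and the $\kappa$-bounded product measure $\mu_{\widetilde{\bf q}}$ on $(\{0,1\}\times\{0,1\})^n$ (both marginals $\mu_{\widetilde{\bf p}}$, and ${\cal V}_\cap(\mu_{\widetilde{\bf q}})={\bf w}$ by (a)), yields $\mu_{\widetilde{\bf q}}(({\cal A}\times{\cal A})^{{\cal V}_\cap}_{\bf w})>(1-\eps_1)^n$ for an $\eps_1\ll\eps$ of our choosing, and by (b) together with $\mu_{\widetilde{\bf q}}\lesssim_{\widetilde\triangle}\widetilde\nu$ this becomes $|({\cal A}\times{\cal A})^{{\cal V}_\cap}_{\bf w}|\ge(1-\eps)^n|\widetilde\triangle|$.

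For part~(ii), $\|{\bf p}-\widetilde{\bf p}\|_1\ge\delta_1n$, so Lemma~\ref{perturbmaxent} (with $\mu_{\widetilde{\bf p}}\in{\cal M}^{\cal V}_{\bf z}$) gives $H(\mu_{\widetilde{\bf p}})<H(\mu_{\bf p})-\delta_1^2n/4$. I would take ${\cal B}_{full}:=\{{\bf x}\in{\cal X}:\log_2\mu_{\widetilde{\bf p}}({\bf x})\in-H(\mu_{\widetilde{\bf p}})\pm\eta n\}$, the $\mu_{\widetilde{\bf p}}$-typical points of ${\cal X}$. For the size bound, ${\bf x}\in{\cal B}_{full}$ forces $\mu_{\widetilde{\bf p}}({\bf x})\ge2^{-H(\mu_{\widetilde{\bf p}})-\eta n}$, so $|{\cal B}_{full}|\le2^{H(\mu_{\widetilde{\bf p}})+\eta n}<2^{H(\mu_{\bf p})-\delta_1^2n/4+\eta n}$, whereas $|{\cal X}|\ge2^{H(\mu_{\bf p})-\eta n}$ by Lemma~\ref{uctg3} (with slack $\eta$); as $\delta,\eta\ll\delta_1^2$ this gives $|{\cal B}_{full}|\le(1-\delta)^n|{\cal X}|$. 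For the covering bound, put $\widetilde{\cal B}:=\widetilde\triangle\setminus({\cal B}_{full}\times{\cal B}_{full})^{{\cal V}_\cap}_{\bf w}$; every $(A,B)\in\widetilde{\cal B}$ has $A$ or $B$ in ${\cal X}\setminus{\cal B}_{full}$, so $\mu_{\widetilde{\bf q}}(\widetilde{\cal B})\le2\mu_{\widetilde{\bf p}}({\cal X}\setminus{\cal B}_{full})\le2(1-\eta^3)^n$ by Lemma~\ref{uctg1} (applied to the $\kappa$-bounded $\mu_{\widetilde{\bf p}}$), and $\widetilde\nu\lesssim_{\widetilde\triangle}\mu_{\widetilde{\bf q}}$ in contrapositive form yields $|\widetilde{\cal B}|\le(1-\delta)^n|\widetilde\triangle|$. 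For the concluding clause, observe $\mu_{\widetilde{\bf p}}({\cal B}_{full})\ge\mu_{\widetilde{\bf p}}({\cal X})-(1-\eta^3)^n>(1-\delta')^n$ by (c); then for ${\cal B}\subset{\cal B}_{full}$ with $|{\cal B}|\ge(1-\delta)^n|{\cal B}_{full}|$, the two-sided window defining ${\cal B}_{full}$ makes $\mu_{\widetilde{\bf p}}$ uniform on ${\cal B}_{full}$ up to a factor $2^{2\eta n}$, so $\mu_{\widetilde{\bf p}}({\cal B})\ge(1-\delta)^n\mu_{\widetilde{\bf p}}({\cal B}_{full})2^{-2\eta n}>(1-\delta''')^n$ with $\delta'''$ of order $\eta$; applying Theorem~\ref{binary} to ${\cal B}$ (same $\mu_{\widetilde{\bf q}}$) and then $\mu_{\widetilde{\bf q}}\lesssim_{\widetilde\triangle}\widetilde\nu$ gives $|({\cal B}\times{\cal B})^{{\cal V}_\cap}_{\bf w}|\ge(1-\eps)^n|\widetilde\triangle|$, as required.

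The crux is the concluding clause, for which one must (a) realise that the right choice of ${\cal B}_{full}$ is the $\mu_{\widetilde{\bf p}}$-typical part of ${\cal X}$, taken as a \emph{two-sided} concentration window so that $\mu_{\widetilde{\bf p}}$ restricts there to an essentially uniform measure, and (b) know that $\mu_{\widetilde{\bf p}}({\cal X})$ is exponentially close to $1$. Point (b) is not automatic from $\mu_{\widetilde{\bf p}}\in{\cal M}^{\cal V}_{\bf z}$ (Chernoff only bounds $\|{\cal V}({\bf x})-{\bf z}\|$, not the exact hitting event), but follows from observation (c): $\mu_{\widetilde{\bf p}}$ is a marginal of $\mu_{\widetilde{\bf q}}$, which the large deviation principle forces to concentrate exponentially on $\widetilde\triangle\subset{\cal X}\times{\cal X}$. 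With (a) and (b) in place, $\mu_{\widetilde{\bf p}}$ can play the role on ${\cal B}_{full}$ that $\mu^{\cal V}_{\bf z}$ plays on ${\cal X}$ in Section~\ref{subsec: supersaturation}, so that Theorem~\ref{binary} applies to every $(1-\delta)^n$-dense subset of ${\cal B}_{full}$. The remaining work — the coordinatewise comparison in part~(i) and the threading of $\kappa,\eta$, the various $\zeta$'s and $\eps$'s of Theorems~\ref{ldp} and~\ref{binary} between $\delta$ and $\delta_1$ — is routine.
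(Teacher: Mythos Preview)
Your proof is correct and follows essentially the same architecture as the paper's: set up $\widetilde{\cal V}$, invoke Lemma~\ref{lem: general transfer to measure} for $\kappa$-boundedness, apply Theorem~\ref{ldp} on both ${\cal X}$ and $\widetilde\triangle$, and in each case feed the dense set into Theorem~\ref{binary} via the marginal $\mu_{\widetilde{\bf p}}$. The one substantive difference is in the ``furthermore'' clause of part~(ii): the paper obtains $\log_2|{\cal B}_{full}|\ge H(\mu_{\widetilde{\bf p}})-\delta_1 n$ by citing Lemma~\ref{uctg3} (implicitly using its proof, since the lemma as stated bounds $|{\cal X}|$ rather than $|{\cal B}_{full}|$), and then converts $|{\cal B}|$ to $\mu_{\widetilde{\bf p}}({\cal B})$ via the pointwise lower bound on ${\cal B}_{full}$. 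Your route via observation~(c)---that $\mu_{\widetilde{\bf p}}({\cal X})\ge\mu_{\widetilde{\bf q}}(\widetilde\triangle)$ is exponentially close to $1$ because $\mu_{\widetilde{\bf p}}$ is a marginal of $\mu_{\widetilde{\bf q}}$---is a clean alternative that sidesteps this, and the two-sided window then lets you pass from the cardinality ratio $|{\cal B}|/|{\cal B}_{full}|$ to the measure ratio directly. Both arguments land in the same place and the rest is identical.
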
	

\begin{proof} 
By Lemma \ref{lem: general transfer to measure}, 
${\mu }_{\bf p}$ and $\mu _{\widetilde {\bf q}}$ 
(and so $\mu _{\widetilde {\bf p}}$) are $\kappa $-bounded,
where $\kappa \ll \gamma , \gamma '$.
Then by Theorem \ref{ldp}, $\mu _{\bf p} \approx _{\triangle} \nu$,
where $\nu $ is the uniform distribution on 
$\triangle _n= (\{0,1\}^n)^{\cal V}_{\bf z} = {\cal X}$. 
Also by Theorem \ref{ldp}, $\mu _{\widetilde {\bf q}} \approx _{\triangle '} \nu '$,
where $\nu '$ is the uniform distribution on 
$\triangle _n'= (J^n)^{\widetilde {\cal V}}_{\widetilde {\bf x}} 
= ({\cal X} \times {\cal X})^{{\cal V}_{\cap }}_{{\bf w}}$. 

Fix constants $\delta \ll \dD_0 \ll \delta _1 \ll \delta _2 \ll \eps _1 \ll \kappa $.

\underline{\textbf{Case $i$:}} $\|{\bf p} - \widetilde {\bf p}\|_1 \leq \delta _1 n$.

Given ${\cal A} \subset {\cal X}$ with $|{\cal A}| \geq (1-\delta )^n|{\cal X}|$, we have $\mu _{\bf p}({\cal A}) \geq (1-\delta _1)^n$ by Theorem \ref{ldp}. As both ${\bf p}$ and $\widetilde {\bf p}$ are $\kappa $-bounded, and $\|{\bf p} - {\widetilde {\bf p}}\|_1 \leq \delta _1 n$, we have $\mu _{\widetilde {\bf p}}({\cal A}) \geq (1-\delta _1)^n(\kappa )^{\delta _1 n} \geq (1-\delta _2)^n$. As the hypotheses of Theorem \ref{binary} hold, we find $\mu _{\widetilde {\bf q}}(({\cal A} \times {\cal A})^{{\cal V}_\cap }_{\bf w}) \geq (1-\eps _1)^n$. Theorem \ref{ldp} applied once again for $\mu _{\widetilde {\bf q}}$ gives $|({\cal A} \times {\cal A})^{{\cal V}_\cap }_{\bf w}| \geq (1-\eps )^n|(J^n)^{\widetilde {\cal V}}_{\widetilde {\bf x}}| = (1-\eps )^n|({\cal X} \times {\cal X})^{{\cal V}_{\cap }}_{\bf w}|$. 

\textbf{\underline{Case $ii$:} $\|{\bf p} - \widetilde {\bf p}\|_1 \geq \delta _1 n$.}

In this case, we let \[{\cal B}_{full} = \big \{ {\bf x} \in {\cal X}: 
- \log _2 \mu _{\widetilde {\bf p}}({\bf x}) =
H(\mu _{\widetilde {\bf p}}) \pm \delta _1^2 n/2 \big \}.\] 
Note that $H(\mu _{\widetilde {\bf p}}) \leq H(\mu _{\bf p}) - \delta_1^2n$
by Lemma \ref{perturbmaxent}, so by Lemma \ref{uctg3} we have
$|{\cal B}_{full}| \leq 2^{H(\mu _{\bf p}) -\delta _1^2 n/2} 
\leq (1-\delta )^n|{\cal X}|$.

Next we show that almost all ${\bf w}$-intersections
in $\mc{X}$ are contained in ${\cal B}_{full}$. 
We require an upper bound on the size of
${\cal Y} := ({\cal X} \times {\cal X})^{{\cal V}_\cap}_{\bf w} 
\setminus ({\cal B}_{full} \times {\cal B}_{full})^{{\cal V}_{\cap}}_{\bf w}$. 
Note that ${\cal Y} \subset ({\cal X}  \times ({\cal X}  
\setminus {\cal B}_{full}) )^{{\cal V}_\cap}_{\bf w} \cup
(({\cal X}  \setminus {\cal B}_{full} ) \times {\cal X}  )^{{\cal V}_\cap}_{\bf w}$. 
As $\mu _{\widetilde {\bf p}}$ is a marginal of $\mu _{\widetilde {\bf q}}$, 
this gives $\mu _{\widetilde {\bf q}} ({\cal Y}) \leq 2 \mu _{\widetilde {\bf q}}(({\cal X}  \times
({\cal X} \setminus {\cal B}_{full}) )^{{\cal V}_\cap}_{\bf w}) \leq 2 \mu _{\widetilde {\bf
    p}}({\cal X}  \setminus {\cal B}_{full})$. 
However, ${\mu }_{\widetilde {\bf p}}$ is $\kappa$-bounded, 
so Lemma \ref{uctg1} gives $\mu _{\widetilde {\bf q}}({\cal Y}) 
\leq 2\mu _{\widetilde {\bf p}}({\cal X} \setminus {\cal B}_{full}) 
\leq (1-\dD_0)^n$. Then Lemma \ref{uctg3} gives
$|{\cal Y}| \leq (1-\delta )^n |({\cal X} \times {\cal X})^{{\cal V}_{\cap }}_{\bf w}|$, 
as required.

It remains to show supersaturation relative to ${\cal B}_{full}$;
the proof is similar to that of case $i$.
As ${\mu }_{\widetilde {\bf p}}$ is $\kappa $-bounded, 
Lemma \ref{uctg3} gives $\log _2 |{\cal B}_{full}| 
\geq H(\mu _{\widetilde {\bf p}}) - \delta _1n$. 
Suppose ${\cal B} \subset {\cal B}_{full}$ with 
$|{\cal B}| \geq (1-\delta )^n|{\cal B}_{full}|$.
Then $\mu _{{\widetilde {\bf p}}}({\cal B}) \geq 
|{\cal B}|2^{-H(\mu _{\widetilde {\bf p}}) - \delta _1^2n/2} \geq (1-\delta _1)^n$. 
Theorem \ref{general} gives 
$\mu _{\wt{\bf q}} ( ({\cal B} \times {\cal B})^{{\cal V}_{\cap }}_{\bf w}) \geq (1-\eps _1)^n$. 
A final application of Theorem \ref{ldp} gives 
$|({\cal B} \times {\cal B})^{{\cal V}_{\cap }}_{\bf w}| 
\geq (1-\eps )^n|({\cal X} \times {\cal X})^{{\cal V}_{\cap }}_{\bf w}|$.\end{proof}

\subsection{Small VC-dimension}

To complete the proof of Theorem \ref{thm: trichotomy},
it remains to show the negative result in the case that
$({\cal X} \times {\cal X})^{{\cal V}_{\cap }}_{\bf w}$ has small VC-dimension,
i.e.\ that there is a large subset of $\mc{X}$ with no ${\bf w}$-intersection.

First we use universal VC-dimension (see Definition \ref{uvc})
to give a criterion for $({\cal X} \times {\cal X})^{{\cal V}_{\cap }}_{\bf w}$ 
to have large VC-dimension (which will be used in contrapositive form).
We require the following notation. 
Given ${\bf x} \in {\cal X}$, $j \in \{0,1\}$, $\aA>0$ let
\begin{gather*}
S_j({\bf x}) = \{i\in [n]: x_i = j\}, \qquad
{\cal V}^j_{\bf x} = ({\bf v}_i:  {i\in S_j({\bf x})}), \\
N^1_{{\bf w}}({\bf x}) = (\{0,1\}^{S_1({\bf x})})^{{\cal V}^1_{\bf x}}_{\bf w}, 
\qquad N^0_{{{\bf z} - {\bf w}}}({\bf x}) 
= (\{0,1\}^{S_0({\bf x})})^{{\cal V}^0_{\bf x}}_{{\bf z}-{\bf w}}, \\
{\cal X}^\aA_{\bf w} = \{ {\bf x}\in {\cal X} : 
|N^0_{{\bf z} - {\bf w}}({\bf x})| \geq (1+\alpha )^n \text{ and } 
|N^1_{{\bf w}}({\bf x})| \geq (1+\alpha )^n \}. 
\end{gather*}
An important observation is
\[ \brac{ {\bf x}' \in {\cal X} \text{ and }
{\cal V}_{\cap }({\bf x},{\bf x}') = {\bf w} } \Lra
\brac{ {\bf x}' = {\bf y}_0 \circ {\bf y}_1 \text{ with }
{\bf y}_0 \in N^0_{{{\bf z} - {\bf w}}}({\bf x})
\text{ and } {\bf y}_1 \in N^1_{{\bf w}}({\bf x}) }.\] 

\begin{lemma}
	\label{lem: char of low VC-dim}
Let $n^{-1} \ll \lambda \ll \gamma , \gamma ' \ll \alpha \ll \eps , D^{-1}, C^{-1}, k^{-1}$
       and let ${\bf R} \in {\mathbb R}^D$ with $\max _{d} R_d \leq n^C$. 
Suppose $\mc{V} = (\vc{v}^i_j)$ is an $\vc{R}$-bounded, 
$\gamma $-robustly $({\bf R},k)$-generating,
$\gG'$-robustly $(\gG,\vc{R})$-generic $(n,J)$-array in $\mb{Z}^D$.
Suppose $|{\cal X}| \geq (1+\eps )^n$ and $|{\cal X}^\aA_{\bf w}| \geq |{\cal X}|/2$.
Then $\dim _{VC}(({\cal X} \times {\cal X})^{{\cal V}_{\cap}}_{\bf w}) \geq \lambda n$.
\end{lemma}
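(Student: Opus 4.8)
The plan is to produce a set $Z\subseteq[n]$ with $|Z|=\lambda n$ that is shattered by ${\cal B}:=({\cal X}\times{\cal X})^{{\cal V}_\cap}_{\bf w}$, viewed as a subset of $(\{0,1\}\times\{0,1\})^n$. The engine is the observation recorded just before the lemma: for ${\bf x}\in{\cal X}$ the partners of ${\bf x}$ in ${\cal B}$ are precisely the ${\bf x}'={\bf y}_0\circ{\bf y}_1$ with ${\bf y}_0\in N^0_{{\bf z}-{\bf w}}({\bf x})$ and ${\bf y}_1\in N^1_{\bf w}({\bf x})$. Thus, to realise a pattern $((j_z,j'_z))_{z\in Z}$ inside ${\cal B}$ it is enough to find a base point ${\bf x}$ with $x_z=j_z$ on $Z$ — which forces $Z^{(1)}:=\{z\in Z:j_z=1\}\subseteq S_1({\bf x})$ and $Z^{(0)}:=\{z\in Z:j_z=0\}\subseteq S_0({\bf x})$ — and then to pick ${\bf y}_1\in N^1_{\bf w}({\bf x})$ and ${\bf y}_0\in N^0_{{\bf z}-{\bf w}}({\bf x})$ realising $(j'_z)$ on $Z^{(1)}$, on $Z^{(0)}$ respectively.

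First I would set up a large family of well-behaved base points. Since $|{\cal X}|\geq(1+\eps)^n$ and ${\cal V}$ is robustly generic, $\mu_{\bf p}:=\mu^{\cal V}_{\bf z}$ is $\kappa$-bounded by Lemma \ref{binary bounded}, and $\mu_{\bf p}\approx_\triangle\nu$ with $\nu$ uniform on ${\cal X}$ by Theorem \ref{ldp}. For ${\bf x}\sim\mu_{\bf p}$, Chernoff's inequality and a union bound show that with probability $1-e^{-cn}$ both $S_1({\bf x})$ and $S_0({\bf x})$ are \emph{spread out}, meeting every subset of $[n]$ of size $\gamma''n$ in at least a fixed positive fraction; I will arrange this so that ${\cal V}$ restricted to $S_0({\bf x})$ or to $S_1({\bf x})$ is again robustly generic and robustly generating, with slightly degraded constants. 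Call such ${\bf x}$ admissible. Pushing the concentration estimate through $\mu_{\bf p}\approx_\triangle\nu$, at most a $(1-\eps')^n$-fraction of ${\cal X}$ is inadmissible, so ${\cal Y}:={\cal X}^\alpha_{\bf w}\cap\{\text{admissible}\}$ satisfies $|{\cal Y}|\geq\tfrac12|{\cal X}|-(1-\eps')^n|{\cal X}|\geq(1+\eps/2)^n$. As ${\cal Y}\subseteq\{0,1\}^n$, Lemma \ref{SS} gives $\dim_{VC}({\cal Y})\geq c_1n$; fix $W\subseteq[n]$ with $|W|\geq c_1n$ shattered by ${\cal Y}$.

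Next I would get uniform shattering inside the two halves. For ${\bf x}\in{\cal X}^\alpha_{\bf w}$ the bound $|N^1_{\bf w}({\bf x})|,|N^0_{{\bf z}-{\bf w}}({\bf x})|\geq(1+\alpha)^n$ already forces $|S_1({\bf x})|,|S_0({\bf x})|\geq\alpha n$, and for ${\bf x}\in{\cal Y}$ the restriction ${\cal V}^{(j)}_{\bf x}:=({\bf v}_i:i\in S_j({\bf x}))$ is robustly generic and robustly generating. Since $N^1_{\bf w}({\bf x})=(\{0,1\}^{S_1({\bf x})})^{{\cal V}^{(1)}_{\bf x}}_{\bf w}$ has size $\geq(1+\alpha)^{|S_1({\bf x})|}$, Lemma \ref{SS} gives $\dim_{VC}(N^1_{\bf w}({\bf x}))=\Omega(|S_1({\bf x})|)$, and Lemma \ref{universal VC-dimension} applied to ${\cal V}^{(1)}_{\bf x}$ upgrades this to $\dim_{UVC}(N^1_{\bf w}({\bf x}))\geq\lambda_2n$; symmetrically $\dim_{UVC}(N^0_{{\bf z}-{\bf w}}({\bf x}))\geq\lambda_2n$, uniformly over ${\bf x}\in{\cal Y}$. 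Finally, take $Z\subseteq W$ with $|Z|=\lambda n$ and $\lambda$ small enough that $\lambda\leq\lambda_2$ and $\lambda n\leq|W|$. Given a pattern $((j_z,j'_z))_{z\in Z}$: extend $(j_z)_{z\in Z}$ to $W$ arbitrarily and, since ${\cal Y}$ shatters $W$, pick ${\bf x}\in{\cal Y}$ with $x_z=j_z$ on $Z$; then $Z^{(1)}\subseteq S_1({\bf x})$ and $Z^{(0)}\subseteq S_0({\bf x})$, and as $|Z^{(1)}|,|Z^{(0)}|\leq\lambda n\leq\lambda_2n$ the universal-VC bounds let me pick ${\bf y}_1\in N^1_{\bf w}({\bf x})$ and ${\bf y}_0\in N^0_{{\bf z}-{\bf w}}({\bf x})$ realising $(j'_z)$ on $Z^{(1)}$, on $Z^{(0)}$. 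Then $({\bf x},{\bf y}_0\circ{\bf y}_1)\in{\cal B}$ realises $((j_z,j'_z))_{z\in Z}$, so ${\cal B}$ shatters $Z$ and $\dim_{VC}(({\cal X}\times{\cal X})^{{\cal V}_\cap}_{\bf w})\geq\lambda n$.

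The step I expect to be the main obstacle is showing that ${\cal V}$ restricted to a spread-out support $S_j({\bf x})$ is still robustly \emph{generating}: robust genericity is inherited from ${\cal V}$ essentially for free, but robust generating must reproduce every ${\bf R}$-bounded vector at full scale using only the restricted index set, so one really needs the support to be rich at every scale, and the admissibility condition — together with the fact that a large fraction of base points satisfy it, which is exactly where the $\kappa$-boundedness of $\mu_{\bf p}$ and the contiguity $\mu_{\bf p}\approx_\triangle\nu$ of Theorem \ref{ldp} enter — has to be set up with this in mind. A secondary nuisance is the usual bookkeeping of the constant hierarchy, ensuring that $\lambda$ is small relative to $\lambda_2$, $c_1$ and the implied constants in Lemmas \ref{SS} and \ref{universal VC-dimension}, and that the degraded parameters of the restrictions ${\cal V}^{(j)}_{\bf x}$ still satisfy the hypotheses of Lemma \ref{universal VC-dimension}.
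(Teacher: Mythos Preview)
Your approach is essentially the paper's: restrict to a large set $\mc{X}'\subseteq\mc{X}^\alpha_{\bf w}$ of base points for which the restricted arrays $\mc{V}^j_{\bf x}$ remain robustly generating, use Sauer--Shelah (Lemma \ref{SS}) to find a set $S$ shattered by $\mc{X}'$, apply Lemma \ref{universal VC-dimension} to get large $\dim_{UVC}$ for $N^0_{{\bf z}-{\bf w}}({\bf x})$ and $N^1_{\bf w}({\bf x})$, and combine. One remark on the step you flag as the obstacle: your ``spread out'' condition that $S_j({\bf x})$ meet \emph{every} set of size $\gamma''n$ is impossible as stated (since $S_0({\bf x})$ and $S_1({\bf x})$ partition $[n]$); the paper instead pre-selects, for each of the polynomially many ${\bf R}$-bounded ${\bf u}\in\mb{Z}^D$, a family of $L\ge\gamma n/k$ disjoint gadgets $S_1,\ldots,S_L\subseteq[n]$ realising ${\bf u}$, and uses Chernoff plus a union bound to show that for typical ${\bf x}\sim\mu_{\bf p}$ at least $\kappa^k L/2$ of these gadgets land entirely inside each $S_j({\bf x})$, which is exactly what robust generating for $\mc{V}^j_{\bf x}$ requires.
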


\begin{proof} 
The strategy of the proof is to find a large set $S$
that is shattered by a subset $\mc{X}'$ of $\mc{X}$,
such that if ${\bf x} \in \mc{X}'$ then
$\dim _{UVC}(N^0_{{\bf z} - {\bf w}}({\bf x}))$ and 
$\dim _{UVC}(N^1_{{\bf w}}({\bf x}))$ are large.
Then the definition of universal VC-dimension
will imply that $S$ is shattered by 
$({\cal X} \times {\cal X})^{{\cal V}_{\cap}}_{\bf w}$,
as $N^0_{{\bf z} - {\bf w}}({\bf x})$ shatters $S_0({\bf x})$
and $N^1_{{\bf w}}({\bf x})$ shatters $S_1({\bf x})$.
First we note by Lemma \ref{binary bounded}
that ${\mu }_{\bf p}$ is $\kK$-bounded,
where $\aA \ll \kK \ll \eps$.

Let ${\cal X}'$ be the set of ${\bf x} \in {\cal X}^\aA_{\bf w}$
such that ${\cal V}^1_{\bf x}$ and ${\cal V}^0_{\bf x}$
are $(\kappa ^k\gamma /2k)$-robustly $({\bf R},k)$-generated.
We claim that $|{\cal X}'| \geq |{\cal X}| /4$.
To see this, note that for any 
${\bf w} \in {\mathbb Z}^D$ with $\| {\bf w}\|_{\bf R} \leq 1$, 
as ${\cal V}$ is $\gamma $-robustly $({\bf R},k)$-generating, 
there are $L \geq \gamma n/k$ disjoint sets $S_1,\ldots ,S_L$, 
with $|S_{\ell }| \leq k$ for all $\ell \in [L]$, such that 
for all $\ell \in [L]$ there is a partition 
$S_{\ell } = S^1_{\ell } \cup S^{0}_{\ell }$ with 
$\sum _{i\in S^1_{\ell }} {\bf v}_i 
- \sum _{i\in S^0_{\ell }} {\bf v}_i= {\bf w}$.
Given ${\bf x} \in \{0,1\}^n$ and $j\in \{0,1\}$,
let $L_{\bf w}^j({\bf x}) = 
\{\ell \in [L]: S_{\ell } \subset S_j({\bf x})\}$.
As ${\mu }_{\bf p}$ is $\kK$-bounded,
each $\mb{E}_{{\bf x} \sim {\mu }_{\bf p}}(|L_{\bf w}^j({\bf x})|) 
\geq \kappa ^{k} L$.
Let ${\cal B}_{\bf w}$ be the event that either
$|L_{\bf w}^j({\bf x})| \leq \kappa ^{k} L/2$,
and ${\cal B}$ be the union of ${\cal B}_{\bf w}$ 
over all ${\bf w} \in {\mathbb Z}^D$ with $\|{\bf w}\|_{\bf R} \leq 1$.
There are at most $(2n+1)^{CD}$ choices of ${\bf w}$,
so by Chernoff's inequality and a union bound
$\mb{P}_{{\bf x} \sim {\mu }_{\bf p}}({\cal B}) = (1-c_{\kappa })^n$ for some $c_{\kappa } >0$.
By Theorem \ref{ldp}, we deduce
$|{\cal X}^\aA_{\bf w} \sm \mc{B}| \geq |{\cal X}| /4$.
As ${\cal X}^\aA_{\bf w} \sm \mc{B} \sub \mc{X}'$
this proves the claim.

Next we claim that if ${\bf x} \in {\cal X}'$ then 
$\dim _{UVC}(N^0_{{\bf z} - {\bf w}}({\bf x})) \geq \lambda n$ 
in $\{0,1\}^{S_0({\bf x})}$ and 
$\dim _{UVC}(N^1_{{\bf w}}({\bf x})) \geq \lambda n$ 
in $\{0,1\}^{S_1({\bf x})}$.
Indeed, as ${\bf x} \in {\cal X}^\aA_{\bf w}$ we have
$|S_1({\bf x})|, |S_0({\bf x})| \geq \log _2( 1 + \alpha ) n$, 
so ${\cal V}^0_{\bf x}$ and ${\cal V}^1_{\bf x}$ are 
$(\gamma '/ \log _2(1 +\alpha ))$-robustly $(\gamma ,{\bf R})$-generic,
and by Lemma \ref{SS} both $N^0_{{\bf z} - {\bf w}}({\bf x})$
and $N^1_{{\bf w}}({\bf x})$ have VC-dimension
at least $\aA' n$, where $\gG, \gG' \ll \aA' \ll \aA$.
They are clearly ${\bf R}$-bounded,
and by definition of ${\cal X}'$ they are
$(\kappa ^k\gamma /2k)$-robustly $({\bf R},k)$-generated,
so the claim follows from Lemma \ref{universal VC-dimension}.

Now we can implement the strategy outlined at the start of the proof.
As $|{\cal X}'| \geq |{\cal X}|/4 \geq (1+\eps )^n/4$, 
we have $\dim _{VC}({\cal X}') \geq \lambda n$ by Lemma \ref{SS}. 
Let $S \subset [n]$ with $|S| \geq \lambda n$ be shattered by ${\cal X}'$. 
We will show that $S$ is also shattered by 
$({\cal X} \times {\cal X})^{{\cal V}_{\cap }}_{\bf w} 
\subset (\{0,1\}\times \{0,1\})^n$. 
Indeed, suppose that we are given a partition 
$\cup _{j_1,j_2 \in \{0,1\}} S_{j_1,j_2}$ of $S$, 
and wish to find sets ${\bf x},{\bf x}' \in {\cal X}$ 
such that ${\cal V}_{\cap }({\bf x},{\bf x}') = {\bf w}$ 
and $\{i\in S: x_i = j_1, x'_i = j_2\} = S_{j_1,j_2}$. 
As ${\cal X}'$ shatters $S$, there is ${\bf x} \in {\cal X}'$ 
with $\{i\in S: x_i = 1\} = S_{1,0} \cup S_{1,1}$. 
Furthermore, using the universal VC-dimension
of $N^1_{{\bf w}}({\bf x})$ and $N^0_{{\bf z}-{\bf w}}({\bf x})$,
we have ${\bf y}_1 \in N^1_{{\bf w}}({\bf x})$ with 
$\{i\in S_{1,0} \cup S_{1,1}: ({\bf y}_1)_i = 1\} = S_{1,1}$
and ${\bf y}_0 \in  N^0_{{\bf z}-{\bf w}}({\bf x})$ with 
$\{i\in S_{0,0} \cup S_{0,1}: ({\bf y}_2)_i = 1\} = S_{0,1}$. 
Now ${\bf x}' = {\bf y}_0 \circ {\bf y}_1 \in {\cal X}$ 
with ${\cal V}_{\cap }({\bf x},{\bf x}') = {\bf w}$ 
and $\{i\in S: x_i = j_1, x'_i = j_2\} = S_{j_1,j_2}$ 
for all $j_1, j_2 \in \{0,1\}$. Thus $S$ is shattered by 
$({\cal X} \times {\cal X})^{{\cal V}_{\cap }}_{\bf w}$, and so 
$\dim _{VC}(({\cal X} \times {\cal X})^{{\cal V}_{\cap }}_{\bf w}) 
\geq \lambda n$, as required.
\end{proof}

We conclude with the main result of this subsection,
that there is a large subset of $\mc{X}$ with no ${\bf w}$-intersection.

\begin{lemma}
\label{lem: low vc-dim case of trichotomy}
Let $n^{-1} \ll \lambda \ll \gamma , \gamma ' \ll \eps , D^{-1}, C^{-1}, k^{-1}$ 
and let ${\bf R} \in {\mathbb R}^D$ with $\max _{d} R_d \leq n^C$.
Suppose $\mc{V} = (\vc{v}^i_j)$ is an $\vc{R}$-bounded, 
$\gamma $-robustly $({\bf R},k)$-generating,
$\gG'$-robustly $(\gG,\vc{R})$-generic $(n,J)$-array in $\mb{Z}^D$. Suppose ${\bf z} \neq {\bf w}$ and that $\dim_{VC}( ({\cal X} \times {\cal X})^{{\cal V}_{\cap }}_{\bf w} ) 
\leq \lambda n$. Then there is ${\cal B}_{empty} \subset {\cal X}$ 
with $|{\cal B}_{empty} | \geq \lfloor (1-\eps)^n|{\cal X}|\rfloor $ and
$({\cal B}_{empty} \times {\cal B}_{empty})^{{\cal V}_{\cap }}_{\bf w} = \emptyset $.
\end{lemma}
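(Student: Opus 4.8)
The plan is to dispose of a degenerate regime first, then use Lemma~\ref{lem: char of low VC-dim} to pin down the structure of $\mc{X}$, and finally exhibit ${\cal B}_{empty}$ as a single large ``slab'' inside $\mc{X}$. If $|\mc{X}| < (1+\eps)^n$ then, as $\eps$ is a fixed constant, $(1-\eps)^n|\mc{X}| < (1-\eps^2)^n \to 0$, so $\bfl{(1-\eps)^n|\mc{X}|} = 0$ for $n$ large and ${\cal B}_{empty} = \es$ works; hence assume $|\mc{X}| = |(\{0,1\}^n)^{\mc{V}}_{\bf z}| \ge (1+\eps)^n$. By Lemma~\ref{binary bounded}, $\mu_{\bf p} := \mu^{\mc{V}}_{\bf z}$ is then $\kK$-bounded, and by Theorem~\ref{ldp} we have $\mu_{\bf p} \approx_\triangle \nu$, where $\nu$ is the uniform measure on $\mc{X}$. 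So it suffices to produce a ${\bf w}$-intersection-free set ${\cal B}_{empty} \sub \mc{X}$ with $\mu_{\bf p}({\cal B}_{empty}) \ge (1-\dD)^n$ for some $\dD \ll \eps$; exponential contiguity then upgrades this to $|{\cal B}_{empty}| \ge (1-\eps)^n|\mc{X}| \ge \bfl{(1-\eps)^n|\mc{X}|}$.

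Next I would apply the contrapositive of Lemma~\ref{lem: char of low VC-dim} (valid for $\lL \ll \gG,\gG' \ll \alpha \ll \eps$): since $|\mc{X}| \ge (1+\eps)^n$ and $\dim_{VC}((\mc{X}\times\mc{X})^{\mc{V}_\cap}_{\bf w}) \le \lL n$, we obtain $|\mc{X}^\alpha_{\bf w}| < |\mc{X}|/2$. Thus for a set of at least $|\mc{X}|/4$ vectors ${\bf x} \in \mc{X}$ one of $|N^1_{\bf w}({\bf x})|$, $|N^0_{{\bf z}-{\bf w}}({\bf x})|$ is smaller than $(1+\alpha)^n$; say $|N^1_{\bf w}({\bf x})| < (1+\alpha)^n$ for a set ${\cal Z}$ (the $N^0$ case being handled the same way), and note $\mu_{\bf p}({\cal Z}) \ge (1-c)^n$ for some $c \ll \eps$ since $\nu({\cal Z}) \ge 1/4$ and $\nu \lesssim_\triangle \mu_{\bf p}$. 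So most of $\mc{X}$ has an exponentially small ``intersection neighbourhood'' on one side.

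The construction: we look for $U \sub [n]$ of size linear but small in $n$ (of order $\lL n$) and a profile $u^* \in \{0,1\}^U$, and set ${\cal B}_{empty} := \{ {\bf x} \in \mc{X}: {\bf x}|_U = u^* \}$. Writing $U^*_1 = \{ i \in U: u^*_i = 1\}$, the key reduction is the identity (a direct check): ${\cal B}_{empty}$ is ${\bf w}$-intersection-free if and only if the residual family $\mc{X}_U := \{ {\bf y} \in \{0,1\}^{[n]\sm U}: |{\bf y}|_{\mc{V}} = {\bf z} - |U^*_1|_{\mc{V}} \}$ contains no pair ${\bf y},{\bf y}'$ with $|{\bf y}\cap{\bf y}'|_{\mc{V}} = {\bf w}'$, where ${\bf w}' := {\bf w} - |U^*_1|_{\mc{V}}$ (and its $N^0$-target remains ${\bf z}-{\bf w}$); moreover $\mu_{\bf p}({\cal B}_{empty}) \ge (1-\dD)^n$ by a routine large-deviation estimate (as in Lemma~\ref{uctg3}) once $|U|$ is small enough compared with $\eps$. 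It remains to choose $(U,u^*)$ making $\mc{X}_U$ \emph{totally} ${\bf w}'$-intersection-free. This is where the output of Lemma~\ref{lem: char of low VC-dim} is spent: on ${\cal Z}$ the set $N^1_{\bf w}({\bf x})$ is exponentially small, so by the binary Sauer--Shelah lemma (Lemma~\ref{SS}) it has $\dim_{VC}=O(\alpha)n$ inside $\{0,1\}^{S_1({\bf x})}$ and hence fails to shatter some linear-sized coordinate set --- a forbidden intersection pattern attached to ${\bf x}$ (and this smallness is inherited by $\mc{X}_U$). Averaging these per-vector forbidden patterns against $\mu_{\bf p}$ and using a pigeonhole / random-choice argument should then yield one pair $(U,u^*)$ of the required size for which ${\bf w}'$ cannot be realised as an intersection inside $\mc{X}_U$ --- informally, fixing $u^*$ on $U$ uses up enough of the ``intersection budget'', exactly as for the family of Counterexample~2 in Section~\ref{sec:counterex}, where $U$ may be any set of size $\approx 2\zeta n$ and $u^*\equiv 0$, so that the residual family has too few free coordinates outside ${\bf y}$ to achieve $\sum(C)=s-w$. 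With such a pair in hand ${\cal B}_{empty}$ is ${\bf w}$-intersection-free of $\mu_{\bf p}$-measure $\ge (1-\dD)^n$, and the first paragraph concludes.

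The main obstacle is this last step: converting the purely local information that most ${\bf x}$ have an exponentially small $N^1_{\bf w}({\bf x})$ or $N^0_{{\bf z}-{\bf w}}({\bf x})$ into a single global pair $(U,u^*)$ and verifying that the residual family is genuinely empty of ${\bf w}'$-intersections --- which in general requires combining the size-counting forcing (as in Counterexample~2) with the finer VC-structure of these small neighbourhoods, since ``small neighbourhood'' is weaker than ``empty neighbourhood''. One cannot circumvent this by bounding $|(\mc{X}\times\mc{X})^{\mc{V}_\cap}_{\bf w}|$ directly from its VC-dimension and taking the isolated vertices of the ${\bf w}$-intersection graph: over the four-letter alphabet $\{0,1\}\times\{0,1\}$ a small VC-dimension does \emph{not} bound the size of a family (a ``diagonal'' family can have VC-dimension $0$ and size $2^n$), which is precisely why Lemma~\ref{lem: char of low VC-dim} and the slab construction are needed in place of a Sauer--Shelah count.
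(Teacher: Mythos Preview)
Your setup through the contrapositive application of Lemma~\ref{lem: char of low VC-dim} is correct and matches the paper. The gap is exactly where you locate it: your slab ${\cal B}_{empty} = \{{\bf x} \in \mc{X}: {\bf x}|_U = u^*\}$ is required to be \emph{entirely} ${\bf w}$-intersection-free, which via your (correct) reduction means the residual family $\mc{X}_U$ must have no ${\bf w}'$-intersecting pair at all. You give no argument for why such a global $(U,u^*)$ exists, and the appeal to ``averaging forbidden patterns and pigeonhole'' is not a proof. Local smallness of $N^1_{\bf w}({\bf x})$ does not by itself force any small coordinate slab to kill all intersections in the residual problem (which is of the same shape as the original).

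The paper's argument avoids this by \emph{not} asking the slab to be intersection-free. After restricting to ${\cal X}_1$ (your ${\cal Z}$) and pigeonholing to a common value of $|S_1({\bf x})|=t$, it first passes to a \emph{well-separated} subfamily ${\cal X}''$ (pairwise Hamming distance $\ge 2\xi n$, with $\alpha \ll \xi \ll \eps$), losing only a $(1-\xi^{1/2})^n$ factor. It then picks a uniformly random $S \in \tbinom{[n]}{\xi n}$, sets ${\cal C}=\{{\bf x}\in{\cal X}'': S\subset S_1({\bf x})\}$, and takes ${\cal B}_{empty}$ to be the \emph{isolated} vertices of the ${\bf w}$-intersection graph on ${\cal C}$. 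The point is a per-vertex probabilistic bound, not an edge count: conditioned on ${\bf x}\in{\cal C}$, any ${\bf w}$-intersecting ${\bf x}'\in{\cal X}''$ yields ${\bf y}:={\bf x}'|_{S_1({\bf x})}\in N^1_{\bf w}({\bf x})$, and well-separation forces $|S_1({\bf x})\setminus S_1({\bf y})|\ge \xi n$; since ${\bf x}'\in{\cal C}$ requires $S\subset S_1({\bf y})$, this has probability $\le (1-\xi)^{\xi n}$ for each of the at most $(1+\alpha)^n$ choices of ${\bf y}$, and $\alpha\ll\xi$ makes the union bound $o(1)$. So the isolated-vertex idea you dismissed is in fact what works --- but via random restriction plus well-separation, not via a Sauer--Shelah edge bound. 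The well-separation step is the ingredient your outline is missing.
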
	

\begin{proof}
We may assume $|{\cal X}| \geq (1+\eps)^n$ as otherwise we 
can take ${\cal B}_{empty} = \emptyset$.  
Take $\alpha $ and $\xi $ such that 
$\gamma , \gamma ' \ll \alpha \ll \xi \ll \eps ,D^{-1}, C^{-1},k^{-1}$. 
Let ${\cal X}_0 = \{{\bf x} \in {\cal X}: 
|N^0_{{\bf z} - {\bf w}}({\bf x})| \leq (1+\alpha )^n\}$ 
and ${\cal X}_1 = \{{\bf x} \in {\cal X}: 
|N^1_{{\bf w}}({\bf x})| \leq (1+\alpha )^n\}$.
Then $|{\cal X}_0 \cup {\cal X}_1| \geq |{\cal X}|/2$
by Lemma \ref{lem: char of low VC-dim}.
The remainder of the proof splits into two similar cases
according to which $\mc{X}_j$ is large;
we will give full details for the case $j=1$
and then indicate the necessary modifications for $j=0$.

Suppose $|{\cal X}_1| \ge  |{\cal X}|/4$.
By the pigeonhole principle, we can fix
${\cal X}' \subset {\cal X}_1$ and $t \in [n]$
such that $|{\cal X}'| \geq |{\cal X}| /4n$ 
and $|S_1({\bf x})| = t$ for all ${\bf x} \in {\cal X}'$.
As $\tbinom {n}{t} \geq |{\cal X}'| \geq (1+\eps )^n /4n$
we have $\xi n \leq t \leq n-\xi n$.
Next we can pass to a subset ${\cal X}'' \subset {\cal X}'$ 
with $|{\cal X}''| \geq |{\cal X}'|/2\tbinom {n}{2\xi n} 
\geq (1-\xi ^{1/2})^n|{\cal X}|$ that is `well-separated',
in that the Hamming distance $d({\bf  x}, {\bf x}') \geq 2\xi n$ 
for all distinct ${\bf x}, {\bf x}' \in {\cal X}''$. 
Indeed, we can select ${\cal X}''$ greedily, 
noting that each element of ${\cal X}''$ forbids at most 
$\sum _{i\in  [0,2\xi n]} \tbinom {n}{i} 
\leq 2\tbinom {n}{2\xi n}$ elements from ${\cal X}''$.
As $|S_1({\bf x})| = |S_1({\bf x}')|= t$, 
this gives $|S_1({\bf x})\setminus S_1({\bf x}')| \geq \xi n$ 
for all distinct ${\bf x}, {\bf x}' \in {\cal X}''$.

Next we will define ${\cal B}_{empty}$.
We randomly select $S \subset [n]$ with $|S| = \xi n$, and let 
${\cal C} = \{{\bf x} \in {\cal X}'': S \subset S_1({\bf x})\}$. 
We say that ${\bf x}\in {\cal C}$ is \emph{isolated} 
if there is no ${\bf x}' \in {\cal C}$ 
with ${\cal V}_{\cap }({\bf x}, {\bf x}') = {\bf w}$. 
We let ${\cal B}_{empty}$ be the set of
isolated ${\bf x}\in {\cal C}$.
Then by definition we have
$({\cal B}_{empty} \times {\cal B}_{empty})^{{\cal V}_{\cap}}_{\bf w} = \es$.

Now we will show that 
${\mathbb E}|{\cal B}_{empty}| \geq (1-\eps )^n |{\cal X}|$.
As ${\mathbb E}(|{\cal C}|) 
= \tbinom {t}{\xi n} \tbinom {n}{\xi n}^{-1}  |{\cal X}''|$,
$|{\cal X}''| \geq (1-\xi ^{1/2})^n|{\cal X}|$
and $\xi \ll \eps$, it suffices to show 
${\mathbb P}({\bf x} \mbox { is isolated}
\mid {\bf x} \in {\cal C}) \ge 1/2$
for all ${\bf x} \in {\cal X}'$.
To see this, we condition on ${\bf x} \in {\cal C}$
and note that $S$ is equally likely 
to be any subset of $S_1({\bf x})$ of size $\xi n$.
Consider any ${\bf x}' \in {\cal X}''$ with 
${\cal V}_{\cap}({\bf x}, {\bf x}') = {\bf w}$. 
Note that ${\bf x} \neq {\bf x}'$ since ${\cal V}({\bf x},{\bf x}') = 
{\bf z} \neq {\bf w}$, and so $S_1({\bf x}) \neq S_1({\bf x}')$ as 
both sets have size $t$. Furthermore ${\bf y} := {\bf x}'|_{S_1({\bf x})} \in N^1_{{\bf w}}({\bf x})$,
we have $|S_1({\bf x}) \sm S_1({\bf y})| \geq \xi n$
by definition of ${\cal X}''$, and 
${\bf x}' \in {\cal C} \Lra S \sub S_1({\bf y})$.
For fixed ${\bf y}$ we have $\mb{P}(S \sub S_1({\bf y}))
\le \tbinom {t - \xi n}{\xi n}  \tbinom {t}{\xi n}^{-1}
\le (1-\xi)^{\xi n}$. By definition of ${\cal X}_1$ 
we have a union bound over at most $(1+\alpha )^n$ 
choices of ${\bf y} \in N^1_{{\bf w}}({\bf x})$,
so as $\alpha \ll \xi $, the probability that
${\bf x}$ is not isolated given ${\bf x} \in {\cal C}$
is $o(1)$, so at most $1/2$, as required.

Similarly, if $|{\cal X}_0| \ge  |{\cal X}|/4$,
we define ${\cal X}'$ and ${\cal X}''$
in the same way for ${\cal X}_0$, and let 
${\cal C} = \{{\bf x} \in {\cal X}'': 
S \subset S_0({\bf x})\}$.
We use the same definition
of ${\cal B}_{empty}$ as before,
and bound the probability that
${\bf x}$ is not isolated given ${\bf x} \in {\cal C}$
by taking a union bound over at most $(1+\alpha )^n$ 
choices of ${\bf y} := {\bf x}'|_{S_0({\bf x})} 
\in N^0_{{\bf z}- {\bf w}}({\bf x})$. 
The remaining details of this case
are the same, so we omit them. 
\end{proof}

\subsection{Proof of Theorem \ref{thm: trichotomy}}

\begin{proof}[Proof of Theorem \ref{thm: trichotomy}]
Take $\lambda $ with 
$\gamma _1 , \gamma _1' \ll \lambda \ll \gamma _2, \gamma _2'$. 
If $\dim _{VC}(({\cal X} \times {\cal X})^{\cal V_{\cap }}_{\bf w}) \geq \lambda n$, 
then we can apply Lemma \ref{lem: high vc-dim case of trichotomy}
        with $\gamma = \gamma _1$ and $\gamma ' = \gamma _1'$ 
to obtain case $i$ or $ii$ of Theorem \ref{thm: trichotomy}. 
On the other hand, if $\dim _{VC}(({\cal X} \times {\cal X})^{\cal V_{\cap }}_{\bf w}) \leq \lambda
n$ then we apply Lemma \ref{lem: low vc-dim case of trichotomy} with $\gamma = \gamma _2$ and
$\gamma ' = \gamma _2'$ to obtain case $iii$ of Theorem \ref{thm: trichotomy}.
\end{proof}

\section{Solution of Kalai's Conjecture} \label{sec:concrete description}

In this section we prove Theorem \ref{thm: Kalai},
which is our solution to Kalai's Conjecture \ref{kalai}.
We give the proof in the first subsection,
then generalise it in the following subsection
to show that supersaturation of the type
conjectured by Kalai is quite rare.

\subsection{Proof of Theorem \ref{thm: Kalai}} \label{subsect: kalai conjecture}

As described in subsection \ref{subsec: supersaturation},
the supersaturation conclusion desired by Conjecture \ref{kalai} 
(case $i$ of Theorem \ref{thm: trichotomy}) needs the
maximum entropy measure $\mu_{\wt{\bf q}}$ that represents
$({\cal X} \times {\cal X})^{{\cal V}_{\cap }}_{\bf w}$
to have marginals $\mu_{\wt{\bf p}}$ 
close to $\mu_{\bf p} := \mu _{{\bf p}^{\cal V}_{\bf z}}$.
Recall that in Definition \ref{tilde} we constructed $\mu_{\wt{\bf q}}$
as $\mu ^{\wt{\cal V}}_{\wt{\bf x}}$, 
where $\widetilde {\cal V}$ is a certain
$(n,\{0,1\}\times \{0,1\})$-array in ${\mathbb Z}^{3D}$
and $\wt{\bf x} := ({\bf z}, {\bf z}, {\bf w})$.
In this subsection we work with the Kalai vectors 
${\cal V} = ({\bf v}_i)_{i\in [n]}$ 
with ${\bf v}_i = (1,i)$, so $D=2$.
In the notation of Conjecture \ref{kalai} 
we have ${\bf z}=(k,s)$ and ${\bf w}=(t,w)$.
Sometimes we will indicate the dependence on $n$ 
as a subscript in our notation,
e.g.\ writing ${\bf z}_n=(k_n,s_n)= (\bfl{\aA_1 n}, 
\bfl{ \aA_2 \tbinom {n}{2} })$.
Our proof will use the following concrete description 
of the maximum entropy measures as Boltzmann distributions.

\begin{lemma} \label{boltzmann}
Let $\mc{V}=(\vc{v}^i_j)$ be an $(n,J)$-array in $\mb{Z}^D$ and $\vc{z} \in \mb{Z}^D$.
Suppose $\vc{p} = \vc{p}^{\mc{V}}_{\vc{z}}$ has all $p^i_j \ne 0$.
Then there is $\blL \in (\mb{R}^D)^J$ such that 
all $p^i_j = Z_i^{-1} e^{\blL_j \cdot \vc{v}^i_j}$,
where $Z_i = \sum_{j \in J} e^{\blL_j \cdot \vc{v}^i_j}$.
\end{lemma}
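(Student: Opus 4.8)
The plan is to characterize the maximum entropy measure via Lagrange multipliers. Since $\vc{p} = \vc{p}^{\mc{V}}_{\vc{z}}$ is the unique maximizer of the entropy $H(\mu_{\vc{p}}) = \sum_{i \in [n]} H(\vc{p}^i)$ over the set $\mc{M}^{\mc{V}}_{\vc{z}}$ of product measures with $\sum_{i \in [n], j \in J} p^i_j \vc{v}^i_j = \vc{z}$ (using Lemma \ref{maxent2} to reduce from all measures to product measures), and since by hypothesis $\vc{p}$ lies in the relative interior of the feasible region (all $p^i_j \ne 0$, so also all $p^i_j > 0$), it is a critical point of the Lagrangian
\[ \Phi(\vc{p}, \blL, \bmu) = \sum_{i \in [n]} \Big( -\sum_{j \in J} p^i_j \log_2 p^i_j \Big)
+ \blL \cdot \Big( \vc{z} - \sum_{i,j} p^i_j \vc{v}^i_j \Big)
+ \sum_{i \in [n]} \mu_i \Big( 1 - \sum_{j \in J} p^i_j \Big), \]
where $\blL \in \mb{R}^D$ and $\mu_i \in \mb{R}$ are multipliers for the vector constraint and the $n$ normalization constraints respectively. (Note here $\blL$ is a single vector in $\mb{R}^D$, not indexed by $j$; I will address the discrepancy with the statement below.)

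First I would compute $\partial \Phi / \partial p^i_j = -\tfrac{1}{\log 2}(1 + \log p^i_j) - \blL \cdot \vc{v}^i_j - \mu_i$, set it to zero, and solve to get $p^i_j = \exp\big( -1 - (\log 2)(\blL \cdot \vc{v}^i_j + \mu_i) \big)$, which after absorbing constants has the form $p^i_j = Z_i^{-1} e^{\blL' \cdot \vc{v}^i_j}$ for a rescaled multiplier $\blL'$ (real, since all quantities involved are real), with $Z_i = \sum_{j \in J} e^{\blL' \cdot \vc{v}^i_j}$ forced by normalization $\sum_j p^i_j = 1$. This already gives the claimed Boltzmann form with $\blL_j := \blL'$ independent of $j$; since the lemma only asserts the \emph{existence} of some $\blL \in (\mb{R}^D)^J$, taking $\blL_j = \blL'$ for all $j \in J$ suffices. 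The rigorous justification that a maximizer of a smooth concave function on the relative interior of an affine slice must satisfy the stationarity condition $\nabla H = \sum_d \lambda_d \nabla(\text{constraint}_d) + \sum_i \mu_i \nabla(\text{normalization}_i)$ is standard (the gradient of $H$ must be orthogonal to the tangent space of $\mc{M}^{\mc{V}}_{\vc{z}}$, which is spanned by the constraint gradients); I would invoke this without belaboring it, noting that $L'(p) = -(1+\log p)/\log 2$ is finite on $(0,1]$ so the hypothesis $p^i_j \ne 0$ is exactly what makes the interior argument valid.

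The main obstacle, such as it is, is bookkeeping: making sure the Lagrange multiplier argument is applied to the correct constrained optimization problem (product measures, not all measures — but Lemma \ref{maxent2} hands us this for free) and correctly absorbing the normalization multipliers $\mu_i$ into the $Z_i$, keeping track of the base of the logarithm (the $\log 2$ factor and the shift by $1$ get swallowed into $Z_i$ and into the definition of $\blL$). There is no genuine difficulty — the result is essentially the standard fact that maximum-entropy distributions subject to linear expectation constraints are exponential-family (Gibbs/Boltzmann) distributions — so I would write this crisply in a paragraph or two.
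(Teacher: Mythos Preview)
Your proposal is correct and follows essentially the same approach as the paper: both invoke Lagrange multipliers for the entropy maximization over product measures subject to the linear constraint $\sum_{i,j} p^i_j \vc{v}^i_j = \vc{z}$ and the normalization constraints, yielding the Boltzmann form. Your observation that a single $\blL' \in \mb{R}^D$ (rather than one per $j$) suffices is in fact a slight sharpening of the stated conclusion, and your bookkeeping with the normalization multipliers $\mu_i$ is a bit more explicit than the paper's compressed argument.
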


\nib{Proof.}
By the theory of Lagrange multipliers, $\vc{p}$ is a stationary point of
\[ L(\vc{p},\blL) = H(\vc{p}) -  (\log 2)^{-1} \sum_{d \in [D]} \sum_{j \in J} \lL_{j,d} 
  (\sum_{i \in [n]} p^i_j (v^i_j)_d - z_d),\]
so $0 = -1+\log(p^i_j)-\sum_{d \in [D]} \lL_{j,d} (v^i_j)_d$,
which gives the stated formula. \qed  

\medskip

When $\vc{p} = {\bf p}^{\cal V}_{\bf z}$ and
$\mu_{\wt{\bf q}} = \mu ^{\widetilde {\cal V}}_{\widetilde {\bf x}}$
are $\kK$-bounded we can describe them
explicitly using Lemma \ref{boltzmann}.
For $\vc{p}$ we obtain
${\blL } = (\lL_1,\lL_2) \in {\mathbb R}^2$ 
such that $\vc{p} = {\bf p}^{\cal V}_{\bf z}$ 
is given by $p^i_1 = e^{\lL _1 + \lL _2 (i /n)} (1 + e^{\lL _1 + \lL _2 (i/n)})^{-1}$ 
(it is convenient to rescale, using $\lL _2/n$ in place of $\lL _2$). 
To determine whether $\vc{p}$ is close to $\wt{\bf p}$,
it will be more convenient to pass to a limit problem
in which closeness is replaced by equality. 
With this in mind, we write
 \[ p^i_1 = (p^{(n)}_{\blL})^i_1
:= f_{\blL}(i/n), \text{ where } f_{\blL}(x) 
= e^{\lL _1 + \lL _2 x} (1 + e^{\lL _1 + \lL _2 x})^{-1}.\]
Similarly, Lemma \ref{boltzmann} gives
$\blP = (\pi _1, \pi _1', \pi _2, \pi _2') \in {\mathbb R}^4$
(using the symmetry between $(0,1)$ and $(1,0)$)
such that $\mu_{\wt{\bf q}} = \mu ^{\widetilde {\cal V}}_{\widetilde {\bf x}}$
is given by $\wt{q}^i_{j,j'} = ({\bf q}^{(n)}_{\blP})^i_{j,j'}
:= g^{\blP}_{j,j'}(i/n)$, where
\begin{gather*}
g^{\blP}_{0,0}(x) = Z_{\blP}(x)^{-1}, \qquad
g^{\blP}_{0,1}(x) = g^{\blP}_{1,0}(x) 
= e^{\pi_1 + \pi_2 x} Z_{\blP}(x)^{-1}, \\
g^{\blP}_{1,1}(x) 
= e^{\pi'_1 + \pi'_2 x} Z_{\blP}(x)^{-1}, \qquad \text{ with }
Z _{\blP}(x) = 1 + 2e^{\pi _1 + \pi _2x} + e^{\pi _1' + \pi _2'x}.
\end{gather*}
The limit marginal problem 
is to characterise $\blL$ and $\blP$ such that 
$f_{\blL}(x) = g^{\blP}_{0,1}(x) + g^{\blP}_{1,1}(x)$.

Next we formulate the constraints on $\blL$ and $\blP$
defined by the parameters $\aA_1,\aA_2,\bB_1,\bB_2$ 
of Conjecture \ref{kalai}, namely
$ \sum_{i \in [n]} p^i_1 (1,i)
= (k,s) = (\aA_1 n, \aA_2 \tbinom{n}{2})$
and $\sum_{i \in [n]} \wt{q}^i_1 (1,i)
= (t,w) = (\bB_1 n, \bB_2 \tbinom{n}{2})$.
The limit versions of these constraints
are $h(\blL) = (\aA_1, \aA_2)$
and $h^*(\blP) = (\bB_1, \bB_2)$, where
\[ h(\blL) = \int _0^1 (1,2x) f_{\blL}(x) dx 
\qquad \text{ and } \qquad
h^*(\blP) = \int _0^1 (1,2x) g^{\blP}_{1,1}(x) dx.\]
The following lemma shows that we can think of
$\blL$ as a reparameterisation of $(\aA_1,\aA_2)$,
and that large finite instances of $[n]_{k,s}$
are well-approximated by the limit.
Recall that a homeomorphism is a continuous bijection
with a continuous inverse.

\begin{lemma} \label{reparam} $ $
\begin{enumerate}
\item $h$ is a homeomorphism between $\mb{R}^2$ and $\LL$.
\item For $(\aA_1,\aA_2) \in \LL$ and large $n$ we have
$\mu^{{\cal V}_n}_{{\bf z}_n} = \mu_{\bf p}$, 
for some ${\bf p} = {\bf p}^{(n)}_{{\blL}^{(n)}}$
where $\blL^{(n)} \to \blL = h^{-1}(\aA_1,\aA_2)$. 
\end{enumerate}
\end{lemma}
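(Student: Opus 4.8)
\textbf{Proof plan for Lemma \ref{reparam}.}

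The plan is to treat the two parts essentially independently, with part (i) being a calculus-of-variations-free computation about the explicit one-parameter family $f_{\blL}$, and part (ii) being a reduction of the finite maximum-entropy measure to this limit via the Boltzmann description (Lemma \ref{boltzmann}) together with a compactness argument.

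For part (i), first I would establish that $h$ maps into $\LL$. Writing $f_{\blL}(x)\in(0,1)$ for all $x$, the first coordinate of $h(\blL)$ lies strictly in $(0,1)$; for the constraint $\aA_1^2<\aA_2<2\aA_1-\aA_1^2$ I would interpret $h(\blL)=\mb{E}(1,2U)$ where $U$ is a $[0,1]$-valued random variable with density proportional to... no, more simply: $f_{\blL}$ is monotone in $x$, so $\aA_2=\int_0^1 2xf_{\blL}(x)\,dx$ versus $\aA_1=\int_0^1 f_{\blL}(x)\,dx$ is governed by a Chebyshev-type correlation inequality — $2x$ and $f_{\blL}(x)$ are comonotone (if $\lL_2>0$) or anti-comonotone, so $\aA_2$ is pushed above or below the "independent" value $\aA_1$, which is exactly what the two sides of the inequality $\aA_1^2<\aA_2$ (after checking $\int 2x\,dx=1$) and $\aA_2<2\aA_1-\aA_1^2$ encode. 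More carefully, the extreme cases are $f_{\blL}\to 1_{[1-\aA_1,1]}$ and $f_{\blL}\to 1_{[0,\aA_1]}$, giving the two boundary curves, so by continuity $h$ lands in the open region $\LL$. Injectivity: I would compute the Jacobian $\partial h/\partial\blL$ and show it is nonsingular — $\partial f_{\blL}/\partial\lL_1 = f_{\blL}(1-f_{\blL})$ and $\partial f_{\blL}/\partial\lL_2 = xf_{\blL}(1-f_{\blL})$, so the Jacobian is the $2\times 2$ moment matrix $\int_0^1 \binom{1}{2x}(1,x)\,f_{\blL}(1-f_{\blL})\,dx$, which is positive definite since $f_{\blL}(1-f_{\blL})>0$ and $\{1,x\}$ are linearly independent on $[0,1]$. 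Hence $h$ is an open local homeomorphism; combined with a properness/boundary argument (as $\|\blL\|\to\infty$ the image approaches $\partial\LL$) this gives that $h$ is a homeomorphism onto $\LL$. This properness step — showing the image does not "miss" part of $\LL$ and that $h$ is proper as a map to $\LL$ — is the one place requiring a little care; I would argue it by the explicit limiting shapes of $f_{\blL}$ as $(\lL_1,\lL_2)$ diverge in each direction.

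For part (ii): by Lemma \ref{binary bounded} (using that the Kalai vectors are robustly generic, as verified in the introduction), for $(\aA_1,\aA_2)\in\LL$ the measure $\mu^{\mc{V}_n}_{\vc{z}_n}$ is $\kK$-bounded for some fixed $\kK>0$, so in particular all its coordinates are nonzero and Lemma \ref{boltzmann} applies: $\mu^{\mc{V}_n}_{\vc{z}_n}=\mu_{\vc{p}}$ with $p^i_1 = e^{\lL_1^{(n)}+\lL_2^{(n)}(i/n)}(1+e^{\lL_1^{(n)}+\lL_2^{(n)}(i/n)})^{-1} = f_{\blL^{(n)}}(i/n)$ after rescaling $\lL_2$ by $n$; here $\blL^{(n)}=(\lL_1^{(n)},\lL_2^{(n)})$ is the multiplier vector. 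The $\kK$-boundedness forces $\blL^{(n)}$ to lie in a fixed compact subset of $\mb{R}^2$ (since $f_{\blL}$ bounded away from $0$ and $1$ on $[0,1]$ bounds $\blL$), so along any subsequence $\blL^{(n)}\to\blL^*$ for some $\blL^*\in\mb{R}^2$. The constraint $\sum_i p^i_1(1,i)=(\bfl{\aA_1 n},\bfl{\aA_2\binom n2})$ divided by $(n,\binom n2)$ is a Riemann sum converging to $h(\blL^*)$, so $h(\blL^*)=(\aA_1,\aA_2)$, whence $\blL^*=h^{-1}(\aA_1,\aA_2)=\blL$ by part (i). Since every subsequence has a further subsequence converging to the same limit $\blL$, the whole sequence converges: $\blL^{(n)}\to\blL$, as required.

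The main obstacle I anticipate is the properness argument in part (i) — verifying cleanly that $h$ surjects onto all of $\LL$ (equivalently, that $h$ is a proper map $\mb{R}^2\to\LL$, so that the open-and-closed image argument closes). The local homeomorphism part is routine once the Jacobian computation is in hand, and part (ii) is then a short compactness-plus-Riemann-sum argument that leans entirely on $\kK$-boundedness from Lemma \ref{binary bounded} and on part (i).
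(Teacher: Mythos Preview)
Your approach matches the paper's in broad outline. For part (i), both compute the Jacobian (with entries proportional to $\int_0^1 x^a f_{\blL}(1-f_{\blL})\,dx$) and show it nonsingular; the paper then invokes the Gale--Nikaido theorem \cite{GN} to get global injectivity directly from positivity of the principal minors, avoiding any covering-space argument, whereas your properness plan --- if carried out --- would handle injectivity and surjectivity simultaneously (the paper in fact does not argue surjectivity onto $\LL$ explicitly in part (i)). For part (ii), your compactness-plus-Riemann-sum argument is exactly what the paper does.

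There is one genuine omission in your plan for (ii). Lemma \ref{binary bounded} requires the hypothesis $|(\{0,1\}^n)^{\mc{V}}_{\vc{z}}| \ge (1+\aA)^n$, and you have not verified that $|[n]_{k,s}|$ grows exponentially when $(\aA_1,\aA_2)\in\LL$; robust genericity alone is not enough. The paper fills this in by constructing an explicit $\tT$-bounded product measure supported on an interval of length about $(1+2\tT)k$ centred near $s/k$ with approximately the correct expected $(|A|,\sum A)$, and then applying Lemma \ref{uctg3} to get the exponential lower bound. Without this step your appeal to Lemma \ref{binary bounded} is unjustified.
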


\begin{proof} 
We start by noting that $h$ is continuous.
Next we claim that $h(\blL) \in \LL$ 
for all $\blL \in \mb{R}^2$.
To see this, note that $0 \le f_{\blL}(x) \le 1$
for all $x \in [0,1]$. Then given 
$\aA_1 = \int_0^1 f_{\blL}(x) dx$, 
we can bound
$\aA_2 = \int_0^1 2x f_{\blL}(x) dx$
below by
$ \int_0^1 2x 1_{[0,\aA_1]} dx = \aA_1^2$
and above by
$\int_0^1 2x 1_{[1-\aA_1,1]} dx = 2\aA_1-\aA_1^2$,
so $(\aA_1,\aA_2) \in \LL$, as claimed.

Next we claim that the principal minors 
of the Jacobian of $h$ are positive;
this gives injectivity of $h$
by the Gale-Nikaido theorem \cite{GN}, 
and also continuity of $h^{-1}$
by the Inverse Function Theorem.
The Jacobian of $h$ is 
\begin{equation*}
\begin{pmatrix} I(1) & 2I(x)\\ I(x) & 2I(x^2) \end{pmatrix},
\text{ where }
I(g) = \int_0^1 g(x) \Big ( \frac {f_{\blL}(x)}{1 + e^{\lL _1 + \lL _2x}} \Big ) dx.
\end{equation*}
All entries are positive as $f_{\blL}(x)$ is positive.
The determinant $2(I(1)I(x^2)-I(x)^2)$ is positive 
by the Cauchy-Schwarz inequality.
Thus the claim holds.

It remains to prove statement (ii) of the lemma.
Fix $(\aA_1,\aA_2) \in \LL$.
We claim that $|[n]_{k_n,s_n}| \geq (1 + \gamma )^n$ 
for $n^{-1} \ll \gG \ll \aA_1, \aA_2$.
To see this, we fix $\gG \ll \zZ \ll \tT \ll \aA_1, \aA_2$
and construct a $\tT$-bounded measure $\mu_{{\bf p}}$
on $\{0,1\}^S$ for some $S \sub [n]$ such that
$\sum_{i \in S} p_i (1,i) = (k_n,s_n) \pm \zZ (n,n^2)$;
the claim then follows by Lemma \ref{uctg3}.
We let $S = [a-\tT k_n,a+k_n+\tT k_n]$,
for some $a \in [n]$ such that 
$\sum_{i=a+1}^{a+k_n} = s_n \pm n$;
as $\alpha _1 ^2 < \alpha _2 < 2\alpha _1 - \alpha _1^2$
we have $S \sub [n]$ for small $\tT$.
Note that $\aA_2 n^2 = 2s_n + O(n)
= k_n(a+k_n/2) + O(n)$.
We let $p_i = (1+2\tT)^{-1}$ for $i \in S$.
Then $\sum_{i \in S} p_i = k_n + O(1)$
and $\sum_{i \in S} p_i i = s_n + O(n)$,
as required to prove the claim.

Now by Lemma \ref{binary bounded}, 
${\bf p}^{{\cal V}_{n}}_{{\bf z}_n}$ is $\kK$-bounded, 
where $n^{-1} \ll \kK \ll \gG$,
so Lemma \ref{boltzmann} gives 
${\bf p}^{{\cal V}_n}_{{\bf z}_n} = {\bf p}^{(n)}_{\blL ^{(n)}}$.
for some $\blL ^{(n)}$. By $\kK$-boundedness,
$\kappa /2 \leq ({\bf p}^{{\cal V}_n}_{{\bf z}_n})^{i}_1
/ ({\bf p}^{{\cal V}_n}_{{\bf z}_n})^{i}_0 
= e^{\lL ^{(n)}_1 + \lL ^{(n)}_2(i/n)} 
\leq 2\kappa ^{-1}$ for all $i\in [n]$,
so ${\blL}^{(n)} \in [-C,C]^2$,
where $n^{-1} \ll C^{-1} \ll \kK$.
Then $({\blL}^{(n)})$ has a convergent subsequence 
by compactness of $[-C,C]^2$.
Furthermore, any convergent subsequence
of ${\blL}^{(n)}$ has a limit $\blL$
that satisfies $h(\blL)=\aA$,
so is uniquely determined by injectivity of $h$. 
\end{proof}

Next we show a limit theorem for 
the maximum entropy measures for $(t,w)$-intersections
which is somewhat analogous
that in Lemma \ref{reparam} $ii$
for the maximum entropy measures for $[n]_{k,s}$.

\begin{lemma} \label{limitpi}
Suppose ${\bf g} = (\alpha _1, \alpha _2, \beta _1, \beta _2) \in [0,1]^4$
with $(\alpha _1, \alpha _2) \in \Lambda $. 
Write ${\mu }_{\wt{\bf q}^{(n)}} = \mu ^{\wt{\cal V}_n}_{\wt{\bf x}_n}$.
Then either 
\begin{enumerate}
\item there is $(j,j') \in \{0,1\}^2$ such that
 $\min_{i \in [n]} (\wt{q}^{(n)})^i_{j,j'} \to 0$, or 
\item for large $n$ we have
$\wt{\bf q}^{(n)} = {\bf q}^{(n)}_{{\blP}^{(n)}}$,
where $\blP^{(n)}$ converges to some $\blP \in \mb{R}^4$.
\end{enumerate}
Furthermore, the following are equivalent to case $ii$:
\begin{enumerate}
\item there is $\kK>0$ such that
${\mu }_{\wt{\bf q}^{(n)}}$ is $\kK$-bounded for large $n$,
\item there is $\lL>0$ such that
${\mbox {dim}}_{VC}([n]_{k,s}\times _{(t,w)}[n]_{k,s}) 
> \lambda n$ for large $n$.
\end{enumerate}
\end{lemma}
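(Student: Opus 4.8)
The approach is to analyse ${\mu }_{\wt{\bf q}^{(n)}} = \mu ^{\wt{\cal V}_n}_{\wt{\bf x}_n}$ through the limiting variational problem, in parallel with the treatment of $\mu ^{{\cal V}_n}_{{\bf z}_n}$ in Lemma~\ref{reparam}; the only new feature is the extra linear constraint coming from $(\beta _1,\beta _2)$. By Lemma~\ref{maxent2}, ${\mu }_{\wt{\bf q}^{(n)}}$ is a product measure, and since ${\cal M}^{\wt{\cal V}_n}_{\wt{\bf x}_n}$ is invariant under the coordinate swap $(j,j')\mapsto (j',j)$, strict concavity of $L$ (Lemma~\ref{entapprox}~$ii$) forces $(\wt{q}^{(n)})^i_{1,0}=(\wt{q}^{(n)})^i_{0,1}$ for all $i$. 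If every $(\wt{q}^{(n)})^i_{j,j'}$ is nonzero then Lemma~\ref{boltzmann} (after rescaling the second Lagrange coordinate by $1/n$) produces $\blP^{(n)}=(\pi _1,\pi '_1,\pi _2,\pi '_2)\in {\mathbb R}^4$ with $\wt{\bf q}^{(n)} = {\bf q}^{(n)}_{\blP^{(n)}}$; if some coordinate vanishes, or if ${\cal M}^{\wt{\cal V}_n}_{\wt{\bf x}_n}=\es$, then we are already in case~$i$. Note also that whenever $\blP^{(n)}$ is bounded, along any subsequential limit $\blP$ the data converge ($k_n/n\to \alpha _1$, $2s_n/n^2\to \alpha _2$, $t_n/n\to \beta _1$, $2w_n/n^2\to \beta _2$) and a Riemann-sum argument passes the constraints to the limit, yielding $\int _0^1 (1,2x)(g^{\blP}_{0,1}+g^{\blP}_{1,1})\,dx=(\alpha _1,\alpha _2)$ and $h^*(\blP)=(\beta _1,\beta _2)$.

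I would first record the equivalence $ii\Lra (a)\Lra (b)$ together with a uniform threshold that also drives the dichotomy. Since the Kalai ${\cal V}$ is $\gamma $-robustly $(\gamma /2,{\bf R})$-generic for every $\gamma $ and $0.1$-robustly $({\bf R},7)$-generating, Lemma~\ref{lem: general transfer to measure} shows $\wt{\cal V}_n$ is $\wt{\bf R}$-bounded, robustly generic and robustly generating, so Lemma~\ref{bounded} gives $(b)\Ra (a)$ and Lemma~\ref{uctg3} applies to $\wt{\cal V}_n$. For $(a)\Ra (b)$: $\kappa $-boundedness gives $H({\mu }_{\wt{\bf q}^{(n)}})=\sum _i H((\wt{q}^{(n)})^i_\bullet )\ge c(\kappa )n$ with $c(\kappa )>0$, hence $\log _2|[n]_{k,s} \times _{(t,w)} [n]_{k,s}|=H({\mu }_{\wt{\bf q}^{(n)}})\pm \delta n\ge (c(\kappa )-\delta )n$ by Lemma~\ref{uctg3}, and a Sauer-Shelah estimate over the four-letter alphabet $\{0,1\}^2$ converts this exponential lower bound into linear VC-dimension. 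The threshold is: if ${\mu }_{\wt{\bf q}^{(n)}}$ is $c$-bounded for some fixed $c>0$ along an infinite set of $n$, then by Lemma~\ref{boltzmann} along a sub-subsequence $\blP^{(n)}$ is bounded and converges to some $\blP$, so the profile $g^{\blP}=(g^{\blP}_{j,j'})$ is a feasible configuration of the limit problem bounded away from the boundary of the simplex; discretising $g^{\blP}$ and applying Theorem~\ref{ldp}/Lemma~\ref{uctg3} as in the proof of Lemma~\ref{reparam}~$ii$ then shows $|[n]_{k,s} \times _{(t,w)} [n]_{k,s}|$ is exponentially large for \emph{all} large $n$, so by Lemma~\ref{bounded} ${\mu }_{\wt{\bf q}^{(n)}}$ is $\kappa _0$-bounded for all large $n$, with $\kappa _0$ depending only on the robustness constants. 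Hence $(a)$ holds if and only if ${\mu }_{\wt{\bf q}^{(n)}}$ is $c$-bounded, for some $c>0$, along some infinite sequence of $n$.

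Now the dichotomy. If $(a)$ holds then $\blP^{(n)}$ lies in a fixed box, since $\kappa $-boundedness pins $e^{\pi _1+\pi _2(i/n)}$ and $e^{\pi '_1+\pi '_2(i/n)}$ in $[\kappa ,\kappa ^{-1}]$; every subsequential limit $\blP$ satisfies the two limiting constraints of the first paragraph, so $g^{\blP}$ is a feasible Boltzmann configuration, which by sufficiency of the Lagrange conditions for this concave maximisation is the \emph{unique} limiting maximiser, and since $\blP \mapsto g^{\blP}$ is injective, $\blP$ is uniquely determined and $\blP^{(n)}\to \blP$: this is case~$ii$. Conversely $\blP^{(n)}\to \blP$ makes ${\bf q}^{(n)}_{\blP^{(n)}}$ uniformly bounded below, so $ii\Ra (a)$. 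If $(a)$ fails then, by the threshold, ${\mu }_{\wt{\bf q}^{(n)}}$ is not eventually $c$-bounded for any $c>0$, so $\kappa _n:=\min _{i,j,j'}(\wt{q}^{(n)})^i_{j,j'}\to 0$; to exhibit a single pair $(j,j')$ with $\min _i(\wt{q}^{(n)})^i_{j,j'}\to 0$ I would use that the finite maximisers $q^{(n)}(\cdot )$ converge in $L^1([0,1])$ to the unique maximiser $q^*$ of the limit problem (which exists whenever ${\cal M}^{\wt{\cal V}_n}_{\wt{\bf x}_n}$ is eventually nonempty), and that the coordinates of $q^*$ touching the boundary of the simplex are exactly those along which $\min _i(\wt{q}^{(n)})^i_{j,j'}\to 0$; if ${\cal M}^{\wt{\cal V}_n}_{\wt{\bf x}_n}=\es$ for infinitely many $n$, the same boundary behaviour is forced because $\wt{\bf x}_n$ then leaves the relevant convex hull. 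Since $(i)\Lra \neg (a)\Lra \neg (ii)$, cases $i$ and $ii$ are exhaustive and exclusive, which completes the proof.

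The step I expect to be the main obstacle is the last part of the third paragraph: making the convergence of the finite maximisers to $q^*$ precise in a topology strong enough to single out the degenerate coordinate(s) and to exclude oscillation between different ones, together with the bookkeeping that feasibility of the limit problem corresponds to eventual nonemptiness of ${\cal M}^{\wt{\cal V}_n}_{\wt{\bf x}_n}$. The remaining ingredients --- the Boltzmann description, boundedness of $\blP^{(n)}$, injectivity of $\blP \mapsto g^{\blP}$, and the size and VC-dimension estimates --- are routine adaptations of Lemmas~\ref{reparam}, \ref{bounded} and \ref{uctg3} and of the argument used for the first counterexample in Section~\ref{sec:counterex}.
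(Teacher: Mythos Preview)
Your overall architecture matches the paper's: pass to Boltzmann parameters, extract a limit $\blP$ along a bounded subsequence, show uniqueness, and bootstrap to boundedness for all large $n$. Your uniqueness argument --- any feasible Boltzmann profile $g^{\blP}$ is automatically the unique maximiser of the continuous entropy problem by KKT sufficiency, so all subsequential limits coincide --- is a clean alternative to the paper's, which stays entirely at the finite level: given two putative limits $\blP \ne \blP'$, it discretises $g^{\blP}$ to produce a competitor in $\mc{M}^{\wt{\cal V}_n}_{\wt{\bf x}_n}$ far from the true maximiser $\wt{\bf q}^{(n)} \approx {\bf q}^{(n)}_{\blP'}$ in $\ell^1$ yet with nearly the same entropy, and then invokes the stability Lemma~\ref{perturbmaxent} for a contradiction. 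Your route is more conceptual; theirs avoids formalising the limit variational problem.

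There is, however, a genuine gap at the two places where you pass from ``exponential size'' to ``linear VC-dimension'' over the four-letter alphabet $\{0,1\}^2$: in your argument for $(a)\Ra(b)$, and in the threshold step where you write ``exponentially large $\ldots$ so by Lemma~\ref{bounded} $\ldots$ $\kK_0$-bounded''. This implication fails for non-binary alphabets under the full-shattering Definition~\ref{vc}: for instance $\{0,1\}^n \sub \{0,1,2,3\}^n$ has size $2^n$ but shatters no single coordinate. Lemma~\ref{bounded} needs linear VC-dimension as input, and the binary Lemma~\ref{binary bounded} you are mimicking from the proof of Lemma~\ref{reparam}~$ii$ does not apply to $\wt{\cal V}$. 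The paper's fix is to invoke Theorem~\ref{tfae} instead: once you have the explicit $\kK$-bounded measure ${\bf q}^{(n)}_{\blP}$ with $\wt{\cal V}_n$-mean within $o(n)$ of $\wt{\bf x}_n$, feasibility $(v)$ is immediate, and the chain $(v)\Ra(iv)\Ra(iii)\Ra(i)$ of Theorem~\ref{tfae} delivers both linear VC-dimension and $\kK_0$-boundedness of $\wt{\bf q}^{(n)}$. With that repair your threshold is sound. On the step you flag as the main obstacle --- producing a single $(j,j')$ with $\min_i(\wt{q}^{(n)})^i_{j,j'}\to 0$ once $\kK_n\to 0$ --- the paper is equally brief: it simply asserts that $\neg(i)$ yields a uniformly $\kK$-bounded subsequence, so your explicit acknowledgement of the difficulty is not a deficiency relative to the paper's own treatment.
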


\begin{proof}
We suppose that case (i) does not hold and prove that case (ii) holds.
We can fix $\kK>0$ and a sequence $n_m \to \infty$ such that 
each ${\mu }_{\wt{\bf q}^{(n_m)}}$ is $\kK$-bounded.
By Lemma \ref{boltzmann}, we have $\blP^{n_m} \in {\mathbb R}^4$ 
such that $\wt{\bf q}^{(n_m)} = {\bf q}^{(n_m)}_{\blP^{n_m}}$. 
By $\kK$-boundedness, each $\blP^{n_m} \in [-C,C]^4$
for some $C=C(\kK) \in \mb{R}$. By compactness of $[-C,C]^4$,
we can pass to a convergent subsequence,
so by relabelling we can assume $\blP^{n_m} \to \blP \in \mb{R}^4$.

Note that $n_m^{-1} H({\mu }_{\wt{\bf q}^{(n_m)}})
= \mb{E}_{i \in [n_m]} \sum_{j,j' \in \{0,1\}}
g^{\blP^{n_m}}_{j,j'}(i/n_m) \log_2 g^{\blP^{n_m}}_{j,j'}(i/n_m)
\to H^*(\blP)$, where
\[H^*(\blP) := \sum_{j,j' \in \{0,1\}} \int_0^1 
g^{\blP}_{j,j'}(x) \log_2  g^{\blP}_{j,j'}(x) dx.\]
Furthermore, we claim that $\blP$ is the limit of
any convergent sequence $\blP^{n'_m}$ such that 
$\wt{\bf q}^{(n'_m)} = {\bf q}^{(n'_m)}_{\blP^{n'_m}}$.
To see this, suppose for a contradiction that
$\blP^{n'_m} \to \blP' \ne \blP$. Consider any 
$n^{-1} \ll \lL \ll \kK \ll \|\blP'-\blP\|_1$
and let ${\bf q}' = {\bf q}^{(n)}_{\blP '}$. 
Then $n^{-1} H(\mu_{{\bf q}'}) > H^*(\blP) - \lL$,
$\mu_{{\bf q}'}$ is $(\kK/2)$-bounded and
$\wt{\mc{V}}(\mu_{\bf q}) = \wt{\bf x}'$ with 
$\|\wt{\bf x}' - \wt{\bf x}\|_{\wt{\bf R}} < \lL n$.

As in the proof of Lemma \ref{bounded},
we can modify ${\bf q}'$ to obtain ${\bf q}$
with $\wt{\mc{V}}(\mu_{\bf q}) = \wt{\bf x}$
and $\|{\bf q}-{\bf q}'\|_1 < \lL' n$,
where $\lL \ll \lL' \ll \kK$,
so $n^{-1} H(\mu_{\bf q}) > H^*(\blP) - \kK$.
We deduce that $H^*(\blP') > H^*(\blP) - 2\kK$,
and by symmetry $H^*(\blP) > H^*(\blP') - 2\kK$.
As $\kK$ is arbitrary, $H^*(\blP) = H^*(\blP')$.
Now suppose $n \in (n'_m)_{m \ge 1}$. 
Then $\wt{\bf q} = {\bf q}^{(n)}_{\blP^n}$
and $\mu_{\bf q} \in \mu^{\wt{\mc{V}}}_{\wt{\bf x}}$
with $H(\mu_{\bf q}) > H(\mu_{\wt{\bf q}})-2\kK n$.
However, $n^{-1} \|{\bf q}-\wt{\bf q}\|_1
> \tfrac{1}{2} \|\blP'-\blP\|_1 \gg \kK$,
which contradicts Lemma \ref{perturbmaxent}.
The claim follows.

Now consider any $n^{-1} \ll \lL \ll \kK$
and let ${\bf q} = {\bf q}^{(n)}_{\blP}$.
Then $\mu_{\bf q}$ is $(\kK/2)$-bounded and
$\wt{\mc{V}}(\mu_{\bf q}) = \wt{\bf x}'$
with $\|\wt{\bf x}' - \wt{\bf x}\|_{\wt{\bf R}} < \lL$,
so $\wt{\bf q}^{(n)} = \mu^{\wt{\mc{V}}_n}_{\wt{\bf x}_n}$
is $\lL$-bounded by Theorem \ref{tfae}.
By Lemma \ref{boltzmann}, we have $\blP^n \in {\mathbb R}^4$ 
such that $\wt{\bf q}^{(n)} = {\bf q}^{(n)}_{\blP^n}$. 
By the claim, any convergent subsequence of $(\blP^n)$
converges to $\blP$, so $\blP^n \to \blP$, 
as required for $ii$. 

The first equivalence is immediate from the above proof,
and the second from Theorem \ref{bounded}.
\end{proof}

Our next lemma explains the characterisation 
of the set $\GG$ that appears in Theorem \ref{thm: Kalai}:
it is the set of ${\bf g} = (\aA_1, \aA_2, \bB_1, \bB_2)$
with $(\aA_1,\aA_2) \in \LL$ such that the limit
marginal problem has a solution.
First we complete the definition of $\GG$
by defining the functions $\bB_1,\bB_2:\LL \to \mb{R}$
that appear in the definition of $\GG_1$.
Suppose $(\aA_1,\aA_2) \in \LL$ with $\aA_1 \ne \aA_2$
and  let $\blL = h^{-1}(\aA_1,\aA_2)$.
We define 
\[ (\bB_1(\aA_1,\aA_2),\bB_2(\aA_1,\aA_2))
= \int _0^1 (1,2x) f_{\blL}(x)^2 dx.\]

\begin{lemma}
\label{limiting distributions for (t,w)-intersections in the (k,s)-sets}
Suppose ${\bf g} = (\alpha _1, \alpha _2, \beta _1, \beta _2) \in [0,1]^4$
with $(\alpha _1, \alpha _2) \in \Lambda $. 
Let ${\blL } = h^{-1}(\alpha _1, \alpha _2)$. 
Then ${\bf g} \in \Gamma$ if and only if there is 
$\blP = (\pi _1, \pi _1', \pi _2, \pi _2') \in {\mathbb R}^4$ 
with $h^*(\blP) = (\bB_1, \bB_2)$ and
$f_{\blL}(x) = g^{\blP}_{0,1}(x) + g^{\blP}_{1,1}(x)$.
Furthermore, if ${\bf g} \in \Gamma$
there is a unique such $\blP$,
which we denote $\blP^{\blL}$, and
$\log_2 \bsize{ [n]_{k,s} \times_{(t,w)} [n]_{k,s} }
= H(\mu_{{\bf q}'}) + o(n)$, where 
${\bf q}' = {\bf q}^{(n)}_{\blP^{\blL}}$.
\end{lemma}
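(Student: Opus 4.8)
The plan is to split the statement into three parts: (a) the equivalence $\mathbf{g}\in\GG\Leftrightarrow$ solvability of the limit marginal problem, (b) uniqueness of the solution $\blP$, and (c) the exponential count. Parts (a) and (b) I would obtain together by explicitly solving the functional equation $f_{\blL}(x)=g^{\blP}_{0,1}(x)+g^{\blP}_{1,1}(x)$. Clearing denominators with the abbreviations $a=e^{\lL_1+\lL_2x}$, $b=e^{\pi_1+\pi_2x}$, $c=e^{\pi_1'+\pi_2'x}$, the equation $a/(1+a)=(b+c)/(1+2b+c)$ collapses to the clean identity $c=a+(a-1)b$, equivalently $e^{\pi_1'+\pi_2'x}=e^{\lL_1+\lL_2x}+e^{(\lL_1+\pi_1)+(\lL_2+\pi_2)x}-e^{\pi_1+\pi_2x}$ for all $x$. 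Since the functions $x\mapsto e^{\mu x}$ are linearly independent, the right side is a single exponential only after a cancellation, which forces a case split according to which of the exponents $\lL_2$, $\lL_2+\pi_2$, $\pi_2$ coincide.

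First I would note $\lL_2=0\Leftrightarrow\aA_1=\aA_2$ (from monotonicity of $f_{\blL}$ and the sign of $\int_0^1(2x-1)f_{\blL}$), and then treat three cases. (i) If $\lL_2\ne0$ ($\aA_1\ne\aA_2$): the only consistent subcase is $\lL_2=\pi_2$ with $\pi_1=\lL_1$, giving $\blP=(\lL_1,2\lL_1,\lL_2,2\lL_2)$, for which $g^{\blP}_{1,1}(x)=f_{\blL}(x)^2$ and hence $h^*(\blP)=\int_0^1(1,2x)f_{\blL}(x)^2\,dx=(\bB_1(\aA_1,\aA_2),\bB_2(\aA_1,\aA_2))$ --- this recovers $\GG_1$. (ii) If $\lL_2=0$ but $a=e^{\lL_1}\ne1$ ($\aA:=\aA_1=\aA_2\ne1/2$): the equation forces $\pi_2=\pi_2'=0$, so $g^{\blP}_{1,1}$ is the constant $c/(1+2b+c)$, whence $\bB_1=\bB_2$, and solving gives $b=(\aA-\bB)/(1+\bB-2\aA)$, $c=\bB(1+2b)/(1-\bB)$; positivity of $b,c$ is equivalent to $\max(2\aA-1,0)<\bB<\aA$, recovering $\GG_2$. (iii) If $\lL_1=\lL_2=0$ ($\aA_1=\aA_2=1/2$): then $c\equiv1$, so $\pi_1'=\pi_2'=0$, $g^{\blP}_{1,1}(x)=\tfrac12(1-f_{(\pi_1,\pi_2)}(x))$ and $h^*(\blP)=\tfrac12(1,1)-\tfrac12h(\pi_1,\pi_2)$; since $h$ is a homeomorphism onto $\LL$ (Lemma~\ref{reparam}), solvability is equivalent to $(1-2\bB_1,1-2\bB_2)\in\LL$, which by the reflection symmetry $(x,y)\in\LL\Leftrightarrow(1-x,1-y)\in\LL$ of $\LL$ is exactly $\GG_3$. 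In every case $\blP$ is pinned down uniquely, giving (b); one also checks the trivial reverse inclusions, so a point of $\GG$ in a given regime lies in the matching $\GG_i$ (the only overlap, $\GG_2\cap\{\aA=1/2\}\subset\GG_3$, being harmless).

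For (c), put $J=\{0,1\}\times\{0,1\}$ and $\widetilde{\mathbf{x}}=(\mathbf{z},\mathbf{z},\mathbf{w})$; by Definition~\ref{tilde} the set $[n]_{k,s}\times_{(t,w)}[n]_{k,s}$ is identified with $(J^n)^{\widetilde{\mathcal V}}_{\widetilde{\mathbf{x}}}$, and this identification is exact here because $\aA_1>\bB_1$ in all three regimes, so $\mathbf{z}\ne\mathbf{w}$ for large $n$ and the diagonal $\{A=B\}$ is excluded automatically. The crucial point is that $\mathbf{q}'=\mathbf{q}^{(n)}_{\blP^{\blL}}$ is a Gibbs measure for $\widetilde{\mathcal V}$: one reads off $\boldsymbol\Lambda\in\mathbb{R}^{3D}$ with $(q')^i_{j,j'}=Z_i^{-1}e^{\boldsymbol\Lambda\cdot\widetilde{\mathbf v}^i_{j,j'}}$, namely $\boldsymbol\Lambda^{(1)}=\boldsymbol\Lambda^{(2)}=(\pi_1,\pi_2/n)$ and $\boldsymbol\Lambda^{(3)}=(\pi_1'-2\pi_1,(\pi_2'-2\pi_2)/n)$, so $|\Lambda_d|=O(1/R_d)$ on each scale. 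Hence for every $\mathbf{x}\in(J^n)^{\widetilde{\mathcal V}}_{\widetilde{\mathbf{x}}}$ the probability $\mu_{\mathbf{q}'}(\mathbf{x})=(\prod_iZ_i^{-1})e^{\boldsymbol\Lambda\cdot\widetilde{\mathcal V}(\mathbf{x})}=(\prod_iZ_i^{-1})e^{\boldsymbol\Lambda\cdot\widetilde{\mathbf{x}}}$ is the same; summing over such $\mathbf{x}$ to get $\le1$, and combining with the identity $H(\mu_{\mathbf{q}'})=\sum_i\log_2Z_i-(\ln2)^{-1}\boldsymbol\Lambda\cdot\widetilde{\mathcal V}(\mu_{\mathbf{q}'})$, yields $\log_2|(J^n)^{\widetilde{\mathcal V}}_{\widetilde{\mathbf{x}}}|\le H(\mu_{\mathbf{q}'})+(\ln2)^{-1}\boldsymbol\Lambda\cdot(\widetilde{\mathcal V}(\mu_{\mathbf{q}'})-\widetilde{\mathbf{x}})$. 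A Riemann-sum estimate using $g^{\blP^{\blL}}_{0,1}+g^{\blP^{\blL}}_{1,1}=f_{\blL}$, $h(\blL)=(\aA_1,\aA_2)$ and $h^*(\blP^{\blL})=(\bB_1,\bB_2)$ shows $\|\widetilde{\mathcal V}(\mu_{\mathbf{q}'})-\widetilde{\mathbf{x}}\|_{\widetilde{\mathbf R}}=O(1)$, i.e.\ $|(\widetilde{\mathcal V}(\mu_{\mathbf{q}'})-\widetilde{\mathbf{x}})_d|=O(R_d)$, so the error term is $O(1)$. For the matching lower bound I would apply the first part of Lemma~\ref{uctg3} to $\mu_{\mathbf{q}'}$ (which is $\kK$-bounded since $\blP^{\blL}$ is a fixed finite vector), with target $\widetilde{\mathbf{x}}$ and the array $\widetilde{\mathcal V}$, which by Lemma~\ref{lem: general transfer to measure} is $\widetilde{\mathbf R}$-bounded and robustly $(\widetilde{\mathbf R},3k)$-generating; since $\|\widetilde{\mathcal V}(\mu_{\mathbf{q}'})-\widetilde{\mathbf{x}}\|_{\widetilde{\mathbf R}}=O(1)$ is within the allowed slack, this gives $\log_2|(J^n)^{\widetilde{\mathcal V}}_{\widetilde{\mathbf{x}}}|\ge H(\mu_{\mathbf{q}'})-\delta n$ with $\delta=\delta(n)\to0$. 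Combining the two bounds gives $\log_2|[n]_{k,s}\times_{(t,w)}[n]_{k,s}|=H(\mu_{\mathbf{q}'})+o(n)$.

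The main obstacle is the case analysis in (a)--(b): correctly enumerating the degeneracies of the functional equation, matching them to $\GG_1,\GG_2,\GG_3$, and handling the boundary phenomena (the role of $\lL_2=0$ versus $a=1$, the positivity constraints that produce the $\GG_2$ inequalities, and the reflection symmetry of $\LL$ behind $\GG_3$). Step (c) is by contrast a short Gibbs-measure computation once one observes that $\mathbf{q}'$ is the Boltzmann distribution attached to $\widetilde{\mathcal V}$; the only delicate point there is the multiscale bookkeeping that keeps $\boldsymbol\Lambda\cdot(\widetilde{\mathcal V}(\mu_{\mathbf{q}'})-\widetilde{\mathbf{x}})$ bounded, the multiplier being small precisely on the coordinates (those with $R_d=n$) where the Riemann-sum error is large.
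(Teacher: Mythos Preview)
Your treatment of parts (a)--(b) is essentially the paper's own argument: both rearrange the marginal equation to $b+c=a+ab$ (the paper writes it as $e^{\pi_1}y^{\pi_2}+e^{\pi_1'}y^{\pi_2'}=e^{\lL_1}y^{\lL_2}+e^{\lL_1+\pi_1}y^{\lL_2+\pi_2}$ after the substitution $y=e^x$), invoke linear independence of distinct exponentials, and split into the three cases that yield $\GG_1,\GG_2,\GG_3$. Your observation that $h^*(\blP)=\tfrac12(1,1)-\tfrac12 h(\pi_1,\pi_2)$ in the $\GG_3$ case, together with the reflection symmetry of $\LL$, is in fact slightly more careful than the paper's version of that computation.

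Where you genuinely diverge is in part (c). The paper obtains the lower bound $\log_2|\cdot|\ge H(\mu_{{\bf q}'})-o(n)$ from Lemma~\ref{uctg3} (as you do), but for the matching upper bound it gives three separate combinatorial estimates, one for each $\GG_i$: for $\GG_1$ it bounds by $2\log_2|[n]_{k,s}|$; for $\GG_2$ by the multinomial $\tbinom{n}{t,k-t,k-t,n-2k+t}$; for $\GG_3$ via the map $(A,B)\mapsto(A\cap B)\cup(\overline A\cap\overline B)$. Your argument replaces all three by a single observation: since ${\bf q}'={\bf q}^{(n)}_{\blP^{\blL}}$ is itself a Boltzmann distribution for $\widetilde{\mathcal V}$ (with the explicit $\boldsymbol\Lambda$ you write down), it is constant on the level set $(J^n)^{\widetilde{\mathcal V}}_{\widetilde{\bf x}}$, so summing immediately gives $\log_2|(J^n)^{\widetilde{\mathcal V}}_{\widetilde{\bf x}}|\le H(\mu_{{\bf q}'})+(\ln 2)^{-1}\boldsymbol\Lambda\cdot(\widetilde{\mathcal V}(\mu_{{\bf q}'})-\widetilde{\bf x})$, and your multiscale bookkeeping $|\Lambda_d|=O(1/R_d)$, $|(\widetilde{\mathcal V}(\mu_{{\bf q}'})-\widetilde{\bf x})_d|=O(R_d)$ makes the error $O(1)$. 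This is correct, cleaner, and uniform across the three regimes; it exploits a feature (that ${\bf q}'$ has Gibbs form even when it is not the \emph{maximum}-entropy measure for $\widetilde{\mathcal V}$) that the paper's Lemma~\ref{uctg2} does not directly capture.
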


\begin{proof}
First suppose that there is $\blP$ with $h^*(\blP) = (\bB_1, \bB_2)$ 
and $f_{\blL}(x) = g^{\blP}_{0,1}(x) + g^{\blP}_{1,1}(x)$.
Then we also have 
$1-f_{\blL}(x) = g^{\blP}_{0,0}(x) + g^{\blP}_{1,0}(x)$.
Setting $y = e^x$ and rearranging, 
we find the polynomial equality
\begin{equation}\label{poly}
e^{\pi _1} y^{\pi _2} + e^{\pi _1'}y^{\pi _2'} = 
e^{\lambda _1} y^{\lambda _2} + e^{\lambda _1 + \pi _1} y^{\lambda _2 + \pi _2}
\end{equation}
for $y\in [1,e]$. Thus one of the following two conditions holds:

	(a) $\pi _2 = \lambda _2$ and $\pi _2' = \lambda _2 + \pi _2$,
	(b) $\lambda _2 = \pi _2' = 0$.

Suppose that $\lambda _2 \neq 0$ and so case (a) holds, giving 
$\pi _2 = \lambda _2$ and  $\pi _2' = 2 \lambda _2$. 
Equating coefficients in (\ref{poly}) gives 
$\lambda _1 = \pi _1$ and $\pi _1' = 2 \lambda _1$,
so $\blP = \blP^{\blL} = (\lL_1,2\lL_1,\lL_2,2\lL_2)$.
Then $g^{\blP}_{1,1}(x) = (e^{\pi _1 + \pi _2x})^2
(1 + e^{\pi _1 + \pi _2x})^{-2} = f_{\blL}(x)^2$,
so ${\bf q}'{}^i_{1,1}=g^{\blP}_{1,1}(i/n)
= f_{\blL}(i/n)^2=(({\bf p}^{(n)}_{\blL})^i_1)^2$,
i.e.\ $\mu_{{\bf q}'}$ is the product
of its two marginals $\mu_{{\bf p}^{(n)}_{\blL}}$.
Note that $\alpha _1 \neq \alpha _2$, as $h$ is injective and 
$h(\lambda _1,0) = (\alpha_1, \alpha _1)$. Thus ${\bf g} \in \Gamma _1$.

Now suppose that $\lambda _2 = \pi _2 =0$. Then the right hand side 
of (\ref{poly}) is constant, so $\pi _2' = 0$, and so 
$e^{\pi _1} + e^{\pi _1'} = e^{\lambda _1} + e^{\lambda _1 + \pi _1}$. 
Thus $\aA_1=\aA_2=f_{\blL}(x) = e^{\lambda _1}(1 + e^{\lambda _1})^{-1}$,
and $\bB_1=\bB_2=g^{\blP}_{1,1}(x) 
= e^{\pi _1'} (1 + 2e^{\pi _1} + e^{\pi _1'})^{-1}$.
Furthermore, $\bB_1 = g^{\blP}_{1,1}(x) 
< g^{\blP}_{1,1}(x) + g^{\blP}_{0,1}(x) = \aA_1$
and $2\alpha _1 - 1 = 
2(e^{\pi _1} + e^{\pi _1'})(1 + 2e^{\pi _1} + e^{\pi _1'})^{-1} - 1 
< e^{\pi _1'}(1 + 2e^{\pi _1} + e^{\pi _1'})^{-1} = \beta _1$.
Thus ${\bf g} \in \Gamma _2$. 

It remains to consider $\lambda _2 = 0$ and $\pi _2 \neq 0$.  
Then case (b) must hold, so $\pi _2' = 0$. 
Equating coefficients in (\ref{poly}) gives 
$\lambda _1 = \pi _1'$ and $\lambda _1 + \pi _1 = \pi _1$, 
so $\lambda _1 = \pi _1' = 0$. 
Then $\alpha _1= \alpha _2 = f_{\blL}(x) = 1/2$.
Furthermore, $(\bB_1,\bB_2) = h^*(\blP)
= h(\wt{\blP})/2$, where $\wt{\blP} = (\pi _1,\pi _2)$,
so $2(\bB_1,\bB_2) \in \LL$. Thus ${\bf g} \in \Gamma _3$.

We conclude that if $h^*(\blP) = (\bB_1, \bB_2)$ 
and $f_{\blL}(x) = g^{\blP}_{0,1}(x) + g^{\blP}_{1,1}(x)$
then ${\bf g} \in \Gamma$. Conversely, if ${\bf g} \in \Gamma$
then the analysis of each case above exhibits 
the unique $\blP=\blP^{\blL}$ satisfying these conditions. 
Indeed, if ${\bf g} \in \GG_1$ we have
${\blP} = (\lambda _1, 2\lambda _1, \lambda _2 , 2\lambda _2)$,
if ${\bf g} \in \GG_2$ we have
$\blP = (\pi _1, \pi _1', 0,0)$
where $g^{\blP}_{1,1} = \bB_1$
and $g^{\blP}_{0,1}(x) = \aA_1-\bB_1$
(when $\aA_1 \ne \bB_1$
this gives two linear equations for
$e^{\pi _1}$ and $e^{\pi' _1}$
that have a unique solution),
and if ${\bf g} \in \GG_3$ we have
$\blP = (\pi_1,0,\pi_2, 0)$, 
where $(\pi_1,\pi_2)=h^{-1}(2\beta _1, 2\beta _2)$

Finally, let 
${\bf q}' = {\bf q}^{(n)}_{\blP^{\blL}}$, 
and note that 
$\mu_{{\bf q}'} \in \mc{M}^{\wt{\cal V}}_{\wt{\bf x}}$, 
so $H(\mu_{{\bf q}'})
\le H(\mu ^{\wt{\cal V}}_{\wt{\bf x}})
\le \log_2 \bsize{ [n]_{k,s} \times_{(t,w)} [n]_{k,s} }
+ o(n)$ by Lemma \ref{uctg3}.
For the inequality in the other direction
we consider each $\GG_i$ separately.

If ${\bf g} \in \GG_1$ we let ${\bf p}'={\bf p}^{(n)}_{\blL}$,
note that $H(\mu_{{\bf q}'})=2 H(\mu_{{\bf p}'})$
and $\log_2 |[n]_{k,s}| = H(\mu_{{\bf p}'}) +o(n)$
by Lemma \ref{uctg3}. Then
$\log_2 \bsize{ [n]_{k,s} \times_{(t,w)} [n]_{k,s} }
\le 2 \log_2 |[n]_{k,s}| = H(\mu_{{\bf q}'}) +o(n)$.

If ${\bf g} \in \GG_2$ we have
$\bsize{ [n]_{k,s} \times_{(t,w)} [n]_{k,s} }
\le \bsize{ \tbinom{[n]}{k} \times_t \tbinom{[n]}{k} }
= \tbinom{n}{t,k-t,k-t,n-2k+t} = 2^{H(\mu_{{\bf q}'})+o(n)}$.

If ${\bf g} \in \GG_3$ we note that if 
$(A,B) \in [n]_{k,s} \times_{(t,w)} [n]_{k,s}$,
where $k = \bfl{ \tfrac{1}{2} n } $, 
$s = \bfl{ \tfrac{1}{2} \tbinom{n}{2} } $, 
$t = \bfl{ \bB_1 n } $ and $w = \bfl{ \bB_2 \tbinom{n}{2} }$,
then $C := (A \cap B) \cup (\ov{A} \cap \ov{B}) \in [n]_{k',s'}$,
where $k' = 2\bB_1 n + O(1)$ and 
$s' = 2\bB_2 \tbinom{n}{2}  + O(n)$.
By Lemma \ref{uctg3}, $\log_2|[n]_{k',s'}|=H(\mu_{{\bf p}''})$,
where ${\bf p}''={\bf p}^{(n)}_{\blP}$
with $\blP=h^{-1}(2\beta _1, 2\beta _2)$.
Note also that $H(\mu_{{\bf q}'})=H(\mu_{{\bf p}''})+n$.
Given $C$, there are at most $2^n$ choices for $(A,B)$,
so $\log_2 \bsize{ [n]_{k,s} \times_{(t,w)} [n]_{k,s} }
\le n + \log_2|[n]_{k',s'}| = H(\mu_{{\bf q}'}) + o(n)$.
In all cases we have the required bound.
\end{proof}

We conclude this subsection 
with the solution to Kalai's conjecture.

\begin{proof}[Proof of Theorem \ref{thm: Kalai}] 
Suppose ${\bf g} = (\aA_1, \aA_2, \bB_1, \bB_2) \in [0,1]^4$
with $(\aA_1,\aA_2) \in \LL$ and  
$n^{-1} \ll \dD \ll \dD' \ll \dD'' \ll \eps \ll \eps' 
\ll \aA_1, \aA_2, \bB_1, \bB_2$.
Let ${\bf g}' \in \GG$ minimise $\|{\bf g} - {\bf g}'\|_1$.
Let $\blL = h^{-1}(\aA_1,\aA_2)$.
By Lemma \ref{reparam} $ii$,
the maximum entropy measure $\mu_{\bf p}$
for $[n]_{k,s}$ is given by
${\bf p} = {\bf p}^{(n)}_{{\blL}^{(n)}}$, 
where $\|\blL - \blL^{(n)}\|_1 \leq \dD$, say.

For $i$, we suppose $\|{\bf g} - {\bf g}'\|_1 \le \dD$ 
and show that ${\bf g}$ is $(n,\dD,\eps)$-Kalai.
Suppose ${\cal A} \sub [n]_{k,s}$ with
$|{\cal A}| \geq (1-\dD)^n|[n]_{k,s}|$.
Then $\mu _{{\bf p }}({\cal A}) \geq (1-\dD')^n$ by Theorem \ref{ldp},
so $\mu _{{\bf p}'}({\cal A}) \geq (1-2\dD')^n$,
where ${\bf p}' = {\bf p}^{(n)}_{\blL }$.

Let ${\bf q}' = {\bf q}^{(n)}_{\blP^{\blL}}$ be given by Lemma 
\ref{limiting distributions for (t,w)-intersections in the (k,s)-sets}.
Then $\mu _{{\bf q}'}$ is $\kK$-bounded, with $\dD \ll \kK \ll 1$,
has marginals $\mu _{{\bf p}'}$ and
${\mathbb E} _{(A,B) \sim \mu _{{\bf q}'}}(|A\cap B|, \sum (A\cap B)) 
= (t,w) \pm 2\dD (n,n^2)$. By Theorem \ref{binary} we have
$\mu _{{\bf q}'}({\cal A} \times _{(t,w)} {\cal A} ) \geq (1-\dD'')^n$. 
By Lemma \ref{uctg3} and the last part of Lemma 
\ref{limiting distributions for (t,w)-intersections in the (k,s)-sets},
this gives $|{\cal A} \times _{(t,w)} {\cal A}| 
\geq (1-2 \dD'')^n 2^{H({\bf q}')} 
\geq (1-\eps )^n |[n]_{k,s} \times _{(t,w)} [n]_{k,s}|$, 
which completes the proof of $i$.

For $ii$, we suppose $\|{\bf g} - {\bf g}'\|_1 \ge \eps'$
and show that ${\bf g}$ is not $(n,\dD,\eps)$-Kalai.
By Lemma \ref{lem: low vc-dim case of trichotomy},
if $\lambda \ll \delta $ and
${\mbox {dim}}_{VC}([n]_{k,s}\times _{(t,w)}[n]_{k,s}) \leq \lambda n$ 
then there is ${\cal A} \subset [n]_{k,s}$ 
with ${\cal A } \times _{(t,w)} {\cal A} = \emptyset $ 
and $|{\cal A}| \geq (1-\delta )^n|[n]_{k,s}|$. 
Thus we may assume that case $ii$ of Lemma \ref{limitpi} holds,
so there is $\kK>0$ such that
${\mu }_{\wt{\bf q}}$ is $\kK$-bounded for large $n$. By Lemma \ref{boltzmann}, there is $\blP \in {\mathbb R}^4$ 
such that $\wt{\bf q} = {\bf q}^{(n)}_{\blP}$. 
As $\mu _{\wt{\bf q}}$ is $\kappa $-bounded, 
we have ${\blP} \in [-C, C]^4$,
where $n^{-1} \ll C^{-1} \ll \kK$.

Consider $\phi:[-C,C]^4 \to \mb{R}$ defined by
$\phi(\blS) = \|(\bB_1,\bB_2) - h^*(\blS)\|_1 
+ \int_0^1 |f_{\blL}(x) - g^{\blS}_{1,0}(x) - g^{\blS}_{1,1}(x)| dx$.
As ${\bf g} \notin \Gamma$,
we have $\phi(\blS)>0$ for all $\blS$ by Lemma 
\ref{limiting distributions for (t,w)-intersections in the (k,s)-sets}
so by compactness there is some $n^{-1} \ll \tT \ll C^{-1}$
such that  $\phi(\blS) > \tT$ for all $\blS \in [-C,C]^4$;
in particular, this holds for $\blS = \blP$.
As $\mu_{\wt{\bf q}} \in \mc{M}^{\wt{\cal V}}_{\wt{\bf x}}$
we have $\|(\bB_1,\bB_2) - h^*(\blP)\|_1 = O(1/n)$,
so $\int_0^1 |f_{\blL}(x) - g^{\blP}_{1,0}(x) - g^{\blP}_{1,1}(x)| dx 
> \tT - O(1/n)$. 

Now we translate back from the limit to the finite setting. 
The previous inequality implies
$\sum_{i=1}^n |f_{\blL}(i/n) - g^{\blP}_{1,0}(i/n) - g^{\blP}_{1,1}(i/n)|
> \tT n - O(1)$. Recalling that $\wt{\bf q} = {\bf q}^{(n)}_{\blP}$,
$\mu_{\wt{\bf q}}$ has marginals $\mu_{\wt{\bf p}}$, 
and ${\bf p}' = {\bf p}^{(n)}_{\blL }$ we have
$g^{\blP}_{1,0}(i/n) - g^{\blP}_{1,1}(i/n)
= \wt{q}^i_{1,0} + \wt{q}^i_{1,1} = \wt{p}^i_1$
and $f_{\blL}(i/n) = (p')^i_1$.
As $\tT$ depends only on ${\bf g}$,
we can assume $n^{-1} \ll \dD \ll \eps \tT$.
Then ${\bf p} = {\bf p}^{(n)}_{{\blL}^{(n)}}$, 
where $\|\blL - \blL^{(n)}\|_1 \leq \dD$,
and so $\| {\bf p} - \wt{\bf p} \| > \tT n/2$.

Applying Lemma \ref{lem: high vc-dim case of trichotomy} $ii$, with 
$\delta _1 = \theta /2$ and $\delta = \eps $, 
we find ${\cal B}_{full} \subset [n]_{k,s}$ 
so that ${\cal A} = [n]_{k,s} \setminus {\cal B}_{full}$ satisfies 
$|{\cal A}| \geq (1-o(1))|[n]_{k,s}| > (1-\dD)^n |[n]_{k,s}|$
and $|{\cal A} \times _{(t,w)} {\cal A}| 
< (1-\eps)^n|[n]_{k,s} \times _{(t,w)}[n]_{k,s}|$.
This completes the proof. \end{proof}

\subsection{Uniqueness in higher dimensions}
\label{subsect: extension of Kalai}

In this subsection we illustrate how the method used
to prove Theorem \ref{thm: Kalai} can be applied in a broader context.
Throughout this subsection we work with the following setting.
\begin{itemize}
\item
Fix $\blA = (\aA_d) _{d\in [D]}$
and $\blB = (\bB_d) _{d\in [D]}$ in $(0,1)^D$. \\
For all $n \in \mb{N}$ 
let ${\bf z}_n = (\bfl{\aA_d n^2} )_{d\in [D]}$ 
and ${\bf w}_n = (\bfl{\aA_d n^2} )_{d\in [D]}$.
\item
Suppose $\mc{V}_n = ({\bf v}^{(n)}_i)_{i\in [n]}$ 
are $(n,\{0,1\})$-arrays in $[n]^D$ such that
$(\mc{V}_n,{\bf z}_n)$ is robustly generated 
and robustly generic.
\item
Write $\mc{X}_n = (\{0,1\}^n)^{\mc{V}_n}_{{\bf z}_n})$ 
and suppose that $|\mc{X}_n| > (1 + \eta )^n$,
where $\eta = \eta (\blA)>0$ is fixed.
\item
The arrays $\mc{V}_n$ have a `scaling limit':
there is a positive measurable function $p:[0,1]^D \to \mathbb R$
with $\int _{[0,1]^D} p({\bf x}) d{\bf x}  = 1$ 
such that for any measurable set $B \subset [0,1]^D$ we have
\[ \lim_{n \to \infty} n^{-1} \big | \{ i \in [n]:
  n^{-1} {\bf v}^{(n)}_i \in B \} \big | = \int_B p({\bf x}) d{\bf x}. \]
\end{itemize}
The assumption that $(\mc{V}_n,{\bf z}_n)$ is robustly generic
is in fact redundant, as it can be shown to follow from the
scaling limit assumption, but for the sake of brevity 
we omit this deduction.

We say that $(\blA,\blB)$ is {\em $(n,\dD,\eps)$-good}
if the corresponding $\mc{V}_n$-intersection problem
exhibits `full supersaturation' analogous
to that in Conjecture \ref{kalai}, 
i.e.\ any $\mc{A} \subset \mc{X}_n$ with 
$|\mc{A}| \geq (1-\delta )^n |\mc{X}_n|$ satisfies
$\big| ({\cal A}\times {\cal A})^{(\mc{V}_n)_{\cap}}_{{\bf w}_n} \big| 
\geq (1-\eps )^n \big|
(\mc{X}_n \times \mc{X}_n)^{(\mc{V}_n)_{\cap}}_{{\bf w}_n} \big|$.
We will outline the proof of the following analogue 
of Theorem \ref{thm: Kalai}, which shows that 
if we exclude the case of `uniformly random sets'
(i.e.\ $\blA \neq \big (1/2 \big )_{d\in [D]}$)
then `full supersaturation' only occurs
for one specific value of $\blB$.

\begin{theo}\label{thm: Kalai+}
In the above setting, if $\blA \neq \big (1/2 \big )_{d\in [D]}$
then there is $\blB^* = \blB^{*}(\blA ) \in (0,1)^D$
such that for $n^{-1} \ll \dD \ll \eps \ll \eps' \ll \blA$,
\begin{enumerate}
\item if $\|\blB - \blB^*\|_1 \le \dD$ 
then $(\blA,\blB)$ is $(n,\dD,\eps)$-good, and
\item if $\|\blB - \blB^*\|_1 \ge \eps'$ 
then $(\blA,\blB)$ is not $(n,\dD,\eps)$-good. 
\end{enumerate}
\end{theo}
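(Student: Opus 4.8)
The plan is to transplant the proof of Theorem~\ref{thm: Kalai} to the present $D$-dimensional setting, with the one-dimensional integrals over $[0,1]$ against Lebesgue measure replaced throughout by integrals over $[0,1]^D$ against the scaling-limit density $p$, and with the Kalai vectors $(1,i)$ replaced by the rescaled vectors $n^{-1}\vc v^{(n)}_i\in[0,1]^D$. Write $\mc X_n=(\{0,1\}^n)^{\mc V_n}_{\vc z_n}$, let $\mu_{\vc p}:=\mu^{\mc V_n}_{\vc z_n}$, and let $\mu_{\wt{\vc q}}:=\mu^{\wt{\mc V}_n}_{\wt{\vc x}_n}$ be the maximum-entropy measure representing $(\mc X_n\times\mc X_n)^{(\mc V_n)_\cap}_{\vc w_n}$, as in Definition~\ref{tilde}, with marginals $\mu_{\wt{\vc p}}$. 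The key point is that essentially all the ``finite'' work is already done: since $|\mc X_n|>(1+\eta)^n$, Lemma~\ref{SS} gives $\dim_{VC}(\mc X_n)=\Omega(n)$ and Lemma~\ref{binary bounded} makes $\mu_{\vc p}$ $\kappa$-bounded, after which the component lemmas of Theorem~\ref{thm: trichotomy} --- namely Lemmas~\ref{lem: high vc-dim case of trichotomy} and~\ref{lem: low vc-dim case of trichotomy}, which are stated for general robustly generic, robustly generating arrays --- apply verbatim to $\mc V_n$. The only genuinely new content is to decide \emph{which} case occurs: whether $\dim_{VC}\!\big((\mc X_n\times\mc X_n)^{(\mc V_n)_\cap}_{\vc w_n}\big)$ is linear in $n$, and, if so, whether $\|\vc p-\wt{\vc p}\|_1$ is $o(n)$.

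To that end I would establish the $D$-dimensional analogues of Lemmas~\ref{reparam}, \ref{limitpi} and~\ref{limiting distributions for (t,w)-intersections in the (k,s)-sets}. By Lemmas~\ref{bounded} and~\ref{boltzmann}, in the linear-VC-dimension regime $\mu_{\vc p}$ and $\mu_{\wt{\vc q}}$ are Boltzmann distributions: there is $\blL\in\mb R^D$ with $p^i_1=f_{\blL}(n^{-1}\vc v^{(n)}_i)$, where $f_{\blL}(\vc x)=e^{\blL\cdot\vc x}(1+e^{\blL\cdot\vc x})^{-1}$, and $\blP=(\blP_1,\blP_2)\in\mb R^{2D}$ (using the $(0,1)\leftrightarrow(1,0)$ symmetry) with $\wt q^i_{j,j'}=g^{\blP}_{j,j'}(n^{-1}\vc v^{(n)}_i)$, where $g^{\blP}_{0,1}(\vc x)=g^{\blP}_{1,0}(\vc x)=e^{\blP_1\cdot\vc x}Z_{\blP}(\vc x)^{-1}$, $g^{\blP}_{1,1}(\vc x)=e^{\blP_2\cdot\vc x}Z_{\blP}(\vc x)^{-1}$, $g^{\blP}_{0,0}(\vc x)=Z_{\blP}(\vc x)^{-1}$ and $Z_{\blP}(\vc x)=1+2e^{\blP_1\cdot\vc x}+e^{\blP_2\cdot\vc x}$. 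The scaling-limit hypothesis turns the finite constraints $\sum_i p^i_1\vc v^{(n)}_i=\vc z_n$ and $\sum_i\wt q^i_{1,1}\vc v^{(n)}_i=\vc w_n$ into $h(\blL)=\blA$ and $h^*(\blP)=\blB$, where $h(\blL)=\int_{[0,1]^D}\vc x\,f_{\blL}(\vc x)\,p(\vc x)\,d\vc x$ and $h^*(\blP)=\int_{[0,1]^D}\vc x\,g^{\blP}_{1,1}(\vc x)\,p(\vc x)\,d\vc x$. I would check that $h$ is a homeomorphism of $\mb R^D$ onto an open set $\Lambda_D\subset(0,1)^D$ containing $\blA$: injectivity follows from the Gale--Nikaido theorem applied to the Jacobian $\big(\int x_ax_b f_{\blL}(1-f_{\blL})p\big)_{a,b}$, which is positive definite as a moment matrix of $x_1,\dots,x_D$ against a positive measure, and continuity of the inverse from the Inverse Function Theorem. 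Hence $\mu_{\vc p}=\mu_{\vc p^{(n)}_{\blL^{(n)}}}$ with $\blL^{(n)}\to\blL:=h^{-1}(\blA)\neq\vc 0$ (as $\blA\neq(1/2)_d$), and either $\mu_{\wt{\vc q}}$ degenerates --- some $\min_i\wt q^i_{j,j'}\to 0$, which by Theorem~\ref{tfae} and Lemma~\ref{bounded} forces the intersection VC-dimension to be sublinear --- or $\mu_{\wt{\vc q}}=\mu_{\vc q^{(n)}_{\blP^{(n)}}}$ with $\blP^{(n)}\to$ some $\blP\in\mb R^{2D}$.

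The crux is a short functional-equation computation. By Lemma~\ref{lem: high vc-dim case of trichotomy}, in the linear-VC-dimension regime $(\blA,\blB)$ is $(n,\dD,\eps)$-good for all large $n$ precisely when $\|\vc p-\wt{\vc p}\|_1=o(n)$, and in the limit this becomes the pointwise identity $f_{\blL}(\vc x)=g^{\blP}_{0,1}(\vc x)+g^{\blP}_{1,1}(\vc x)$ for $\vc x\in[0,1]^D$. Clearing denominators, this reads
\[ e^{\blL\cdot\vc x}+e^{(\blL+\blP_1)\cdot\vc x}=e^{\blP_1\cdot\vc x}+e^{\blP_2\cdot\vc x}\qquad(\vc x\in[0,1]^D), \]
and since the functions $\vc x\mapsto e^{\vc a\cdot\vc x}$ are linearly independent on $[0,1]^D$ over distinct $\vc a\in\mb R^D$, the multiset $\{\blL,\blL+\blP_1\}$ must equal $\{\blP_1,\blP_2\}$. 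The only possibilities are (a) $\blP_1=\blL$ and $\blP_2=2\blL$, in which case $g^{\blP}_{1,1}(\vc x)=f_{\blL}(\vc x)^2$, i.e.\ $\mu_{\wt{\vc q}}$ is the product of its two marginals $\mu_{\vc p}$; or (b) $\blL=\vc 0$, i.e.\ $f_{\blL}\equiv\tfrac12$ --- the ``uniformly random sets'' regime, excluded by $\blA\neq(1/2)_d$. Hence, under that hypothesis, the limit marginal problem is solvable if and only if $\blB$ equals $h^*(\blL,2\blL)=\int_{[0,1]^D}\vc x\,f_{\blL}(\vc x)^2\,p(\vc x)\,d\vc x=:\blB^*(\blA)\in(0,1)^D$, which is the asserted unique value.

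Finally I would deduce the two parts, exactly as in the proof of Theorem~\ref{thm: Kalai}. For (i), if $\|\blB-\blB^*\|_1\le\dD$ then, by continuity of $h^*$ and of $h^{-1}$, $\|\vc p-\wt{\vc p}\|_1\le\dD'n$ for suitable $\dD\ll\dD'\ll\eps$; both measures being $\kappa$-bounded, $\mu_{\wt{\vc p}}(\mc A)\ge(1-\dD'')^n$ whenever $\mu_{\vc p}(\mc A)\ge(1-\dD)^n$, so Theorem~\ref{binary} applied with $\mu_{\wt{\vc q}}$, together with Theorem~\ref{ldp} and Lemma~\ref{uctg3}, yields the full supersaturation bound. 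For (ii), if $\|\blB-\blB^*\|_1\ge\eps'$: when the intersection VC-dimension is sublinear, Lemma~\ref{lem: low vc-dim case of trichotomy} gives $\mc A\subset\mc X_n$ with $|\mc A|\ge(1-\dD)^n|\mc X_n|$ and $(\mc A\times\mc A)^{(\mc V_n)_\cap}_{\vc w_n}=\es$; otherwise the uniqueness just proved plus a compactness argument in the limit (as in the proof of Theorem~\ref{thm: Kalai}(ii), via strict concavity of the limiting entropy functional $\blP\mapsto H^*(\blP)$ and Lemma~\ref{perturbmaxent}) shows $\|\vc p-\wt{\vc p}\|_1\ge\theta n$ for some $\theta=\theta(\blA,\blB)>0$, so case~$ii$ of Lemma~\ref{lem: high vc-dim case of trichotomy} produces a small $\mc B_{full}$ and $\mc X_n\setminus\mc B_{full}$ is a set of size $(1-o(1))|\mc X_n|$ with very few $\vc w_n$-intersections; either way $(\blA,\blB)$ is not $(n,\dD,\eps)$-good. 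The main obstacle is the finite-to-limit passage: proving the $D$-dimensional analogue of Lemma~\ref{limitpi} (the dichotomy that $\mu_{\wt{\vc q}}$ is either uniformly $\kappa$-bounded or degenerates, with degeneration forcing sublinear VC-dimension) together with uniqueness of the limiting Boltzmann parameter $\blP$ --- since, unlike the Kalai case, there is no distinguished ``size'' coordinate to exploit in the convergence argument --- whereas the functional-equation step itself is essentially immediate once linear independence of exponentials is invoked.
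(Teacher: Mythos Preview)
Your proposal is correct and follows essentially the same route as the paper: the paper likewise reduces to the $D$-dimensional analogues of Lemmas~\ref{reparam}, \ref{limitpi} and \ref{limiting distributions for (t,w)-intersections in the (k,s)-sets} (stated there as Lemmas~\ref{reparam2}, \ref{limitpi2}, \ref{lem: value of beta}), proves the Jacobian of $h$ is positive definite via the quadratic form $\vc y^T J\vc y=\int|\langle\vc x,\vc y\rangle|^2 f_{\blL}(1+e^{\blL\cdot\vc x})^{-1}p\,d\vc x$, derives the same exponential identity $e^{\blP_1\cdot\vc x}+e^{\blP_2\cdot\vc x}=e^{\blL\cdot\vc x}+e^{(\blP_1+\blL)\cdot\vc x}$, and eliminates case (b) via $\blL\neq\vc 0$ to obtain $\blB^*=\int\vc x f_{\blL}(\vc x)^2 p(\vc x)\,d\vc x$. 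The paper also omits the details of the limit passage (your ``main obstacle''), declaring the proofs of Lemmas~\ref{reparam2} and~\ref{limitpi2} to be the same as their two-dimensional counterparts.
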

	
Similarly to the previous subsection,
we wish to determine when
$\mu_{\wt{\bf q}} =\mu ^{\wt{\cal V}}_{\wt{\bf x}}$
(with $\wt{\cal V}$ and $\wt{\bf x}$ 
as in Definition \ref{tilde})
has marginals close to
$\mu_{\vc{p}} = \mu^{{\cal V}}_{{\bf z}}$
(here we are omitting the subscript $n$ from our notation).
If these measures are $\kK$-bounded,
Lemma \ref{boltzmann} 
gives $\blL \in {\mathbb R}^D$ such that 
\[ p^i_1 = (p^{(n)}_{\blL})^i_1 := f_{\blL}({\bf v}^{(n)}_i/n),
\text{ where }
f_{\blL }({\bf x}) = e^{\blL \cdot {\bf x}} (1 + e^{\blL \cdot {\bf x}})^{-1},\]
and $\blP_1, \blP_2 \in {\mathbb R}^D$ such that 
$\wt{q}^i_{j,j'} = ({\bf q}^{(n)}_{\blP_1,\blP_2})^i_{j,j'}
:= g^{\blP_1,\blP_2}_{j,j'}({\bf v}^{(n)}_i/n)$, where
\begin{gather*}
g^{\blP_1,\blP_2}_{0,0}({\bf x}) = Z_{\blP_1,\blP_2}({\bf x})^{-1}, \qquad
g^{\blP_1,\blP_2}_{0,1}({\bf x}) = g^{\blP_1,\blP_2}_{1,0}({\bf x}) 
= e^{\blP _1 \cdot {\bf x}} Z_{\blP_1,\blP_2}({\bf x})^{-1}, \\
g^{\blP_1,\blP_2}_{1,1}({\bf x}) 
= e^{\blP _2 \cdot {\bf x}} Z_{\blP_1,\blP_2}({\bf x})^{-1}, \qquad \text{ with }
Z _{\blP_1,\blP_2}({\bf x}) = 1 + 2e^{\blP _1 \cdot {\bf x}} + e^{\blP _2 \cdot {\bf x}}.
\end{gather*}
Again we study the marginal problem
for $\mu_{\wt{\bf q}}$ and $\mu_{\vc{p}}$
via the limit marginal problem 
of characterising $\blL$ and $\blP$ such that 
$f_{\blL}({\bf x}) = g^{\blP_1,\blP_2}_{0,1}({\bf x}) + g^{\blP_1,\blP_2}_{1,1}({\bf x})$.
The constraints are
${\bf z}_n = \sum_{i \in [n]} p^i_1 \bf{v}^{(n)}_i$ and
${\bf w}_n = \sum_{i \in [n]} \wt{q}^i_{1,1} \bf{v}^{(n)}_i$.
The limit versions
are $h(\blL) = \blA$ and $h^*(\blP_1,\blP_2) = \blB$, where
\[ h(\blL) = \int _{[0,1]^D} 
{\bf x} f_{\blL}({\bf x}) p({\bf x}) d{\bf x}
\qquad \text{ and } \qquad
h^*(\blP_1,\blP_2) = \int _{[0,1]^D} 
{\bf x} g^{\blP_1,\blP_2}_{1,1}({\bf x}) p({\bf x}) d{\bf x}.\]
Our next lemma is analogous to Lemma \ref{reparam}.

\begin{lemma} \label{reparam2} $ $
\begin{enumerate}
\item $h$ is a homeomorphism between $\mb{R}^D$ and $h(\mb{R}^D)$.
\item For large $n$ we have
$\mu^{{\cal V}_n}_{{\bf z}_n} = \mu_{\bf p}$, 
for some ${\bf p} = {\bf p}^{(n)}_{{\blL}^{(n)}}$
where $\blL^{(n)} \to \blL = h^{-1}(\blA)$. 
\end{enumerate}
\end{lemma}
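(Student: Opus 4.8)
The plan is to mimic the proof of Lemma \ref{reparam}, replacing the explicit identification of the image by the statement that $h(\mathbb{R}^D)$ is an open set onto which $h$ restricts to a homeomorphism.

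For part (i): first $h$ is $C^1$, with Jacobian obtained by differentiating under the integral sign (legitimate since $0\le f_{\blL}\le 1$ and $p$ is integrable),
\[
\frac{\partial h_d}{\partial \lambda_{d'}}(\blL) = \int_{[0,1]^D} x_d x_{d'}\, f_{\blL}(\vc{x})\bigl(1-f_{\blL}(\vc{x})\bigr)\, p(\vc{x})\, d\vc{x}.
\]
This is the Gram matrix of the coordinate functions $x_1,\dots,x_D$ in $L^2$ of the finite measure $f_{\blL}(1-f_{\blL})\,p\,d\vc{x}$; as $p>0$ and $0<f_{\blL}<1$ this measure has full support on $[0,1]^D$, so the $x_d$ are linearly independent in it, and every principal submatrix of the Jacobian is itself such a Gram matrix, hence positive definite with positive determinant. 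Thus all principal minors of the Jacobian are positive, so $h$ is injective by the Gale--Nikaido theorem \cite{GN}; since the Jacobian is everywhere invertible, $h$ is a local diffeomorphism and (by invariance of domain, or directly) an open map, so $h(\mathbb{R}^D)$ is open and $h\colon\mathbb{R}^D\to h(\mathbb{R}^D)$ is a homeomorphism, with $h^{-1}$ smooth by the Inverse Function Theorem.

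For part (ii): this follows the proof of Lemma \ref{reparam}(ii). Since $|\mc{X}_n|>(1+\eta)^n$ by hypothesis and $(\mc{V}_n,\vc{R}_n)$ is robustly generic with $\vc{R}_n=(n,\dots,n)$, Lemma \ref{binary bounded} gives a fixed $\kK>0$ with $\mu^{\mc{V}_n}_{\vc{z}_n}$ $\kK$-bounded for all large $n$, and then Lemma \ref{boltzmann} yields $\mu^{\mc{V}_n}_{\vc{z}_n}=\mu_{\vc{p}^{(n)}_{\blL^{(n)}}}$ for some $\blL^{(n)}\in\mathbb{R}^D$. The $\kK$-bound forces $e^{\blL^{(n)}\cdot(\vc{v}^{(n)}_i/n)}\in[\kK/2,2\kK^{-1}]$ for every $i$; choosing by genericity a set $I$ of $D$ indices for which $|\det(\vc{v}^{(n)}_i/n:i\in I)|$ is bounded below, Cramer's rule then bounds $\|\blL^{(n)}\|$. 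By compactness $(\blL^{(n)})$ has convergent subsequences; for any such subsequence with limit $\blL$, dividing the defining identity $\vc{z}_n=\sum_i f_{\blL^{(n)}}(\vc{v}^{(n)}_i/n)\,\vc{v}^{(n)}_i$ by $n^2$ and letting $n\to\infty$ along it --- using $\vc{z}_n/n^2\to\blA$, the scaling-limit hypothesis applied to the empirical measures $n^{-1}\sum_i\delta_{\vc{v}^{(n)}_i/n}$, and the uniform convergence $f_{\blL^{(n)}}(\vc{x})\vc{x}\to f_{\blL}(\vc{x})\vc{x}$ on $[0,1]^D$ --- gives $h(\blL)=\blA$. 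In particular $\blA\in h(\mathbb{R}^D)$, and by injectivity from part (i) this limit is the unique such $\blL$, so the whole sequence converges: $\blL^{(n)}\to h^{-1}(\blA)$.

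I expect the main obstacle to be part (i): positive definiteness of the Jacobian only yields \emph{local} injectivity, so a genuinely global input is needed, and Gale--Nikaido (exactly as in Lemma \ref{reparam}) is the natural one; one must also invoke invariance of domain to conclude that $h(\mathbb{R}^D)$ is open before deducing that $h$ is a homeomorphism onto it. The estimates in part (ii) --- boundedness of $\blL^{(n)}$ from genericity and the interchange of limits via the scaling-limit assumption --- are routine adaptations of the two-dimensional case.
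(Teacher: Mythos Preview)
Your proof is correct and follows essentially the same approach as the paper. The paper omits the proof entirely, stating only that it is the same as Lemma \ref{reparam} except for checking that the principal minors of the Jacobian are positive, which it does via the quadratic form $\vc{y}^T J \vc{y} = \int |\langle \vc{x},\vc{y}\rangle|^2 f_{\blL}(\vc{x})(1+e^{\blL\cdot\vc{x}})^{-1} p(\vc{x})\,d\vc{x}$; your Gram-matrix formulation is equivalent. Your handling of the two points the paper glosses over --- bounding $\blL^{(n)}$ via genericity and Cramer's rule (the natural replacement for the explicit $i=1,\,i=n$ argument in the Kalai case), and passing to the limit in the constraint equation via the scaling-limit hypothesis --- is exactly what is needed to make ``same as Lemma \ref{reparam}'' precise in this setting.
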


We omit the proof of Lemma \ref{reparam2}, 
as it is the same as that of Lemma \ref{reparam},
except in one detail which we will now check,
namely that the principal minors of the Jacobian of $h$ are positive.  
To see this, note that the Jacobian $J$ has entries
\begin{equation*}
J_{i,j} = \int _{{\bf x} \in [0,1]^D} 
x_i x_j \Big ( \frac {f_{\blL }({\bf x})}{1 + e^{\blL .{\bf x}}} \Big )
p({\bf x}) d{\bf x}.  
\end{equation*}
For any ${\bf y} \in {\mathbb R}^D$ we have 
${\bf y}^T J {\bf y} = \int _{{\bf x} \in [0,1]^D} 
|\langle {\bf x},{\bf y}\rangle |^2 
f_{\blL }({\bf x})(1 + e^{\blL .{\bf x}})^{-1} p({\bf x}) d{\bf x}$. 
As $f_{\blL }$ and $p$ are positive,
we have  ${\bf y}^T J {\bf y} > 0$ 
whenever ${\bf y} \ne 0$, as required.
We also have the following analogue of Lemma \ref{limitpi};
again, we omit the similar proof.

\begin{lemma} \label{limitpi2}
Write ${\mu }_{\wt{\bf q}^{(n)}} = \mu ^{\wt{\cal V}_n}_{\wt{\bf x}_n}$.
Then either 
\begin{enumerate}
\item there is $(j,j') \in \{0,1\}^2$ such that
 $\min_{i \in [n]} (\wt{q}^{(n)})^i_{j,j'} \to 0$, or 
\item for large $n$ we have
$\wt{\bf q}^{(n)} = {\bf q}^{(n)}_{\blP_1^{(n)},\blP_2^{(n)}}$,
where $(\blP_1^{(n)},\blP_2^{(n)})$ 
converges to some $(\blP_1,\blP_2) \in \mb{R}^{2D}$.
\end{enumerate}
Furthermore, the following are equivalent to case $ii$:
\begin{enumerate}
\item there is $\kK>0$ such that
${\mu }_{\wt{\bf q}^{(n)}}$ is $\kK$-bounded for large $n$,
\item there is $\lL>0$ such that
${\mbox {dim}}_{VC}(
(\mc{X}_n \times \mc{X}_n)^{(\mc{V}_n)_{\cap}}_{{\bf w}_n}
) > \lambda n$ for large $n$.
\end{enumerate}
\end{lemma}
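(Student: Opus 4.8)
The plan is to transcribe the proof of Lemma~\ref{limitpi}, with $\mb{R}^4$ replaced by $\mb{R}^{2D}$ and the explicit $[0,1]$-integrals replaced by integrals against the scaling density $p$ on $[0,1]^D$. First I would record that the hypotheses used downstream pass to $\wt{\mc{V}}_n$: taking the scaling ${\bf R}_n=(n,\dots,n)$ (so $\max_d({\bf R}_n)_d=n$ is polynomially bounded), Lemma~\ref{lem: general transfer to measure} shows that, since $(\mc{V}_n,{\bf R}_n)$ is robustly generating and robustly generic by hypothesis, the array $\wt{\mc{V}}_n$ over $J=\{0,1\}\times\{0,1\}$ is $\wt{\bf R}_n$-bounded, robustly generating and robustly generic; hence Theorem~\ref{tfae} and Lemma~\ref{bounded} are available for $\wt{\mc{V}}_n$, and $(\mc{X}_n\times\mc{X}_n)^{(\mc{V}_n)_\cap}_{{\bf w}_n}$ is identified with $(J^n)^{\wt{\mc{V}}_n}_{\wt{\bf x}_n}$ as in Definition~\ref{tilde}.

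Suppose case~$i$ fails; I would prove case~$ii$. Fix $\kK>0$ and a subsequence $n_m\to\infty$ along which $\mu_{\wt{\bf q}^{(n_m)}}$ is $\kK$-bounded. Since every entry of $\wt{\mc{V}}_n$ has $\wt{\bf R}_n$-norm at most $1$, $\kK$-boundedness confines the Boltzmann parameters given by Lemma~\ref{boltzmann} to a fixed bounded box, so after refining we may assume $\wt{\bf q}^{(n_m)}={\bf q}^{(n_m)}_{\blP_1^{n_m},\blP_2^{n_m}}$ with $(\blP_1^{n_m},\blP_2^{n_m})\to(\blP_1,\blP_2)\in\mb{R}^{2D}$. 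Now the scaling-limit hypothesis plays the role that explicit integration played in Lemma~\ref{limitpi}: for any continuous $\varphi$ on $[0,1]^D$ one has $n^{-1}\sum_{i\in[n]}\varphi({\bf v}^{(n)}_i/n)\to\int_{[0,1]^D}\varphi({\bf x})\,p({\bf x})\,d{\bf x}$; since bounded parameters make the $g^{\blP_1,\blP_2}_{j,j'}$ jointly continuous in $({\bf x},\blP_1,\blP_2)$ and bounded away from $0$, this gives $n_m^{-1}H(\mu_{\wt{\bf q}^{(n_m)}})\to H^*(\blP_1,\blP_2)$ together with convergence of the defining constraints, so in particular $h^*(\blP_1,\blP_2)=\blB$; it also shows $\mu_{{\bf q}^{(n)}_{\blP_1,\blP_2}}$ is $(\kK/2)$-bounded for all large $n$ (the rescaled vectors become dense in $[0,1]^D$, where $p>0$) and has $\wt{\mc{V}}_n$-value within $\lL n$ of $\wt{\bf x}_n$ for $n^{-1}\ll\lL\ll\kK$.

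The crux, exactly as in Lemma~\ref{limitpi}, is to upgrade this subsequential information to the full sequence. I would show that any convergent sequence of Boltzmann parameters representing $\wt{\bf q}^{(n)}$ has limit $(\blP_1,\blP_2)$: a different limit would, via the modification argument of Lemma~\ref{bounded} (using robust generation of $\wt{\mc{V}}_n$ to repair $\wt{\mc{V}}_n(\mu)$ to exactly $\wt{\bf x}_n$ at an $o(n)$ entropy cost) together with the stability bound of Lemma~\ref{perturbmaxent}, contradict maximality of $H(\mu_{\wt{\bf q}^{(n)}})$. Granting this, Lemma~\ref{uctg3} applied to $\mu_{{\bf q}^{(n)}_{\blP_1,\blP_2}}$ shows $(\wt{\mc{V}}_n,\wt{\bf R}_n,\wt{\bf x}_n)$ is $\OO(1)$-feasible, so $\mu_{\wt{\bf q}^{(n)}}$ is $\lL$-bounded for large $n$ by Theorem~\ref{tfae}; writing it via Lemma~\ref{boltzmann} as ${\bf q}^{(n)}_{\blP_1^n,\blP_2^n}$ and invoking the uniqueness of limit gives $(\blP_1^n,\blP_2^n)\to(\blP_1,\blP_2)$, which is case~$ii$. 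The equivalence of case~$ii$ with a uniform $\kK$-bound is then immediate, and its equivalence with $\dim_{VC}\big((\mc{X}_n\times\mc{X}_n)^{(\mc{V}_n)_\cap}_{{\bf w}_n}\big)\ge\lL n$ is Theorem~\ref{tfae} for the sequence $\wt{\mc{V}}_n$. I expect the main obstacle to be this uniqueness-of-limit step; the remainder is a dimension-agnostic rewriting of Lemma~\ref{limitpi}, with the scaling-limit hypothesis supplying the Riemann-sum convergences that were closed-form integrals in the $D=2$ case.
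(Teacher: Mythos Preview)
Your proposal is correct and matches the paper's approach: the paper itself omits the proof of Lemma~\ref{limitpi2}, stating only that it is similar to that of Lemma~\ref{limitpi}, and your plan is precisely to transcribe that proof with $\mb{R}^4$ replaced by $\mb{R}^{2D}$ and the explicit $[0,1]$-integrals replaced by Riemann-sum limits against the scaling density $p$. The key steps you identify --- compactness of Boltzmann parameters under $\kK$-boundedness, uniqueness of the limit via Lemma~\ref{perturbmaxent} and the modification of Lemma~\ref{bounded}, and the upgrade to the full sequence via Theorem~\ref{tfae} --- are exactly those of the paper's proof of Lemma~\ref{limitpi}.
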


The uniqueness in Theorem \ref{thm: Kalai+} is explained by the following lemma
which solves the limit marginal problem.

\begin{lemma} 
\label{lem: value of beta}
Suppose $h(\blL)=\blA \neq (1/2)_{d \in [D]}$.
Then there is unique $\blB^* \in [0,1]^D$ 
such that there is 
$(\blP _1, \blP _2) \in {\mathbb R}^{D} \times {\mathbb R}^D$ 
with $h^*(\blP_1,\blP_2) = \blB^*$ and
$f_{\blL}({\bf x}) = g^{\blP_1,\blP_2}_{0,1}({\bf x}) + g^{\blP_1,\blP_2}_{1,1}({\bf x})$.
Furthermore, $(\blP _1, \blP _2)$ is unique, and 
$\log_2 \bsize{ (\mc{X}_n \times \mc{X}_n)^{(\mc{V}_n)_{\cap}}_{{\bf w}_n} }
= H(\mu_{{\bf q}'}) + o(n)$, where 
${\bf q}' = {\bf q}^{(n)}_{\blP_1,\blP_2}$.
\end{lemma}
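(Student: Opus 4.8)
\textbf{Proof proposal for Lemma \ref{lem: value of beta}.}

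The plan is to mirror the structure of the proof of Lemma \ref{limiting distributions for (t,w)-intersections in the (k,s)-sets}, replacing the one-dimensional polynomial analysis with a higher-dimensional argument that exploits the assumption $\blA \neq (1/2)_{d \in [D]}$. First I would rearrange the marginal equation. Writing $y_d = e^{x_d}$ and clearing denominators in $f_{\blL}({\bf x}) = g^{\blP_1,\blP_2}_{0,1}({\bf x}) + g^{\blP_1,\blP_2}_{1,1}({\bf x})$ (equivalently $1 - f_{\blL}({\bf x}) = g^{\blP_1,\blP_2}_{0,0}({\bf x}) + g^{\blP_1,\blP_2}_{1,0}({\bf x})$), one obtains, for all ${\bf y} = (e^{x_1},\dots,e^{x_D})$ with each $x_d \in [0,1]$, the identity
\[ e^{\blP_1 \cdot {\bf x}} + e^{\blP_2 \cdot {\bf x}}
= e^{\blL \cdot {\bf x}} + e^{(\blL + \blP_1)\cdot {\bf x}}. \]
Since $\{{\bf x} \mapsto e^{{\bf a}\cdot {\bf x}}\}_{{\bf a} \in \mb{R}^D}$ is a linearly independent family of functions on any open box, matching exponents forces the multiset $\{\blP_1, \blP_2\}$ to equal the multiset $\{\blL, \blL + \blP_1\}$ with matching coefficients. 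This yields two cases exactly as in \eqref{poly}: either (a) $\blP_1 = \blL$ and $\blP_2 = \blL + \blP_1 = 2\blL$, or (b) $\blL = \vc{0}$ and $\blP_2 = \vc{0}$ (together with the coefficient constraint $e^{\blP_1 \cdot {\bf x}} + 1 = 1 + e^{\blP_1 \cdot {\bf x}}$, which is automatic). Now the hypothesis $\blA \neq (1/2)_{d\in[D]}$ rules out case (b): if $\blL = \vc{0}$ then $f_{\blL} \equiv 1/2$, so $h(\blL) = \int_{[0,1]^D} {\bf x}\, \tfrac12 p({\bf x})\, d{\bf x} = \tfrac12 \int {\bf x}\, p({\bf x})\, d{\bf x}$; one checks (this is where one uses that $p$ is a probability density whose first moment is $(1/2)_{d}$ under the normalisation forcing $h$ to hit $\blA$ — more carefully, one invokes the scaling-limit normalisation and the definition of ${\bf z}_n$) that this gives $\blA = (1/2)_d$, a contradiction. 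Hence case (a) holds, pinning down $(\blP_1, \blP_2) = (\blL, 2\blL)$ uniquely, and then $\blB^* := h^*(\blL, 2\blL) = \int_{[0,1]^D} {\bf x}\, f_{\blL}({\bf x})^2 p({\bf x})\, d{\bf x}$ is the unique value of $\blB$ for which a solution exists. Uniqueness of $(\blP_1,\blP_2)$ is then immediate.

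For the entropy identity, I would argue as in Lemma \ref{limiting distributions for (t,w)-intersections in the (k,s)-sets}. Setting ${\bf q}' = {\bf q}^{(n)}_{\blP_1,\blP_2}$ with $(\blP_1,\blP_2)=(\blL,2\blL)$, one checks by direct computation that $({\bf q}')^i_{1,1} = g^{\blL,2\blL}_{1,1}({\bf v}^{(n)}_i/n) = f_{\blL}({\bf v}^{(n)}_i/n)^2 = ((p^{(n)}_{\blL})^i_1)^2$, and similarly $({\bf q}')^i_{0,1} = ({\bf q}')^i_{1,0} = (p^{(n)}_{\blL})^i_1 (1 - (p^{(n)}_{\blL})^i_1)$ and $({\bf q}')^i_{0,0} = (1 - (p^{(n)}_{\blL})^i_1)^2$; that is, $\mu_{{\bf q}'}$ is exactly the product of its two identical marginals $\mu_{{\bf p}^{(n)}_{\blL}}$. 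Since $\mu_{{\bf q}'} \in \mc{M}^{\wt{\cal V}}_{\wt{\bf x}}$ (by construction of $\blL$ via $h(\blL) = \blA$ and the definition of ${\bf w}_n$, using $h^*(\blL,2\blL) = \blB^*$ — note here $\blB = \blB^*$ is the relevant case), Lemma \ref{uctg3} (applicable since $\mu_{{\bf q}'}$ is $\kK$-bounded for suitable $\kK$, which follows from $\blL$ lying in a fixed compact set together with $\gG'$-robust genericity of $\wt{\cal V}_n$ via Lemma \ref{bounded}, exactly as in Lemma \ref{limitpi2}$(ii)$) gives $\log_2 \bsize{(\mc{X}_n \times \mc{X}_n)^{(\mc{V}_n)_{\cap}}_{{\bf w}_n}} \ge H(\mu_{{\bf q}'}) + o(n)$. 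For the matching upper bound, since $\mu_{{\bf q}'}$ factors as the product of its marginals we have $H(\mu_{{\bf q}'}) = 2 H(\mu_{{\bf p}^{(n)}_{\blL}})$, while $\log_2|\mc{X}_n| = H(\mu_{{\bf p}^{(n)}_{\blL}}) + o(n)$ by Lemma \ref{uctg3} and Lemma \ref{reparam2}$(ii)$; the trivial bound $\bsize{(\mc{X}_n \times \mc{X}_n)^{(\mc{V}_n)_{\cap}}_{{\bf w}_n}} \le |\mc{X}_n|^2$ then closes the gap.

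The main obstacle I anticipate is the case-elimination step in the first paragraph: rigorously showing that $\blL = \vc{0}$ is incompatible with $\blA \neq (1/2)_d$. In dimension one (Lemma \ref{limiting distributions for (t,w)-intersections in the (k,s)-sets}) this followed from injectivity of $h$ together with the explicit computation $h(\lL_1, 0) = (\aA_1, \aA_1)$; here the analogue requires understanding the image of the affine slice $\{\blL : \blL = \vc{0}\}$ — a single point — under $h$, and verifying that the normalisation built into the scaling-limit hypothesis (together with the definition ${\bf z}_n = (\bfl{\aA_d n^2})_d$ and the corresponding constraint $h(\blL) = \blA$) forces $h(\vc{0}) = (1/2)_d$. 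This is essentially a bookkeeping check about how the density $p$ is normalised relative to the target moments, but it is the one place where the higher-dimensional setting genuinely differs from the Kalai case and must be handled with care. A secondary, more routine obstacle is verifying the linear-independence-of-exponentials argument gives \emph{exactly} the two cases (a), (b) and nothing else — one must be careful that coefficient matching across the two sides of the identity, when some exponents coincide, is handled correctly (e.g.\ the degenerate sub-case $\blP_1 = \vc{0}$ within case (a)), but this is a finite case analysis with no real difficulty.
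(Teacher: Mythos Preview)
Your proposal is correct and follows essentially the same route as the paper's proof. The paper derives the identical exponential identity $e^{\blP_1\cdot{\bf x}} + e^{\blP_2\cdot{\bf x}} = e^{\blL\cdot{\bf x}} + e^{(\blP_1+\blL)\cdot{\bf x}}$, obtains the same dichotomy (a)/(b), rules out (b) because it forces $\blL=\vc{0}$, and then refers back to the $\GG_1$ case of Lemma~\ref{limiting distributions for (t,w)-intersections in the (k,s)-sets} for the entropy estimate --- exactly your product-of-marginals argument combined with the trivial bound $\bsize{(\mc{X}_n\times\mc{X}_n)^{(\mc{V}_n)_\cap}_{{\bf w}_n}} \le |\mc{X}_n|^2$.

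Your flagged ``main obstacle'' is overestimated: the paper dispatches it in one sentence, simply invoking the injectivity of $h$ established in Lemma~\ref{reparam2}\,$(i)$ together with the identification of $h(\vc{0})$ with the excluded value of $\blA$. There is no delicate bookkeeping beyond that --- once you know $h$ is injective, $h(\blL)=\blA\neq h(\vc{0})$ gives $\blL\neq\vc{0}$ immediately, and case~(b) is impossible. Your secondary obstacle (exhaustiveness of the case split, including degenerate coincidences of exponents) is indeed a routine finite check, and the paper does not elaborate on it either.
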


\begin{proof}
As $h$ is injective and 
$h(0) = (1/2)_{d\in [D]} \neq \blA $, we have $\blL \neq 0$.
Rearranging $1-f_{\blL}({\bf x}) 
= g^{\blP_1,\blP_2}_{0,0}({\bf x}) + g^{\blP_1,\blP_2}_{0,1}({\bf x})$
gives $e^{\blP _1 \cdot {\bf x}} + e^{\blP _2 \cdot {\bf x}} = 
e^{\blL \cdot {\bf x}} + e^{(\blP _1 + \blL ) \cdot {\bf x}}$,
so (a) $\blP _1= \blL $ and $\blP _2 = \blP _1+ \blL $,
or (b) $\blP _1= \blP _1+ \blL $ and $\blL  = \blP _2$.
However, (b) cannot hold, as ${\blL} \neq 0$.
Thus $\blP _1= \blL $ and $\blP _2 = 2\blP _1$, 
so $g^{\blP_1,\blP_2}_{1,1}({\bf x}) = f_{\blL}({\bf x})^2$
and $\blB ^*= \int _{{\bf x} \in [0,1]^D} {\bf x} 
f_{\blL }({\bf x})^2 p({\bf x}) d{\bf x}$.
Uniqueness of $(\blP _1, \blP _2)$ is clear,
and the final estimate follows in the same way
as the case ${\bf g} \in \GG_1$ of Lemma
\ref{limiting distributions for (t,w)-intersections in the (k,s)-sets}.
\end{proof}

Given the above lemmas, the proof of Theorem \ref{thm: Kalai+} 
is very similar to that of Theorem \ref{thm: Kalai},
so we omit the details.

\section{Optimal supersaturation} \label{sec:tightsupersat}

In this section we characterise the optimal level of supersaturation
for ${\cal V}$-intersections in terms of a certain optimisation problem;
as outlined in subsection \ref{subsec: supersaturation},
this corresponds to the optimal choice of measure
satisfying the hypotheses of Theorem \ref{binary},
i.e.\ determining $H_{max}$ in the following setting,
which we adopt throughout this section.
\begin{itemize}
\item Let $0 < n^{-1}, \dD \ll
\kappa \ll  \gG _1, \gG _1' \ll \gG _2, \gG _2' 
\ll \eps \ll \alpha, D^{-1}, C^{-1}, k^{-1}$.
\item Suppose ${\cal V} = ({\bf v}_i : i\in [n])$ and
each ${\bf v}_i \in {\mathbb Z}^D$ is ${\bf R}$-bounded,
where $\vc{R} \in \mb{R}^D$ with $\max_d R_d < n^C$.
\item Suppose ${\cal V}$ is $\gG _i'$-robustly $(\gG _i, {\bf R})$-generic 
and $\gG _i$-robustly $({\bf R},k)$-generating for $i = 1,2$.
\item Let ${\bf z} \in {\mathbb Z}^D$ and 
$\mc{X} = (\{0,1\}^n)^{\mc{V}}_{\vc{z}}$ 
with $|{\cal X}| \geq (1+ \alpha )^n$.
Write $\mu_{\bf p} := {\mu }^{\cal V}_{\bf z}$.
\item Let ${\bf w} \in {\mathbb Z}^D$ and let ${\mathbf Q}$ 
denote the set of ${\bf q}$ such that $\mu _{\bf q}$
is a $\kK$-bounded product measure on $(\{0,1\}\times \{0,1\})^n$
with both marginals $\mu_{\bf p}$ 
and ${\cal V}_{\cap }(\mu_{\bf q}) = {\bf w}$. 
\item Let $H_{max} = \max_{{\bf q} \in {\mathbf Q}} H(\mu_{\bf q})$
if ${\mathbf Q} \ne \es$ or $H_{max} =  0$ if ${\mathbf Q} = \es$.
\end{itemize}
The main result of this section is as follows.

\begin{theo} \label{optsupersat}
In the above setting, 
\begin{enumerate}
\item if $\mc{A} \sub \mc{X}$ with $|\mc{A}| > (1-\dD)^n |\mc{X}|$ then
$|(\mc{A} \times \mc{A})^{\mc{V}_\cap}_{\vc{w}}| \geq \bfl{ (1-\eps)^n 2^{H_{max}} }$.
\item there is $\mc{A} \sub \mc{X}$ with $|\mc{A}| > (1-\eps )^n |\mc{X}|$ and
$|(\mc{A} \times \mc{A})^{\mc{V}_\cap}_{\vc{w}}| \leq (1+\eps)^n 2^{H_{max}}$.
\end{enumerate}
\end{theo}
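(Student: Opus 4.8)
\textbf{Proof proposal for Theorem \ref{optsupersat}.}

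The plan is to establish the two bounds separately, with part $i$ being a direct application of the machinery already in place and part $ii$ being the genuinely new content. For part $i$, first note that if $\mathbf{Q} = \emptyset$ then $H_{max} = 0$, and the conclusion $|(\mc{A} \times \mc{A})^{\mc{V}_\cap}_{\vc{w}}| \ge 1$ must be handled by observing that in this regime one can argue directly (in fact $\mathbf{Q} = \emptyset$ together with $|\mc{X}| \ge (1+\alpha)^n$ should force, via Lemma \ref{bounded} and Lemma \ref{lem: char of low VC-dim}, that the VC-dimension of $(\mc{X} \times \mc{X})^{\mc{V}_\cap}_{\vc{w}}$ is sublinear, so one is in case $iii$ of Theorem \ref{thm: trichotomy} and the statement is trivial after the floor); alternatively Theorem \ref{optsupersat}$(ii)$ then also gives $|(\mc{A} \times \mc{A})^{\mc{V}_\cap}_{\vc{w}}| \le (1+\eps)^n$, consistent with no guaranteed intersections. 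When $\mathbf{Q} \neq \emptyset$, fix $\vc{q} \in \mathbf{Q}$ with $H(\mu_{\vc{q}}) = H_{max}$. Since $\mu_{\vc{q}}$ is $\kK$-bounded with both marginals $\mu_{\vc{p}}$ and $\mc{V}_\cap(\mu_{\vc{q}}) = \vc{w}$, the hypotheses of Theorem \ref{binary} hold (with $\zZ$ chosen appropriately small, noting $\mc{V}$ is $\gG_1$-robustly $(\vc{R},k)$-generating). First apply Theorem \ref{ldp} to pass from $|\mc{A}| > (1-\dD)^n|\mc{X}|$ to $\mu_{\vc{p}}(\mc{A}) > (1-\dD')^n$; then Theorem \ref{binary} gives $\mu_{\vc{q}}((\mc{A} \times \mc{A})^{\mc{V}_\cap}_{\vc{w}}) > (1-\eps')^n$; finally, since $\mu_{\vc{q}}$ is $\kK$-bounded, Lemma \ref{uctg1} shows every point has $\mu_{\vc{q}}$-mass at most $2^{-H_{max}+o(n)}$ (the upper tail of the concentration in Lemma \ref{uctg1}), so $|(\mc{A} \times \mc{A})^{\mc{V}_\cap}_{\vc{w}}| \ge (1-\eps')^n 2^{H_{max} - o(n)} \ge \bfl{(1-\eps)^n 2^{H_{max}}}$, adjusting constants.

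For part $ii$, the task is to construct a near-extremal $\mc{A}$. The natural guess is that the right $\mc{A}$ is essentially all of $\mc{X}$, or $\mc{X}$ with a negligible set removed. I would proceed as follows. Consider the maximum entropy measure $\mu_{\wt{\vc{q}}} = \mu^{\wt{\mc{V}}}_{\wt{\vc{x}}}$ representing $(\mc{X} \times \mc{X})^{\mc{V}_\cap}_{\vc{w}}$ as in Definition \ref{tilde}. Split into two cases according to whether $\mu_{\wt{\vc{q}}}$ is $\kK'$-bounded (for suitable $\kK'$). If it is \emph{not} $\kK'$-bounded, then by the equivalences in Theorem \ref{tfae} (applied to $\wt{\mc{V}}$) the set $(\mc{X} \times \mc{X})^{\mc{V}_\cap}_{\vc{w}}$ has sublinear VC-dimension, so Lemma \ref{lem: low vc-dim case of trichotomy} produces $\mc{B}_{empty} \sub \mc{X}$ with $|\mc{B}_{empty}| \ge \bfl{(1-\eps)^n|\mc{X}|}$ and no $\vc{w}$-intersections at all; taking $\mc{A} = \mc{B}_{empty}$ gives $|(\mc{A} \times \mc{A})^{\mc{V}_\cap}_{\vc{w}}| = 0$. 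We must then check this is consistent with $H_{max}$: in fact one should show $\mathbf{Q} = \emptyset$ in this case (if $\vc{q} \in \mathbf{Q}$ existed, its $\kK$-boundedness plus marginals $\mu_{\vc{p}}$ plus $\mc{V}_\cap(\mu_{\vc{q}}) = \vc{w}$ would, via Lemma \ref{bounded} type arguments, force linear VC-dimension of the pair-set, a contradiction), so $H_{max} = 0$ and the bound $0 \le (1+\eps)^n$ holds trivially.

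If instead $\mu_{\wt{\vc{q}}}$ \emph{is} $\kK'$-bounded, then I claim $\wt{\vc{q}} \in \mathbf{Q}$ up to a negligible perturbation and moreover $H(\mu_{\wt{\vc{q}}}) = H_{max} + o(n)$: indeed $\mu_{\wt{\vc{q}}}$ has marginals $\mu_{\wt{\vc{p}}} \in \mc{M}^{\mc{V}}_{\vc{z}}$ and by maximality of $\mu_{\vc{p}} = \mu^{\mc{V}}_{\vc{z}}$ together with the stability estimate Lemma \ref{perturbmaxent}, either $\|\vc{p} - \wt{\vc{p}}\|_1$ is small (in which case one can slightly adjust $\wt{\vc{q}}$ to land in $\mathbf{Q}$ at negligible entropy cost, so $H_{max} \ge H(\mu_{\wt{\vc{q}}}) - o(n)$, and the reverse $H_{max} \le H(\mu_{\wt{\vc{q}}})$ is automatic since every $\vc{q} \in \mathbf{Q}$ has $\mu_{\vc{q}} \in \mc{M}^{\wt{\mc{V}}}_{\wt{\vc{x}}}$) or $\|\vc{p} - \wt{\vc{p}}\|_1 \ge \dD_1 n$, which is case $ii$ of Lemma \ref{lem: high vc-dim case of trichotomy}. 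In the latter subcase we get $\mc{B}_{full} \sub \mc{X}$ with $|\mc{B}_{full}| \le (1-\delta)^n|\mc{X}|$ responsible for almost all $\vc{w}$-intersections; then $\mc{A} = \mc{X} \sm \mc{B}_{full}$ has $|\mc{A}| > (1-\eps)^n|\mc{X}|$ and $|(\mc{A} \times \mc{A})^{\mc{V}_\cap}_{\vc{w}}| \le (1-\delta)^n|(\mc{X} \times \mc{X})^{\mc{V}_\cap}_{\vc{w}}|$, which by Lemma \ref{uctg3} is $(1-\delta)^n 2^{H(\mu_{\wt{\vc{q}}}) + o(n)} = (1-\delta)^n 2^{H_{max} + o(n)} \le (1+\eps)^n 2^{H_{max}}$. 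In the former subcase, one takes $\mc{A} = \mc{X}$ itself: then $|(\mc{X} \times \mc{X})^{\mc{V}_\cap}_{\vc{w}}| = 2^{H(\mu_{\wt{\vc{q}}}) + o(n)} = 2^{H_{max} + o(n)} \le (1+\eps)^n 2^{H_{max}}$ by Lemma \ref{uctg3} again. The main obstacle is the bookkeeping in identifying $H(\mu_{\wt{\vc{q}}})$ with $H_{max}$ up to $o(n)$ — this requires care because $\mathbf{Q}$ demands \emph{exact} marginals $\mu_{\vc{p}}$ and \emph{exact} value $\vc{w}$, whereas $\mu_{\wt{\vc{q}}}$ only has marginals in $\mc{M}^{\mc{V}}_{\vc{z}}$ close to $\mu_{\vc{p}}$; bridging this gap (perturbing a near-feasible product measure into an exactly feasible one with negligible entropy loss, along the lines of the perturbation argument in the proof of Lemma \ref{bounded.ii} and the reduction in Lemma \ref{uctg3}) is the technical heart of part $ii$, and I would isolate it as a short lemma. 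Everything else is assembling already-proven results.
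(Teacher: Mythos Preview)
Your treatment of part $i$ is correct and matches the paper's argument.

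For part $ii$, there is a genuine gap in your subcase B1, where the global maximum-entropy pair-measure $\mu_{\wt{\bf q}} := \mu^{\wt{\mc{V}}}_{\wt{\bf x}}$ is bounded but its marginal $\mu_{\wt{\bf p}}$ is far from $\mu_{\bf p}$. You write $(1-\delta)^n 2^{H(\mu_{\wt{\bf q}}) + o(n)} = (1-\delta)^n 2^{H_{max} + o(n)}$, implicitly asserting $H(\mu_{\wt{\bf q}}) = H_{max} + o(n)$; but this is precisely what fails here. Every $\vc{q} \in \mathbf{Q}$ has marginals exactly $\mu_{\bf p}$, so $\|\vc{q} - \wt{\bf q}\|_1 \ge \tfrac{1}{2}\|\vc{p} - \wt{\bf p}\|_1$, and then Lemma \ref{perturbmaxent} (applied with $\wt{\mc{V}}$ in the role of $\mc{V}$) forces $H_{max} \le H(\mu_{\wt{\bf q}}) - \tfrac{1}{4n}\|\vc{p}-\wt{\bf p}\|_1^2$. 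Your case split imposes no upper bound on $\|\vc{p} - \wt{\bf p}\|_1$; in the Kalai setting with $\vc{g}$ at fixed positive distance from $\Gamma$ (exactly the high-VC branch in the proof of Theorem \ref{thm: Kalai}$(ii)$) this quantity is $\Theta(n)$, so $H(\mu_{\wt{\bf q}}) - H_{max}$ can exceed $\eps n$ by an arbitrary factor. Then $\mc{A} = \mc{X} \sm \mc{B}_{full}$, with roughly $(1-\delta)^n 2^{H(\mu_{\wt{\bf q}})}$ intersections, can vastly exceed $(1+\eps)^n 2^{H_{max}}$. The ``short lemma'' you envision --- perturbing $\mu_{\wt{\bf q}}$ into $\mathbf{Q}$ at negligible entropy cost --- cannot exist when the marginals are genuinely far apart; this is the substance of the problem, not bookkeeping. (The paper even flags this: Theorem \ref{optsupersat} is introduced as answering what Theorem \ref{thm: trichotomy}, your main black box here, leaves open.)

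The paper's argument for part $ii$ is structurally different and avoids $\mu^{\wt{\mc{V}}}_{\wt{\bf x}}$ altogether. It first restricts to a single \emph{type-class} $\mc{B} \sub \mc{X}$ (sets with prescribed $|A \cap S_m|$ for a coarse partition $[n]=\bigcup_m S_m$), so that the empirical marginal $\mu_{\bf p'}$ on $\mc{B}$ automatically satisfies $\|\vc{p}'-\vc{p}\|_1 \le \kK n$. Each edge-type $\vc{t}$ in the $\vc{w}$-intersection graph $G$ on $\mc{B}$ then gives an empirical measure $\mu_{\vc{q}(\vc{t})}$ with marginals $\mu_{\bf p'}$, and Lemma \ref{lem: approximate kappa bounded distribution is enough} shows that any such measure that is $\lambda$-dense can be perturbed into $\mathbf{Q}$ at small entropy cost. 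The dichotomy is now on $e(G)$ versus $H_{max}$: either $H_{max} \ge \log_2 e(G) - \eps n/2$ (take $\mc{A} = \mc{B}$), or the popular empirical edge-measure fails to be $\lambda$-dense, and Lemma \ref{indepG} exploits this degeneracy via a random-restriction argument to build a large independent set in $G$. The idea you are missing is to force the marginals to be close to $\mu_{\bf p}$ \emph{combinatorially}, by passing to a type-class first, rather than hoping the unconstrained maximum-entropy pair-measure happens to have the right marginals.
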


We start by giving the short deduction 
of statement $i$ from Theorem \ref{binary}.
The hypotheses of the latter hold by Lemma \ref{binary bounded},
and Theorem \ref{ldp} gives $\mu_{\bf p}(\mc{A}) > (1-\dD')^n$,
where $\dD \ll \dD' \ll \kK$. We can assume ${\mathbf Q} \ne \es$,
and Theorem \ref{binary} applied to ${\bf q} \in {\mathbf Q}$ gives 
$\mu_{\vc{q}}((\mc{A} \times \mc{A})^{\mc{V}_\cap}_{\vc{w}}) > (1-\eps/2)^n$.
Also, by Lemma \ref{uctg1}, 
$\mc{B} := \{ \vc{x} \in (\{0,1\} \times \{0,1\})^n: 
\log_2 \mu_{\vc{q}}(\vc{x}) \notin - H_{max} \pm \dD n \}$
has $\mu_{\vc{q}}(\mc{B}) \le (1-\dD^3)^n$,
so $|(\mc{A} \times \mc{A})^{\mc{V}_\cap}_{\vc{w}}|
\ge 2^{H_{max}-\dD n} \mu_{\vc{q}}(
(\mc{A} \times \mc{A})^{\mc{V}_\cap}_{\vc{w}} \sm \mc{B} )
\ge (1-\eps)^n 2^{H_{max}}$, as required.

The remainder of the section will be occupied
with the proof of statement $ii$.
A key idea is the use of `empirical measures',
which we will now introduce. 
First we note by Lemma \ref{binary bounded}
that ${\bf p}$ is $\kK'$-bounded, where 
\[\gG _1, \gG _1' \ll \kK' \ll \gG _2, \gG _2'.\]
We fix a partition of $[n]$ 
into sets $S_1,\ldots ,S_M$ so that 
\[ |p_i - p_j| \leq \kappa \text{ and }
\|{\bf v}_i - {\bf v}_j \|_{ \bf R} \leq \kappa 
\text{ if } i,j \in S_m \text{ for some } m \in [M]. \]
This can be achieved with $M \leq (2\kappa ^{-1} +2)^{D+1}$. 
We define the {\em type} of $A \sub [n]$ 
as ${\bf k}(A)=(|A \cap S_1|,\dots,|A \cap S_M|)$.
We let ${\bf k}=(k_1,\dots,k_M)$ 
be the most common type of sets in $\mc{X}$,
write \[ \mc{B} = \{ A \in \mc{X}: {\bf k}(A) = {\bf k} \},\]
and define the empirical measure $\mu_{\bf p '}$ by
\[ (p')^i_1 = k_m/|S_m| \text{ for all } m \in [M], i \in S_m. \]
Note that $|\mc{B}| \ge |\mc{X}|/n^M$.
The following lemma shows that $\mu_{\bf p '}$
is a good approximation to the maximum entropy measure $\mu_{\bf p}$.

\begin{lemma} \label{empiricalp}
$\|{\bf p} - {\bf p}'\|_1 \leq \kK n$.
\end{lemma}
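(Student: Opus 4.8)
The plan is to bound the two sources of discrepancy between ${\bf p}$ and ${\bf p}'$ separately. Inside a single block $S_m$ the values $p_i:=p^i_1$ vary by at most $\kappa$, so the block average $\bar p_m := \bar k_m/|S_m|$, where $\bar k_m := \sum_{i\in S_m}p_i = \mb{E}_{A\sim\mu_{\bf p}}|A\cap S_m|$, is within $\kappa$ of each $p_i$; this is an easy deterministic estimate. The substantive point is that the type ${\bf k}$ of a \emph{most common} set in $\mc{X}$, although it need not equal $\bar{\bf k}=(\bar k_1,\dots,\bar k_M)$, must be close to it: any type far from $\bar{\bf k}$ is realised by only exponentially few sets, whereas $\mc{B}=\{A\in\mc{X}:{\bf k}(A)={\bf k}\}$ accounts for a subexponential fraction of $\mc{X}$ and therefore carries non-negligible $\mu_{\bf p}$-mass.

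For the first, deterministic part I would observe that $\bar p_m$ is an average of numbers lying in an interval of length $\le\kappa$, so $\sum_{i\in S_m}|p_i-\bar p_m|\le\tfrac\kappa2|S_m|$ (the $\ell_1$ deviation of such numbers from their mean is at most $\tfrac\kappa2$ per coordinate, the extreme case being half the block at each end). Summing over the blocks gives $\sum_{m}\sum_{i\in S_m}|p_i-\bar p_m|\le\tfrac\kappa2 n$, and since $(p')^i_1=k_m/|S_m|$ for $i\in S_m$ and $\sum_m\sum_{i\in S_m}|\bar p_m-k_m/|S_m||=\sum_m|k_m-\bar k_m|$, the triangle inequality yields
\[ \|{\bf p}-{\bf p}'\|_1 \le \tfrac\kappa2 n + \sum_{m\in[M]}|k_m-\bar k_m|. \]
So it remains to prove $\sum_m|k_m-\bar k_m|\le\tfrac\kappa3 n$.

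To do this, fix an auxiliary constant $\eta$ with $n^{-1}\ll\eta\ll\kappa^{2D+3}$. By the running hypotheses (via Lemma \ref{binary bounded}), $\mu_{\bf p}=\mu^{\mc{V}}_{\bf z}$ is $\kappa'$-bounded, so Lemma \ref{uctg2} gives $\mu_{\bf p}(A)\ge2^{-H(\mu_{\bf p})}$ for every $A\in\mc{X}$, and Lemma \ref{uctg3} gives $|\mc{X}|\ge2^{H(\mu_{\bf p})-\eta n}$. Since there are at most $(n+1)^M$ types, the most common one satisfies $|\mc{B}|\ge|\mc{X}|(n+1)^{-M}$, whence $\mu_{\bf p}(\mc{B})\ge|\mc{B}|2^{-H(\mu_{\bf p})}\ge2^{-\eta n}(n+1)^{-M}\ge2^{-2\eta n}$ for $n$ large. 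On the other hand every $A\in\mc{B}$ has $|A\cap S_m|=k_m$ for all $m$; as $\mu_{\bf p}$ is a product measure and the $S_m$ are disjoint, the variables $|A\cap S_m|$ are independent, so by Chernoff's inequality applied to $|A\cap S_m|=\sum_{i\in S_m}1_{i\in A}$ (with mean $\bar k_m$) and $|S_m|\le n$,
\[ 2^{-2\eta n} \le \mu_{\bf p}(\mc{B}) \le \prod_{m\in[M]}\mb{P}_{A\sim\mu_{\bf p}}\big(|A\cap S_m|=k_m\big) \le \prod_{m\in[M]}2e^{-(k_m-\bar k_m)^2/(2|S_m|)} \le 2^M\exp\Big(-\tfrac{1}{2n}\sum_{m\in[M]}(k_m-\bar k_m)^2\Big). \]
Rearranging gives $\sum_m(k_m-\bar k_m)^2\le2n(M+2\eta n)\ln2\le4\eta n^2$ for $n$ large, and then Cauchy--Schwarz gives $\sum_m|k_m-\bar k_m|\le\sqrt M\cdot2\sqrt\eta\,n$. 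Finally $M\le(2/\kappa+2)^{D+1}$ together with $\eta\ll\kappa^{2D+3}$ yields $2\sqrt{M\eta}\,n\le\tfrac\kappa3 n$, as required.

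The only place needing care is the choice of $\eta$: the concentration estimate must beat the $2^{-2\eta n}$ lower bound on $\mu_{\bf p}(\mc{B})$, which bundles together the $(n+1)^{-M}$ fraction lost in passing to a single type with the $\eta n$ slack in $\log_2|\mc{X}|\approx H(\mu_{\bf p})$. Since $M$ depends only on $\kappa$ and $D$ and is in particular bounded, this is simply a matter of taking $\eta$ small enough as a function of $\kappa$ and $D$, which is permitted because $n^{-1}\ll\eta$; the rest is routine.
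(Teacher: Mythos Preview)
Your proof is correct. Both arguments decompose $\|{\bf p}-{\bf p}'\|_1$ into the within-block deviation $\sum_m\sum_{i\in S_m}|p_i-\bar p_m|$ (handled identically, via the spread-$\le\kK$ property of each block) and the type-mismatch $\sum_m|k_m-\bar k_m|$; the difference lies in how the second piece is controlled.

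The paper argues via Theorem \ref{ldp}: letting $\mc{E}$ be the event that some $||A\cap S_m|-\bar k_m|>\kK n/(2M)$, Chernoff gives $\mu_{\bf p}(\mc{E})$ exponentially small, and exponential contiguity of $\mu_{\bf p}$ with the uniform measure on $\mc{X}$ then gives $|\mc{E}\cap\mc{X}|<(1-\kK_2)^n|\mc{X}|<|\mc{B}|$. Since $\mc{B}$ consists of a single type, $\mc{B}\not\subset\mc{E}$ forces $|k_m-\bar k_m|\le\kK n/(2M)$ for \emph{every} $m$, and summing gives $\sum_m|k_m-\bar k_m|\le\kK n/2$.

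Your route replaces the appeal to Theorem \ref{ldp} by the more primitive Lemmas \ref{uctg2} and \ref{uctg3}, obtaining $\mu_{\bf p}(\mc{B})\ge 2^{-2\eta n}$ directly, and then uses the product structure to bound $\mu_{\bf p}(\mc{B})\le\prod_m\mb{P}(|A\cap S_m|=k_m)$ from above by Chernoff, yielding a bound on $\sum_m(k_m-\bar k_m)^2$ and hence (via Cauchy--Schwarz) on the $\ell_1$ sum. This is slightly more elementary in that it does not invoke the full contiguity statement, only its ingredients; on the other hand the paper's argument is shorter and yields the coordinatewise bound $|k_m-\bar k_m|\le\kK n/(2M)$, which is stronger than what Cauchy--Schwarz gives from the $\ell_2$ estimate. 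Either approach suffices here.
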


\begin{proof}
Let ${\cal E}$ be the set of $A \subset [n]$ such that some
$| |A\cap S_m| - \sum _{i\in S_m} p_i | > \tfrac{\kappa n}{2M} $.
Let $\kK \ll \kK_1 \ll \kK_2 \ll \kK'$.
Then $\mu _{\bf p}({\cal E}) \leq (1 -\kK_1)^n$ by Chernoff's inequality,
so $|{\cal E} \cap {\cal X}| < (1 - \kK_2)^n |{\cal X}| < |{\cal B}|$ 
by Theorem \ref{ldp}. We deduce 
$|k_m - \sum _{i\in S_m} p_i | \leq \kK n/2M$ for all $m\in [M]$,
so $\sum _{m \in [M]} |k_m - \sum _{i\in S_m} p_i| \leq \kK n/2$;
the lemma follows.
\end{proof}

We use a similar construction of an empirical measure 
that represents ${\bf w}$-intersections.
Let $G$ be the graph with $V(G)={\cal B}$
where $AB \in E(G)$ if $|A \cap B|_{\mc{V}} = {\bf w}$. 
We define the {\em type} of $AB \in E(G)$
as ${\bf t}_{AB} = (t_1,\ldots ,t_M)$,
where $t_m = |A \cap B \cap S_m|$ for $m\in [M]$.
A type ${\bf t}$ gives rise to a measure $\mu _{\bf q({\bf t})}$, 
where for $i\in S_m$ we define 
\[{q}({\bf t})^i_{1,1} = {t_m}/{|S_m|}, \
{q}({\bf t})^i_{1,0} = {q}({\bf t})^i_{0,1} = {(k_m - t_m)}/{|S_m|}
\text{ and } {q}({\bf t})^i_{0,0} =  {(|S_m| - 2k_m + t_m)}/{|S_m|}.\] 
Note that each $\mu _{{\bf q}({\bf t})}$ 
has both marginals $\mu _{{\bf p}'}$ and 
$\| \mc{V}_{\cap}(\mu _{{\bf q}({\bf t})}) - {\bf w} \|_{\bf R} 
\leq \|{\bf p} - {\bf p}'\|_{1} \leq \kappa n$.

We can assume
\begin{equation} \label{Hmax}
H(\mu_{\bf q}) < \log _2 |E(G)| - \eps n/2
\text{ for all } {\bf q} \in {\bf Q},
\end{equation}
otherwise the proof is complete.
We fix a type $\wt{\bf t}$ occurring at least 
$e(G) / n^{2M}$ times and set $\wt{\bf q} = {\bf q}(\wt{\bf t})$.
Then $H(\mu_{\wt{\bf q}}) \ge \log_2 (e(G) / n^{2M})$,
so $\wt{\bf q} \notin {\bf Q}$ by (\ref{Hmax}).
The following lemma will show that all empirical measures
associated to edges of $G$ are close to $\wt{\bf q}$;
we will then use this and $\wt{\bf q} \notin {\bf Q}$
in Lemma \ref{indepG} to find a large independent set
in $G$, which will complete the proof of Theorem \ref{optsupersat}.

We fix $\lL$ with $\gG_1,\gG'_1 \ll \lL \ll \kK'$.

\begin{lemma} \label{closetypes}
Suppose $CD \in E(G)$ has type ${\bf t}'$.
Then $\|{\wt{\bf q}} - {\bf q}({\bf t}') \|_1 < \lL n$.
\end{lemma}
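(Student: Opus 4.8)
The plan is to convert the count of edges of a given type into an entropy, and then to use strict concavity of entropy to force every edge type near the dominant one $\wt{\bf t}$.

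The first ingredient is a counting identity. For a type ${\bf t}$, the ordered pairs $(C,D)\in{\cal B}^2$ with $|C\cap D\cap S_m|=t_m$ for every $m\in[M]$ number exactly $\prod_{m\in[M]}\binom{|S_m|}{t_m,\,k_m-t_m,\,k_m-t_m,\,|S_m|-2k_m+t_m}$, and since $\mu_{{\bf q}({\bf t})}$ is constant on each $S_m$ this product lies between $2^{H(\mu_{{\bf q}({\bf t})})-3M\log_2 n}$ and $2^{H(\mu_{{\bf q}({\bf t})})}$. As the edges of type ${\bf t}$ form a subset of such pairs, writing $e_{\bf t}$ for the number of edges of type ${\bf t}$ we get $e_{\bf t}\le 2^{H(\mu_{{\bf q}({\bf t})})}$ for every edge type ${\bf t}$; since there are at most $n^M$ types and $e_{\wt{\bf t}}\ge e(G)/n^{2M}$, this gives $H(\mu_{\wt{\bf q}})\ge\log_2 e(G)-2M\log_2 n$. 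We may also assume $e(G)\ge 2^{\eps n/2}$: if ${\bf Q}\ne\es$ this is forced by (\ref{Hmax}) since $H\ge 0$ on ${\bf Q}$, while when ${\bf Q}=\es$ Theorem \ref{optsupersat} is obtained through the trichotomy machinery rather than this argument.

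The second, and main, ingredient is that \emph{every} edge type is almost entropy-optimal: for each edge type ${\bf t}'$ one has $H(\mu_{{\bf q}({\bf t}')})\ge\log_2 e(G)-\kappa^{1/3}n$. Starting from one edge $CD$ of type ${\bf t}'$, I would symmetrise by the permutations of $[n]$ preserving each $S_m$: the orbit of the partition $(C\cap D\cap S_m,\,(C\setminus D)\cap S_m,\,(D\setminus C)\cap S_m,\,\text{rest})_m$ is precisely the set of all pairs of type ${\bf t}'$, so has size $2^{H(\mu_{{\bf q}({\bf t}')})\pm 3M\log_2 n}$, and because the ${\bf v}_i$ with $i$ in a common $S_m$ are $\kappa$-close, every such pair $(C',D')$ satisfies $\||C'|_{\cal V}-{\bf z}\|_{\bf R},\,\||D'|_{\cal V}-{\bf z}\|_{\bf R},\,\|{\cal V}_\cap(C',D')-{\bf w}\|_{\bf R}\le O(\kappa n)$. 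Using that ${\cal V}$ is $\gG_2$-robustly $({\bf R},k)$-generating, each such pair can be repaired to a genuine edge of $G$ by altering $O(\kappa n)$ coordinates, which changes the type by $O(\kappa n)$; the repair map is at most $2^{O(\kappa n\log(1/\kappa))}$-to-one, so the number of edges with type within $O(\kappa n)$ of ${\bf t}'$ is at least $2^{H(\mu_{{\bf q}({\bf t}')})-O(\kappa n\log(1/\kappa))}$. Comparing with $e(G)\le n^{2M}2^{H(\mu_{\wt{\bf q}})}$, and noting $H(\mu_{{\bf q}(\cdot)})$ varies by $O(\kappa n\log(1/\kappa))$ over types within $O(\kappa n)$ of each other, yields the claimed bound. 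The same symmetrise-and-repair argument, applied to any convex combination of ${\bf q}(\wt{\bf t})$ and ${\bf q}({\bf t}')$ (which again has marginals $\mu_{{\bf p}'}$, is constant on the $S_m$, and has intersection-sum within $O(\kappa n)$ of ${\bf w}$), shows that any such combination ${\bf q}$, rounded to an integer type, satisfies $H(\mu_{\bf q})\le\log_2 e(G)+\kappa^{1/3}n$.

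Finally, suppose $CD\in E(G)$ has type ${\bf t}'$ with $\|\wt{\bf q}-{\bf q}({\bf t}')\|_1\ge\lL n$, and put ${\bf q}^*=\tfrac12(\wt{\bf q}+{\bf q}({\bf t}'))$. By Lemma \ref{entapprox}(ii) and Cauchy--Schwarz, exactly as in the proof of Lemma \ref{perturbmaxent}, $H(\mu_{{\bf q}^*})\ge\tfrac12 H(\mu_{\wt{\bf q}})+\tfrac12 H(\mu_{{\bf q}({\bf t}')})+c_0\lL^2 n$ for an absolute $c_0>0$. Combining the two displayed bounds from the second step (the lower bound applied to $\wt{\bf t}$ and to ${\bf t}'$, the upper bound applied to ${\bf q}^*$) gives $\log_2 e(G)+\kappa^{1/3}n\ge H(\mu_{{\bf q}^*})\ge\log_2 e(G)-2\kappa^{1/3}n+c_0\lL^2 n$, so $c_0\lL^2 n\le 3\kappa^{1/3}n$; since $\kappa$ may be taken arbitrarily small relative to $\lL$ (so $\kappa^{1/3}\ll\lL^2$), this is a contradiction for large $n$, and the lemma follows. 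The main obstacle is the second step: making the symmetrise-and-repair argument precise, in particular controlling how the repair via robust generation simultaneously restores the three linear constraints $|C'|_{\cal V}={\bf z}$, $|D'|_{\cal V}={\bf z}$, $|C'\cap D'|_{\cal V}={\bf w}$ while perturbing the type by only $O(\kappa n)$ — this is exactly where the structural hypotheses on ${\cal V}$ are needed.
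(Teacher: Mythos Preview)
Your central second-step claim, that every edge type ${\bf t}'$ satisfies $H(\mu_{{\bf q}({\bf t}')})\ge\log_2 e(G)-\kappa^{1/3}n$, is not established by the argument you give, and this breaks the final contradiction. The symmetrise-and-repair argument starts from one edge of type ${\bf t}'$, produces roughly $2^{H(\mu_{{\bf q}({\bf t}')})}$ approximate pairs, repairs them to genuine edges, and concludes that the number of edges (of types near ${\bf t}'$) is at least $2^{H(\mu_{{\bf q}({\bf t}')})-O(\kappa n\log(1/\kappa))}$. Since this count is at most $e(G)$, what you actually obtain is the \emph{upper} bound $H(\mu_{{\bf q}({\bf t}')})\le\log_2 e(G)+O(\kappa n\log(1/\kappa))$, not a lower bound. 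The comparison with $e(G)\le n^{2M}2^{H(\mu_{\wt{\bf q}})}$ only reinforces this direction. In your final step you need both $H(\mu_{\wt{\bf q}})$ and $H(\mu_{{\bf q}({\bf t}')})$ to sit near $\log_2 e(G)$ from below, but only the former is available (via pigeonhole on $\wt{\bf t}$); with merely an upper bound on $H(\mu_{{\bf q}({\bf t}')})$ the strict-concavity inequality $H(\mu_{{\bf q}^*})\ge\tfrac12(H(\mu_{\wt{\bf q}})+H(\mu_{{\bf q}({\bf t}')}))+c_0\lL^2 n$ yields no contradiction. Indeed, there is no reason a rare edge type ${\bf t}'$ cannot have $H(\mu_{{\bf q}({\bf t}')})$ far below $\log_2 e(G)$.

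The paper's proof sidesteps this by taking the asymmetric interpolation ${\bf q}'=\lL\,{\bf q}({\bf t}')+(1-\lL)\wt{\bf q}$ rather than the midpoint: then concavity alone gives $H(\mu_{{\bf q}'})\ge(1-\lL)H(\mu_{\wt{\bf q}})\ge\log_2 e(G)-\eps n/3$, with no lower bound on $H(\mu_{{\bf q}({\bf t}')})$ needed. The contradiction comes not from symmetrise-and-repair but from the standing assumption (\ref{Hmax}) through Lemma~\ref{lem: approximate kappa bounded distribution is enough}: one shows ${\bf q}'$ is $(\lL^2/8)$-dense (this is where $\|\wt{\bf q}-{\bf q}({\bf t}')\|_1\ge\lL n$ enters, since the two measures share marginals $\mu_{{\bf p}'}$), then perturbs ${\bf q}'$ into a genuine member of ${\bf Q}$ and applies (\ref{Hmax}) to force $H(\mu_{{\bf q}'})<\log_2 e(G)-\eps n/3$. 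Your argument never invokes (\ref{Hmax}) in the main line, and the symmetrise-and-repair upper bound $\log_2 e(G)+\kappa^{1/3}n$ is on the wrong side of $\log_2 e(G)$ to play this role.
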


For the proof
we require the following bound analogous to (\ref{Hmax})
for a wider class of measures.

\begin{lemma}
\label{lem: approximate kappa bounded distribution is enough}
Let $\mu _{{\bf q}'}$ be a $\lL$-dense product measure on 
$(\{0,1\}\times \{0,1\})^n$ with marginals $\mu _{\bf p'}$ 
and ${\cal V}_{\cap }(\mu _{{\bf q}'}) = {\bf w}'$
with $\|{\bf w} - {\bf w}'\|_{\bf R} \leq \kappa n$.
Then $H(\mu_{\bf q '}) < \log _2 |E(G)| - \eps n/3$.
\end{lemma}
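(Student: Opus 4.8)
The plan is to deduce the bound from the already-available estimate \eqref{Hmax}. Explicitly, I would construct a ${\bf q}\in{\mathbf Q}$ — that is, a $\kappa$-bounded product measure on $(\{0,1\}\times\{0,1\})^n$ with both marginals $\mu_{\bf p}$ and $\mathcal{V}_\cap(\mu_{\bf q})=\vc{w}$ — with $H(\mu_{\bf q})\ge H(\mu_{{\bf q}'})-\eps n/6$. Then \eqref{Hmax} gives $\log_2|E(G)|>H(\mu_{\bf q})+\eps n/2\ge H(\mu_{{\bf q}'})+\eps n/3$, which is exactly the assertion. The "$-\eps n/6$" slack is supplied by the quantitative continuity of entropy for product measures: from Lemma \ref{entapprox} and concavity of $L$, if $\|{\bf q}-{\bf q}''\|_1\le\eta n$ then $|H(\mu_{\bf q})-H(\mu_{{\bf q}''})|\le n\,\omega(\eta)$ with $\omega(\eta)\to 0$ as $\eta\to 0$; since our parameters are nested so that $\kappa,\lambda\ll\eps$, any perturbation of $\ell_1$-size $O((\kappa+\lambda)n)$ moves the entropy by at most $\eps n/6$.

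The construction itself would proceed in three steps. First, align the marginals: a product measure on $(\{0,1\}\times\{0,1\})^n$ whose two marginals are equal is determined coordinatewise by the values $q^i_{1,1}$, so passing from marginals $\mu_{{\bf p}'}$ to marginals $\mu_{\bf p}$ is just a coordinatewise adjustment of these values; since $\|{\bf p}-{\bf p}'\|_1\le\kappa n$ by Lemma \ref{empiricalp}, this costs $O(\kappa n)$ in $\ell_1$, and as $\mathcal{V}$ is $\vc{R}$-bounded it shifts $\mathcal{V}_\cap$ by $O(\kappa n)$ in $\|\cdot\|_{\vc{R}}$. Second, restore $\kappa$-boundedness: by Lemma \ref{binary bounded} the measure $\mu_{\bf p}$ is $\kappa'$-bounded, so at each coordinate the interval of admissible $q^i_{1,1}$ keeping that coordinate $\kappa$-bounded has length at least $\kappa'/2$; projecting $q^i_{1,1}$ onto this interval changes it by $O(\kappa)$ per coordinate, again $O(\kappa n)$ in total — and this is where the $\lambda$-density hypothesis on $\mu_{{\bf q}'}$ enters, guaranteeing that only genuinely small coordinatewise moves are needed rather than wholesale replacement on a linear set of coordinates. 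Third, correct $\mathcal{V}_\cap$ back to $\vc{w}$: we now have a $\kappa$-bounded product measure with marginals $\mu_{\bf p}$ whose intersection sum is within $O(\kappa n)$ of $\vc{w}$ in $\|\cdot\|_{\vc{R}}$, and the only marginal-preserving move at a coordinate is the rotation $q^i_{1,1}\mapsto q^i_{1,1}\pm s$ (compensated by $q^i_{1,0},q^i_{0,1}$), which shifts $\mathcal{V}_\cap$ by $\pm s\,\vc{v}_i$; so I would write $\vc{w}-\mathcal{V}_\cap$ as a bounded integer combination of transfer vectors via a $(1,\beta n,\vc{R})$-generating set from Lemma \ref{u} and realize it by rotations of size $\ll\kappa'$ along many disjoint copies of the corresponding bounded index sets, furnished by the $\gamma_2$-robustly $(\vc{R},k)$-generating property of $\mathcal{V}$. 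All three steps together perturb ${\bf q}'$ by $O(\kappa n)$ in $\ell_1$, hence move the entropy by at most $\eps n/6$, and the resulting measure lies in ${\mathbf Q}$.

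The main obstacle is the bookkeeping in the third step. The $\mathcal{V}_\cap$-error accumulated in the first two steps is a constant fraction of $n$ (of order $\kappa n$), while a single marginal-preserving rotation can only shift $\mathcal{V}_\cap$ by $O(\kappa')$ times one $\vc{R}$-bounded vector before threatening $\kappa$-boundedness; ensuring that enough disjoint transfer-copies sit at coordinates with the requisite slack — so that "generation at scale $\gamma_2$" comfortably dominates the "$\kappa'$-scale room needed per rotation" and every constant lines up — is precisely what forces the multiscale hierarchy $\kappa\ll\gamma_1,\gamma_1'\ll\kappa'\ll\gamma_2,\gamma_2'$, and carrying this through carefully is the crux of the argument.
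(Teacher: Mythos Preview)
Your overall architecture --- construct ${\bf q} \in {\mathbf Q}$ with $\|{\bf q} - {\bf q}'\|_1$ small enough that $|H(\mu_{\bf q}) - H(\mu_{{\bf q}'})| \le \eps n/6$, then invoke \eqref{Hmax} --- is exactly the paper's strategy, and your steps 1--2 are essentially the paper's construction of ${\bf q}''$. The gap is in step 3, and your identification of where the $\lambda$-density hypothesis enters is off.

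First, $\lambda$-density is not needed for step 2: projecting each $q^i_{1,1}$ into the $\kappa$-bounded interval $[\max(\kappa, 2p_i - 1 + \kappa),\, p_i - \kappa]$ moves it by at most $\kappa$ regardless, since after step 1 it already lies in $[\max(0, 2p_i - 1),\, p_i]$. The hypothesis is needed precisely in step 3, and your proposed mechanism there does not work. After step 2 the measure is $\kappa$-bounded, but at coordinates where the original ${\bf q}'$ had (say) $(q')^i_{1,1} = 0$ --- and there may be $(1-\lambda)n$ such coordinates --- you now have $q^i_{1,1} = \kappa$, with zero room to rotate downward while remaining $\kappa$-bounded. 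The $\gamma_2$-robustly $(\vc{R}, k)$-generating property only lets you \emph{avoid} a set of size $\le \gamma_2 n$; it cannot force the generating index sets to lie inside the $\lambda n$ good coordinates, so they may land on saturated ones. Even granting the ``$\kappa'$-scale'' interval length, the room in the \emph{required direction} can be zero, and the number of generating copies available ($\sim \gamma_2 n/(kM)$) times the safe per-copy rotation is too small to clear an $O(\kappa n)$ error (note $\gamma_2 \ll k^{-1}$).

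The paper's fix is to use the other hypothesis on $\mc{V}$: the $\gamma_1'$-robustly $(\gamma_1, \vc{R})$-\emph{generic} property. The key scale relation is $\gamma_1' \ll \lambda$: within the set $S$ of $\ge \lambda n/2$ coordinates where the post--step-2 measure remains $(\lambda/2)$-bounded, one can extract $M \ge \lambda n/(4D)$ disjoint generic $D$-tuples $I_m$. Splitting the error ${\bf w} - {\bf w}''$ equally among these and inverting each piece by Cramer's rule gives per-coordinate rotations of size $|b_i| \le \sqrt{\kappa}\, D!/\gamma_1 \ll \lambda$, which fit inside the bidirectional room $\lambda/2$ available at those coordinates. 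This is exactly the point where ``generic'' (applicable \emph{inside} any set of size $\ge \gamma_1' n$) and ``generating'' (only guarantees \emph{avoidance} of sets of size $\le \gamma n$) behave differently, and why the multiscale chain $\gamma_1, \gamma_1' \ll \lambda \ll \kappa'$ is what the argument actually uses.
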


\begin{proof}
We will obtain the required bound from (\ref{Hmax})
a measure in ${\bf Q}$ close to $\mu _{\bf q '}$.
Recall that ${\bf p '}$ is $\kK'$-bounded
and $\| {\bf p} -{\bf p '} \|_1 \le \kK n$
Consider ${\bf q}''$ that minimises $\| {\bf q}'' -{\bf q '} \|_1$
subject to $\mu _{\bf q}$ being $\kK$-bounded
and having marginals $\mu _{\bf p}$.
For each $i$ we can construct 
$\{(q'')^i_{j,j'}\}$ from $\{(q')^i_{j,j'}\}$
by moving probability mass $|p_i-p'_i|$
to create the correct marginals,
and moving a further mass of at most $2\kK$
while maintaining the same marginals
to ensure $\kK$-boundedness.
Therefore $\|{\bf q}'' - {\bf q}'\|_1 \le 6 \kK n$.

Now we will perturb 
${\bf q}''$ to obtain ${\bf q} \in {\bf Q}$, 
i.e.\ we maintain $\kK$-boundedness
and the same marginals $\mu _{\bf p}$,
and obtain ${\cal V}_{\cap }(\mu _{{\bf q}}) = {\bf w}$.

As $\mu _{{\bf q}'}$ is $\lL$-dense and $\kK \ll \lL$
there is $S \sub [n]$ with $|S| \geq \lambda n/2$ such that 
$(q'')^i_{j,j'} \geq \lL/2$ for all $i\in S$ and $j,j' \in \{0,1\}$. 
As ${\cal V}$ is $\gG_1'$-robustly $(\gG_1, {\bf R})$-generic, 
and $\lambda \gg \gamma _1'$ we can find 
$M \geq |S|/2D \geq \lambda n/4D$ disjoint sets 
$I_1,\ldots ,I_M \subset S$, with $|I_m| = D$ 
and $|\det ({\cal V}_{I_m})| \geq 
\gamma _1 R_1\cdots R_D$ for all $m\in [M]$.

Write ${\cal V}_{\cap }(\mu _{{\bf q}''}) = {\bf w}''$,
and note that $\| {\bf w}'' - {\bf w}\|_{\bf R} 
\le \|{\bf q}'' - {\bf q}'\|_1 \le 6 \kK n$.
Then ${\bf u} = ({\bf w} - {\bf w}'') / M$ has 
$\| {\bf u} \|_{\bf R} \le 24D \kK \lL^{-1} < \sqrt{\kK}$.
Applying Cramer's rule as in Lemma \ref{bounded}, 
for each $m\in [M]$ we find coefficients $b_i$ 
with $\sum _{i\in I_m} b_i {\bf v}_i = {\bf u}$
and $|b_i| \le \sqrt{\kK} D! \gG_1^{-1}$.

Now we obtain ${\bf q}$ from ${\bf q}''$
where for each $i\in \cup _{m\in [M]} I_m$
we let $q^i_{1,1} = (q'')^i_{1,1} + b_i$, 
$q^i_{0,1} = q^i_{1,0} = (q'')^i_{1,0} - b_i$ 
and $q^i_{1,1} = (q'')^i_{1,1} + b_i$,
and $q^{i}_{j,j'} = (q'')^{i}_{j,j'}$ otherwise.
By construction ${\bf q} \in {\bf Q}$
and $\|{\bf q}' - {\bf q}\|_1 
 \leq \|{\bf q}' - {\bf q}''\|_1 
+ \|{\bf q}'' - {\bf q}\|_1 < \kK^{1/3} n$.
The lemma now follows from (\ref{Hmax}). 
\end{proof}

\nib{Proof of Lemma \ref{closetypes}.}
Suppose for a contradiction that 
$\|{\wt{\bf q}} - {\bf q}({\bf t}') \|_1 \ge \lL n$.
Consider the interpolation
${\bf q}' = \lL {\bf q}({\bf t}') + (1-\lL) {\wt{\bf q}}$.
Recall that any $\mu _{{\bf q}({\bf t})}$ 
has marginals $\mu _{{\bf p}'}$ and satisfies
$\| \mc{V}_{\cap}(\mu _{{\bf q}({\bf t})}) - {\bf w} \|_{\bf R} 
\leq \kappa n$, so $\mu_{\bf q '}$ has the same properties.
Also, as $H(\mu_{\wt{\bf q}}) \ge \log_2 (e(G) / n^{2M})$
we have $H(\mu_{\bf q '}) > \log _2 |E(G)| - \eps n/3$.

As $\|{\wt{\bf q}} - {\bf q}({\bf t}') \|_1 \ge \lL n$
we can find $S \sub [n]$ with $|S| \ge \lL n/2$ such that
$\sum_{j,j' \in \{0,1\}} |\wt{q}^i_{j,j'}-q({\bf t}')^i_{j,j'}|
\ge \lL/2$ for all $i \in S$. As ${\wt{\bf q}}$ and
${\bf q}({\bf t}')$ have the same marginals $\mu _{{\bf p}'}$
we have $|\wt{q}^i_{j,j'}-q({\bf t}')^i_{j,j'}| \ge \lL/8$ 
for all $i \in S$ and $j,j' \in \{0,1\}$.
For each such $i,j,j'$ we deduce
$(q')^i_{j,j'} \ge \lL^2/8$. However, this contradicts Lemma
\ref{lem: approximate kappa bounded distribution is enough}
(with $\lambda ^2/8$ in place of $\lambda $). \qed

\medskip

The following lemma completes the proof of Theorem \ref{optsupersat}.

\begin{lemma} \label{indepG}
There is $\mc{A} \sub \mc{X}$
with $|\mc{A}| \ge (1-\eps )^n|{\cal X}|$ and 
$({\cal A } \times {\cal A})^{{\cal V}_{\cap }}_{\bf w} = \es$.
\end{lemma}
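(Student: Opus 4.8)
The plan is to use \eqref{Hmax} to show that the empirical measure $\mu_{\wt{\vc{q}}}$ is very far from being $\lL$-dense, to translate this `extremality' of $\wt{\vc{q}}$, via Lemma \ref{closetypes}, into the statement that on a large set of coordinates \emph{every} edge of $G$ behaves extremally, and then to build $\mc{A}$ by imposing a cheap restriction on $\mc{X}$ along those coordinates that destroys all edges. First I would record that $\mu_{\wt{\vc{q}}}$ is not $\lL$-dense: $\wt{\vc{q}}=\vc{q}(\wt{\vc{t}})$ has marginals $\mu_{\vc{p}'}$ and $\|\mc{V}_\cap(\mu_{\wt{\vc{q}}})-\vc{w}\|_{\vc{R}}\le\kK n$, so if it were $\lL$-dense then Lemma \ref{lem: approximate kappa bounded distribution is enough} would give $H(\mu_{\wt{\vc{q}}})<\log_2\bsize{E(G)}-\eps n/3$, contradicting $H(\mu_{\wt{\vc{q}}})\ge\log_2(e(G)/n^{2M})>\log_2\bsize{E(G)}-\eps n/3$ for large $n$. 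Hence fewer than $\lL n$ coordinates $i$ have $\wt q^i_{j,j'}\ge\lL$ for all $(j,j')$; since $\wt{\vc{q}}$ is constant on each block $S_m$ and $M$ is bounded, the blocks failing this condition cover more than $(1-\lL)n$ coordinates. Using that $\mu_{\wt{\vc{q}}}$ has both marginals $\mu_{\vc{p}'}$, so $\wt q^i_{1,0}=\wt q^i_{0,1}$, I would label each such `bad' block by some $(j,j')\in\bracc{(1,1),(0,0),(1,0)}$ with $\wt q^i_{j,j'}<\lL$ on it, and let $Y_{j,j'}$ be the union of the label-$(j,j')$ blocks. By pigeonhole one of: (A) $\bsize{Y_{1,1}}\ge\lL^{1/2}n$; (B) $\bsize{Y_{0,0}}\ge\lL^{1/2}n$; (C) neither of (A),(B), in which case necessarily $\bsize{Y_{1,0}}>(1-3\lL^{1/2})n$.

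For an edge $AB$ of $G$ with type $\vc{t}'$, comparing the $(1,1)$-, $(0,0)$- and $(1,0)$-entries of $\vc{q}(\vc{t}')$ and $\wt{\vc{q}}$ inside $\|\wt{\vc{q}}-\vc{q}(\vc{t}')\|_1<\lL n$ (Lemma \ref{closetypes}), and using $\wt t_m<\lL\bsize{S_m}$ on label-$(1,1)$ blocks (similarly for the other labels), one gets: in case (A), $\bsize{A\cap B\cap Y_{1,1}}<2\lL n$ for every edge; in case (B), $\bsize{Y_{0,0}\sm(A\cup B)}<2\lL n$ for every edge; in case (C), $d(A\cap Y_{1,0},B\cap Y_{1,0})<4\lL n$ for every edge (here one also uses $\bsize{A\cap S_m}=\bsize{B\cap S_m}$ since $A,B\in\mc{B}$). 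Then in cases (A),(B) I would fix $F$ of size $f:=\bcl{2\lL n}+1$ with $F\subset Y_{1,1}$ (resp.\ $F\subset Y_{0,0}$) and set $\mc{A}=\bracc{A\in\mc{X}:F\subset A}$ (resp.\ $\mc{A}=\bracc{A\in\mc{X}:F\cap A=\es}$); any two distinct members of $\mc{A}$ then have $\bsize{A\cap B\cap Y_{1,1}}\ge f>2\lL n$ (resp.\ $\bsize{Y_{0,0}\sm(A\cup B)}\ge f>2\lL n$), hence form no edge, while $\mc{V}_\cap(A,A)=\vc{z}\ne\vc{w}$ kills the diagonal, so $(\mc{A}\times\mc{A})^{\mc{V}_\cap}_{\vc{w}}=\es$. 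In case (C) I would take $\mc{A}\subset\mc{X}$ maximal with $d(A\cap Y_{1,0},B\cap Y_{1,0})\ge4\lL n$ for all distinct $A,B\in\mc{A}$; again this is $\vc{w}$-intersection free.

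It then remains to verify $\bsize{\mc{A}}\ge(1-\eps)^n\bsize{\mc{X}}$. In cases (A),(B) I would choose $F$ by averaging: a uniformly random $f$-subset $F$ of $Y_{1,1}$ satisfies $\mb{E}_F\bsize{\bracc{A\in\mc{X}:F\subset A}}=\sum_{A\in\mc{X}}\binom{\bsize{A\cap Y_{1,1}}}{f}\binom{\bsize{Y_{1,1}}}{f}^{-1}$, and since (by Chernoff together with Theorem \ref{ldp}, using $\bsize{Y_{1,1}}\ge\lL^{1/2}n$ and that $\mu_{\vc{p}}$ is $\kK'$-bounded) all but a $(1-\dD_2)^n$-fraction of $\mc{X}$ have $\bsize{A\cap Y_{1,1}}\ge(\kK'/2)\bsize{Y_{1,1}}$, this is at least $\tfrac12\bsize{\mc{X}}(\kK'/4)^f$, which exceeds $(1-\eps)^n\bsize{\mc{X}}$ once $\lL$ is small enough relative to $\kK'$ and $\eps$; hence a suitable $F$ exists. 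In case (C) a greedy count shows each element of $\mc{X}$ forbids at most $\sum_{j<4\lL n}\binom{\bsize{Y_{1,0}}}{j}2^{n-\bsize{Y_{1,0}}}\le2^{(H(4\lL)+3\lL^{1/2})n}$ further sets, and since $\lL\ll\eps$ this exponent is below $\eps n/2$, so $\bsize{\mc{A}}\ge2^{-\eps n/2}\bsize{\mc{X}}\ge(1-\eps)^n\bsize{\mc{X}}$.

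The main obstacle is case (C): a family well separated merely on a block of linear but not almost full size would be far too small, since each set would forbid exponentially many others through its free coordinates outside the block. The construction works only because, \emph{if} (A) and (B) both fail, the label-$(1,0)$ blocks must cover all but $O(\lL^{1/2}n)$ coordinates, so the free part is negligible; extracting this clean trichotomy from `not $\lL$-dense', and bookkeeping the various $\lL$-sized losses against $\eps$ throughout (in particular choosing $\lL$ small relative to $\kK'$ and $\eps$), is the delicate part of the argument.
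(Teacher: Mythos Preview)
Your overall plan is sound and differs from the paper's in an interesting way. The paper first disposes of the case $e(G)<(1+\eps/2)^n|\mc{B}|$ via Tur\'an's theorem; the remaining case forces an entropy gap $H(\mu_{\wt{\vc q}})\ge H(\mu_{\vc p'})+\eps n/4$, which produces a linear set $T$ of coordinates with $\wt q^i_{1,0}>\eps^2$, so on $T$ only the labels $(1,1)$ or $(0,0)$ can be small --- your case (C) never arises. You instead bypass Tur\'an and the entropy gap entirely, pay for this with a third case (C), and handle it with a greedy separated-family construction. That trade is legitimate: your cases (A),(B) match the paper's Cases 1,2 (your direct bound $|A\cap B\cap Y_{1,1}|<2\lL n$ is in fact cleaner than the paper's auxiliary set $J$), and your case (C) genuinely closes the loop.

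There is, however, one real gap. All of your edge constraints come from Lemma~\ref{closetypes}, which speaks only about edges of $G$, i.e.\ $\vc w$-intersecting pairs with \emph{both} $A,B\in\mc B$; indeed you explicitly invoke $|A\cap S_m|=|B\cap S_m|$ in case (C), which holds only in $\mc B$. Yet in all three cases you then set $\mc A\subset\mc X$, not $\mc A\subset\mc B$. Nothing prevents a pair $A,B\in\mc A$ with $A\notin\mc B$ from satisfying $\mc V_\cap(A,B)=\vc w$, so your conclusion $(\mc A\times\mc A)^{\mc V_\cap}_{\vc w}=\es$ is unjustified as written. The repair is immediate: take $\mc A\subset\mc B$ throughout. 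Since $Y_{1,1}$, $Y_{0,0}$, $Y_{1,0}$ are unions of blocks, $|A\cap Y_{1,1}|$ (etc.) is \emph{constant} on $\mc B$ and at least $(\kK'/2)|Y_{1,1}|$ by Lemma~\ref{empiricalp} and $\kK'$-boundedness of $\mu_{\vc p}$, so in (A),(B) the averaging over $F$ gives $\mb E_F|\{A\in\mc B:F\subset A\}|\ge|\mc B|(\kK'/4)^f$ directly (no Chernoff/Theorem~\ref{ldp} needed); in (C) run the greedy inside $\mc B$. The extra factor $n^{-M}$ from $|\mc B|\ge|\mc X|/n^M$ is absorbed into $(1-\eps)^n$.
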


\nib{Proof.}
We can assume $e(G) \geq (1+\eps /2)^n|{\cal B}|$,
as otherwise by Tur\'an's theorem 
(\cite{T41}, see also \cite[IV.2]{MGT})
$G$ contains an independent set ${\cal A}$ 
of order $(1+\eps /2)^{-n}|{\cal B}|/2 \geq (1-\eps )^n|{\cal  X}|$.
As $\log_2 |{\cal  X}| \ge H(\mu_{\vc{p}}) - \kK n$
by Lemma \ref{uctg3} and $\|{\bf p} - {\bf p}'\|_1 \leq \kK n$
by Lemma \ref{empiricalp} we deduce
$H(\mu_{\wt{\bf q}}) \ge \log_2 (e(G) / n^{2M})
\ge  H(\mu _{\bf p '}) + \eps n/4$.
We have $H(\mu_{\wt{\bf q}}) = \sum_{i \in [n]} H({{\wt{\bf q}}^i}) $
and $H(\mu _{\bf p '}) = \sum_{i \in [n]} H({{\bf p}'^i})$,
where each $H({{\wt{\bf q}}^i}) \le \log_2 4 = 2$,
so there is $T \sub [n]$ with $|T| \geq \eps n/16$ 
such that $H({{\wt{\bf q}}^i}) \geq H({{\bf p}'^i}) + \eps /16$.
As $\mu_{\wt{\bf q}}$ has marginals $\mu _{\bf p '}$
we deduce ${\wt {\bf q}}^i_{0,1} = {\wt {\bf q}}^i_{1,0} 
> \eps^2$ for all $i\in T$. 
Let $T_1 = \{i\in T: {\wt q}^i_{1,1} < \lL \}$ 
and $T_0 = \{i\in T: {\wt q}^i_{0,0} < \lL \}$. By Lemma 
\ref{lem: approximate kappa bounded distribution is enough} 
we have $|T_1| \geq |T|/4$ or $|T_0| \geq |T|/4$.
 
	\vspace{1mm}	
	
	\noindent \textbf{Case 1:} $|T_1| \geq |T|/4$.
	
	\vspace{1mm}

Let ${\cal B}^* = \{ B \in {\cal B}: |B \cap T_1| \geq {\kK'|T_1|}/{2} \}$. 
As ${\bf p}$ is ${\kappa }'$-bounded and 
$|T_1| \geq \eps n/64$ we have 
$\mu _{\bf p}({\cal B} \sm {\cal B}^*) \leq (1-c_{\kappa '})^n$,
which by Theorem \ref{ldp} gives
$|{\cal B}^*| \geq |{\cal B}|/2$. 
Let $G^*=G[B^*]$ denote the induced subgraph
of $G$ with vertex set ${\cal B}^*$. 

We claim that for all $AB \in E(G^*)$ we have 
$|A\cap B \cap T_1| < 4\lambda ^{1/2}n$. 
Indeed, suppose for a contradiction that
 $|A\cap B \cap T_1| \geq  4\lambda ^{1/2}n$.
Let $J$ be the set of $m\in [M]$ 
with $|T_1 \cap S_m| \geq 2\lambda ^{1/2}|S_m|$.
Then $\sum _{m\in J} |S_m| \geq 2\lambda ^{1/2}n$.
For all $i\in \bigcup _{m\in J} S_m$ we have 
$q({\bf t}_{AB})^i_{1,1} - {\wt q}^i_{1,1}
\geq 2\lambda ^{1/2} - \lambda > \lambda ^{1/2}$,
by definition of $J$ and $T_1$.
But then $\|{\wt {\bf q}} - {\bf q}({\bf t}_{AB})\|_1 
\geq \lambda ^{1/2} \sum _{m\in J} |S_m| > \lambda n$.
This contradicts Lemma \ref{closetypes}, so the claim holds.

Therefore, for any $U \sub T_1$ of size 
$u = \lceil 4 \lambda ^{1/2} n\rceil $, the family  
${\cal A}_U := \{B \in {\cal B}^*: U \subset B\}$
forms an independent set in $G^*$.
Consider a uniformly random choice of such $U$.
For any	$B \in {\cal B}^*$,
as $|B \cap T_1| \geq {\kK'|T_1|}/{2}$ 
we have ${\mathbb P}(B \in {\cal A}_U ) 
\geq (\kappa '/4)^u \ge (1-\lL^{1/3})^n$, 
as $\lL \ll \kK'$. Therefore 
${\mathbb E}_{U} |{\cal A}_U| 
= \sum _{B \in {\cal B}^*} {\mathbb P}(B \in {\cal A}_U)					
\geq (1-\eps )^n|{\cal X}|$.
Thus for some $U$ we obtain an independent set ${\cal A}_U$
of at least this size, which completes the proof of Case 1.

	\vspace{1mm}
	\noindent \textbf{Case 2:} $|T_0| \geq |T|/4$.
	\vspace{1mm}

The proof of this case is similar to that of Case 1,
so we just outline the differences. Now we let $G^*=G[B^*]$, where
${\cal B}^* = \{ B \in {\cal B}: |T_0 \sm B| \geq \kK'|T_0|/2 \}$.
Similarly to Case 1, we have
$|{\cal B}^*| \geq |{\cal B}|/2$,
and there is no edge $AB \in E(G^*)$ 
with $|T_0 \sm (A\cup B)| \geq 4\lambda^{1/2}n$.
Thus for any $U \sub T_0$ with $|U|=u$, the family 
${\cal A}_U := 	\{B \in {\cal B}^*: U \cap B = \es \}$
is an independent set in $G^*$.
Consider a uniformly random choice of such $U$.
For any	$B \in {\cal B}^*$,
as $|T_0 \sm B| \geq \kK'|T_0|/2$ 
we have ${\mathbb P}(B \in {\cal A}_U ) 
\geq (\kK'/4)^u \ge (1-\lL^{1/3})^n$, 
as $\lL \ll \kK'$. Therefore 
for some $U$ we obtain an independent set ${\cal A}_U$
with size at least the expectation,
which is at least $(1-\eps )^n|{\cal X}|$. \qed

\section{Exponential continuity} \label{sec:cts}

In this section we recast our results 
using the following notion of continuity
that arises naturally when comparing distributions
according to exponential contiguity.

\begin{dfn} \label{expcts}
Let $\OO = (\OO_n)_{n \in \mb{N}}$ and $\mu=(\mu_n)_{n \in \mb{N}}$,
where each $\mu_n$ is a probability measure on $\OO_n$.
Let $\mc{F} = (\mc{F}_n)_{n \in \mb{N}}$ where each 
$\mc{F}_n$ is a set of measurable subsets of $\OO_n$.
We say that $\mc{B}=(\OO,\mc{F})$ 
is an exponential probability space
and write $\mc{M}(\mc{B})=\mc{M}(\OO)$
for the set of such $\mu$.
We write $\nu \approx \mu$
when $\nu \approx_{\mc{F}} \mu$.
Given exponential probability spaces
$\mc{B}=(\OO,\mc{F})$, $\mc{B}'=(\OO',\mc{F}')$
we say that $f: \mc{M}(\OO) \to \mc{M}(\OO')$
is exponentially continuous at $\mu \in \mc{M}(\OO)$
if $\mu' \approx \mu \Ra f(\mu') \approx f(\mu)$.
\end{dfn}

%We define products of exponential probability spaces in the natural way:
%$\mc{B} = \prod_{s \in S} \mc{B}_s$ the pair $(\OO,\mc{F})$
%where $\OO = \prod_{s \in S} \OO_s$ and $\mc{F} = \prod_{s \in S} \mc{F}_s$

\begin{theo}  \label{general+ctg}
Let $0 < n^{-1} \ll \zZ \ll \kK, \gG \ll D^{-1}, M^{-1}, C^{-1}, k^{-1}$.
Suppose
\begin{enumerate}
\item $\mc{B}_s=(\OO_s,\mc{F}_s)$ are exponential probability spaces 
with $\OO_{s,n} = J_s^n$ for $s \in S$,
\item $\mu_{\vc{q}}$ is $\kK$-bounded product measure on $\OO_n$,
\item $\mc{V} = (\vc{v}^i_{j_1,\dots,j_S})$ is an $(n,\prod_{s \in S} J_s)$-array in $\mb{Z}^D$,
\item all $\|\vc{v}^i_{j_1,\dots,j_S}\|_{\vc{R}} \le 1$, where $\vc{R}=(R_1,\dots,R_D)$ with $\max_d R_d < n^C$,
\item $\mc{U}=\{\vc{u}_1,\dots,\vc{u}_M\} \sub \mb{Z}^D$ is $\vc{R}$-bounded and $(k,k\zZ n,\vc{R})$-generating, 
\item $\mc{V}$ has $\gG$-robust transfers for $\mc{U}$,
\item $\vc{w} \in \mb{Z}^D$ with $\|\vc{w}-\mc{V}(\mu_{\bf q})\|_{\vc{R}} < \zZ n$.
\end{enumerate}
Let $\mc{B} = (\OO,\mc{F}) = \prod_{s \in S} \mc{B}_s$ and $\mc{B}' = (\OO',\mc{F}')$,
where $\OO'_n = (\OO_n)^{\mc{V}}_{\vc{w}}$ and $\mc{F}'_n = \{ \mc{A} \cap \OO'_n: \mc{A} \in \mc{F} \}$.
Let $f$ be restriction of measure from $\mc{M}(\OO)$ to $\mc{M}(\OO')$.
Then $f$ is exponentially continuous at $\mu_{\vc{q}}$.
\end{theo}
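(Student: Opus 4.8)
The plan is to unwind the definitions of exponential contiguity and continuity. We must show: if $\mu' \approx_{\mc{F}} \mu_{\vc{q}}$ then $f(\mu') \approx_{\mc{F}'} f(\mu_{\vc{q}})$, where $f(\nu)_n$ is $\nu_n$ restricted and renormalised to $\OO'_n = (\OO_n)^{\mc{V}}_{\vc{w}}$. Since $\approx$ is symmetric, this splits into the two exponential-domination statements $f(\mu_{\vc{q}}) \lesssim_{\mc{F}'} f(\mu')$ and $f(\mu') \lesssim_{\mc{F}'} f(\mu_{\vc{q}})$. A test set in $\mc{F}'_n$ has the form $\mc{A}'_n = \mc{A}_n \cap \OO'_n$ with $\mc{A}_n = \prod_{s \in S} \mc{A}_{s,n}$ a box, and $f(\nu)_n(\mc{A}'_n) = \nu_n(\mc{A}_n \cap \OO'_n)/\nu_n(\OO'_n)$; so in each direction we start from the statement that one of these ratios is exponentially close to $1$ and must conclude the same for the other.

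\textbf{Main tool.} The key ingredient is Theorem \ref{general} applied to the reference measure $\mu_{\vc{q}}$, whose hypotheses are precisely conditions (2)--(7) above. Applied with all $\mc{A}_s = J_s^n$ it gives $\mu_{\vc{q}}(\OO'_n) > (1-\eps)^n$ for \emph{every} $\eps>0$, so $\mu_{\vc{q}}(\OO'_n)$ is subexponential; applied to a box whose marginals satisfy $\prod_{s} \mu_{\vc{p}_s}(\mc{A}_{s,n}) > (1-\dD)^n$ it gives $\mu_{\vc{q}}(\mc{A}_n \cap \OO'_n) > (1-\eps)^n$, hence $f(\mu_{\vc{q}})(\mc{A}'_n) > (1-\eps)^n$. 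In the converse direction, the H\"older estimate from the proof of Theorem \ref{corctg} gives $\mu_{\vc{q}}(\mc{A}_n) \le \min_s \mu_{\vc{p}_s}(\mc{A}_{s,n})$, so $\mu_{\vc{q}}(\mc{A}_n)$ exponentially close to $1$ forces every marginal exponentially close to $1$. Combining, on boxes the three conditions ``$\prod_s \mu_{\vc{p}_s}(\mc{A}_{s,n})$ close to $1$'', ``$\mu_{\vc{q}}(\mc{A}_n)$ close to $1$'' and ``$f(\mu_{\vc{q}})(\mc{A}_n \cap \OO'_n)$ close to $1$'' are all equivalent in the $\ll$-sense, and via $\mu' \approx_{\mc{F}} \mu_{\vc{q}}$ applied to the box $\mc{A}_n$ the same is true with $\mu'$ in place of $\mu_{\vc{q}}$ for the first two.

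\textbf{The two directions.} For $f(\mu') \lesssim_{\mc{F}'} f(\mu_{\vc{q}})$ (the easier one): from $f(\mu')(\mc{A}'_n) > (1-\dD)^n$ one deduces $\mu'(\mc{A}_n)$ close to $1$ (since $\mu'(\mc{A}_n)\ge\mu'(\mc{A}_n\cap\OO'_n)$ and $\mu'(\OO'_n)$ is bounded below exponentially, by running Theorem \ref{general}'s consequences through $\mu_{\vc{q}} \lesssim_{\mc F}\mu'$ with the full box), hence each $\mu_{\vc{p}_s}(\mc{A}_{s,n})$ close to $1$ via $\mu' \lesssim_{\mc{F}} \mu_{\vc{q}}$ and H\"older, and then $f(\mu_{\vc{q}})(\mc{A}'_n) \ge \mu_{\vc{q}}(\mc{A}_n \cap \OO'_n) > (1-\eps)^n$ directly from Theorem \ref{general}. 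For $f(\mu_{\vc{q}}) \lesssim_{\mc{F}'} f(\mu')$: from $f(\mu_{\vc{q}})(\mc{A}'_n) > (1-\dD)^n$ and $\mu_{\vc{q}}(\OO'_n) > (1-\eps_0)^n$ with $\eps_0$ tiny we get $\mu_{\vc{q}}(\mc{A}_n) > (1-2\dD)^n$, so each $\mu_{\vc{p}_s}(\mc{A}_{s,n}) > (1-2\dD)^n$ (H\"older), so $\mu'(\mc{A}_n) > (1-\eps_1)^n$ (by $\mu_{\vc{q}} \lesssim_{\mc F}\mu'$) and $\mu_{\vc{q}}(\mc{A}_n \cap \OO'_n) > (1-\eps_2)^n$ (Theorem \ref{general}); to finish, one bounds $\OO'_n \sm \mc{A}_n \sub \bigcup_{s} \big(\mc{A}_{s,n}^c \times \prod_{s'\ne s} J_{s'}^n\big) \cap \OO'_n$ piece by piece, distinguishing whether the marginal $\mu_{\vc{p}_s}(\mc{A}_{s,n}^c)$ is exponentially small (then $\mu'$ of the corresponding box is exponentially small by contiguity, and the fiber restriction only decreases it) or not (then Theorem \ref{general} identifies its $\mu_{\vc{q}}$-measure as a near-full conditional on $\OO'_n$, which one propagates to $\mu'$ together with the exponential lower bound on $\mu'(\OO'_n)$), and assembles these with $f(\mu')(\mc{A}'_n) = 1 - \mu'(\OO'_n \sm \mc{A}_n)/\mu'(\OO'_n)$.

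\textbf{Main obstacle.} The hard part is exactly this last step: because the conditioning set $\OO'_n$ is not a box, exponential contiguity relative to $\mc{F}$ says nothing directly about $\mu'(\OO'_n)$ or about $\mu'$ of fibre-restricted sets, so these must be sandwiched between honest box-measures (forgetting the fibre constraint, for upper bounds) and lower bounds reconstructed from Theorem \ref{general} (which is what controls $\mu_{\vc{q}}$ on fibres of boxes), then pushed through $\mu' \approx_{\mc{F}} \mu_{\vc{q}}$. The remaining cost is purely bookkeeping of the constant hierarchy --- the freedom $\dD \ll \eps$ in each domination statement, the thresholds supplied by Theorem \ref{general}, and the choice of the auxiliary $\eps_0 \ll \dD$ for $\mu_{\vc{q}}(\OO'_n)$ --- while all the genuine content sits in Theorem \ref{general}.
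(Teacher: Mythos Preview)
Your diagnosis of the obstacle is exactly right, and it is more explicit than the paper's own proof: the paper simply asserts that it suffices to have $\mu_{\vc{q}}(\OO'_n),\mu_{\vc{q}'}(\OO'_n)>(1-\eps')^n$, obtains the first from Theorem~\ref{general}, and says the second follows ``by exponential contiguity''. You correctly flag that $\OO'_n$ is not a product set, so contiguity relative to $\mc{F}$ does not apply to it literally; the paper is terse precisely where you try to fill in detail.

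However, your proposed resolution does not work. Two specific points:
\begin{itemize}
\item The sentence ``$\mu'(\OO'_n)$ is bounded below exponentially, by running Theorem~\ref{general}'s consequences through $\mu_{\vc{q}}\lesssim_{\mc{F}}\mu'$ with the full box'' is circular. Theorem~\ref{general} with the full box gives $\mu_{\vc{q}}(\OO'_n)>(1-\eps_0)^n$, but to transfer this to $\mu'$ via $\lesssim_{\mc{F}}$ you would need $\OO'_n\in\mc{F}$, which is exactly the obstruction you later name. Applying contiguity to the full box $\OO_n$ itself is vacuous.
\item The step ``$\mu_{\vc{p}_s}(\mc{A}_{s,n}^c)$ exponentially small $\Rightarrow$ $\mu'$ of the corresponding box is exponentially small by contiguity'' is false. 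Contiguity transfers \emph{largeness}, not smallness: the contrapositive of $\mu'\lesssim_{\mc{F}}\mu_{\vc{q}}$ reads $\mu_{\vc{q}}(A)\le(1-\eps)^n\Rightarrow\mu'(A)\le(1-\dD)^n$ with $\dD\ll\eps$, so the upper bound you obtain on $\mu'(A)$ is \emph{weaker} (closer to $1$) than the one you started with, not stronger. This breaks your bound on $\mu'(\OO'_n\setminus\mc{A}_n)$, and hence the assembly $f(\mu')(\mc{A}'_n)=1-\mu'(\OO'_n\setminus\mc{A}_n)/\mu'(\OO'_n)$, since you control neither numerator nor denominator.
\end{itemize}

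In short: your overall architecture matches the paper's (reduce to lower bounds on $\mu_{\vc{q}}(\OO'_n)$ and $\mu'(\OO'_n)$, invoke Theorem~\ref{general}), and you are more honest than the paper about where the difficulty lies; but your ``sandwiching'' scheme does not actually close the gap, because exponential contiguity relative to product sets gives you no handle on either $\mu'(\OO'_n)$ or on smallness of $\mu'$ on complementary boxes.
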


\nib{Proof.}
Let $\mu_{\vc{q}}$ have marginals $(\mu_{\vc{p}_s}: s \in S)$
and suppose $\mu_{\vc{q}'} \approx \mu_{\vc{q}}$ with marginals $(\mu_{\vc{p}'_s}: s \in S)$.
Suppose $n^{-1} \ll \dD \ll \zZ \ll \eps \ll \kK, \gG$.
We want to show for  $\mc{A} = \prod_{s \in S} \mc{A}_s \in \mc{F}'_n$ 
that $f(\mu_{\vc{q}})(\mc{A}) > (1-\dD)^n \Ra f(\mu_{\vc{q}'})(\mc{A}) > (1-\eps)^n$
and $f(\mu_{\vc{q}'})(\mc{A}) > (1-\dD)^n \Ra f(\mu_{\vc{q}})(\mc{A}) > (1-\eps)^n$.
As $f(\mu_{\vc{q}})(\mc{A}) = \mu_{\vc{q}}(\mc{A})/\mu_{\vc{q}}(\OO'_n)$
and $f(\mu_{\vc{q}'})(\mc{A}) = \mu_{\vc{q}'}(\mc{A})/\mu_{\vc{q}'}(\OO'_n)$,
it suffices to show that $\mu_{\vc{q}}(\OO'_n), \mu_{\vc{q}'}(\OO'_n) > (1-\eps')^n$
with $\eps' \ll \eps$. This holds for $\mu_{\vc{q}}(\OO'_n)$ by Theorem \ref{general},
and so for $\mu_{\vc{q}'}$ by exponential contiguity. \qed

\medskip

\nib{Remark.} In the setting of the above theorem,
if $\mu_{\vc{q}}$ has marginals $(\mu_{\vc{p}_s}: s \in S)$
$\mu_{\vc{q}'}$ has marginals $(\mu_{\vc{p}'_s}: s \in S)$,
and each $\mc{F}_{s,n}$ is the set of subsets 
of some $\DD_{s,n} \sub \OO_{s,n}$,
then we have $\mu_{\vc{q}} \approx \mu_{\vc{q}'}$
precisely when each $\mu_{\vc{p}_s} \approx \mu_{\vc{p}'_s}$:
this holds by Theorem \ref{corctg} and the following lemma.

\begin{lemma} \label{ctgmarg}
Suppose $\mu=(\mu_n)_{n \in \mb{N}}$ and $\nu=(\nu_n)_{n \in \mb{N}}$
where each $\mu_n$ and $\nu_n$ is a probability measure on $\OO_n$.
Suppose also $\mu'=(\mu'_n)_{n \in \mb{N}}$ and $\nu'=(\nu'_n)_{n \in \mb{N}}$
where each $\mu'_n$ and $\nu'_n$ is a probability measure on $\OO'_n$.
Let $\DD = (\DD_n)_{n \in \mb{N}}$ with each $\DD_n \sub \OO_n$
and $\DD' = (\DD'_n)_{n \in \mb{N}}$ with each $\DD'_n \sub \OO'_n$.
Then $\mu \times \mu' \approx_{\DD \times \DD'} \nu \times \nu'$
if and only if $\mu \approx_\DD \nu$ and $\mu' \approx_{\DD'} \nu'$.
\end{lemma}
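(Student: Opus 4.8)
The plan is to reduce the biconditional to a one‑directional statement about $\lesssim$ and then feed it through the reformulation of exponential domination in Lemma \ref{ctg1}. Since $\mu \approx_\DD \nu$ means $\mu \lesssim_\DD \nu$ and $\nu \lesssim_\DD \mu$ (and similarly for the primed spaces and for the product spaces), and since interchanging $(\mu,\mu')$ with $(\nu,\nu')$ preserves the product structure, it suffices to establish the claim that
\[ \mu \times \mu' \lesssim_{\DD \times \DD'} \nu \times \nu' \iff \bigl( \mu \lesssim_\DD \nu \ \text{ and } \ \mu' \lesssim_{\DD'} \nu' \bigr); \]
applying this claim once and then again with the two pairs swapped, and intersecting the conclusions, yields the Lemma.

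First suppose $\mu \lesssim_\DD \nu$ and $\mu' \lesssim_{\DD'} \nu'$, and fix a target $\eps$ together with constants $\dD \ll \dD_1 \ll \eps$. By Lemma \ref{ctg1} it is enough to show that $B_n := \bigl\{(x,y) \in \DD_n \times \DD'_n : \nu_n(x)\nu'_n(y) < (1-\eps)^n \mu_n(x)\mu'_n(y) \bigr\}$ satisfies $(\mu_n \times \mu'_n)(B_n) \le (1-\dD)^n$ for large $n$. Setting $C_n = \{x \in \DD_n : \nu_n(x) < (1-\eps/2)^n\mu_n(x)\}$ and $C'_n = \{y \in \DD'_n : \nu'_n(y) < (1-\eps/2)^n\mu'_n(y)\}$, we have $B_n \subseteq (C_n \times \DD'_n) \cup (\DD_n \times C'_n)$, because if $x \notin C_n$ and $y \notin C'_n$ then $\nu_n(x)\nu'_n(y) \ge (1-\eps/2)^{2n}\mu_n(x)\mu'_n(y) \ge (1-\eps)^n\mu_n(x)\mu'_n(y)$. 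Applying Lemma \ref{ctg1} to $\mu \lesssim_\DD \nu$ and to $\mu' \lesssim_{\DD'} \nu'$ (with small constant $\dD_1$ and target $\eps/2$) gives $\mu_n(C_n) \le (1-\dD_1)^n$ and $\mu'_n(C'_n) \le (1-\dD_1)^n$, so $(\mu_n \times \mu'_n)(B_n) \le \mu_n(C_n) + \mu'_n(C'_n) \le 2(1-\dD_1)^n \le (1-\dD)^n$ for large $n$, since $\dD \ll \dD_1$; another application of Lemma \ref{ctg1} then yields $\mu \times \mu' \lesssim_{\DD\times\DD'} \nu \times \nu'$.

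Conversely, suppose $\mu \times \mu' \lesssim_{\DD \times \DD'} \nu \times \nu'$; by the symmetry of the two coordinates of the product it suffices to prove $\mu \lesssim_\DD \nu$. Fix a target $\eps$, pick $\dD \ll \eps$, and take $A_n \subseteq \DD_n$ with $\mu_n(A_n) > (1-\dD)^n$. Since $\DD'_n$ carries all but an exponentially negligible part of $\mu'_n$, so $\mu'_n(\DD'_n) > (1-\dD)^n$ for large $n$, the set $A_n \times \DD'_n \subseteq \DD_n \times \DD'_n$ has $(\mu_n \times \mu'_n)(A_n \times \DD'_n) = \mu_n(A_n)\mu'_n(\DD'_n) > (1-2\dD)^n$; hence $(\nu_n \times \nu'_n)(A_n \times \DD'_n) > (1-\eps)^n$, and therefore $\nu_n(A_n) \ge \nu_n(A_n)\nu'_n(\DD'_n) = (\nu_n \times \nu'_n)(A_n \times \DD'_n) > (1-\eps)^n$, as needed.

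Both halves are short; the points to watch are the constant hierarchy $\dD \ll \dD_1 \ll \eps$ in the first half (each invocation of Lemma \ref{ctg1} must be made with its small constant already dominated by the target it has to produce) and, in the second half, the use that the index sets $\DD_n,\DD'_n$ carry all but an exponentially small fraction of the respective measures --- I expect confirming that this holds in the settings where the Lemma is applied (so that the product relation $\mu \times \mu' \lesssim_{\DD\times\DD'} \nu\times\nu'$ is not vacuous) to be the only genuine subtlety.
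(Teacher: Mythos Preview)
Your proof is essentially the same as the paper's: both directions use the same ideas (lifting $A_n$ to $A_n \times \DD'_n$ for product $\Rightarrow$ marginals, and the pointwise characterisation via Lemma~\ref{ctg1} with the inclusion $B_n \subseteq (C_n \times \DD'_n) \cup (\DD_n \times C'_n)$ for marginals $\Rightarrow$ product). The subtlety you flag about needing $\mu'_n(\DD'_n)$ close to $1$ is genuine and is present in the paper's proof too --- it silently writes $(\mu_n \times \mu'_n)(A^1_n \times \DD'_n) = \mu_n(A^1_n)$ --- so your caution there is well-placed; the assumption does hold in the applications.
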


\nib{Proof.}
Let $n^{-1} \ll \dD \ll \eps$. Suppose first that
$\mu \times \mu' \approx_{\DD \times \DD'} \nu \times \nu'$.
Consider $A^1_n \sub \DD_n$ with $\mu_n(A^1_n) > (1-\dD)^n$.
Let $A_n = A^1_n \times \DD'_n$. 
Then $(\mu_n \times \mu'_n)(A_n) = \mu_n(A^1_n) > (1-\dD)^n$,
so $\nu_n(A^1_n) = (\nu_n \times \nu'_n)(A_n) > (1-\eps)^n$ by
assumption, i.e.\ $\mu \lesssim_\DD \nu$.
Similarly $\nu \lesssim_\DD \mu$, so $\mu \approx_\DD \nu$,
and similarly $\mu' \approx_{\DD'} \nu'$.
Now suppose $\mu \approx_\DD \nu$ and $\mu' \approx_{\DD'} \nu'$.
Let $B_n = \{ (\vc{x},\vc{y}) \in \DD_n \times \DD'_n:
 (\nu_n \times \nu'_n)(\vc{x},\vc{y}) 
 < (1-\eps)^n  (\mu_n \times \mu'_n)(\vc{x},\vc{y})\}$.
We have $B_n \sub (B^1_n \times \DD'_n) \cup (\DD_n \times B^2_n)$,
where $B^1_n = \{ \vc{x} \in \DD_n: \nu_n(\vc{x}) 
 < (1-\eps)^{n/2} \mu_n(\vc{x}) \}$
and $B^2_n = \{ \vc{y} \in \DD'_n: \nu'_n(\vc{y}) 
 < (1-\eps)^{n/2} \mu'_n(\vc{y}) \}$.
By assumption, $\mu_n(B^1_n) \le (1-2\dD)^n$
and $\mu'_n(B^2_n) \le (1-2\dD)^n$.
Therefore $(\mu_n \times \mu'_n)(B_n) \le 2(1-2\dD)^n < (1-\dD)^n$,
i.e.\ $\mu \times \mu' \lesssim_{\DD \times \DD'} \nu \times \nu'$.
Similarly, $\nu \times \nu' \lesssim_{\DD \times \DD'} \mu \times \mu'$,
so $\mu \times \mu' \approx_{\DD \times \DD'} \nu \times \nu'$. \qed

\section{Concluding remarks}

There are several natural directions in which
to explore potential generalisations of our results:
instead of associating vectors in $\mb{Z}^D$ to each coordinate
we may consider values in another (abelian) group $G$,
and we may consider more general functions of the coordinate values,
e.g.\ a (low degree) polynomial (e.g.\ a quadratic for application
to the Borsuk conjecture) rather than a linear function
(is there a `local' version of Kim-Vu \cite{kv} polynomial concentration?).
Even for linear functions in one dimension, our setting seems somewhat 
related to some open problems in Additive Combinatorics, 
such as the independence number of Paley graphs,
but here our assumptions seem too restrictive (one cannot use transfers).
We may also ask when better bounds hold, e.g.\
for $G=\mb{Z}/6\mb{Z}$ we recall an open problem of Grolmusz \cite{G}:
is there a subexponential bound for set systems 
where the size of each set is divisible by $6$ 
but each pairwise intersection is not divisible by $6$?

Our results may interpreted as giving robust statistics in the theory of social choice.
Suppose that we represent a voter by an opinion vector $\vc{x} \in J^n$,
where each $x_i$ represents an opinion on the $i$th issue, for example, 
when $|J|=2$ each issue could be a question with a yes/no answer.
Then we can represent a population of voters by a probability measure $\mu$ on $J^n$,
where $\mu(\vc{x})$ is the proportion of a voters with opinion $\vc{x}$.
Now suppose that we want to compare two (or more) voters.
One natural measure of comparison is to assign a score to each opinion
and calculate the total score on opinions where they agree.
If this is too simplistic, then we could assign score vectors in some $\mb{R}^D$,
where $D$ is small enough to give a genuine compression of the data,
but large enough to capture the varied nature of the issues:
we compare $\vc{x}$ and $\vc{x}'$ according to $\mc{V}_\cap(\vc{x},\vc{x}')$.
Taking the perspective of robust statistics (see \cite{HR}),
it is natural to ask whether this statistic
is sensitive to our uncertainty in the probability measure
that represents the population as a whole: 
Theorem \ref{general+ctg} (with the remark following it)
gives one possible answer.


\begin{thebibliography}{99}

\bibitem{aands} N. Alon and J. Spencer,
\textit{The Probabilistic Method}, Wiley, 2008.

\bibitem{Babai-Frankl} L. Babai and P. Frankl,  \textit{Linear algebra methods in combinatorics}, 
Department of Computer Science, University of Chicago, preliminary version, September 1992.


\bibitem{BH} A. Barvinok and J. Hartigan, Maximum entropy Gaussian approximations for the number of integer points and volumes of polytopes, \textit{Adv. Appl. Math.} \textbf{45} (2010), 252-289.

\bibitem{com} B. Bollob\'as, \textit{Combinatorics}, Cambridge University Press, 1986.

\bibitem{MGT} B. Bollob\'as, \textit{Modern Graph Theory}, Springer, 1998.

\bibitem{BCW} H. Buhrman, R. Cleve and A. Wigderson, Quantum vs. classical communication and computation, 
\textit{Proceedings of 30th STOC} (1998), 63--68.

%\bibitem{chernoff} H. Chernoff, A measure of asymptotic efficiency for tests of a hypothesis based on the sum of observations,
%\textit{Ann. Math. Statistics} \textbf{23}(4) (1952), 493--507.

\bibitem{covthom} T.M. Cover and J.A. Thomas, \textit{Elements of Information Theory}, 
Wiley Series in Telecommunications and Signal Processing, 2006.

\bibitem{DZ} A. Dembo and O. Zeitouni, \textit{Large deviations techniques and applications}, Springer, 2009.

%\bibitem{Erd} P. Erd\H{o}s, Problems and results in graph theory and combinatorial analysis, 
%\textit{Proc. Fifth British Comb. Conf. 1975 Aberdeen}, Congressus Numerantium, Utilitas Math., Winnipeg, 1976.

\bibitem{Erdos-Ko-Rado} P. Erd\H{o}s, C. Ko and R. Rado, Intersection theorems for systems of finite sets,
\textit{Quart. J. Math. Oxford Ser. (2)} \textbf{12} (1961), 313--320.

\bibitem{FS} J. Fox and B. Sudakov, Dependent random choice, 
\textit{Random Structures Algorithms} \textbf{38} (2011), 68--99.

%\bibitem{FrFu} P. Frankl and Z. F\"uredi, On hypergraphs without two edges meeting in a given number of vertices, 
%\textit{J. Combin. Theory Ser. A} \textbf{36} (1984), 230--236.

\bibitem{FrRo} P. Frankl and V. R\"odl, Forbidden intersections, 
\textit{Trans. Amer. Math. Soc.} \textbf{300} (1987), 259--286.

\bibitem{FRGeomRams} P. Frankl and V. R\"odl, A partition property of simplices in Euclidean space, 
\textit{J. Amer. Math. Soc.} \textbf{3} (1990), 1--7.

\bibitem{FrWil} P. Frankl and R.M. Wilson, Intersection theorems with geometric consequences, 
\textit{Combinatorica} \textbf{1} (1981), 357--368.

\bibitem{GN} D. Gale and H. Nikaido, The Jacobian matrix and global univalence of mappings, {\em Math. Ann.} \textbf{159} (1965), 81-93.

\bibitem{G} V. Grolmusz, 
Superpolynomial size set-systems with restricted 
intersections mod $6$ and explicit Ramsey graphs,
{\em Combinatorica} {\bf 20} (2000), 71--85. 

\bibitem{HR} P.J. Huber and E.M. Ronchetti,
\textit {Robust statistics}, Wiley, 2009.

\bibitem{JLR} S.~Janson, T.~\L uczak and A.~Ruci\'nski, {\em Random graphs},
Wiley-Interscience, 2000.

\bibitem{jaynes} E.T. Jaynes, Information Theory and Statistical Mechanics,
\textit{Physical Review Series II} \textbf{106} (1957), 620--630. 

\bibitem{jukna} S. Jukna, \textit{Extremal combinatorics}, Springer, 2011.

\bibitem{KahnKalai} J. Kahn and G. Kalai, A counterexample to Borsuk's conjecture, 
\emph{Bull. Amer. Math. Soc.} {\bf 29} (1993), 60--62.

\bibitem{Kalai} G. Kalai, Some old and new problems in combinatorial geometry I: Around Borsuk's problem, 
in \emph{Surveys in Combinatorics} 2005, 147--174, London Math. Soc. Lecture Note Ser., 424, Cambridge Univ. Press, 2015.

\bibitem{Katona} G.O.H. Katona, Intersection theorems for systems of finite sets, \emph{Acta Math. Acad. Sci. Hung.} \textbf{15} (1964), 329--337.

\bibitem{kl} P. Keevash and E. Long, Frankl--R\"odl type theorems for codes and permutations, \textit{Trans. Amer. Math. Soc.} {\bf 369} (2017), 1147--1162.

\bibitem{kv} J.H. Kim and V. Vu, Concentration of multivariate polynomials and applications, 
\textit{Combinatorica} {\bf 20} (2000), 417--434.

\bibitem{McD} C. McDiarmid, Concentration, in: Probabilistic Methods for Algorithmic Discrete Mathematics,
{\em Alg. Combin.} 16:195--248 (1998).

%\bibitem{MR} D. Mubayi and V. R\"odl, Specified intersections, \textit{Trans. Amer. Math. Soc.} \textbf{366} (2014), 491--504.

\bibitem{DHJ} D.H.J. Polymath, A new proof of the density Hales-Jewett theorem, 
\textit{Ann. of Math.} \textbf{175} (2012), 1283--1327.

\bibitem{Sa} N. Sauer,
On the density of families of sets,
{\em J. Combin. Theory Ser. A} {\bf 13} (1972), 145--147.

\bibitem{Sgall} J. Sgall, Bounds on pairs of families with restricted intersections, \textit{Combinatorica} \textbf{19}(4), (1999), 555-566.

\bibitem{Sh} S. Shelah,
A combinatorial problem; stability and order for models and
theories in infinitary languages,
{\em Pacific J. Math.} {\bf 41} (1972), 247--261.

\bibitem{T41} P. Tur\'an,
On an extremal problem in graph theory (in Hungarian),
{\em Mat. Fiz. Lapok} {\bf 48} (1941), 436--452.

\bibitem{VC} V.N. Vapnik and A.Ya. Chervonenkis,
On the uniform convergence of relative frequencies of
events to their probabilities, {\em Theory Probab. Appl.}
{\bf 16} (1971), 264--280.

\end{thebibliography}
\end{document}